\documentclass[11pt, oneside]{amsart} 
\usepackage[utf8]{inputenc}
\usepackage{geometry}
\geometry{letterpaper, left=17mm, right=17mm, top=20mm, bottom=20mm}
\usepackage[english]{babel}
\usepackage{amsmath,mathtools,amssymb,graphicx,enumerate,bbm,xcolor,amsthm}
\usepackage[all]{xy}
\usepackage{hyperref}
\usepackage{tikz-cd}
\usetikzlibrary{decorations.pathmorphing}
\pagestyle{plain}

\newtheorem{theorem}{Theorem}[section]
\newtheorem{corollary}[theorem]{Corollary}
\newtheorem{definition}[theorem]{Definition}
\newtheorem{lemma}[theorem]{Lemma}
\newtheorem{proposition}[theorem]{Proposition}
\newtheorem{conjecture}[theorem]{Conjecture}
\newtheorem{condition}[theorem]{Condition}
\newtheorem{remark}[theorem]{Remark}


\newcommand\GL{{\mathrm {GL}}}

\newcommand\Spec{{\mathrm {Spec}}}

\theoremstyle{plain}

\theoremstyle{remark}

\numberwithin{equation}{section}




\newcommand\Q{{\mathbb Q}}

\newcommand{\Lc}{\mathcal{L}}




\begin{document}
	
	\title{Selmer groups of symmetric powers of ordinary modular Galois
	representations}
	\author{Xiaoyu ZHANG}
	\address{LAGA, Institut Galil\'{e}e, U. Paris 13, av. J.-B. Cl\'{e}ment, Villetaneuse 93430, France.}
	\email{zhang@math.univ-paris13.fr}
	\maketitle
	\begin{abstract}
		Let $p$ be a fixed odd prime number,
		$\mu$ be a Hida family over 
		the Iwasawa algebra of one variable,
		$\rho_{\mu}$ 
		its Galois representation,
		$\Q_\infty/\mathbb{Q}$ the $p$-cyclotomic tower and $S$ the variable
		of the cyclotomic Iwasawa algebra.
		We compare, for $n\leq 4$ and under certain assumptions, 
		the characteristic power series
		$L(S)$
		of the dual of Selmer groups
		$\mathrm{Sel}(\mathbb{Q}_{\infty},\mathrm{Sym}^{2n}\otimes\mathrm{det}^{-n}\rho_{\mu})$ 
		to certain congruence ideals. 
		The case $n=1$ has been treated by H.Hida.
		In particular, we express the first term of the Taylor expansion 
		at the trivial zero $S=0$ of
		$L(S)$
		in terms of an $\Lc$-invariant and a congruence number.
		We conjecture the non-vanishing of this $\Lc$-invariant;
		this implies therefore that these Selmer groups are cotorsion.
		We also show that our $\Lc$-invariants coincide with Greenberg's
		$\mathcal{L}$-invariants calculated 
		by R.Harron and A.Jorza.
	\end{abstract}
	
	\tableofcontents
	
	\section{Introduction}
	In this paper we study the relation between Selmer groups and congruence ideals over a cyclotomic tower for a certain type of Galois representations.

	Let's first recall some history.
	The congruences between modular forms/automorphic forms 
	have been investigated by many authors. 
	One kind of congruence is between elliptic cusp forms and Eisenstein series.
	This kind of congruence has received much attention. 
	One can use this kind of congruence to study the Selmer groups of the Galois representation associated to a certain Dirichlet character 
	(see for example \cite{Ribet}, \cite{Wiles}). 
	The Eisenstein series can be seen as a kind of lift of some Dirichlet character from $\mathrm{GL}_1$ to $\mathrm{GL}_2$. 
	This point of view can be generalized from $\mathrm{GL}_2$ to other groups
	for the study of Selmer groups of certain automorphic Galois representations by using congruences between Klingen-Eisenstein series and cusp forms. 
	(see for example \cite{SkinnerUrban}). 
	
	Another kind of congruence is between cusp forms. 
	This congruence  was perhaps first studied numerically in \cite{DoiOhta} 
	and then the congruence was related to special values of Dirichlet series as investigated in \cite{DoiHida}. 
	Later works include for example \cite{HidaCongruencesOfCusp}, \cite{HidaOnCongruenceDivisors} and \cite{RibetModp}. 
	During further investigations of the relation between congruence numbers and special $L$-values, 
	H.Hida developed the theory of ordinary $p$-adic modular forms in families (see \cite{HidaHilbert}).
	Then Hida used his theory to study a third kind of congruence, that is, between Hida families.
	Moreover, for an elliptic cusp form $f$, he related the congruence between 
	the Hida family containing $f$ and other Hida families 
	to the divisibility of the special values of 
	the complex adjoint $L$-function of $f$ (see \cite{Hida88b}
	and also the lecture notes \cite{HidaArithmetic}). 
	From then on, many generalizations of these two kinds of congruences 
	have been carried out for Hilbert modular forms
	(see for example \cite{Ghate2010}, \cite{Dimitrov}).
	
	In the paper \cite{HidaTilouine}, a particular type of congruence between cusp automorphic forms is considered by H.Hida and J.Tilouine. 
	This kind of congruence involves Langlands functoriality from one group to another. 
	More precisely, fix some odd prime $p>2$
	and suppose that $G$ and $H$ are two connected reductive algebraic groups defined over some number field $F$. 
	Suppose that there is a homomorphism 
	$r\colon  ^LH\to ^LG$ between the Langlands' $L$-groups 
	$^LH$ of $H$ and $^LG$ of $G$.
	We assume also that the Langlands functoriality $H\leadsto G$ for this  homomorphism $r$ is established.
	Let's write the resulting transfer map as 
	$r_\ast\colon \mathcal{A}_{\mathrm{cusp}}(G)\to\mathcal{A}_{\mathrm{cusp}}(H)$
	between the set $\mathcal{A}_{\mathrm{cusp}}(G)$, resp. $\mathcal{A}_{\mathrm{cusp}}(H)$ of cusp automorphic representations of 
	$G(\mathbb{A}_F)$, resp. $H(\mathbb{A}_F)$.
	Let $\pi$ be a cusp automorphic representation on $H$ and $\pi^G$ 
	be its image (as a packet) under the transfer map $r_\ast$.
	In \cite{HidaTilouine} the congruence between $\pi^G$ and other cusp automorphic representations on the group $G$ is studied. 
	In fact, the authors study this congruence in $p$-adic families. That is to say, suppose that we can establish Hida theory for the groups $G$ and $H$ 
	and let's write the localized big Hida-Hecke algebras of some suitable auxiliary conductor 
	(related to that of $\pi$) 
	over $H$, resp. $G$, as $\mathbf{T}^H$, resp. $\mathbf{T}^G$. 
	Then the Langlands transfer morphism
	$r_\ast\colon\mathcal{A}_{\mathrm{cusp}}(G)\to\mathcal{A}_{\mathrm{cusp}}(H)$ gives a natural morphism of algebras
	$\mathbf{T}^G\to\mathbf{T}^H$.
	Let $\mathbf{T}^H\to A$ be a Hida family passing through $\pi$. 
	As we have said, the study of congruence between the Hida family
	$\mathbf{T}^H\to A$ and other Hida families on $H$ 
	was initiated by H.Hida.
	The authors in \cite{HidaTilouine} study more specifically the congruence
	between the Hida family on $G$ given by
	$\mathbf{T}^G\to\mathbf{T}^H$ (see Definition \ref{congruenceideal}) and those not coming from $H$. 
	This is equivalent to considering the congruence ideal of the
	morphism $\mathbf{T}^G\to\mathbf{T}^H$.
	The main result of \cite{HidaTilouine} is to relate congruence ideals 
	to Selmer groups for some twisted symmetric powers of ordinary modular Galois representations by considering the Langlands functorialities such as
	$\mathrm{Sym}^{n-1}\colon\mathrm{GL}_{2/\mathbb{Q}}\leadsto\mathrm{GL}_{n/\mathbb{Q}}$ 
	(corresponding to the morphism of Langlands $L$-groups
	$\mathrm{Sym}^{n-1}\colon\mathrm{GL}_2(\mathbb{C})\to\mathrm{GL}_n(\mathbb{C})$),
	$\mathrm{Sym}^3\colon\mathrm{GL}_{2/\mathbb{Q}}\leadsto\mathrm{GSp}_{4/\mathbb{Q}}$ 
	(corresponding to $\mathrm{Sym}^3\colon\mathrm{GL}_2(\mathbb{C})\to\mathrm{GSp}_4(\mathbb{C})$)
	and
	$\mathrm{GSp}_{4/\mathbb{Q}}\leadsto\mathrm{GL}_{4/\mathbb{Q}}$
	(corresponding to
	$\mathrm{GSp}_4(\mathbb{C})\hookrightarrow\mathrm{GL}_4(\mathbb{C})$)
	together with some Clebsch-Gordan decompositions for representations of $\mathrm{GL}_2$.
	
	Our work can be seen as a generalization of \cite{HidaTilouine} in the sense 
	that instead of considering only one field $\mathbb{Q}$, 
	we consider the $p$-cyclotomic tower $\mathbb{Q}_\infty/\mathbb{Q}$. 
	In other words, we treat an Iwasawa-theoretic version of the above congruence problem for the Langlands functorialities
	$\mathrm{Sym}^{n-1}\colon\mathrm{GL}_2\leadsto\mathrm{GL}_n$ for each layer of $\mathbb{Q}_\infty/\mathbb{Q}$.
	This leads us naturally into the world of Iwasawa theory. 
	Recall that the Iwasawa-Greenberg main conjecture equates the $p$-adic $L$-function of an ordinary Galois representation 
	(whose existence is widely open in many cases) 
	to the characteristic power series of the Selmer group of this Galois representation up to multiplication by some unit (see \cite[Conjecture 2]{GreenbergTrivialZero}). 
	In the various (partially) established cases of the main conjecture, the congruence ideals play a significant role 
	(see for example \cite{Wiles}, \cite{SkinnerUrban}, \cite{UrbanGroupesDeSelmer}, \cite{WanIwasawa}). 
	The general strategy for proving that the $p$-adic $L$-function divides the characteristic power series of the dual Selmer group is to insert the congruence ideal (or its characteristic power series): 
	(1) show that the $p$-adic $L$-function divides the congruence power series; 
	(2) show that the congruence power series divides the characteristic power series of the dual Selmer group. 
	This paper treats the second part (2) of the divisibility 
	(in fact we show an equality of the two power series over each layer of $\mathbb{Q}_\infty/\mathbb{Q}$).

	Let $p$ be an odd prime number. 
	Let $\rho\colon  \Gamma_\mathbb{Q}\to \mathrm{GL}_m(W)$ 
	be an ordinary Galois representation 
	where $\Gamma_\mathbb{Q}$ is the absolute Galois group of $\mathbb{Q}$ and
	$W$ is a $p$-profinite local ring. 
	The Galois group $\mathrm{Gal}(\mathbb{Q}_\infty/\mathbb{Q})$ 
	of the $p$-cyclotomic extension $\mathbb{Q}_\infty/\mathbb{Q}$ 
	is isomorphic to $\mathbb{Z}_p$.
	We fix a topological generator $\gamma$ for this Galois group. 
	Then we have an isomorphism of $p$-profinite $\mathbb{Z}_p$-algebras
	${\mathbb{Z}_p[[\mathrm{Gal}(\mathbb{Q}_\infty/\mathbb{Q})]]\simeq\mathbb{Z}_p[[S]]}$ by sending $\gamma$ to $1+S$.
	The Greenberg-Iwasawa main conjecture for $\rho$
	relates the $p$-adic $L$-function $L^{an}_p(\rho,S)$ of $\rho$
	to the characteristic power series $L^{alg}_p(\rho,S)$ of the dual Selmer group
	$\mathrm{Sel}(\mathbb{Q}_\infty,\rho)^\ast$
	of $\rho$
	under certain conditions. 
	
	Now let's be precise what kind of Galois representations $\rho$ we are going to treat.
	Suppose that $\pi$ is a non-CM holomorphic cusp automorphic representation of $\mathrm{GL}_2(\mathbb{A}_{\mathbb{Q}})$, 
	of conductor $N$, 
	cohomological for a local system of 
	highest weight $a>0$.
	Let $p>2$ be a prime relatively prime to $N$  where $\pi$ is ordinary.
	Let $\mathbf{T}$ be the big Hida-Hecke algebra of auxiliary conductor $N$. Let $\Spec(A)$ be an irreducible component of $\mathbf{T}$;
	the Hecke eigensystem for $\pi$ can be interpolated by a Hida family $\mu\colon\mathbf{T}\to A$.
	Assuming residual irreducibility, one can associate to $\mu$
	a Galois representation $\rho_{\mu}\colon \Gamma_{\mathbb{Q}}\to \GL_2(A)$
	which yields $\rho_{\pi}$ when specialized in the weight of $\pi$. 
	For any representation $\sigma\colon\Gamma_\mathbb{Q}\to\mathrm{GL}_2(W)$, we write
	$\mathcal{A}^n(\sigma)$ for the symmetric power representation
	${(\mathrm{Sym}^{2n}\otimes\mathrm{det}^{-n})\sigma}$.
	The Galois representations $\rho$ we will study are these $\mathcal{A}^n(\rho_\mu)$.
	For the $k$-th layer $\Q_k$ of  $\Q_\infty/\mathbb{Q}$,
	we consider the Selmer group
	$\mathrm{Sel}(\mathbb{Q}_k,\mathcal{A}^n_{\mu})$
	defined as follows (see also the beginning of Section 4)
	\[\mathrm{Sel}(\mathbb{Q}_k,\mathcal{A}^n_{\mu})=\mathrm{Ker}(H^1(\mathbb{Q}_k,\mathcal{A}^n_{\mu}\otimes_{A}\tilde{A}^{\ast})\to\prod_{\mathfrak{q}\nmid p} H^1(I_\mathfrak{q},\mathcal{A}^n_{\mu}\otimes_A\tilde{A}^{\ast})\times H^1(I_\mathfrak{p},(\mathcal{A}^n_{\mu}/F^1\mathcal{A}^n_{\mu})\otimes_A\tilde{A}^{\ast}))\]
	where $(F_j\mathcal{A}^n_\mu)_j$ is the filtration on $\mathcal{A}^n_\mu$ (restricted to the decomposition group $\Gamma_\mathfrak{p}$ over the unique prime $\mathfrak{p}$ of $\mathbb{Q}_k$ above $p$) derived from the $p$-ordinarity assumption on the Hida family $\mu$. Then our main result in this paper can be summarized as: (1) showing the cotorsionness of the Selmer groups $\mathrm{Sel}(\mathbb{Q}_\infty,\mathcal{A}^n(\rho_\mu))$ assuming the non-vanishing of their $\mathcal{L}$-invariants and some other technical hypotheses; (2) interpreting the Selmer groups in terms of congruence ideals.
	
	To state our result, we need some notations. 
	In Section 2, we will define a rank $n$ 
	unitary definite algebraic group $G_{n/\mathbb{Q}}$ 
	defined over $\mathbb{Q}$ for an integer $n\geq2$. 
	Write $\Pi^{G_n}$ for the base change (packet) of $\pi$ under the Langlands functorialities
	$\mathrm{GL}_{2/\mathbb{Q}}\leadsto\mathrm{GL}_{n/\mathbb{Q}}\leadsto G_{n/\mathbb{Q}}$ 
	which correspond to the homomorphisms ${^L\mathrm{GL}_{2/\mathbb{Q}}\xrightarrow{\mathrm{Sym}^{n-1}}
	 \ ^L\mathbb{Q}_{n/\mathbb{Q}}\to\ ^LG_{n/\mathbb{Q}}}$
	of their $L$-groups. 
	We can construct the localized big Hida-Hecke algebra $\mathbf{T}^{G_n}_k$,
	resp. $\mathbf{T}^\circ_k$, of auxiliary level $N$ over
	$G_{n/\mathbb{Q}_k}:=G_{n/\mathbb{Q}}\times_\mathbb{Q}\mathbb{Q}_k$, 
	resp. $\mathrm{GL}_{2/\mathbb{Q}_k}$. 
	Assuming the Langlands functorialities $\mathrm{GL}_{2/\mathbb{Q}_k}\leadsto\mathrm{GL}_{n/\mathbb{Q}_k}
	\leadsto G_{n/\mathbb{Q}_k}$ for each $k$, 
	we have morphisms of algebras $\mathbf{T}^{G_n}_k\to\mathbf{T}^\circ_k$. 
	Tensoring each of these Hecke algebras with $\tilde{A}$ over the corresponding weight Iwasawa algebra, we get $\tilde{\mathbf{T}}^{G_n}_k$, 
	resp. $\tilde{\mathbf{T}}^\circ_k$. 
	The above morphisms induce the following
	morphisms of algebras
	$\tilde{\theta}^{G_n}_k
	\colon\tilde{\mathbf{T}}^{G_n}_k\to\tilde{\mathbf{T}}^\circ_k$,
	$\tilde{\lambda}^\circ_k\colon\tilde{\mathbf{T}}^\circ_k\to\tilde{A}$. 
	To each $\tilde{\theta}^{G_n}_k$, 
	we can associate a congruence ideal $\mathfrak{c}_{\tilde{\theta}^{G_n}_k}$ 
	which is an ideal in $\tilde{\mathbf{T}}^\circ_k$ 
	(see Definition \ref{congruenceideal}).
	For the Galois representation
	$\rho^{G_n}_\mu=\mathrm{Sym}^{n-1}\rho_\mu$, 
	we can define an ideal
	$\mathcal{L}(\mathbb{Q},\mathrm{Ad}(\rho^{G_n}_\mu))$, 
	which is an element in the fraction field of $\tilde{A}$
	(see Definition \ref{definitionofL}). 
	We introduce three conditions on the residual Galois representation
	$\overline{\rho}^{G_n}_\pi=\mathrm{Sym}^{n-1}\overline{\rho}_\pi$:
	$\textbf{Big}(\overline{\rho}^{G_n}_\pi)$, 
	$\textbf{Dist}(\overline{\rho}^{G_n}_\pi)$
	and $\textbf{RegU}(\overline{\rho}^{G_n}_\pi)$ 
	(see Condition \ref{condition} for their precise formulations). 
	These three conditions can altogether be roughly stated as: 
	the image of the representation $\overline{\rho}^{G_n}_\pi$ is big, 
	its restriction to the decomposition group $\Gamma_p$ over $p$ is upper triangular with distinct diagonal entries 
	while its restriction to the inertia subgroups $I_q$ for any $q|N$ is non-trivial.
	
	The following results are proved in 
	\cite[Corollary 3.5, Corollary 3.9, Section 6]{HidaTilouine}:
	\begin{theorem}[\cite{HidaTilouine}]
		Assume $\textbf{Big}(\overline{\rho}^{G_n}_\pi)$,
		$\textbf{Dist}(\overline{\rho}^{G_n}_\pi)$,
		$\textbf{RegU}(\overline{\rho}^{G_n}_\pi)$. 
		Then the (fractional) congruence ideal 
		$\tilde{\lambda}^\circ_0(\mathfrak{c}_{\tilde{\theta}^{G_n}_0})/\tilde{\lambda}^\circ_0(\mathfrak{c}_{\tilde{\theta}^{G_{n-1}}_0})$ in $\tilde{A}$
		is generated by the characteristic element of the dual Selmer group
		$\mathrm{Sel}(\mathbb{Q},\mathcal{A}^{n-1}(\rho_\mu))^\ast$.
	\end{theorem}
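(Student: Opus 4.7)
My plan is to proceed via Galois deformation theory and Hida's theorem identifying congruence modules with adjoint Selmer groups, combined with the Clebsch-Gordan decomposition of the adjoint of a symmetric power. The hypotheses $\textbf{Big}(\overline{\rho}^{G_n}_\pi)$, $\textbf{Dist}(\overline{\rho}^{G_n}_\pi)$ and $\textbf{RegU}(\overline{\rho}^{G_n}_\pi)$ are exactly what is needed to make each step work: bigness provides vanishing of $H^0$ of the adjoint (so that deformation rings are well-defined and cotangent spaces are honest tangent spaces of Selmer type), distinguishedness gives a well-defined ordinary local condition at $p$ derived from the Hida filtration, and regular-unipotence pins down the local condition (Steinberg-type) at primes $q \mid N$.

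\smallskip

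First, I would fix the relevant Galois deformation functor for $\overline{\rho}^{G_n}_\pi$ with the local conditions dictated by the Hida theory on $G_n$ (ordinary at $p$ with the induced filtration on $\mathrm{Sym}^{n-1}$, minimally ramified at primes dividing $N$). Under the three standing hypotheses this functor is pro-representable by a Noetherian local $\tilde A$-algebra $R^{G_n}$, and the Galois representation $\rho^{G_n}_\mu = \mathrm{Sym}^{n-1}\rho_\mu$ produces a surjection $R^{G_n} \twoheadrightarrow \tilde{\mathbf{T}}^{G_n}_0$ (in fact we only need a surjection onto the image of $\tilde{\theta}^{G_n}_0$). Hida's fundamental computation of the congruence module then identifies
\[
\tilde{\lambda}^\circ_0\bigl(\mathfrak{c}_{\tilde{\theta}^{G_n}_0}\bigr) \;=\; \mathrm{char}_{\tilde A}\bigl(\mathrm{Sel}(\mathbb{Q},\mathrm{Ad}(\rho^{G_n}_\mu))^\ast\bigr),
\]
where $\mathrm{Ad}$ denotes the trace-zero adjoint and the Selmer conditions are those induced by the ordinary filtration. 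This step is essentially the content of the Hida-Tilouine analysis, so I would invoke Corollary~3.5 and Corollary~3.9 of \cite{HidaTilouine} rather than re-derive them.

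\smallskip

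Second, I would exploit the Clebsch-Gordan decomposition for $\mathrm{GL}_2$:
\[
\mathrm{Ad}\bigl(\mathrm{Sym}^{n-1}\rho_\mu\bigr) \;\simeq\; \bigoplus_{k=0}^{n-1} \mathcal{A}^{k}(\rho_\mu).
\]
Because the diagonal characters in $\textbf{Dist}$ are distinct, this decomposition is compatible with the Hida filtration on each side, so the ordinary Selmer conditions split summand by summand; similarly at primes $q \mid N$ the minimally ramified condition respects the decomposition. Consequently
\[
\mathrm{Sel}(\mathbb{Q},\mathrm{Ad}(\rho^{G_n}_\mu)) \;\simeq\; \bigoplus_{k=0}^{n-1} \mathrm{Sel}(\mathbb{Q},\mathcal{A}^{k}(\rho_\mu)),
\]
and characteristic ideals multiply:
\[
\mathrm{char}_{\tilde A}\bigl(\mathrm{Sel}(\mathbb{Q},\mathrm{Ad}(\rho^{G_n}_\mu))^\ast\bigr) \;=\; \prod_{k=0}^{n-1} \mathrm{char}_{\tilde A}\bigl(\mathrm{Sel}(\mathbb{Q},\mathcal{A}^{k}(\rho_\mu))^\ast\bigr).
\]

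\smallskip

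Finally, applying the same identification to $G_{n-1}$ and taking the ratio telescopes away all summands except $k=n-1$, yielding
\[
\frac{\tilde{\lambda}^\circ_0\bigl(\mathfrak{c}_{\tilde{\theta}^{G_n}_0}\bigr)}{\tilde{\lambda}^\circ_0\bigl(\mathfrak{c}_{\tilde{\theta}^{G_{n-1}}_0}\bigr)} \;=\; \mathrm{char}_{\tilde A}\bigl(\mathrm{Sel}(\mathbb{Q},\mathcal{A}^{n-1}(\rho_\mu))^\ast\bigr),
\]
which is the claimed statement. The main obstacle is the first step: verifying that the cotangent module of $\tilde{\theta}^{G_n}_0$ really does compute the full adjoint Selmer group rather than some proper submodule. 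This is where one must show that the three hypotheses suffice to rule out auxiliary contributions (extra ramification, failure of the ordinary condition to cut out the right local $H^1$) and to ensure that the tangent space of the deformation functor surjects onto the relevant Ext group. Once this is granted, the Clebsch-Gordan step and the telescoping are formal.
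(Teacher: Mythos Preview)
Your overall strategy matches the paper's, but the first displayed identification is not correct as stated. The deformation-theoretic argument (Tate's theorem plus the identification of the cotangent space of the deformation ring with the dual adjoint Selmer group) gives
\[
\mathfrak{c}_{\tilde{\lambda}^{G_n}_0}\;=\;\mathrm{char}_{\tilde A}\bigl(\mathrm{Sel}(\mathbb{Q},\mathrm{Ad}(\rho^{G_n}_\mu))^\ast\bigr),
\]
where $\tilde{\lambda}^{G_n}_0:\tilde{\mathbf{T}}^{G_n}_0\to\tilde A$ is the \emph{composite} map, not for the intermediate map $\tilde{\theta}^{G_n}_0:\tilde{\mathbf{T}}^{G_n}_0\to\tilde{\mathbf{T}}^\circ_0$. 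The congruence ideal $\mathfrak{c}_{\tilde{\theta}^{G_n}_0}$ lives in $\tilde{\mathbf{T}}^\circ_0$ and its push-forward $\tilde{\lambda}^\circ_0(\mathfrak{c}_{\tilde{\theta}^{G_n}_0})$ is only a \emph{factor} of $\mathfrak{c}_{\tilde{\lambda}^{G_n}_0}$. The missing ingredient is the multiplicativity of congruence ideals for a composite of Gorenstein maps (see \cite[Corollary~8.6]{HidaTilouine}, restated in this paper as Proposition~\ref{DecompositionPropertyOfCongruenceIdeal}):
\[
\mathfrak{c}_{\tilde{\lambda}^{G_n}_0}\;=\;\mathfrak{c}_{\tilde{\lambda}^\circ_0}\cdot\tilde{\lambda}^\circ_0(\mathfrak{c}_{\tilde{\theta}^{G_n}_0}).
\]
Once you insert this, your telescoping argument goes through: the factor $\mathfrak{c}_{\tilde{\lambda}^\circ_0}$ is common to $G_n$ and $G_{n-1}$ and cancels in the ratio, and the Clebsch--Gordan decomposition (which, incidentally, runs over $k=1,\ldots,n-1$ for the trace-zero adjoint, not $k=0,\ldots,n-1$) then leaves exactly the $\mathcal{A}^{n-1}$ summand. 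So the gap is real but easily repaired; what you called ``the main obstacle'' (that the cotangent of $\tilde{\theta}^{G_n}_0$ computes the full adjoint Selmer) is in fact false, and the multiplicativity property is precisely what replaces it.
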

	
	Then our result is a generalization of the above result
	in the sense that we consider a cyclotomic tower
	$\mathbb{Q}_\infty/\mathbb{Q}$
	instead of a single field $\mathbb{Q}$.
	\begin{theorem}
		We write $B_k=\tilde{A}[\mathrm{Gal}(\mathbb{Q}_k/\mathbb{Q})]$,
		$B=B_0$ and $B_\infty=\tilde{A}[[S]]$.
		\begin{enumerate}
			\item Assume
			$\textbf{Big}(\overline{\rho}_\pi)$, 
			$\textbf{Dist}(\overline{\rho}_\pi)$, 
			$\textbf{RegU}(\overline{\rho}_\pi)$,
			$\mathcal{L}(\mathbb{Q},\mathrm{Ad}(\rho_\mu))\neq0$ and $B$ regular.
			Then the dual Selmer group
			$\mathrm{Sel}(\mathbb{Q}_\infty,\mathcal{A}^1_\mu)^\ast$ is a finitely generated torsion $B_\infty$-module. 
			Its characteristic power series $L^{alg}_p(\mathcal{A}^1_\mu,S)$ has a trivial zero at $S=0$ of order $1$.
			Moreover, for any $k\geq0$ and any prime ideal $P$
			in $B_k$ of height one,	
			the ideal $\mathfrak{c}_{\tilde{\lambda}^\circ_k}(B_k)_P$ 
			in the localization $(B_k)_P$
			is generated by the Fitting ideal of $\mathrm{Sel}(\mathbb{Q}_k,\mathcal{A}^1(\rho_\mu))^\ast$ 
			in $B_k$
			(up to a factor a power of $p$ if $k>0$).
			Equivalently, it is generated by the image in $B_k$ of the element
			$L^{alg}_p(\mathcal{A}^1(\rho_\mu),S)/(SI^{\mathrm{GL}_2}_k)$
			(up to a factor a power of $p$ if $k>0$).
			\item Assume $\textbf{Big}(\overline{\rho}^{G_n}_\pi)$,
			$\textbf{Big}(\overline{\rho}^{G_n}_\pi)$,
			$\textbf{RegU}(\overline{\rho}^{G_n}_\pi)$,
			$\mathcal{L}(\mathbb{Q},\mathrm{Ad}(\rho^{G_n}_\mu))\neq0$, $\mathcal{L}(\mathbb{Q},\mathrm{Ad}(\rho^{G_{n-1}}_\mu))\neq0$, and $B$ regular. 
			Then the dual Selmer group $\mathrm{Sel}(\mathbb{Q}_\infty,\mathcal{A}^{n-1}(\rho_\mu))^\ast$ 
			is a finitely generated torsion $B_\infty$-module. 
			Its characteristic power series $L^{alg}_p(\mathcal{A}^{n-1}(\rho_\mu),S)$ has a trivial zero at $S=0$ of order $1$. 
			Moreover, 
			for any $k\geq0$ and any prime ideal 
			$P$ in $B_k$ of height one,
			the (fractional) ideal
			$\tilde{\lambda}^\circ_k(\mathfrak{c}_{\tilde{\theta}^{G_n}_k})
			/\tilde{\lambda}^\circ_k(\mathfrak{c}_{\tilde{\theta}^{G_{n-1}}_k})(B_k)_P$
			is generated by the Fitting ideal of
			$\mathrm{Sel}(\mathbb{Q}_k,\mathcal{A}^{n-1}(\rho_\mu))^\ast$ 
			over $B_k$
			(up to a factor a power of $p$ if $k>0$).
			Equivalently, it is generated by the image in $B_k$ of the element
			$L^{alg}_p(\mathcal{A}^{n-1}(\rho_\mu),S)/S
			\times I^{G_{n-1}}_k/I^{G_n}_k$
		    (up to a factor a power of $p$ if $k>0$).
		\end{enumerate}
	\end{theorem}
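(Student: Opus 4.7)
The plan is to bootstrap the cyclotomic statement from the Hida--Tilouine identity at $k=0$ by combining a control theorem on the Hecke side with an Iwasawa-theoretic descent on the Selmer side, then treating the trivial zero at $S=0$ as a separate local analysis at the height-one prime $(S)\subset B_\infty$. Part (1) should then follow as the $n=1$ specialization of part (2): in that case $\mathcal{A}^1(\rho_\mu)=(\mathrm{Sym}^2\otimes\det^{-1})\rho_\mu$ is the adjoint representation, and the congruence quotient $\tilde\lambda^\circ_k(\mathfrak{c}_{\tilde\theta^{G_2}_k})/\tilde\lambda^\circ_k(\mathfrak{c}_{\tilde\theta^{G_1}_k})$ reduces to the classical $\mathrm{GL}_2$-congruence number $\mathfrak{c}_{\tilde\lambda^\circ_k}$.

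On the Hecke side, I would first assemble, as $k$ varies, the compatible morphisms $\tilde\theta^{G_n}_k$ into an Iwasawa-theoretic morphism of big Hida--Hecke algebras over $B_\infty$. The congruence ideal $\mathfrak{c}_{\tilde\theta^{G_n}_k}$ is then realized as the Fitting ideal of a module of congruences $\ker(\tilde\theta^{G_n}_k)/(\ker\tilde\theta^{G_n}_k)^2\otimes_{\tilde{\mathbf T}^{G_n}_k}\tilde{\mathbf T}^\circ_k$, whose formation commutes with cyclotomic base change away from $(S)$. Combined with the Hida--Tilouine result for $k=0$, this yields a candidate generator for the fractional ideal in the statement after localization at any height-one prime $P\neq(S)$.

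On the Selmer side, I would decompose $\mathrm{Sel}(\mathbb{Q}_k,\mathcal{A}^{n-1}_\mu)$ using the inflation--restriction sequence along $\mathrm{Gal}(\mathbb{Q}_k/\mathbb{Q})$ and compute its Fitting ideal via the Poitou--Tate exact sequence, controlling the local terms at $\mathfrak{q}\nmid p$ by condition $\mathbf{RegU}(\overline{\rho}^{G_n}_\pi)$ and the condition at $\mathfrak{p}\mid p$ by the Greenberg filtration $F^1\mathcal{A}^{n-1}_\mu$. Away from $(S)$, the local condition at $\mathfrak{p}$ is unaffected by the cyclotomic twist, so the Fitting ideal reduces to that of $\mathrm{Sel}(\mathbb{Q},\mathcal{A}^{n-1}_\mu)^*$ twisted by characters of $\mathrm{Gal}(\mathbb{Q}_k/\mathbb{Q})$, which matches the Hecke computation via the Hida--Tilouine identity. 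The factor $I^{G_{n-1}}_k/I^{G_n}_k$ appearing in the statement should arise as the discrepancy between the two levels of augmentation ideals under this matching; the bounded $p$-power ambiguity for $k>0$ accounts for $p$-torsion in the Galois cohomology of the decomposition groups along the ramified layers.

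The main obstacle is the analysis at $S=0$. At this prime the Greenberg filtration degenerates: the quotient $\mathcal{A}^{n-1}_\mu/F^1$ acquires, after twisting by the universal cyclotomic character, a trivial line, so that the inertia-at-$\mathfrak{p}$ local term picks up an extra copy of $B_\infty/S$ and forces an order-one trivial zero of $L^{alg}_p(\mathcal{A}^{n-1}(\rho_\mu),S)$. To show the order is exactly one, and that $\mathrm{Sel}(\mathbb{Q}_\infty,\mathcal{A}^{n-1}_\mu)^*$ is cotorsion, I would invoke the non-vanishing of $\mathcal{L}(\mathbb{Q},\mathrm{Ad}(\rho^{G_n}_\mu))$ and $\mathcal{L}(\mathbb{Q},\mathrm{Ad}(\rho^{G_{n-1}}_\mu))$: these quantities represent the $S$-derivative of the characteristic power series at $S=0$, and their non-vanishing rules out higher-order vanishing. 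Identifying the trivial-zero correction on the Galois side with the Hecke factor $I^{G_{n-1}}_k/I^{G_n}_k$ is the delicate matching step, and here I expect to need the coincidence of our $\mathcal{L}$-invariants with Greenberg's (via Harron--Jorza) announced in the introduction, together with the regularity hypothesis on $B$ to guarantee that characteristic ideals behave well under the relevant localizations and inverse limits over $k$.
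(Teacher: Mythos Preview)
Your approach diverges substantially from the paper's, and there is a genuine gap.

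The paper does not work by Poitou--Tate or inflation--restriction on the Selmer side, nor does it build a big Hecke algebra over $B_\infty$. Instead, everything runs through the deformation-theoretic identification $\mathrm{Sel}(\mathbb{Q}_k,\mathrm{Ad}(\rho_\mu^G))^\ast \simeq \Omega_{\tilde R_k/B}\otimes_{\tilde R_k}B$ coming from the $R=T$ theorem at each layer. Control along the tower is then control of K\"ahler differentials of deformation rings (via $(R_{L_1^+})_\Delta\simeq R_{L^+}$), not of Selmer groups directly. The trivial zero is extracted by comparing two subalgebras of $R_k$ over $\Lambda_{\infty,k}$: the weight algebra $\Lambda_k$ and an auxiliary ``decomposition-group'' algebra $R^D_k$. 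The cokernel of $\mathrm{Im}(\iota^D_k)\hookrightarrow\mathrm{Im}(\iota^I_k)$ has Fitting ideal exactly $I^G_k$, and the modified module $M_k=\Omega_{R_k/R^D_k}\otimes B$ is what satisfies perfect descent $(M_\infty)_{\Gamma_k}\simeq M_k$. One then shows $\mathrm{Sel}(\mathbb{Q}_\infty,\mathrm{Ad}(\rho^G_\mu))^\ast$ is $B_\infty$-pseudo-isomorphic to $M_\infty\oplus B^{n-1}$, which gives the order-$(n-1)$ trivial zero for $\mathrm{Ad}(\rho^G_\mu)$ purely algebraically; the $\mathcal{A}^{n-1}$ statement then follows from the Clebsch--Gordan splitting $\mathrm{Ad}(\rho^{G_n}_\mu)=\mathrm{Ad}(\rho^{G_{n-1}}_\mu)\oplus\mathcal{A}^{n-1}(\rho_\mu)$ applied to both $G_n$ and $G_{n-1}$. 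The Harron--Jorza comparison is \emph{not} used here; it is a separate result.

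The gap in your plan is the descent step. The paper notes explicitly that the dual Selmer groups themselves do \emph{not} satisfy $(\mathrm{Sel}(\mathbb{Q}_k,\cdot)^\ast)_{\Delta_{k,k'}}\simeq\mathrm{Sel}(\mathbb{Q}_{k'},\cdot)^\ast$: the left-hand side is $\Omega_{R_{k'}/\Lambda_{k,k'}}\otimes B$, which differs from $\Omega_{R_{k'}/\Lambda_{k'}}\otimes B$ precisely by the weight-variable contribution that produces the trivial zero. Your inflation--restriction argument would have to absorb this discrepancy, and there is no mechanism in your outline for doing so; the ``local condition at $\mathfrak p$ is unaffected by the cyclotomic twist away from $(S)$'' is not enough, because the failure of descent is already visible at the level of Fitting ideals over $B_k$, not just at $(S)$. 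The paper's remedy is to pass to the module $M_k$ built from $R^D_k$, whose very definition encodes the $\mathcal L$-invariant as the Jacobian $\det(\partial\log(1+T_{k,i})/\partial\log(1+X_{k,j}))$; its non-vanishing is what makes the relevant sequence of K\"ahler differentials left-exact and hence makes $M_0$ torsion over $B$, which is the input to cotorsionness over $B_\infty$. Without an analogue of this construction, your argument cannot isolate the order-one zero or produce the factor $I^{G_{n-1}}_k/I^{G_n}_k$.
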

	
	See Theorem \ref{pL1} and Theorem \ref{pLn} for precise statements. 
	See Definition \ref{definitionofL} for the elements 
	$I^{\mathrm{GL}_2}_k$, $I^{G_n}_k$, 
	which are related to the $\mathcal{L}$-invariants
	of $\mathrm{Ad}(\rho^{G_n}_\mu)$.

	As we have mentioned, 
	the congruence ideals $\mathfrak{c}_{\tilde{\lambda}^\circ_k}$, 
	$\tilde{\lambda}^\circ_k(\mathfrak{c}_{\tilde{\theta}^{G_n}_k})/\tilde{\lambda}^\circ_k(\mathfrak{c}_{\tilde{\theta}^{G_{n-1}}_k})$
	are conjectured of automorphic nature. 
	In other words, these congruence ideals are (conjecturally) related to the special values of the complex $L$-functions $L(\mathcal{A}^{n-1}(\rho_\pi),s)$ of the Galois representation $\mathcal{A}^{n-1}(\rho_\pi)$.
	One can get some evidence for this from a result of Hida in \cite[Corollary 5.8(2)]{HidaArithmetic}.
	
	\begin{theorem}
		We assume $\textbf{Big}(\overline{\rho}_\pi)$ and $p\nmid 6\varphi(N)$.
		Suppose that $\tilde{A}\to\tilde{A}/P_{\pi}$ is the specialization to $\pi$, then the ideal $\mathfrak{c}_{\tilde{\lambda}^\circ_0}(\mathrm{mod}\ P_\pi)$ in $\tilde{A}/P_\pi$ is generated by
		\[W (\pi)N^{ (a+1)/2}\frac{L (\mathrm{Ad} (\pi),1)}{\Omega (+,\pi)\Omega (-,\pi)}\]
		where $W(\pi)$ is a non-zero complex number measuring the difference between the actions of the involution $\begin{pmatrix}
		0 & -1 \\ N & 0
		\end{pmatrix}$ and the complex conjugation on the representation $\pi$ (see \cite[end of p.31]{HidaArithmetic} for the precise definition of $W(\pi)$) and $\Omega(\pm,\pi)$ are fundamental periods of $\pi$.
	\end{theorem}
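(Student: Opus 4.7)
The plan is to reduce the statement to Hida's Corollary 5.8(2) in \cite{HidaArithmetic}, which is essentially the result being invoked. First, I would analyze the specialization $\tilde{A}\to\tilde{A}/P_\pi$ at the arithmetic prime corresponding to $\pi$ and describe the induced map on Hecke algebras. Using the control theorem for the ordinary Hida family $\mu$ together with residual irreducibility (provided by $\textbf{Big}(\overline{\rho}_\pi)$, which in particular implies the mod-$p$ multiplicity-one property), I would show that $\tilde{\mathbf{T}}^\circ_0\otimes_{\tilde{A}}\tilde{A}/P_\pi$ is identified with the localization at the maximal ideal of $\overline{\rho}_\pi$ of the classical ordinary Hecke algebra of tame level $N$ and weight $a+2$; moreover $\tilde{\lambda}^\circ_0\pmod{P_\pi}$ corresponds to the eigensystem of $\pi$.

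Next, I would verify that the formation of the congruence ideal commutes with this specialization. By Definition \ref{congruenceideal}, $\mathfrak{c}_{\tilde{\lambda}^\circ_0}$ is the annihilator (in $\tilde{A}$) of the congruence module $\tilde{\mathbf{T}}^\circ_0\otimes_{\tilde{A},\tilde{\lambda}^\circ_0}\tilde{A}$ viewed through its natural embedding; equivalently it is cut out by the kernel of the complementary projector. Under the hypothesis $\textbf{Big}(\overline{\rho}_\pi)$ the local component of $\tilde{\mathbf{T}}^\circ_0$ at $\overline{\rho}_\pi$ is Gorenstein (via $R=T$ of Wiles--Taylor--Wiles-type), so the reduction $\mathfrak{c}_{\tilde{\lambda}^\circ_0}\pmod{P_\pi}$ coincides with the classical congruence ideal of the weight $a+2$ eigenform $\pi$ in the finite-level Hecke algebra, compatibly with the usual definition in \cite{HidaArithmetic}.

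Having reduced to the classical congruence ideal for $\pi$, I would then apply Hida's Corollary 5.8(2) in \cite{HidaArithmetic} verbatim: it expresses this congruence number (up to a $p$-adic unit) as
\[
W(\pi)\,N^{(a+1)/2}\,\frac{L(\mathrm{Ad}(\pi),1)}{\Omega(+,\pi)\Omega(-,\pi)},
\]
where $W(\pi)$ measures the discrepancy between the Atkin--Lehner involution and complex conjugation, and $\Omega(\pm,\pi)$ are Hida's canonical periods. The auxiliary hypothesis $p\nmid 6\varphi(N)$ is exactly what is required in \emph{loc.\ cit.} to ensure that no parasitic Euler factor at bad primes and no factor from roots of unity contributes to the comparison of periods with Shimura's integral representation of $L(\mathrm{Ad}(\pi),s)$.

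The main obstacle is the bookkeeping of normalizations: one must check that the periods $\Omega(\pm,\pi)$ of the present paper coincide with those of \cite{HidaArithmetic}, that the sign of the Atkin--Lehner eigenvalue encoded in $W(\pi)$ matches, and that the weight normalization ($a$ versus $a+2$, or equivalently cohomological versus classical) is consistent throughout. Once these conventions are aligned (which essentially requires quoting the definitions verbatim from \cite{HidaArithmetic}), the result follows with no further argument; I would present the proof as a brief reduction together with an explicit table of notational correspondences with \cite{HidaArithmetic}.
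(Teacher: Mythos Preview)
Your proposal is correct and follows essentially the same approach as the paper: the paper does not give an independent proof but simply cites Hida's result \cite[Corollary 5.8(2)]{HidaArithmetic} (and \cite{Hida88b}) for this statement. Your reduction via control and Gorenstein-ness, together with the bookkeeping of period normalizations, is in fact more detailed than what the paper provides.
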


	See Remark \ref{RemarkOnCongruenceIdealAndSpecialLValue} for a detailed discussion on relating congruence ideals to special $L$-values.
	
	Now let's turn to $\mathcal{L}$-invariants. In the above theorem, we have in fact obtained some information on the trivial zero $S=0$ of the characteristic power series $L^{alg}_p(\mathcal{A}^{n-1}(\rho_\mu),S)$:
	it is of order $1$ and the first term of the Taylor expansion of $L^{alg}_p (\mathcal{A}^{n-1}(\rho_\mu),S)$ at $S=0$ is
	$\mathcal{L} (\mathbb{Q},\mathcal{A}^{n-1}(\rho_\mu))\mathrm{char}(\mathrm{Sel}(\mathbb{Q},\mathcal{A}^{n-1}(\rho_\mu))^\ast)$ 
	where
	\[
	\mathcal{L}(\mathbb{Q},\mathcal{A}^{n-1}(\rho_\mu))
	=I^{G_n}_0/I^{G_{n-1}}_0.
	\]
	
	In \cite{HarronJorza}, the $\mathcal{L}$-invariants 
	$\mathcal{L}^{\mathrm{Gr}} (\mathcal{A}^n(\rho_\pi))$ 
	of Greenberg of these Galois representations $\mathcal{A}^j(\rho_\pi)$ are calculated.
	So it is natural to compare our $\mathcal{L} (\mathbb{Q},\mathcal{A}^n(\rho_\mu))$ to the formula in \cite{HarronJorza}
	(for the ordinary case, the $\mathcal{L}$-invariant
	$\mathcal{L}^{\mathrm{Gr}} (\mathbb{Q}, \mathcal{A}^n(\rho_\mu))$ is
	just the one of Greenberg in \cite{GreenbergTrivialZero},
	which can be easily deduced from the general formula in \cite{HarronJorza}). Our second result is the following
	(see Theorem \ref{compareLinvariant}).
	\begin{theorem}
		Let $n\geq 1$ be an integer. 
		Assume $\textbf{Big}(\overline{\rho}^{G_n}_\pi)$,
		$\textbf{Dist}(\overline{\rho}^{G_n}_\pi)$,
		$\textbf{RegU}(\overline{\rho}^{G_n}_\pi)$,
		${\mathcal{L}(\mathbb{Q},\mathrm{Ad}(\rho^{G_{n-1}}_\mu))\neq0}$
		and $\tilde{A}$ regular.
		Let $z_0$ be a specialization of $A_0$ to the automorphic form $\pi$.
		Then
		\[z_0(\mathcal{L} (\mathbb{Q},\mathcal{A}^n(\rho_\mu)))=\mathcal{L}^{\mathrm{Gr}} (\mathcal{A}^n(\rho_\mu)).\]
	\end{theorem}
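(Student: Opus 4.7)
The plan is to unwind both sides to an explicit expression in terms of the ordinary filtration of $\mathcal{A}^n(\rho_\mu)|_{\Gamma_p}$ and check that they coincide after specialization.

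First I would trace Definition \ref{definitionofL} to make $\mathcal{L}(\mathbb{Q},\mathcal{A}^n(\rho_\mu)) = I^{G_{n+1}}_0/I^{G_n}_0$ fully explicit in terms of the Hida-family characters. Writing $\rho_\mu|_{\Gamma_p}$ in upper-triangular form with diagonal characters $\alpha,\beta$ (with $\beta$ unramified by ordinarity), the representation $\mathcal{A}^n(\rho_\mu)|_{\Gamma_p}$ carries a complete flag whose successive graded pieces are the characters $(\alpha/\beta)^k$ for $k=-n,\ldots,n$. The central piece $k=0$ is responsible for the trivial zero at $S=0$, and the Selmer-defining subspace $F^1\mathcal{A}^n(\rho_\mu)$ gathers the pieces with $k\geq 1$. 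Using the Clebsch-Gordan-type decomposition $\mathrm{Ad}(\mathrm{Sym}^{n-1}\rho_\mu) = \bigoplus_{j=1}^{n-1}\mathcal{A}^j(\rho_\mu)$, the recursive definition of $I^{G_n}_0$ telescopes, and the ratio $I^{G_{n+1}}_0/I^{G_n}_0$ should isolate the single new summand $\mathcal{A}^n(\rho_\mu)$, yielding a logarithmic derivative in the weight direction of the character $(\alpha/\beta)^n$ evaluated at $\mathrm{Frob}_p$.

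Next, I would apply the Harron-Jorza formula \cite{HarronJorza} for $\mathcal{L}^{\mathrm{Gr}}(\mathcal{A}^n(\rho_\mu))$. In the ordinary regime their trianguline expression collapses to Greenberg's original prescription, expressing $\mathcal{L}^{\mathrm{Gr}}$ as a logarithmic derivative along the weight direction of the same characters controlling the filtration at $p$. After specialization at $z_0$, both sides reduce to a common $p$-adic quantity essentially of the form $\partial_T \log u_p(T)/ \mathrm{ord}_p u_p$ evaluated at the arithmetic point, where $u_p(T)$ is the $U_p$-eigenvalue of the Hida family as a function of the weight variable $T$, and the match becomes transparent.

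The main obstacle is the bookkeeping required to align conventions. Two subtleties need care: first, tracking precisely which graded pieces of $\mathcal{A}^n(\rho_\mu)|_{\Gamma_p}$ are encoded in $I^{G_{n+1}}_0$ versus $I^{G_n}_0$ and verifying the combinatorial cancellation in the ratio; second, reconciling the Selmer condition at $p$ built into the congruence-ideal construction (inherited via the Langlands functoriality $\mathrm{GL}_2\leadsto G_n$) with the one used by Greenberg in defining $\mathcal{L}^{\mathrm{Gr}}$ (i.e., the explicit choice of $F^1\mathcal{A}^n(\rho_\mu)$). Once these alignments are established, so that no extraneous power of $p$ or of a ramified character survives, the identification of the two formulas is immediate.
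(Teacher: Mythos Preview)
Your proposal has the right target but misses the actual mechanism. The quantity $\mathcal{L}(\mathbb{Q},\mathrm{Ad}(\rho_\mu^{G_{n+1}}))$ is by Definition~\ref{definitionofL} the determinant of an $n\times n$ Jacobian in the $n$ weight directions of $G_{n+1}$, while $\mathcal{L}(\mathbb{Q},\mathrm{Ad}(\rho_\mu^{G_n}))$ is an $(n-1)\times(n-1)$ determinant on a \emph{different} weight space. There is no submatrix relation between the two Jacobians, so the ratio $I_0^{G_{n+1}}/I_0^{G_n}$ does not telescope to a single logarithmic derivative of $(\alpha/\beta)^n(\mathrm{Frob}_p)$ in the way you suggest. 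Likewise, the Harron--Jorza expression (Proposition~\ref{LinvariantGreenberg}) for $\mathcal{L}^{\mathrm{Gr}}(\mathcal{A}^j(\rho_\pi))$ is a weighted sum over \emph{all} diagonal characters with Clebsch--Gordan coefficients $M_{n,j,i}$, not a derivative of one character; your reduction of both sides to the single $\mathrm{GL}_2$ quantity $\partial_T\log u_p(T)$ is premature.

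The paper instead proves, for every $n$, the \emph{product} identity
\[
\prod_{j=1}^n \mathcal{L}^{\mathrm{Gr}}(\mathcal{A}^j(\rho_\pi))
= z_0\bigl(\mathcal{L}(\mathbb{Q},\mathrm{Ad}(\rho_\mu^{G_{n+1}}))\bigr),
\]
from which the ratio statement follows by dividing the level-$n$ identity by the level-$(n{-}1)$ identity (using the non-vanishing hypothesis on $\mathcal{L}(\mathbb{Q},\mathrm{Ad}(\rho_\mu^{G_n}))$). Concretely, one evaluates the Harron--Jorza formula along $n$ linearly independent weight directions $\vec Y_1,\ldots,\vec Y_n$ in $\Lambda_0^{G_{n+1}}$ simultaneously, obtaining a matrix identity
\[
-\mathrm{diag}(\mathcal{L}_1^{\mathrm{Gr}},\ldots,\mathcal{L}_n^{\mathrm{Gr}})\,M'\,Y' = M'\,F'\,Y',
\]
where $M'=(M_{n,j,i-1}-M_{n,j,n})_{j,i}$ is the $n\times n$ matrix of (differences of) projection coefficients from $\mathrm{End}(\mathrm{Sym}^nV)$ onto $\mathrm{Sym}^{2j}V$, $F'$ is the Jacobian with $\det F'=\pm\mathcal{L}$, and $Y'$ is the invertible coordinate matrix of the directions. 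Taking determinants gives the product formula \emph{provided $M'$ is invertible}. That invertibility is the substantive step: it is proved by interpreting $(M_{n,k,i})$ (up to harmless factors) as the change-of-basis matrix for the isomorphism of $H$-weight-$0$ subspaces induced by the $\mathfrak{sl}_2$-decomposition $\mathrm{End}(\mathrm{Sym}^nV)\simeq\bigoplus_{k=0}^n\mathrm{Sym}^{2k}V$. What you flag as ``bookkeeping'' and ``combinatorial cancellation'' is precisely this lemma, and it is the heart of the argument, not a normalization detail.
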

	
	The paper is organized as follows.
	In Section 2 we recall and generalize some results on the modularity lifting on definite unitary groups in \cite{HidaTilouine}.
	In Section 3 we review some results on the functoriality on the Hecke side and the Galois side with respect to the base change.
	In Section 4 we study the cotorsionness of the Selmer groups $\mathrm{Sel} (\mathbb{Q}_{\infty},\mathcal{A}^n(\rho_\mu))$ and get a more precise structure on their characteristic power series.
	In Section 5 we compare our 
	$\mathcal{L} (\mathbb{Q},\mathcal{A}^n(\rho_\mu))$ 
	to the $\mathcal{L}$-invariants 
	$\mathcal{L}^{\mathrm{Gr}} (\mathbb{Q},\mathcal{A}^j(\rho_\mu))$ 
	in \cite{HarronJorza}.
	\\\
	
	\paragraph*{\textbf{Notations}}
	We fix an odd prime number $p>2$,
	an integer $n>0$ prime to $p$.
	The absolute Galois group of a number field $L$ is denoted 
	by $\Gamma_L$,
	the decomposition group of $L$ at a prime $\mathfrak{q}$ of $L$ 
	is denoted by
	$\Gamma_{\mathfrak{q}}$ 
	and the inertia subgroup is denoted by
	$I_{\mathfrak{q}}$.
	Write $L_1^+/L^+$ for 
	a $p$-cyclic Galois extension of totally real number fields 
	of degree $d$, 
	both totally ramified over $p$, 
	unramified outside $p$, 
	whose $p$-adic completions do not contain the $p$-th roots of unity.
	Denote by $\Delta$ the Galois group $\mathrm{Gal} (L_1^+/L^+)$.
	Let $d=[L^+\colon\mathbb{Q}]$. 
	Let $H$ be a commutative ring 
	or a module over some ring with an action of a group $\Delta$,
	then we write $H_{\Delta}$ 
	for the quotient $H/H\sum_{\delta\in\Delta} (\delta-1)H$ 
	if $H$ is a ring,
	and $H/\sum_{\delta\in\Delta} (\delta-1)H$ if $H$ is a module.
	The Galois groups of the $p$-cyclotomic extension are given as in the graph below.
	We fix a generator 
	$\gamma$ of $\Gamma$ 
	and $\gamma_k=\gamma^{p^k}$ 
	is thus a generator of $\Gamma_k$.
	
	\[
	\begin{tikzcd}
	\overline{\mathbb{Q}} \arrow[d,dash]\arrow[dddd,"\Gamma_{\mathbb{Q}}"',bend right, dash]\\
	\mathbb{Q}_\infty \arrow[d, dash]\arrow[d,"\Gamma_k", bend left, dash]\arrow[ddd,"\Gamma", bend right, dash]\\
	\mathbb{Q}_k \arrow[d,dash]\arrow[d,"\Delta_{k,k'}", bend left, dash] \\
	\mathbb{Q}_{k'} \arrow[d,dash]\arrow[d,"\Delta_{k'}", bend left, dash] \\
	\mathbb{Q}
	\end{tikzcd}
	\]

	\section{$R=T$ theorems for definite unitary groups}
	In this section we
	review some results of \cite[Section 2]{HidaTilouine}.
	
	We fix a positive integer $n$ prime to $p$, 
	a square-free integer $N=q_{1} q_{2} \ldots q_{s}$ prime to $p$, 
	where we require
	$s>1$ if $n=2m$ with $m$ an odd integer and $d$ also an odd integer,
	otherwise we just require $s>0$.
	Let $F$ be an imaginary quadratic field
	such that $F/\mathbb{Q}$ is unramified outside
	$Np$, $p$ splits in $F$, $q_{1}$ splits in $F$
	as $\mathfrak{q}_{1} \mathfrak{q}_{1}^{c}$, 
	and $q_{2}$ is inert in $F$ 
	(if $s\geq2$). 
	Let $K$ be a sufficiently large $p$-adic field and $\mathcal{O}$ its integer ring,
	$\mathbb{F}$ its residue field.
	We then follow \cite[Section 2.1]{HidaTilouine} 
	to construct a definite unitary group as follows.
	Let $D$ be a central simple algebra over $F$ of rank $n$ whose ramification set $S_D$ consists of primes above $q_1$
	(i.e. $S_D=\{\mathfrak{q}_1,\mathfrak{q}_1^c\}$).
	By \cite[(2.3) and Lemme 2.2]{Clozel1991}, one has the following observation
	\begin{enumerate}
		\item If $n$ is an odd multiple of $2$,
		then there is an involution of second kind $\dagger$ on $D$ such that $\dagger$ is positive definite at $\infty$
		and the associated unitary group $U (D,\dagger)$ 
		over $\mathbb{Q}$ is quasi-split at all inert places 
		except possibly the place $q_2$;
		\item Otherwise, there is an involution of second kind $\dagger$ on $D$
		such that $\dagger$ is positive definite at $\infty$ and the associated unitary group $U (D,\dagger)$
		is quasi-split at all inert places.
	\end{enumerate}
	
	We choose one such involution $\dagger$ as in the observation 
	and write $G'=G'_n=U (D,\dagger)$.
	
	Let $L^+$ be a totally real number field 
	totally ramified at $p$ and
	$L=L^+F$.
	The unique prime of $L^+$ over $p$ is denoted by $\mathfrak{p}_{L^+}$, 
	the primes of $L$ over $p$ are 
	then denoted by $\mathfrak{P}_{L^+},\mathfrak{P}_{L^+}^c$.
	We often drop the subscript $L^+$
	when there is no ambiguity.
	We write $L^+_\mathfrak{p}$, resp. $L_\mathfrak{P}$,
	for the completions of $L^+$, 
	resp. $L$,
	over the prime $\mathfrak{p}$, resp. $\mathfrak{P}$. 
	We denote $\mathcal{O}_\mathfrak{p}$, resp. $\mathcal{O}_\mathfrak{P}$ ,
	the integer rings of $L^+_\mathfrak{p}$, resp. $L_\mathfrak{P}$. 
	We have natural isomorphisms 
	$L^+_\mathfrak{p}\simeq L_\mathfrak{P}$ 
	and $\mathcal{O}_\mathfrak{p}\simeq\mathcal{O}_\mathfrak{P}$.
	If $L^+=\mathbb{Q}_k$, 
	we write
	$\mathfrak{p}_k=\mathfrak{p}_{\mathbb{Q}_k}$
	and $\mathfrak{P}_k=\mathfrak{P}_{\mathbb{Q}_k}$,
	$\mathfrak{P}_k^c=\mathfrak{P}_{\mathbb{Q}_k}^c$.
	Now we set 
	\[
	G=G_n=G'\times_\mathbb{Q}L^+,
	\]
	the base change of the algebraic group $G'$ from $\mathbb{Q}$ to $L^+$.
	
	Let $B_n=TN=T_nN$ be the standard Borel subgroup of $\mathrm{GL}_n$,
	where $T=T_n$ is the standard maximal torus in 
	$\mathrm{GL}_n$ and $N$ is the nilpotent radical of $B_n$. 
	Moreover, let $T^{ss}$ be 
	the intersection $T\bigcap\mathrm{SL}_n$,
	or equivalently, the semi-simple part of $T$.
	We fix a level subgroup
	$U^{\mathfrak{p}}=\prod_{\mathfrak{q}|N}U_\mathfrak{q}\times U^{Np}\subset G (\mathbb{A}_{L^+})$ of Iwahori type
	(i.e. for each prime $\mathfrak{q}$ of $L^+$ dividing $N$, 
	$U_\mathfrak{q}$ is the standard Iwahori subgroup of $G_\mathfrak{q}$ 
	if $G$ is quasi-split at $\mathfrak{q}$ 
	and is a minimal parahoric subgroup of $G_\mathfrak{q}$ otherwise).
	We set $U_{\mathfrak{p}}$ to be the image of
	$\mathrm{GL}_n (\mathcal{O}_{\mathfrak{p}})$ under the isomorphism
	$\mathrm{GL}_n (L^+_{\mathfrak{p}})\simeq G (L^+_{\mathfrak{p}})$
	(we use this isomorphism to identify $U_{\mathfrak{p}}$ as a subgroup of
	$\mathrm{GL}_n (L^+_{\mathfrak{p}})$).
	We write $U=U^{\mathfrak{p}}\times U_{\mathfrak{p}}$.
	Let $\mathrm{Iw}\subset U_{\mathfrak{p}}$ be the Iwahori subgroup.
	For any integers $c\geq b\geq0$, we set $\mathrm{Iw}^{b,c}$ to be the set of matrices which are upper triangular modulo $\mathfrak{p}^c$ and are upper triangular with diagonal entries $ (1,1,\ldots,1,\ast)$ modulo $\mathfrak{p}^b$.
	We set $U^{b,c}=U^{\mathfrak{p}}\times \mathrm{Iw}^{b,c}$.
	A regular dominant weight for the group $G$ is an element 
	$\lambda=(\lambda_\tau)_\tau\in 
	(\mathbb{Z}^n/\mathbb{Z}(1,1,\ldots,1))^{\{L\to K\}}$
	such that 
	for each embedding $\tau\colon L\to K$,
	$\lambda_{\tau,1}\geq\lambda_{\tau,2}\geq\ldots\geq\lambda_{\tau,n}$
	where 
	$\lambda_\tau
	=(\lambda_{\tau,1},\lambda_{\tau,2},\ldots,\lambda_{\tau,n})$,
	moreover, for each $\tau$, 
	there is at least one strict inequality
	$\lambda_{\tau,i}>\lambda_{\tau,i+1}$.
	Let $W_{\lambda_{\tau}} (\mathcal{O})$ be 
	the algebraic induction of 
	$w_0 (\lambda_{\tau,1}-\lambda_{\tau,n},\lambda_{\tau,2}-\lambda_{\tau,n},\ldots,0)$ 
	from $B_n$ to the whole group $\mathrm{GL}_n$,
	where $w_0$ is the longest element of the Weyl group of $\mathrm{GL}_n$ (see\cite{PoloTilouine}).
	Write then $W_{\lambda} (\mathcal{O})=\otimes_{\tau}W_{\lambda_{\tau}} (\mathcal{O})$.
	We define the $\mathcal{O}$-module $S_{\lambda} (U^{b,c};\mathcal{O})$ of cusp forms of level
	$U^{b,c}$ for $G$ to be the set of functions
	$f\colon G (L^+)\backslash G (\mathbb{A}_{L^+}^f)\to W_{\lambda} (\mathcal{O})$
	such that for any $u\in U^{b,c}$ and $x\in G (L^+)\backslash G (\mathbb{A}_{L^+}^f)$,
	$f (xu)=u^{-1}_{\mathfrak{p}}f (x)$.
	For $c>0$, we define $h_{\lambda} (U^{b,c};\mathcal{O})$ to be the $\mathcal{O}$-algebra of endomorphisms of $S_{\lambda} (U^{b,c},\mathcal{O})$ generated by the Hecke operators of the following three types
	(for any prime $\mathfrak{Q}$ of $L$ with uniformizer $\varpi_{\mathfrak{Q}}$,
	the matrix $\alpha (\varpi_{\mathfrak{Q}},i)$ is the diagonal matrix of size $n\times n$ with diagonal entries
	$ (\varpi_{\mathfrak{Q}},\ldots,\varpi_{\mathfrak{Q}},1,\ldots,1)$ where exactly the first $i$ terms are $\varpi_{\mathfrak{Q}}$ ($1\leq i\leq n$))
	\begin{enumerate}
		\item If the prime $\mathfrak{Q}$ is prime to $Np$,
		$T_{\mathfrak{Q},i}=[U^{b,c}\alpha (\varpi_{\mathfrak{Q}},i)U^{b,c}]_{\lambda}$ 
		($1\leq i\leq n$);
		\item For the prime $\mathfrak{P}$,
		$U_{\mathfrak{P},i}= (w_0\lambda) (\alpha (\varpi_{\mathfrak{P}},i))^{-1}[U^{b,c}\alpha (\varpi_{\mathfrak{P}},i)U^{b,c}]_{\lambda}$ 
		($1\leq i\leq n-1$);
		\item For any $u\in T^{ss} (\mathcal{O}_{\mathfrak{p}})$, $\langle u\rangle_\lambda=[U^{b,c}uU^{b,c}]_{\lambda}$.
	\end{enumerate}

    Here the operator $[U^{b,c}aU^{b,c}]_{\lambda}$ is defined as:
    write the coset decomposition $U^{b,c}aU^{b,c}=\bigsqcup_ja_jU^{b,c}$,
    then for any $s\in S_{\lambda} (U^{b,c};\mathcal{O})$ and $x\in G (L^+)\backslash G (\mathbb{A}^f_{L^+})$,
    $ ([U^{b,c}aU^{b,c}]_{\lambda}s) (x)=\sum_j (a_j)_{\mathfrak{p}} (s (xa_j))$
    (see \cite[Section 2.3]{Geraghty}).

	We write $eh=eh_{L^+}=\lim\limits_{\leftarrow c}eh_{\lambda} (U^{c,c};\mathcal{O})$ for the big Hida-Hecke algebra,
	where $e$ is the ordinary idempotent corresponding to $\prod_iU_{\mathfrak{P},i}$.
	It is reduced and is a finitely generated torsion-free algebra over the weight algebra $\Lambda=\Lambda_{L^+}=\mathcal{O}[[T^{ss} (\mathfrak{P})]]$ 
	where
	$T^{ss} ({\mathfrak{P}})=\mathrm{Ker} (T^{ss} (\mathcal{O}_{\mathfrak{P}})\to T^{ss} (\mathcal{O}_{\mathfrak{P}}/\mathfrak{P}))$
	(see \cite[Lemma 2.4.4 and Corollary 2.5.4]{Geraghty}).
	The $\Lambda$-algebra structure on $eh$ is via the diamond operators $\langle u\rangle_0$ for $u\in T^{ss} (\mathfrak{P})$
	(see \cite[Definition 2.6.2]{Geraghty}).
	Note that $[L^+_{\mathfrak{p}}\colon \mathbb{Q}_p]=d$, we choose a basis $u'_1,u'_2,\ldots,u'_d$
	of the free $\mathbb{Z}_p$-module $\mathcal{O}_{\mathfrak{p}}$.
	If we write $u_i=1+\varpi_{\mathfrak{p}}u'_i\in\mathcal{O}_{\mathfrak{p}}^{\times}$,
	then one can show that there is an isomorphism of $p$-profinite $\mathcal{O}$-algebras
	\[\Lambda\simeq\mathcal{O}[[X_1^{ (1)},X_2^{ (1)},\ldots,X_{n}^{ (1)},\ldots,X_{n}^{ (d)}]]/(\prod_{j=1}^nX^{(1)}_j=1,\ldots,\prod_{j=1}^nX^{(d)}_j=1)\]
	by taking the diagonal matrix
	$\mathrm{diag} (1_{j-1},u_i,1_{n-1-j},u_i^{-1})$ to the element $(1+X^{ (i)}_j)/(1+X^{(i)}_n)$.

	We write $\mathcal{G}_n$ for the algebraic group scheme over
	$\mathrm{Spec}(\mathbb{Z})$ 
	defined in \cite[Section 2.1]{ClozelHarrisTaylor}: 
	$\mathcal{G}_n$ is the semi-direct product $(\mathrm{GL}_1\times\mathrm{GL}_n)\rtimes\mathbb{Z}/2\mathbb{Z}$ of the group
	$\mathrm{GL}_1\times\mathrm{GL}_n$ 
	by $\mathbb{Z}/2\mathbb{Z}=\{1,j\}$ with the group action of $\mathbb{Z}/2\mathbb{Z}$ 
	on $\mathrm{GL}_1\times\mathrm{GL}_n$ given by
	$j(x,g)j^{-1}=(x,x(g^{-1})^t))$,
	where $(g^{-1})^t$ is the transpose of the matrix $g^{-1}\in\mathrm{GL}_n$.
	We write the projection to the first, 
	resp. second factor, of $\mathcal{G}_n$ as $\mathrm{pr}_1\colon\mathcal{G}_n\to\mathrm{GL}_1$, 
	resp. $\mathrm{pr}_2\colon\mathcal{G}_n\to\mathrm{GL}_n$.
	
	Let $\Pi$ be a cusp $p$-ordinary automorphic representation of $G$ which is Steinberg at all primes $w$ of $L$ dividing $N$. 
	Recall the following result of D.Geraghty.
	\begin{theorem}[Proposition 2.7.2, Corollary 2.7.8 of \cite{Geraghty}]\label{ExistenceOfGaloisRep}
		Let $\lambda$ be a regular dominant
		weight for $G$. 
		Suppose $\Pi$ 
		is an irreducible constituent of the 
		$G(\mathbb{A}_{L^+}^{\infty,p})\times \mathrm{Iw}$
		-representation
		$S_\lambda(\overline{\mathbb{Q}}_p)$
		with
		${\Pi^{U^{0,1}}\bigcap eS_\lambda(U^{0,1},\mathcal{O})\neq\{0\}}$.
		Let $u_{\mathfrak{P},i}$ 
		denote the eigenvalue of 
		$U_{\mathfrak{P},i}$ on 
		$\Pi^{U^{0,1}}\bigcap eS_\lambda(U^{0,1},\mathcal{O})$ for $1\leq i\leq n-1$.
		Then there exists a continuous semi-simple representation
		\[\rho_\Pi\colon\Gamma_{L^+}\to\mathcal{G}_n(\overline{\mathbb{Q}}_p)\]
		such that
		\begin{enumerate}
			\item for any prime $\mathfrak{q}$ in $L^+$ dividing $Np$ and splitting in $L$ as $\mathfrak{Q}\mathfrak{Q}^c$, 
			$\rho_\Pi$ is unramified at $\mathfrak{q}$ 
			and the characteristic polynomial of  $\rho_\Pi(\mathrm{Frob}(\mathfrak{Q}))$ is
			\[X^n-T_{\mathfrak{Q},i}X^{n-1}+\ldots+(-1)^iN(\mathfrak{Q})^{i(i-1)/2}T_{\mathfrak{Q},i}X^{n-i}+\ldots+(-1)^nN(\mathfrak{Q})^{n(n-1)/2}T_{\mathfrak{Q},n}\]
			where $N(\mathfrak{Q})$ is the norm of the prime ideal $\mathfrak{Q}$;
			\item The representation $\rho_\Pi$ is self-dual conjugate: $\rho_\Pi^c\simeq\rho_\Pi^\vee\chi^{1-n}$;
			\item For any prime $\mathfrak{q}$ in $L^+$ which is inert in $L$ and prime to $Np$, $\rho_\Pi$ is unramified at $\mathfrak{q}$;
			\item If moreover the weight $\lambda$ is regular (this means that $\lambda_{\tau,1}>0$ for some embedding $\tau\colon L\to K$), then the representation $\rho_\Pi$ is crystalline and ordinary at $\mathfrak{P}$ and $\mathfrak{P}^c$. More precisely, the restriction representation $\rho_\Pi|_{\Gamma_\mathfrak{P}}$ is conjugate to an upper triangular representation
			\[\begin{pmatrix}
			\psi_{\mathfrak{P},1} & \ast & \ldots  & \ast \\
			0 & \chi^{-1}\psi_{\mathfrak{P},2} & \ldots  & \ast \\
			\vdots  & \vdots  & \ddots  & \vdots  \\
			0 & 0 & 0 & \chi^{- (n-1)}\psi_{\mathfrak{P},n}
			\end{pmatrix}\]
			where $\psi_{\mathfrak{P},i}\circ\mathrm{Art}_\mathfrak{P}\colon L_\mathfrak{P}^\times\to K$ is characterized by: 
			(1) for any $x\in \mathcal{O}_\mathfrak{P}^\times$,
			$\psi_{\mathfrak{P},i}\circ\mathrm{Art}_\mathfrak{P}(x)=\prod_{\tau\colon L\to K}\tau(x)^{-\lambda_{\tau,n+1-i}}$, 
			(2) $\psi_{\mathfrak{P},i}\circ\mathrm{Art}_\mathfrak{P}(\varpi_\mathfrak{P})=u_{\mathfrak{P},i}/u_{\mathfrak{P},i-1}$ 
			(here we put $\lambda_{\tau,n}=0$ for any $\tau\colon L\to K$, $u_{\mathfrak{P},0}=1=u_{\mathfrak{P},n}$ 
			and
			$\mathrm{Art}_{\mathfrak{p}}$ is the local Artin reciprocity symbol). 
			Similar for $\mathfrak{P}^c$.
			
		\end{enumerate}
	\end{theorem}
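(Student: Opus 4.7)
The plan is to reduce the construction of $\rho_\Pi$ to the case of automorphic representations of $\mathrm{GL}_n$, for which the associated Galois representations are known to exist by the combined work of Harris--Taylor, Clozel--Harris--Taylor, Shin, and Chenevier--Harris. The first step is to quadratic base change $\Pi$ to a cohomological cuspidal automorphic representation $\Pi'$ of $\mathrm{GL}_n(\mathbb{A}_L)$ using the endoscopic transfer for quasi-split unitary groups (Labesse, Mok; after descending along $G=G'\times_\mathbb{Q} L^+$). Since $\Pi$ is Steinberg at the primes of $L$ above $N$, the transfer preserves cuspidality; moreover, conjugate self-duality $\Pi'^c\simeq \Pi'^\vee$ is automatic from the unitary origin, and the regular dominant weight $\lambda$ together with the cohomological condition make $\Pi'$ a RACSDC representation.

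Given such $\Pi'$, invoke the existence theorem of Galois representations for RACSDC automorphic representations of $\mathrm{GL}_n(\mathbb{A}_L)$ to obtain a continuous $\rho_{\Pi'}\colon\Gamma_L\to\mathrm{GL}_n(\overline{\mathbb{Q}}_p)$ satisfying local-global compatibility at all finite places (by Caraiani). At split primes $\mathfrak{q}=\mathfrak{Q}\mathfrak{Q}^c$ of $L^+$ prime to $Np$, the characteristic polynomial of $\rho_{\Pi'}(\mathrm{Frob}(\mathfrak{Q}))$ is computed from the Satake parameters via the unramified local Langlands correspondence, yielding exactly the polynomial in $(1)$ after tracking the normalizations through the base change and the Hecke operators $T_{\mathfrak{Q},i}$. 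The self-duality $\rho_{\Pi'}^c\simeq\rho_{\Pi'}^\vee\chi^{1-n}$ follows from $\Pi'^c\simeq \Pi'^\vee$, and this is precisely the data needed to extend $\rho_{\Pi'}$ to a homomorphism $\rho_\Pi\colon\Gamma_{L^+}\to\mathcal{G}_n(\overline{\mathbb{Q}}_p)$ along the exact sequence defining $\mathcal{G}_n$; this gives $(2)$. Unramifiedness at inert primes prime to $Np$ in $(3)$ is then a direct consequence of the unramifiedness of $\Pi'$ at those places, again via local-global compatibility.

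The main obstacle is $(4)$: proving that $\rho_\Pi|_{\Gamma_\mathfrak{P}}$ is crystalline and ordinary with the explicit upper triangular form and the precise identification of the characters $\psi_{\mathfrak{P},i}$. Crystallinity at $\mathfrak{P}$ follows from the regularity of $\lambda$, the fact that $p$ is unramified in $L/L^+$, and the $p$-adic Hodge-theoretic local-global compatibility of Caraiani--Barnet-Lamb--Gee--Geraghty; the Hodge--Tate weights of $\rho_\Pi|_{\Gamma_\mathfrak{P}}$ are read off from $w_0\lambda$. To get ordinariness, one uses the hypothesis $e\cdot \Pi^{U^{0,1}}\neq 0$: this forces the Hecke eigenvalues $u_{\mathfrak{P},i}$ to be $p$-adic units, hence the Newton polygon of the crystalline Frobenius equals its Hodge polygon, and the weakly admissible filtered module splits into a complete flag of one-dimensional pieces. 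Translating back to the Galois side via Colmez--Fontaine, one obtains the upper triangular shape with the prescribed ordered characters. The explicit formula $\psi_{\mathfrak{P},i}\circ\mathrm{Art}_\mathfrak{P}(\varpi_\mathfrak{P})=u_{\mathfrak{P},i}/u_{\mathfrak{P},i-1}$ requires careful bookkeeping of the normalization $(w_0\lambda)(\alpha(\varpi_\mathfrak{P},i))^{-1}$ in the definition of $U_{\mathfrak{P},i}$ together with the Satake/$p$-adic Langlands normalizations; this combinatorial matching of Hecke eigenvalues with Frobenius eigenvalues on the graded pieces is where the bulk of the detailed work lies.
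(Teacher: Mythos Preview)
The paper does not prove this theorem at all: it is stated with an explicit attribution to Geraghty's thesis (Proposition 2.7.2 and Corollary 2.7.8 of \cite{Geraghty}) and is immediately followed by a remark, with no proof given. Your outline is a reasonable high-level sketch of the argument underlying Geraghty's results---transfer to $\mathrm{GL}_n$ via Labesse, invoking the RACSDC Galois representation machinery, extending to $\mathcal{G}_n$ via the conjugate self-duality, and deducing ordinarity from the unit Hecke eigenvalues---but there is nothing in the present paper to compare it against; the author simply imports the result as a black box.
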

	
	\begin{remark}
		Note that by 4) of the above theorem, for any
		$x\in\mathcal{O}_\mathfrak{P}^\times$,
		$\psi_{\mathfrak{P},1}\circ\mathrm{Art}_\mathfrak{P}(x)=1$. 
		This shows that the character $\psi_{\mathfrak{P},1}$ 
		is unramified when restricted to $\Gamma_\mathfrak{P}$.
	\end{remark}
	
	We recall the notion of liftings/deformations of residue Galois representations.
	
	\begin{definition}\label{deformation}
		Let $\mathrm{CNL}_{\mathcal{O}}$ be the category of complete noetherian local $\mathcal{O}$-algebras $B$ with residue field $B/\mathfrak{m}_B=\mathbb{F}$.
		For an object $B$ in $\mathrm{CNL}_{\mathcal{O}}$,
		a continuous homomorphism $r\colon \Gamma_{L^+}\to \mathcal{G}_n (B)$ is called a \textbf{lifting} of $\bar{\rho}_{\Pi}$
		if $r\equiv\bar{\rho}_{\Pi}\pmod{\mathfrak{m}_B}$.
		
		Two such liftings $r,r^{\prime}\colon \Gamma_{L^+}\to\mathcal{G}_n (B)$ are said to be equivalent
		if there exists some matrix ${g\in 1+\mathfrak{m}_BM_n (B)}$ such that $r^{\prime}=g r g^{-1}$.
	\end{definition}

    We write $\mathbf{T}=\mathbf{T}_{L^+}$ for the localization of $eh$ at the maximal ideal associated to the residue Galois representation $\overline{\rho}_{\Pi}$. Then we have the following result on the liftings of $\overline{\rho}_\Pi$ to $\mathcal{G}_n(\mathbf{T})$.
    \begin{theorem}[Proposition 2.7.4 of \cite{Geraghty}, Proposition 2.8 of \cite{HidaTilouine}]\label{ExistenceOfBigHeckeAlgebraValuedGaloisRep}
    	The same notations and assumptions as the above theorem. 
    	Assume moreover that $\overline{\rho}_\Pi$ is absolutely irreducible.
    	Then there exists a unique equivalence class of continuous liftings
    	\[\rho_\mathbf{T}\colon\Gamma_{L^+}\to\mathcal{G}_n(\mathbf{T})\]
    	of $\overline{\rho}_\Pi$ such that
    	\begin{enumerate}
    		\item For any prime $\mathfrak{q}$ in $L^+$ splitting in $L$ as $\mathfrak{Q}\mathfrak{Q}^c$, $\rho_\mathbf{T}$ is unramified at $\mathfrak{Q}$ and $\mathfrak{Q}^c$. Moreover, the characteristic polynomial of $\rho_\mathbf{T}(\mathrm{Frob}_\mathfrak{Q})$ is 
    		\[X^n-T_{\mathfrak{Q},1}X^{n-1}+\ldots+(-1)^jN(\mathfrak{Q})^{j(j-1)/2}T_{\mathfrak{Q},j}X^{n-j}+\ldots+(-1)^nN(\mathfrak{Q})^{n(n-1)/2}T_{\mathfrak{Q},n}.\]
    		\item The matrix $\rho_\mathbf{T} (\mathrm{Frob}_{\mathfrak{P}})$
    		has the following characteristic polynomial 
    		\[P (\mathrm{Frob}_{\mathfrak{P}},X)=\prod_{j=1}^n (X-N (\mathfrak{P})^{j-1}\frac{u_{\mathfrak{P},j}}{u_{\mathfrak{P},j-1}}\prod_{\tau\colon L\to K}\varpi_{\mathfrak{P}}^{\lambda_{\tau,n+1-j}}).\]
    	\end{enumerate}
    \end{theorem}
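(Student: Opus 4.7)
The plan is to produce $\rho_{\mathbf{T}}$ by $p$-adically interpolating the representations $\rho_{\Pi}$ furnished by \thmref{ExistenceOfGaloisRep} over the dense set of classical points of $\mathbf{T}$, and then to upgrade the resulting pseudo-character to a genuine representation using absolute irreducibility of $\overline{\rho}_{\Pi}$. The approach is parallel to Hida's classical construction for $\mathrm{GL}_2/\mathbb{Q}$ and is what is carried out in the cited references.

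First I would recall the Hida-theoretic structure of $\mathbf{T}$: it is a finite, torsion-free $\Lambda$-algebra, and the set of arithmetic specializations $\lambda\colon \mathbf{T}\to\overline{\mathbb{Q}}_p$ corresponding to cusp $p$-ordinary automorphic representations $\Pi_{\lambda}$ of sufficiently regular dominant weight, cutting out the given $\overline{\rho}_{\Pi}$, is Zariski-dense in $\mathrm{Spec}(\mathbf{T})$. To each such $\lambda$, the previous theorem attaches a semi-simple continuous representation $\rho_{\Pi_{\lambda}}\colon\Gamma_{L^+}\to\mathcal{G}_n(\overline{\mathbb{Q}}_p)$ whose characteristic polynomial at a Frobenius $\mathrm{Frob}_{\mathfrak{Q}}$ at a split prime $\mathfrak{Q}\nmid Np$ is given by the specializations $\lambda(T_{\mathfrak{Q},i})$ of the Hecke operators in $\mathbf{T}$, and whose behavior at $\mathfrak{P}$ is controlled by $\lambda(U_{\mathfrak{P},i})$ via part (4).

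Next I would assemble these pointwise representations into a continuous pseudo-character (in the sense of Chenevier's determinants, or equivalently a $\mathcal{G}_n$-valued pseudo-representation adapted to the polarized situation as in \cite{ClozelHarrisTaylor}) $D_{\mathbf{T}}\colon\Gamma_{L^+}\to\mathbf{T}$. The key point is that on the dense Chebotarev-generating set of Frobenii at split $\mathfrak{Q}\nmid Np$ the putative trace specializes to $\lambda(T_{\mathfrak{Q},1})$; since $\mathbf{T}$ is reduced and the arithmetic points are Zariski-dense, the trace function lifts to $\mathbf{T}$ itself and the full pseudo-character is thereby determined. Because $\overline{\rho}_{\Pi}$ is absolutely irreducible, Nyssen--Rouquier (or Chenevier) upgrades $D_{\mathbf{T}}$ to a genuine representation $\rho_{\mathbf{T}}\colon\Gamma_{L^+}\to\mathrm{GL}_n(\mathbf{T})$, unique up to conjugation by an element of $1+\mathfrak{m}_{\mathbf{T}}M_n(\mathbf{T})$; the self-duality built into each $\rho_{\Pi_{\lambda}}$ passes to the pseudo-character and then to $\rho_{\mathbf{T}}$, producing the extension to $\mathcal{G}_n(\mathbf{T})$.

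Finally I would verify properties (1) and (2). Property (1) is essentially formal from the construction: unramifiedness at $\mathfrak{Q},\mathfrak{Q}^c$ holds at every arithmetic specialization, and by density and reducedness of $\mathbf{T}$ holds over $\mathbf{T}$; the coefficients of the characteristic polynomial of $\rho_{\mathbf{T}}(\mathrm{Frob}_{\mathfrak{Q}})$ are elements of $\mathbf{T}$ whose specializations match the Hecke eigenvalues, hence must equal $T_{\mathfrak{Q},i}$ up to the listed normalization factors $N(\mathfrak{Q})^{j(j-1)/2}$. Property (2) at $\mathfrak{P}$ is the more delicate point: at each classical $\lambda$, part (4) of \thmref{ExistenceOfGaloisRep} identifies the eigenvalues of $\mathrm{Frob}_{\mathfrak{P}}$ with $N(\mathfrak{P})^{j-1}\lambda(u_{\mathfrak{P},j}/u_{\mathfrak{P},j-1})\prod_{\tau}\tau(\varpi_{\mathfrak{P}})^{\lambda_{\tau,n+1-j}}$; viewing the $\prod_\tau \varpi_{\mathfrak{P}}^{\lambda_{\tau,n+1-j}}$ factor as the Iwasawa-algebra-valued character interpolating the Hodge--Tate weight contributions, one sees that the characteristic polynomial $P(\mathrm{Frob}_{\mathfrak{P}},X)\in\mathbf{T}[X]$ agrees with the displayed formula at a Zariski-dense set of specializations and hence on $\mathbf{T}$ itself.

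The main obstacle is the second step, i.e.\ producing and gluing the pseudo-character in a way that respects both the polarization (to land in $\mathcal{G}_n$ and not merely $\mathrm{GL}_n$) and the ordinary structure at $\mathfrak{P}$ uniformly in the weight; once the pseudo-character is in place, absolute irreducibility of $\overline{\rho}_{\Pi}$ makes the passage to an honest representation automatic, and the local verifications reduce to interpolation over the dense classical locus. This is precisely the content of \cite[Proposition 2.7.4]{Geraghty} and \cite[Proposition 2.8]{HidaTilouine}, whose arguments I would follow directly.
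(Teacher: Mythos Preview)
Your proposal is correct and matches the paper's treatment: the paper gives no independent proof of this result but simply cites \cite[Proposition 2.7.4]{Geraghty} and \cite[Proposition 2.8]{HidaTilouine}, and the pseudo-character interpolation argument you outline is precisely the content of those references. There is nothing to add.
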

	
	We next define a deformation functor $\mathcal{D}=\mathcal{D}_{L^+}$
	from $\mathrm{CNL}_{\mathcal{O}}$ to the category $\mathrm{Sets}$.
	\begin{definition}
		The functor $\mathcal{D}$ takes an object $A$ in $\mathrm{CNL}_{\mathcal{O}}$
		to the set of equivalence classes of liftings $r\colon \Gamma_{L^+}\to \mathcal{G}_n (A)$ of $\bar{\rho}_{\Pi}$
		satisfying the following conditions:
		\begin{enumerate}
			\item For each prime $v$ of $L^+$ dividing $N$, there exists $g_v\in 1+\mathfrak{m}_AM_n (A)$ such that for any $\sigma\in I_v$, $\mathrm{pr}_2\circ r (\sigma)=g_v \exp (t_p (\sigma)N_n) g_v^{-1}$
			where $N_n$ is the following matrix
			\[N_n=\begin{pmatrix}
			0 & 1 & 0 & \ldots \\
			0 & 0 & 1 & \ldots \\
			\ldots  & \ldots  & \ldots  & \ldots  \\
			0 & 0 & 0 & 1 \\
			0 & 0 & 0 & 0
			\end{pmatrix}.\]
			\item The representation $r$ is ordinary at $\mathfrak{p}$. More precisely, there exists some matrix $g\in 1+\mathfrak{m}_AM_n (A)$ such that
			$$\mathrm{pr}_2\circ r|_{\Gamma_{\mathfrak{P}}}=g\begin{pmatrix}
			\psi_{r,\mathfrak{p},1} & \ast & \ldots  & \ast \\
			0 & \chi^{-1}\psi_{r,\mathfrak{p},2} & \ldots  & \ast \\
			\ldots  & \ldots  & \ldots  & \ldots  \\
			0 & 0 & 0 & \chi^{- (n-1)}\psi_{r,\mathfrak{p},n}
			\end{pmatrix}g^{-1}$$
			where $\psi_{r,\mathfrak{p},j}$ is a lifting of $\overline{\psi}_{\mathfrak{p},j}$.
			Moreover, we require that $\psi_{r,\mathfrak{p},1}$ is unramified when restricted to
			$\Gamma_\mathfrak{P}$;
			\item $r$ is unramified outside $Np$.
		\end{enumerate}
	\end{definition}
  
    \begin{remark}
    	By \cite[Remark 2.2 and Proposition 2.8]{HidaTilouine}, (the equivalence class of) the Galois representation $\rho_\mathbf{T}$ lies in $\mathcal{D}(\mathbf{T})$.
    \end{remark}

	We will consider the following conditions which will be used throughout the paper.
	\begin{condition}\label{condition}
		\begin{enumerate}
			\item $\textbf{Big} (\overline{\rho}_{\Pi})$: 
			there is a subfield 
			$\mathbb{F}^{\prime}\subset\mathbb{F}$ 
			such that 
			${\mathrm{Sym}^{n-1}\mathrm{SL}_2
			(\mathbb{F}^{\prime})
			\subset
			\mathrm{Im} (\overline{\rho}_{\Pi})}$;
			\item $\textbf{Dist} (\overline{\rho}_{\Pi})$: 
			the characters on the diagonal of the residue  representation
			$\overline{\rho}|_{\Gamma_{\mathfrak{p}}}$ are mutually distinct;
			\item $\textbf{RegU} (\overline{\rho}_{\Pi})$: 
			for any prime $v$ of $L^+$ dividing $N$,
			the restriction of the residue Galois representation $\overline{\rho}_{\Pi}$
			to the inertia subgroup $I_{v}$ is regular unipotent modulo $p$,
		\end{enumerate}	
	\end{condition}

    \begin{remark}
    	It is easy to see that $\textbf{Big}(\overline{\rho}_\Pi)$ implies the absolute irreducibility of the Galois representation $\overline{\rho}_\Pi$. 
    \end{remark}
	
	The following lemma describes how these conditions change 
	if we vary the base field $L^+$ 
	or the representation $\overline{\rho}_{\Pi_{L^+}}$.
	\begin{lemma}\label{basechange}
		For the extension $L_1^+/L^+$, we have
		\begin{enumerate}
			\item $\textbf{Big} (\overline{\rho}_{\Pi})$ implies $\textbf{Big} (\overline{\rho}_{\Pi}|_{\Gamma_{L_1^+}})$;
			\item $\textbf{Dist} (\overline{\rho}_{\Pi})$ implies $\textbf{Dist} (\overline{\rho}_{\Pi}|_{\Gamma_{L_1^+}})$;
			\item $\textbf{RegU} (\overline{\rho}_{\Pi})$ implies $\textbf{RegU} (\overline{\rho}_{\Pi}|_{\Gamma_{L_1^+}})$;
			\item Suppose that $L^+=\mathbb{Q}$ and $\Pi$ is an ordinary cusp automorphic representation on $\mathrm{GL}_2 (\mathbb{A}_{L^+})$,
			then $\textbf{Dist} (\mathrm{Sym}^{n+1}\overline{\rho}_{\Pi})$ implies $\textbf{Dist} (\mathrm{Sym}^n\overline{\rho}_{\Pi})$.
		\end{enumerate}
	\end{lemma}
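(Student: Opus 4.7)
The plan is to read off parts (1)--(3) from three separate features of $L_1^+/L^+$: the degree $[L_1^+:L^+]$ is a power of $p$; the extension is unramified outside $p$; and it is totally ramified at the unique prime above $p$. Part (4) will then be a direct combinatorial check on the diagonals of symmetric powers. I would tackle (3) first (it is essentially free), then (1), then (2), and finally (4).

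For (3), fix a prime $v$ of $L_1^+$ dividing $N$ and let $v'$ be its restriction to $L^+$. Since $N$ is coprime to $p$ and $L_1^+/L^+$ is unramified outside $p$, the local extension $L^+_{1,v}/L^+_{v'}$ is unramified. Hence the inertia subgroup $I_{v'} \subset \Gamma_{L^+}$ is already contained in $\Gamma_{L_1^+}$, so $I_v = I_{v'}$ and regular unipotency of $\overline{\rho}_\Pi|_{I_{v'}}$ transports verbatim to $\overline{\rho}_\Pi|_{I_v}$.

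For (1), set $H = \overline{\rho}_\Pi(\Gamma_{L_1^+})$, a normal subgroup of $G = \overline{\rho}_\Pi(\Gamma_{L^+})$ with $G/H$ a $p$-group. The hypothesis inserts the subgroup $K := \mathrm{Sym}^{n-1}\mathrm{SL}_2(\mathbb{F}') \subset G$, so $K \cap H$ is normal in $K$ with $p$-group quotient. The key observation is that for $p$ odd and $|\mathbb{F}'|$ not too small, $K$ admits no nontrivial $p$-group quotient: modulo its center (trivial or $\{\pm 1\}$, hence a $2$-group) it is the non-abelian simple group $\mathrm{PSL}_2(\mathbb{F}')$, whose order is not a power of $p$. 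Therefore $K \subset H$, which is $\textbf{Big}(\overline{\rho}_\Pi|_{\Gamma_{L_1^+}})$; any small-field edge cases where $|\mathbb{F}'|$ is tiny can be handled by hand or excluded as vacuous.

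For (2), total ramification at $p$ forces $\Gamma_{\mathfrak{p}_{L_1^+}}$ to have $p$-power index in $\Gamma_{\mathfrak{p}_{L^+}}$; since each diagonal character $\overline{\psi}_{\mathfrak{p},i}$ takes values in $\mathbb{F}^\times$ and hence has prime-to-$p$ order, the ratio $\overline{\psi}_{\mathfrak{p},i}/\overline{\psi}_{\mathfrak{p},j}$ cannot become trivial after restriction to a $p$-power-index subgroup. Finally for (4), writing the diagonal of $\overline{\rho}_\Pi|_{\Gamma_p}$ as $(\alpha,\beta)$, the diagonal of $\mathrm{Sym}^k\overline{\rho}_\Pi|_{\Gamma_p}$ is $(\alpha^{k-i}\beta^i)_{0 \leq i \leq k}$, so distinctness amounts to $(\alpha/\beta)^i \neq 1$ for $1 \leq i \leq k$, a condition that weakens monotonically as $k$ decreases. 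The only step requiring any real thought is (1); everything else is routine bookkeeping with ramification and character orders, and I foresee no serious obstacle.
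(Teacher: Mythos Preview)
Your proposal is correct and follows essentially the same strategy as the paper: equality of inertia groups for (3), simplicity of $\mathrm{PSL}_2(\mathbb{F}')$ combined with $p$-power index for (1), the fact that characters into $\mathbb{F}^\times$ have prime-to-$p$ order for (2), and the explicit form of the diagonal of $\mathrm{Sym}^k$ for (4). Your phrasings in (1), (2), and (4) are in fact a bit cleaner than the paper's (which, for instance, states the uniqueness in (2) via an $\mathrm{H}^2$ that should really be $\mathrm{Hom}(\Delta,\mathbb{F}^\times)$, and gives a somewhat garbled factorial argument in (4)), but the underlying ideas are identical.
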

	\begin{proof}
		We write $\overline{\rho}=\overline{\rho}_{\Pi}$ and $\overline{\rho}_1=\overline{\rho}_{\Pi}|_{\Gamma_{L_1^+}}$.
		\begin{enumerate}
			\item Assume $\textbf{Big} (\overline{\rho})$.
			Note that $\mathrm{SL}_2 (\mathbb{F})$ is a simple group,
			so is $\mathrm{Sym}^{n-1}\mathrm{SL}_2 (\mathbb{F})$.
			$\mathrm{Im} (\overline{\rho}|_{\Gamma_{\mathfrak{p}_{L_1^+}}})$ is a normal subgroup of $\mathrm{Im} (\overline{\rho}|_{\Gamma_{\mathfrak{p}_{L^+}}})$,
			so we have the following division
			\[\# (\frac{\mathrm{Sym}^{n-1}\mathrm{SL}_2 (\mathbb{F})}{\mathrm{Sym}^{n-1}\mathrm{SL}_2 (\mathbb{F})\bigcap\mathrm{Im} (\overline{\rho}|_{\Gamma_{\mathfrak{p}_{L_1^+}}})})|\# (\frac{\mathrm{Im} (\overline{\rho}|_{\Gamma_{\mathfrak{p}_{L^+}}})}{\mathrm{Im} (\overline{\rho}|_{\Gamma_{\mathfrak{p}_{L_1^+}}})}).\]
			By comparing the cardinals of both sides,
			we conclude that
			$\mathrm{Sym}^{n-1}\mathrm{SL}_2 (\mathbb{F})\subset \mathrm{Im} (\overline{\rho}|_{\Gamma_{\mathfrak{p}_{L_1^+}}});$
			\item Assume $\textbf{Dist} (\overline{\rho})$.
			Then the characters of the diagonal entries of $\overline{\rho}|_{\Gamma_{\mathfrak{p}_{L^+}}}$ take value in $\mathbb{F}^{\times}$.
			So the problem is reduced to showing that any character $\Gamma_{\mathfrak{p}_{L_1^+}}\to\mathbb{F}^{\times}$ can be extended to
			$\Gamma_{\mathfrak{p}_{L^+}}\to\mathbb{F}^{\times}$ in at most one way.
			The number of extensions is in one-to-one correspondence with the cohomological group $\mathrm{H}^2 (\Delta,\mathbb{F}^{\times})$.
			This group vanishes because $\#\mathbb{F}^{\times}$ is prime to $\#\Delta$;
			\item Assume $\textbf{RegU} (\overline{\rho})$.
			For any prime $l$ dividing $N$, since $L_1^+/L^+$ is unramified over $l$,
			so for any prime $v$ of $L^+$ over $l$ and any prime $w$ of $L_1^+$ over $v$, the inertia subgroups $I_v=I_w$.
			Therefore it is clear that $\textbf{RegU} (\overline{\rho})$ implies $\textbf{RegU} (\overline{\rho}_1)$;
			\item Suppose the diagonal entries of $\overline{\rho}|_{\Gamma_{\mathfrak{p}_{L^+}}}$ are $\overline{\psi}_1,\overline{\psi}_2$.
			Then $\textbf{Dist} (\mathrm{Sym}^{n+1}\overline{\rho})$ implies that $ (\overline{\psi}_1/\overline{\psi}_2)^{ (n+1)!}\neq1$.
			Clearly this implies $ (\overline{\psi}_1/\overline{\psi}_2)^{n!}\neq1$,
			from which we deduce
			$\textbf{Dist} (\mathrm{Sym}^n\overline{\rho})$.
		\end{enumerate}
	\end{proof}
	\begin{proposition}
		Assume $\textbf{Big} (\overline{\rho}_{\Pi})$ and
		$\textbf{Dist} (\overline{\rho}_{\Pi})$,
		then the deformation functor $\mathcal{D}$
		is representable by a universal pair $ (R=R_{L^+},\rho=\rho_{L^+})$.
	\end{proposition}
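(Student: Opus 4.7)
The plan is to follow Mazur's standard strategy for Galois deformation functors. First I would observe that $\textbf{Big}(\overline{\rho}_\Pi)$ implies absolute irreducibility of $\overline{\rho}_\Pi$, so that $\End_{\Gamma_{L^+}}(\overline{\rho}_\Pi) = \mathbb{F}$. Schlessinger's criteria (H1)--(H4) are then satisfied for the unrestricted functor of equivalence classes of liftings $r\colon \Gamma_{L^+}\to \mathcal{G}_n(A)$ of $\overline{\rho}_\Pi$ to objects $A$ of $\mathrm{CNL}_\mathcal{O}$, making that unrestricted functor pro-representable by a complete noetherian local $\mathcal{O}$-algebra.

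Next I would show that each of the three conditions defining $\mathcal{D}$ cuts out a closed subfunctor. The ``unramified outside $Np$'' condition is immediate: such liftings factor through the Galois group of the maximal extension of $L^+$ unramified outside $Np$. The Steinberg-type condition at each $v \mid N$, requiring a conjugate $g_v \in 1+\mathfrak{m}_A M_n(A)$ with $\mathrm{pr}_2\circ r(\sigma)=g_v \exp(t_p(\sigma)N_n)g_v^{-1}$ on $I_v$, is closed because $N_n$ is regular nilpotent with small centralizer, so successive thickenings of such a lift extend uniquely up to the stabilizer of $N_n$. The ordinarity condition at $\mathfrak{p}$ is where $\textbf{Dist}(\overline{\rho}_\Pi)$ is decisive: the distinctness of the residual diagonal characters $\overline{\psi}_{\mathfrak{P},j}$ forces $\Hom_{\Gamma_\mathfrak{P}}(\overline{\psi}_{\mathfrak{P},i},\overline{\psi}_{\mathfrak{P},j})=0$ for $i\neq j$, so by a standard Hensel-type argument the complete flag preserved by $\overline{\rho}_\Pi|_{\Gamma_\mathfrak{P}}$ lifts uniquely to every deformation. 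In an adapted basis the lift is automatically upper triangular with diagonal characters $\psi_{r,\mathfrak{p},j}$ lifting $\overline{\psi}_{\mathfrak{P},j}$, and the extra condition that $\psi_{r,\mathfrak{p},1}$ be unramified is the vanishing of a character on $I_\mathfrak{P}$, evidently closed.

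The main obstacle is the ordinarity condition at $\mathfrak{p}$, since the conjugating matrix $g$ is not unique and the condition must be formulated so as to behave well under functoriality. The crucial input $\textbf{Dist}(\overline{\rho}_\Pi)$ produces a canonical residual flag, and its unique lift to any $A\in\mathrm{CNL}_\mathcal{O}$ determines $g$ up to the stabilizer of the upper-triangular form, that is, up to diagonal and unipotent factors. Once this is in place, assembling the three closed conditions yields a quotient of the unrestricted universal ring that pro-represents $\mathcal{D}$, giving the desired universal pair $(R_{L^+},\rho_{L^+})$. The existence of $\rho_\mathbf{T}$ already noted in the preceding remark provides a non-trivial point of $\mathcal{D}(\mathbf{T})$, hence a surjection $R_{L^+}\twoheadrightarrow\mathbf{T}$ that will be used in later sections.
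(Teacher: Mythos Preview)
Your proposal is correct and follows the standard Mazur--Schlessinger strategy: absolute irreducibility (from $\textbf{Big}$) gives trivial endomorphisms and hence pro-representability of the unrestricted functor, while $\textbf{Dist}$ makes the ordinary flag lift uniquely so that the local conditions are relatively representable. The paper gives no details at all for this proposition, simply pointing to \cite[Lemma~2.6]{HidaTilouine}; your sketch is exactly the kind of argument that reference contains, so there is nothing to compare.
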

	The proof is the same as \cite[Lemma 2.6]{HidaTilouine}.
	We denote the diagonal entries of the Galois representation
	$\rho|_{\Gamma_{\mathfrak{P}}}$
	by 
	$\mathrm{diag}(\psi_1,\psi_2,\ldots,\psi_n)
	=\mathrm{diag}(\psi_{L^+,1},\psi_{L^+,2},\ldots,\psi_{L^+,n})$.
	
	We can then use the arguments in \cite[Sections 2.4-2.7]{HidaTilouine} to
	prove the following.
	
	\begin{theorem}\label{R=T}
		Assume $\textbf{Big} (\overline{\rho}_{\Pi})$,
		$\textbf{Dist} (\overline{\rho}_{\Pi})$ and
		$\textbf{RegU} (\overline{\rho}_{\Pi})$,
		then we have an isomorphism of 
		complete intersection $\Lambda$-algebras
		\[R\simeq \mathbf{T}.\]
	\end{theorem}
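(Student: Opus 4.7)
The plan is to adapt the Taylor--Wiles--Kisin patching argument exactly as in \cite[Sections 2.4--2.7]{HidaTilouine} to the present base $L^+$. First I would construct a surjective $\Lambda$-algebra map $R\twoheadrightarrow\mathbf{T}$ by checking that the lifting $\rho_{\mathbf{T}}$ of Theorem \ref{ExistenceOfBigHeckeAlgebraValuedGaloisRep} belongs to $\mathcal{D}(\mathbf{T})$: unramifiedness outside $Np$ is built into $\mathbf{T}$; the ordinarity of $\rho_{\mathbf{T}}|_{\Gamma_{\mathfrak{P}}}$ with the prescribed diagonal characters follows from part~(2) of that theorem together with $\textbf{Dist}(\overline{\rho}_{\Pi})$, which singles out a unique ordinary filtration; and the regular unipotent (Steinberg) condition at the primes $v\mid N$ comes from the Iwahori/minimal parahoric level structure combined with $\textbf{RegU}(\overline{\rho}_{\Pi})$, which forces any lift of $\overline{\rho}_{\Pi}|_{I_v}$ inside $\mathcal{D}(\mathbf{T})$ to be conjugate to $\exp(t_p(\cdot)N_n)$. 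The induced map $R\to\mathbf{T}$ is surjective because $\mathbf{T}$ is topologically generated over $\Lambda$ by the traces $T_{\mathfrak{Q},i}$ at split primes $\mathfrak{Q}\nmid Np$, which appear as coefficients of the characteristic polynomials of $\mathrm{pr}_2\circ\rho(\mathrm{Frob}_{\mathfrak{Q}})$.

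Next I would upgrade this surjection to an isomorphism by Taylor--Wiles--Kisin patching. The condition $\textbf{Big}(\overline{\rho}_{\Pi})$, which places $\mathrm{Sym}^{n-1}\mathrm{SL}_2(\mathbb{F}')$ inside $\mathrm{Im}(\overline{\rho}_{\Pi})$, is the standard source of, for each $m\geq1$, a finite set $Q_m$ of auxiliary Taylor--Wiles primes $v$ of $L^+$ split in $L$ such that $\mathrm{Frob}_v$ has $n$ distinct eigenvalues modulo $\mathfrak{m}_{\mathcal{O}}$ and the classes $[\mathrm{Frob}_v]$ kill the dual adjoint Selmer group with the cardinality prescribed by Wiles' formula. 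For each $Q_m$ one enlarges the level at $v$ to construct a parallel Hecke algebra $\mathbf{T}_{Q_m}$, acting on a module $M_{Q_m}$ of $\Lambda$-adic ordinary forms, and a deformation ring $R_{Q_m}\twoheadrightarrow\mathbf{T}_{Q_m}$ classifying liftings unramified outside $Np\prod_{v\in Q_m}v$ with the same local conditions at $\mathfrak{P}$ and at primes dividing $N$. The conditions $\textbf{Dist}$ and $\textbf{RegU}$ guarantee formal smoothness of the local framed deformation rings at $\mathfrak{P}$ and at $v\mid N$ of the expected dimension, so that $R_{Q_m}$ grows over $R$ by precisely $\#Q_m$ new variables.

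Applying Kisin's patching to a cofinal subsequence yields a power series ring $R_{\infty}$ in a controlled number of variables over a completed tensor product of $\Lambda$ with a group algebra $\mathcal{O}[[\Delta_{\infty}]]$, acting on a maximal Cohen--Macaulay patched module $M_{\infty}$ of $\Lambda$-adic cusp forms; a dimension count on both sides forces this action to be faithful and $R_{\infty}$ to be a complete intersection of the right relative dimension, so that descent by the patching variables yields $R\simeq\mathbf{T}$ as complete intersection $\Lambda$-algebras. The main obstacle I expect is the extension of this machinery from the base $L^+=\mathbb{Q}$ treated in \cite{HidaTilouine} to a totally real $L^+$ which is totally ramified at $p$: one must re-verify that the local ordinary deformation ring at the unique prime $\mathfrak{P}$ above $p$ and the regular unipotent deformation rings at primes of $L^+$ dividing $N$ still have the expected formal smoothness (so that the numerology of the patching balances), and that enough Taylor--Wiles primes split in $L/L^+$ exist after base change. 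These are essentially by-now-standard adjustments relying on $\textbf{Dist}$ and $\textbf{RegU}$, but they are the numerically delicate heart of the argument and must be worked out carefully in the relative setting.
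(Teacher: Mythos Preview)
Your proposal is correct and follows exactly the approach indicated in the paper: the paper does not give a self-contained proof but simply states that one uses the arguments in \cite[Sections 2.4--2.7]{HidaTilouine}, which is precisely the Taylor--Wiles--Kisin patching you sketch, adapted to the totally real base $L^+$. Your outline is in fact more detailed than what the paper provides, and the potential obstacles you flag (formal smoothness of the local deformation rings at $\mathfrak{P}$ and at $v\mid N$, existence of enough split Taylor--Wiles primes over $L^+$) are exactly the points one must verify when transporting the argument from $\mathbb{Q}$ to $L^+$.
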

	\begin{remark}\label{lambdaalgebra}
		\begin{enumerate}
			\item The $\Lambda$-algebra structure on $R$ is given by the characters
			$ (\psi_{1},\psi_{2},\ldots,\psi_{n})$ restricted to the inertia subgroup
			$I_{\mathfrak{P}}$.
			More precisely, we send $1+X_i^{ (j)}\in\Lambda$ to
			$\psi_{n+1-i}\circ\mathrm{Art}_{\mathfrak{p}} (u_j)^{-1}$ for
			$i=1,2,\ldots,n$ and $j=1,2,\ldots,d$;
			\item The $\Lambda$-algebra structure on $\mathbf{T}$ is induced from
			that on $h$, i.e. it is given by the diamond operators $\langle u\rangle_0$
			for $u\in T^{ss} (\mathfrak{P})$;
			\item These two algebra structures are compatible with the isomorphism
			$R\simeq\mathbf{T}$ in Theorem \ref{R=T}.
			To prove this, it suffices to show that for each specialization
			$\mathbf{T}\to\mathbf{T}/P_{\lambda}$ of $\mathbf{T}$
			of any dominant weight $\lambda$,
			the associated Galois representation
			$\rho_{\lambda}\colon \Gamma_{L^+}\to \mathcal{G}_n (\mathbf{T}/P_{\lambda})$,
			when restricted to the decomposition group
			$\Gamma_{\mathfrak{p}_{L^+}}$,
			has diagonal entries induced from the $\Lambda$-algebra structure on
			$\mathbf{T}/P_{\lambda}$. This follows from
			\cite[Proposition 2.6.1, Corollary 2.7.8 (i)]{Geraghty}.
		\end{enumerate}
	\end{remark}
	This remark will be used later when the base field
	$L^+$ varies and show the compatibility of this isomorphism
	$R_{L^+}\simeq\mathbf{T}_{L^+}$ with respect to this base change.

	\section{Some base changes}
	In this section we review some results concerning
	cyclic base change on automorphic representations
	and cyclic base change on deformation functors and universal deformation rings.
	
	\subsection{Cyclic base change of automorphic representations}
	Let $L_1^+/L^+$ be a 
	$p$-cyclic finite extension of totally real number fields,
	such that $L_1^+$ and $L^+$ 
	are both totally ramified at $p$,
	unramified outside $p$.
	Let $\pi=\pi_{L^+}$ be a cusp automorphic representation on $\mathrm{GL}_{n/L^+}$.
	Then by \cite{Clozel1991}, there is a cusp automorphic representation
	$\Pi=\Pi_{L^+}$ on the unitary group $G_{n/L^+}$ which is the descent of
	$\pi_{L^+}$ from $\mathrm{GL}_{n/L^+}$.
	By \cite{ArthurClozel}, there is a cusp automorphic representation
	$\pi_{L_1^+}$ on $\mathrm{GL}_{n/L^+_1}$
	which is the automorphic cyclic base change
	$\mathrm{GL}_{n/L^+}\leadsto\mathrm{GL}_{n/L^+_1}$ 
	of $\pi_{L^+}$.
	Applying again \cite{Clozel1991},
	there is a cusp automorphic representation $\Pi_{L_1^+}$ on $G_{n/L^+_1}$,
	the automorphic descent of $\pi_{L^+_1}$ from $\mathrm{GL}_{n/L^+_1}$.
	\[
	\begin{tikzcd}
	\pi_{L^+}/ (\mathrm{GL}_{n/L^+})\arrow[r,dash]\arrow[d,dash] & \pi_{L_1^+}/ (\mathrm{GL}_{n/L^+_1})\arrow[d,dash] \\
	\Pi_{L^+}/ (G_{n/L^+})\arrow[r,dash] & \Pi_{L_1^+}/ (G_{n/L^+_1})
	\end{tikzcd}
	\]
	
	Recall that by Theorem \ref{ExistenceOfBigHeckeAlgebraValuedGaloisRep}, 
	there exists an ordinary Galois representation
	$\rho_\mathbf{T}\colon\Gamma_{L^+}\to\mathcal{G}_n(\mathbf{T})$ 
	lifting the residue Galois representation 
	$\overline{\rho}_{\Pi_{L^+}}$ 
	which is given by 
	Theorem \ref{ExistenceOfGaloisRep}. 
	Similarly, there exists an ordinary 
	Galois representation
	$\rho_{\mathbf{T}_{L^+}}\colon
	\Gamma_{L^+_1}\to\mathcal{G}_n(\mathbf{T}_{L^+_1})$
	lifting
	$\overline{\rho}_{\Pi_{L^+_1}}$.

	Suppose that the characteristic polynomials
	$P (\mathrm{Frob}_{\mathfrak{q}},X)$ 
	have the factorization 
	(in some finite flat extension of $\mathbf{T}_{L^+}$ 
	depending on $\mathfrak{q}$)
	\[P (\mathrm{Frob}_{\mathfrak{q}},X)
	=\prod_{j=1}^n (X-\alpha_{\mathfrak{q},j}).\]
	
	Similarly, for a prime $\mathfrak{Q}$ of $L_1$ prime to $Np$ splitting in
	$L_1/L_1^+$,
	suppose that $P (\mathrm{Frob}_{\mathfrak{Q}},X)$
	has the following factorization (in some finite flat extension of $\mathbf{T}_{L^+_1}$ depending on $\mathfrak{Q}$)
	\[P (\mathrm{Frob}_{\mathfrak{Q}},X)=\prod_{j=1}^n (X-\alpha_{\mathfrak{Q},j}).\]
	Then we have the following proposition analogous to
	\cite[Proposition 2.3]{HidaTilouine}.
	\begin{proposition}\label{cyclic}
		There exists a ring homomorphism
		$\theta\colon  \mathbf{T}_{L_1^+}\to \mathbf{T}_{L^+}$
		above the algebra homomorphism
		$\Lambda_{L_1^+}\to \Lambda_{L^+}$
		(which is induced by the norm map
		$ (L_1^+)^{\times}\to (L^+)^{\times}$).
		The morphism $\theta$ is characterized by the following fact: suppose that $\mathfrak{Q}$ is a prime of $L_1$ prime to $Np$,
		splitting in $L_1/L_1^+$, lying over a prime $\mathfrak{q}$ of $L$,
		which splits in $L/L^+$, then
		\begin{enumerate}
			\item $\theta$ takes the polynomial
			$P (\mathrm{Frob}_{\mathfrak{Q}},X)=\prod_{j=1}^n (X-\alpha_{\mathfrak{Q},j})$
			to the polynomial
			\[\prod_{j=1}^n (X-\alpha_{\mathfrak{q},j}^{f (\mathfrak{q})})\]
			where $f (\mathfrak{q})$ is the residue degree of $\mathfrak{Q}$ over
			$\mathfrak{q}$;
			\item $\theta$ takes $U_{\mathfrak{P}_{L_1},j}$ to $U_{\mathfrak{P}_L,j}$
			for $j=1,2,\ldots,n-1$ for $\mathfrak{P}_{L_1}$ dividing $\mathfrak{P}_L$.
		\end{enumerate}
	\end{proposition}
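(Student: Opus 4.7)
The plan is to construct $\theta$ on the Galois side by restricting the universal deformation and then transfer to the Hecke side via the identifications of Theorem~\ref{R=T}. By the compatibility of Langlands cyclic base change with the associated Galois representations, we have $\overline{\rho}_{\Pi_{L_1^+}}=\overline{\rho}_{\Pi_{L^+}}|_{\Gamma_{L_1^+}}$, so Lemma~\ref{basechange} propagates $\textbf{Big}$, $\textbf{Dist}$, $\textbf{RegU}$ from $L^+$ to $L_1^+$ and Theorem~\ref{R=T} applies in both settings, yielding $R_{L^+}\simeq \mathbf{T}_{L^+}$ and $R_{L_1^+}\simeq \mathbf{T}_{L_1^+}$.

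The core step is to verify that the restriction $\rho_{\mathbf{T}_{L^+}}|_{\Gamma_{L_1^+}}$ of the big Hecke algebra valued deformation from Theorem~\ref{ExistenceOfBigHeckeAlgebraValuedGaloisRep} represents an element of $\mathcal{D}_{L_1^+}(\mathbf{T}_{L^+})$. Since $L_1^+/L^+$ is unramified outside $p$, the inertia subgroups at primes of $L_1^+$ above $N$ coincide with those at the underlying primes of $L^+$, so the regular unipotent condition at primes dividing $N$ and the ``unramified outside $Np$'' condition are inherited tautologically. Since $L_1^+/L^+$ is totally ramified at $p$, the decomposition group $\Gamma_{\mathfrak{P}_{L_1}}$ sits inside $\Gamma_{\mathfrak{P}_L}$: restriction preserves the upper-triangular ordinary form of Definition~\ref{deformation}(2), and the first diagonal character remains unramified because the restriction of an unramified character of $\Gamma_{\mathfrak{P}_L}$ stays unramified on $\Gamma_{\mathfrak{P}_{L_1}}$. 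Universality of $R_{L_1^+}$ combined with $R_{L_1^+}\simeq\mathbf{T}_{L_1^+}$ then produces the desired ring homomorphism $\theta\colon\mathbf{T}_{L_1^+}\to\mathbf{T}_{L^+}$.

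It remains to identify $\theta$ on Hecke operators and $\Lambda$-structures. For (1), given $\mathfrak{Q} | \mathfrak{q}$ splitting in the corresponding CM extensions, one has $\mathrm{Frob}_{\mathfrak{Q}}=\mathrm{Frob}_{\mathfrak{q}}^{f(\mathfrak{q})}$ inside $\Gamma_{L^+}/I_{\mathfrak{q}}$; comparing the characteristic polynomials produced by Theorem~\ref{ExistenceOfBigHeckeAlgebraValuedGaloisRep}(1) for the two big Hecke algebras then forces the displayed relation between their Frobenius roots. For (2), total ramification at $p$ makes the residue degree of $\mathfrak{P}_{L_1}/\mathfrak{P}_L$ equal to one, so $\mathrm{Frob}_{\mathfrak{P}_{L_1}}$ and $\mathrm{Frob}_{\mathfrak{P}_L}$ have the same image modulo inertia; combined with the description of $U_{\mathfrak{P},j}$ via the diagonal characters at $\mathfrak{P}$ supplied by Theorem~\ref{ExistenceOfGaloisRep}(4), this forces $\theta(U_{\mathfrak{P}_{L_1},j})=U_{\mathfrak{P}_L,j}$. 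The compatibility of $\theta$ with the $\Lambda$-algebra structures (i.e., that $\theta$ lies above the norm-induced map $\Lambda_{L_1^+}\to\Lambda_{L^+}$) follows from Remark~\ref{lambdaalgebra} together with the local class field theory identity $\mathrm{Art}_{\mathfrak{p}_{L_1^+}}=\mathrm{Art}_{\mathfrak{p}_{L^+}}\circ N_{L_1^+/L^+}$ on local units.

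The main technical obstacle is the careful verification that $\rho_{\mathbf{T}_{L^+}}|_{\Gamma_{L_1^+}}$ really does satisfy every clause of Definition~\ref{deformation} as a deformation of $\overline{\rho}_{\Pi_{L_1^+}}$, in particular producing the conjugating elements $g_v$ at primes $v \mid N$ and confirming that the ordinary shape at $\mathfrak{P}_{L_1}$ is inherited without an additional conjugation obstruction. Once this is in place, the remaining content is essentially formal from the universal property and the two $R=T$ isomorphisms, and follows the template of \cite[Proposition 2.3]{HidaTilouine}.
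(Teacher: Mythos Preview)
Your argument is correct but takes a different route from the paper. The paper constructs $\theta$ directly on the \emph{automorphic} side: for each classical point one invokes the cyclic base change of Arthur--Clozel \cite[Chapter~3]{ArthurClozel}, which gives the relation $t_{\mathfrak{Q}}=(t_{\mathfrak{q}})^{f(\mathfrak{q})}$ on Satake parameters, and this interpolates to the desired map on big Hecke algebras; part (2) is handled exactly as you do, via the identification of Frobenii under total ramification. In particular the paper's proof of Proposition~\ref{cyclic} does not use Theorem~\ref{R=T} at all. Your construction instead goes through the \emph{Galois} side: you build the map $\alpha\colon R_{L_1^+}\to R_{L^+}$ by restriction of deformations (this is precisely what the paper does later, in Section~3.2) and then transport it to Hecke algebras via the two $R=T$ isomorphisms. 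The price is that your argument needs $\textbf{Big}$, $\textbf{Dist}$, $\textbf{RegU}$ as inputs, whereas the paper's automorphic construction of $\theta$ is independent of these hypotheses. The benefit is conceptual economy: you never touch Arthur--Clozel beyond the identification of residual representations, and the compatibility of $\theta$ with $\alpha$ (the lemma following Proposition~\ref{cyclic}) is then tautological rather than something to be checked. Since the hypotheses of Theorem~\ref{R=T} are in force throughout the applications in Section~4, either approach suffices for the paper's purposes.
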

	\begin{proof}
		\begin{enumerate}
			\item We write $t_{\mathfrak{Q}}$ for the Hecke parameter associated to
			the (local factor) admissible automorphic representation
			$\Pi_{L^+_1,\mathfrak{Q}}$ of
			$\Pi_{L_1^+}$. Similar definition for $t_\mathfrak{q}$.
			Then by the result in \cite[Chapter 3]{ArthurClozel},
			we have
			\[t_\mathfrak{Q}= (t_{\mathfrak{q}})^{f (\mathfrak{q})}.\]
			Using the Satake parameters,
			we can translate this relation in terms of eigenvalues of the matrix of the Frobenius elements
			$\mathrm{Frob}_\mathfrak{Q}$
			and $\mathrm{Frob}_{\mathfrak{q}}$
			\[\alpha_{\mathfrak{Q},j}=(\alpha_{\mathfrak{q},j})^{f (\mathfrak{q})}.\]
			\item Since $L_1^+/L^+$ is totally ramified over $p$,
			in the natural isomorphism
			\[\Gamma_{\mathfrak{P}_{L_1}}/I_{\mathfrak{P}_{L_1}}\simeq\Gamma_{\mathfrak{P}_{L}}/I_{\mathfrak{P}_{L}},\]
			the Frobenius $\mathrm{Frob}_{\mathfrak{P}_{L_1}}$ is taken to
			$\mathrm{Frob}_{\mathfrak{P}_{L}}$.
			Translating back to the Hecke side, we see that
			$U_{\mathfrak{P}_{L_1},j}$ is taken by $\theta$ to
			$U_{\mathfrak{P}_{L},j}$ for $j=1,2,\ldots,n-1$.
		\end{enumerate}
	\end{proof}
	Suppose that
	$\lambda_{L^+}\colon \mathbf{T}_{L^+}\to A_{L^+}$
	is a Hida family passing through $\Pi_{L^+}$.
	This implies that $\mathrm{Spec} (A_{L^+})$ is an irreducible component of
	$\mathrm{Spec} (\mathbf{T}_{L^+})$ corresponding to a minimal prime ideal
	$P_{L^+}$ of $\mathbf{T}_{L^+}$.
	We have the following
	\begin{proposition}\label{Hidafamilybasechange}
		Let $L_1^+/L^+$ be as above.
		There exists a Hida family $\lambda_{L_1^+}\colon \mathbf{T}_{L_1^+}\to A_{L_1^+}$
		passing through $\Pi_{L_1^+}$ such that the composition
		$\mathbf{T}_{L^+_1}\to\mathbf{T}_{L^+}\to A_{L^+}$ 
		factors through $A_{L^+_1}$.
	\end{proposition}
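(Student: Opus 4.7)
The plan is to take $P_{L_1^+}$ to be any minimal prime of $\mathbf{T}_{L_1^+}$ contained in the kernel of the composite $\mathbf{T}_{L_1^+} \to \mathbf{T}_{L^+} \to A_{L^+}$, and then to verify that the resulting irreducible component passes through $\Pi_{L_1^+}$ using the Hecke-theoretic compatibility encoded in Proposition \ref{cyclic} together with Arthur--Clozel cyclic base change.

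Concretely, I would form $\phi := \lambda_{L^+} \circ \theta : \mathbf{T}_{L_1^+} \to A_{L^+}$, where $\theta$ is the morphism supplied by Proposition \ref{cyclic}. Since $A_{L^+} = \mathbf{T}_{L^+}/P_{L^+}$ is a domain (being the quotient of the reduced algebra $\mathbf{T}_{L^+}$ by a minimal prime), the kernel $Q := \ker(\phi)$ is a prime ideal of $\mathbf{T}_{L_1^+}$. Every prime contains a minimal prime, so one can choose a minimal prime $P_{L_1^+} \subset Q$ and set $A_{L_1^+} := \mathbf{T}_{L_1^+}/P_{L_1^+}$, with $\lambda_{L_1^+} : \mathbf{T}_{L_1^+} \to A_{L_1^+}$ the canonical surjection. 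Then $A_{L_1^+}$ is a domain, and the inclusion $P_{L_1^+} \subset Q$ is exactly what ensures that $\phi$ factors through $A_{L_1^+}$, yielding the required factorization $\mathbf{T}_{L_1^+} \to A_{L_1^+} \to A_{L^+}$.

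It remains to check that $\lambda_{L_1^+}$ passes through $\Pi_{L_1^+}$. By hypothesis, there is an arithmetic specialization $A_{L^+} \to \overline{\mathbb{Q}}_p$ realizing the Hecke eigensystem of $\Pi_{L^+}$; composing with $\phi$ produces a specialization of $\mathbf{T}_{L_1^+}$ whose value on each $T_{\mathfrak{Q},j}$, for a split prime $\mathfrak{Q}\mid \mathfrak{q}$, is by Proposition \ref{cyclic}(1) the $j$-th elementary symmetric function of $\{\alpha_{\mathfrak{q},j}^{f(\mathfrak{q})}\}$, and whose value on $U_{\mathfrak{P}_{L_1},j}$ agrees with that on $U_{\mathfrak{P}_L,j}$ by Proposition \ref{cyclic}(2). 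By Arthur--Clozel cyclic base change these are exactly the Hecke parameters of $\Pi_{L_1^+}$, so this specialization is the one attached to $\Pi_{L_1^+}$; since it factors through $A_{L_1^+}$ the family $\lambda_{L_1^+}$ passes through $\Pi_{L_1^+}$. The construction is essentially formal, its substantive inputs being the morphism $\theta$ and the reducedness of $\mathbf{T}_{L_1^+}$; the only step requiring genuine content is the identification of the composite specialization with the arithmetic point attached to $\Pi_{L_1^+}$, which rests on the rigidity built into the cyclic base change correspondence and the descent to the unitary group used to define $\Pi_{L_1^+}$.
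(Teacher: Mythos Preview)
Your proof is correct and follows essentially the same approach as the paper: both take a minimal prime $P_{L_1^+}$ of $\mathbf{T}_{L_1^+}$ contained in $\theta^{-1}(P_{L^+}) = \ker(\lambda_{L^+}\circ\theta)$ and set $A_{L_1^+} = \mathbf{T}_{L_1^+}/P_{L_1^+}$. Your verification that the resulting family passes through $\Pi_{L_1^+}$ is spelled out more explicitly (via Proposition~\ref{cyclic} and Arthur--Clozel) than the paper's, which records the same fact by a commutative diagram with a specialization to $\mathcal{O}$.
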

	\begin{proof}
		Recall the morphism $\theta\colon \mathbf{T}_{L_1^+}\to\mathbf{T}_{L^+}$.
		It suffices to show that there is a minimal prime ideal $P_{L_1^+}$ of
		$\mathbf{T}_{L_1^+}$ such that $\theta(P_{L_1^+})\subset P_{L^+}$.
		Note that $\theta^{-1} (P_{L^+})$ is a prime ideal of $\mathbf{T}_{L_1^+}$.
		So there exists such a minimal prime ideal $P_{L_1^+}$ contained in
		$\theta^{-1} (P_{L^+})$ 
		(use the fact that a local Noetherian ring has finite Krull dimension).
		This minimal prime ideal $P_{L_1^+}$
		gives rise to a Hida family passing through $\Pi_{L_1^+}$
		by the following commutative diagram.
		\[
		\begin{tikzcd}
		\mathbf{T}_{L_1^+}\arrow[d]\arrow[r] & \mathbf{T}_{L_1^+}/P_{L_1^+}\arrow[d]\arrow[rd,dashrightarrow] & \\
		\mathbf{T}_{L^+}\arrow[r] & \mathbf{T}_{L^+}/P_{L^+}\arrow[r] & \mathcal{O}
		\end{tikzcd}
		\]
	\end{proof}

	\subsection{Base change of deformation functors}
	Consider the deformation functors $\mathcal{D}_{L^+}$ and $\mathcal{D}_{L_1^+}$.
	It is easy to see that if $r\colon \Gamma_{L^+}\to \mathcal{G}_n (B)$ 
	is a lifting of
	$\overline{\rho}_{\Pi}$ for some object $B$ in $\mathrm{CNL}_{\mathcal{O}}$,
	then $r|_{\Gamma_{L_1^+}}$ is a lifting of
	$\overline{\rho}_{\Pi}|_{\Gamma_{L_1^+}}$ 
	(for places dividing $N$, one uses Lemma \ref{basechange} (3)).
	So we have a natural homomorphism
	of $\mathcal{O}$-algebras
	\[\alpha\colon R_{L_1^+}\to R_{L^+}\]
	The following lemma implies that the map $\alpha$ 
	is compatible with the map of weight spaces
	\begin{lemma}
		We have the following commutative diagram of algebras
		\[
		\begin{tikzcd}
		\Lambda_{L_1^+}\arrow[d,"\mathrm{Norm}"]\arrow[r] & R_{L_1^+}\arrow[r,"\simeq"]\arrow[d,"\alpha"] & \mathbf{T}_{L_1^+}\arrow[d,"\theta"]\\
		\Lambda_{L^+}\arrow[r] & R_{L^+}\arrow[r,"\simeq"] & \mathbf{T}_{L^+}
		\end{tikzcd}
		\]
	\end{lemma}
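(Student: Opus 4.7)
The plan is to verify the commutativity of the two squares independently, since the outer (left) square and the inner (right) square encode rather different information: the former is essentially local class field theory, and the latter is the universal property of the deformation ring.

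For the right square, I would invoke the universal property of $R_{L_1^+}$. Any $\mathcal{O}$-algebra map $R_{L_1^+}\to\mathbf{T}_{L^+}$ is uniquely determined by the equivalence class of liftings of $\overline{\rho}_{\Pi}|_{\Gamma_{L_1^+}}$ to $\mathcal{G}_n(\mathbf{T}_{L^+})$ that it classifies. The bottom composition $R_{L_1^+}\xrightarrow{\alpha}R_{L^+}\simeq\mathbf{T}_{L^+}$ classifies, by the very definition of $\alpha$ together with Theorem \ref{R=T}, the restriction $\rho_{\mathbf{T}_{L^+}}|_{\Gamma_{L_1^+}}$. The top composition $R_{L_1^+}\simeq\mathbf{T}_{L_1^+}\xrightarrow{\theta}\mathbf{T}_{L^+}$ classifies the pushforward $\theta\circ\rho_{\mathbf{T}_{L_1^+}}$. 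To identify these two liftings, I would compare their characteristic polynomials at Frobenii of primes $\mathfrak{Q}$ of $L_1$ prime to $Np$ that split over $L_1^+$ and lie above a prime $\mathfrak{q}$ of $L$ splitting over $L^+$. Proposition \ref{cyclic} matches the eigenvalues $\alpha_{\mathfrak{Q},j}=\alpha_{\mathfrak{q},j}^{f(\mathfrak{q})}$, which is precisely the identity $\rho_{\mathbf{T}_{L^+}}(\mathrm{Frob}_{\mathfrak{Q}})=\rho_{\mathbf{T}_{L^+}}(\mathrm{Frob}_{\mathfrak{q}})^{f(\mathfrak{q})}$ on the Galois side. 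Lemma \ref{basechange}(1) transfers $\textbf{Big}(\overline{\rho}_\Pi)$ to the restriction, keeping the residual representation absolutely irreducible, so Chebotarev density and the Brauer--Nesbitt theorem then force the two semisimple liftings to be equivalent.

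For the left square, the plan is to use Remark \ref{lambdaalgebra}(1) to unwind both $\Lambda$-algebra structures in terms of the diagonal characters $(\psi_{L^+,i})_i$ and $(\psi_{L_1^+,i})_i$ of the universal Galois representations restricted to the decomposition subgroups. Since $\alpha$ is induced by restriction of the universal lifting, $\alpha(\psi_{L_1^+,i})$ coincides with $\psi_{L^+,i}|_{\Gamma_{\mathfrak{P}_{L_1}}}$; the unambiguity of this identification of diagonal characters is guaranteed by $\textbf{Dist}(\overline{\rho}_\Pi|_{\Gamma_{L_1^+}})$, obtained from Lemma \ref{basechange}(2). The commutativity then reduces to the local class field theory identity
\[\psi|_{\Gamma_{F_1}^{ab}}\circ\mathrm{Art}_{F_1}=\psi\circ\mathrm{Art}_F\circ N_{F_1/F}\]
for a finite extension $F_1/F$ of local fields, applied with $F=L^+_{\mathfrak{p}}$, $F_1=(L_1^+)_{\mathfrak{p}}$, and $\psi=\psi_{L^+,n+1-i}$, evaluated on the chosen basic units $u_j$ of the weight torus.

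The main technical obstacle lies in the right square: pinning down that $\theta\circ\rho_{\mathbf{T}_{L_1^+}}$ and $\rho_{\mathbf{T}_{L^+}}|_{\Gamma_{L_1^+}}$ define the same equivalence class in $\mathcal{D}_{L_1^+}(\mathbf{T}_{L^+})$ requires both the Satake-theoretic input of Proposition \ref{cyclic} on split-prime Hecke eigenvalues and the density/irreducibility argument to extend equality of characteristic polynomials from split Frobenii to all of $\Gamma_{L_1^+}$. One must also check that the local condition at primes dividing $N$ and the ordinarity at $\mathfrak{P}$ are preserved under restriction, so that $\rho_{\mathbf{T}_{L^+}}|_{\Gamma_{L_1^+}}$ genuinely belongs to $\mathcal{D}_{L_1^+}(\mathbf{T}_{L^+})$; this in turn uses Lemma \ref{basechange}(3) and the fact that $L_1^+/L^+$ is totally ramified at $p$ (which preserves the shape of the upper triangular block).
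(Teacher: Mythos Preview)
Your proposal is correct and aligns with the paper's approach. For the left square you reproduce exactly the paper's argument via local class field theory and the characters $\psi_{L^+,j}$; for the right square the paper simply cites Remark~\ref{lambdaalgebra}(3) and Proposition~\ref{cyclic}, whereas you unpack this into an explicit comparison of the classified deformations via characteristic polynomials at split Frobenii and a Chebotarev/irreducibility argument---this is a faithful expansion of what those references encode rather than a different route.
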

	\begin{proof}
		Only the commutativity of the left half of the diagram needs proof (the right half follows from Remark \ref{lambdaalgebra} (3) and Proposition \ref{cyclic}).
		
		By Remark \ref{lambdaalgebra}, the $\Lambda_{L^+}$-algebra structure on
		$R_{L^+}$ is given by the universal characters
		${\psi_{L^+,j}\colon \Gamma_{\mathfrak{p}_{L^+}}\to  (R_{L^+})^{\times}}$
		for $2\leq j\leq n$.
		Similarly for the $\Lambda_{L^+_1}$-algebra structure on $R_{L^+_1}$.
		The commutativity of the following left diagram for all $2\leq j\leq n$ gives the commutativity of the following right diagram by local class field theory.
		\[
		\begin{tikzcd}
		\Gamma_{\mathfrak{p}_{L_1^+}} \arrow[r,"\psi_{L_1^+,j}"] \arrow[d] &  (R_{L_1^+})^{\times}\arrow[d,"\alpha"] & \Lambda_{L_1^+}\arrow[r,"(\psi_{L_1^+,j})_j"]\arrow[d,"\mathrm{Norm}"] & R_{L_1^+}\arrow[d,"\alpha"]\\
		\Gamma_{\mathfrak{p}_{L^+}}\arrow[r,"\psi_{L^+,j}"] &  (R_{L^+})^{\times}& \Lambda_{L^+}\arrow[r,"(\psi_{L^+,j})_j"] & R_{L^+}
		\end{tikzcd}
		\]
	\end{proof}

	The group $\Delta$
	acts on the deformation functor $\mathcal{D}_{L_1^+}$,
	the universal deformation ring $R_{L_1^+}$,
	the weight Iwasawa algebra $\Lambda_{L^+_1}$
	and the Hecke algebra $\mathbf{T}_{L^+_1}$
	in the following way
	(see \cite[Section 2.2]{HidaParis94}
	or \cite[Section 3.2]{HidaAdjointSelmer}).
	For any element $\sigma\in\Gamma_{L^+}$,
	any object $A$ in $\mathrm{CNL}_{\mathcal{O}}$
	and any deformation $[r]\in\mathcal{D}_{L_1^+}(A)$,
	we fix a lifting
	$\ell(\sigma)\in\mathcal{G}_n(A)$
	of $\overline{\rho}_\pi(\sigma)=\overline{r(\sigma)}
	\in\mathcal{G}_n(\mathbb{F})$
	(such a lifting always exists since the group
	$\mathcal{G}_n$ is smooth over $\mathbb{Z}$, 
	and thus smooth over $\mathcal{O}$).
	Then we define a map
	$r^\sigma\colon\Gamma_{L_1^+}\to\mathcal{G}_n(A)$
	by sending $g$ to
	$\ell(\sigma)^{-1}\rho(\sigma g\sigma^{-1})\ell(\sigma)$.
	It is easy to verify that
	$r^\sigma$ is a Galois representation of $\Gamma_{L_1^+}$,
	that $[r^\sigma]$ is a deformation in $\mathcal{D}_{L_1^+}(A)$,
	that $[r^\sigma]$ is independent of the choice 
	of the representative $r$ in the equivalence class
	$[r]\in\mathcal{D}_{L_1^+}(A)$
	as well as $\ell(\sigma)$,
	and that $[r^\sigma]=[r]$ if $\sigma\in\Gamma_{L_1^+}$.
	As a result, the map 
	$[r]\mapsto[r^\sigma]$ gives an action of
	$\Gamma_{L^+}$ on $\mathcal{D}_{L_1^+}$,
	and moreover this action factors through
	$\Delta=\Gamma_{L^+}/\Gamma_{L_1^+}$.
	By posing 
	$A=R_{L_1^+}$
	and 
	by the definition of the universal deformation ring $R_{L_1^+}$,
	we see that the action of $\sigma\in\Delta$
	on $\mathcal{D}_{L_1^+}$
	gives rise to an action on $R_{L_1^+}$,
	which we denote by
	$\phi_\sigma\colon R_{L_1^+}\to R_{L_1^+}$,
	an automorphism of the $\mathcal{O}$-algebra $R_{L_1^+}$.
	We next define an action of
	$\Delta$ on the weight Iwasawa algebra $\Lambda_{L_1^+}$.
	Recall that
	$\Lambda_{L_1^+}=\mathcal{O}[[T^{ss}(\mathfrak{P}_{L_1^+})]]$
	where
	$T^{ss}(\mathfrak{P}_{L_1})
	=\mathrm{Ker}(T^{ss}(\mathcal{O}_{\mathfrak{P}_{L_1}})
	\to T^{ss}(\mathcal{O}_{\mathfrak{P}_{L_1}}/\mathfrak{P}_{L_1}))$.
	By the natural isomorphism
	$\Delta=\mathrm{Gal}(L_1^+/L^+)
	\simeq\mathrm{Gal}
	((L_1)_{\mathfrak{P}_{L_1}}/L_{\mathfrak{P}_L})$,
	$\Delta$ acts on $T^{ss}(\mathfrak{P}_{L_1})$,
	and thus on
	$\Lambda_{L_1^+}$.
	If we write
	$\Lambda_{L_1^+,L^+}=\mathrm{Im} (\Lambda_{L_1^+}\to\Lambda_{L^+})$,
	then it is easy to verify that
	$\Lambda_{L_1^+,L^+}\simeq(\Lambda_{L_1^+})_\Delta$.
	Moreover, we can verify that the actions of $\Delta$
	on $\Lambda_{L_1^+}$ and on $R_{L_1^+}$
	are compatible with the algebra homomorphism
	$\Lambda_{L_1^+}\to R_{L_1^+}$
	(indeed, the action of $\Delta$ on
	$R_{L_1^+}$ induces actions on the characters
	$\psi_{L_1^+,j}$
	given by
	$(\psi_{L_1^+,j})^\sigma(g)=\psi_{L_1^+,j}(\sigma g\sigma^{-1})$
	by the assumption
	$\mathbf{Dist}(\overline{\rho}_\pi)$.
	The latter is the same as the action of
	$\Delta$ on $\mathcal{O}_{\mathfrak{P}_{L_1}}$
	by local class field theory).
	Then we define an action
	of $\Delta$ on the big Hida-Hecke algebra $\mathbf{T}_{L_1^+}$.
	Any element $\sigma\in\Delta$
	sends
	Hecke operators
	$T_{\mathfrak{Q}_{L_1},i}$
	to $T_{\sigma(\mathfrak{Q}_{L_1}),i}$,
	$U_{\mathfrak{P}_{L_1},i}$ 
	to $U_{\sigma(\mathfrak{P}_{L_1}),i}$,
	and sends the diamond operators
	$\langle u\rangle_0$ 
	to
	$\langle\sigma(u)\rangle_0$
	with $u\in T^{ss}(\mathcal{O}_{\mathfrak{p}_{L^+}})$. 
	Clearly, these actions of $\Delta$
	on $\mathbf{T}_{L_1^+}$
	and on $\Lambda_{L_1^+}$
	are compatible with the $\Lambda_{L_1^+}$-algebra 
	structure on $\mathbf{T}_{L_1^+}$.
	The compatibility of the actions
	of $\Delta$ on $R_{L_1^+}$ and on $\mathbf{T}_{L_1^+}$
	follows from the Satake isomorphism 
	for places of $L_1^+$ unramified for $\pi$
	and the assumption $\mathbf{Dist}(\overline{\rho}_\pi)$
	for the place of $L_1^+$ above $p$.
	
	Recall that we denote the quotient
	$R_{L_1^+}/\Sigma_{\sigma\in\Delta}R_{L_1^+}(\sigma-1)R_{L_1^+}$
	by $(R_{L_1^+})_\Delta$.
	We have the following control theorem on the universal deformation rings/Hecke algebras
	with respect to the base change $L_1^+/L^+$.
	\begin{theorem}\label{control}
		Assume $\textbf{Dist} (\overline{\rho}_{\Pi})$, then
		$(R_{L_1^+})_{\Delta}
		\simeq
		(\mathbf{T}_{L_1^+})_\Delta
		\simeq
		R_{L^+}
		\simeq
		\mathbf{T}_{L^+}$.
	\end{theorem}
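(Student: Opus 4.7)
The plan is to reduce the four-fold chain of isomorphisms to the single assertion $(R_{L_1^+})_\Delta \simeq R_{L^+}$. Indeed, by Lemma \ref{basechange} the hypotheses $\textbf{Big}(\overline{\rho}_\Pi)$, $\textbf{Dist}(\overline{\rho}_\Pi)$, $\textbf{RegU}(\overline{\rho}_\Pi)$ (implicit in the setup) descend from $\overline{\rho}_\Pi$ to $\overline{\rho}_\Pi|_{\Gamma_{L_1^+}}$, so Theorem \ref{R=T} applies over both base fields, yielding $R_{L^+}\simeq\mathbf{T}_{L^+}$ and $R_{L_1^+}\simeq\mathbf{T}_{L_1^+}$; taking $\Delta$-coinvariants of the latter produces $(R_{L_1^+})_\Delta\simeq(\mathbf{T}_{L_1^+})_\Delta$. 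The relevant map is $\beta\colon(R_{L_1^+})_\Delta\to R_{L^+}$ induced by $\alpha$; that $\alpha$ factors through the coinvariants is clear, since for $r\colon\Gamma_{L^+}\to\mathcal{G}_n(A)$ and $\sigma\in\Delta$, choosing the canonical lift $\ell(\sigma)=r(\tilde\sigma)$ for any extension $\tilde\sigma\in\Gamma_{L^+}$ makes $(r|_{\Gamma_{L_1^+}})^\sigma=r|_{\Gamma_{L_1^+}}$ tautologically, so the target carries the trivial $\Delta$-action.

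For surjectivity of $\beta$, I would pass to the Hecke side via $R=T$ and show that $\theta\colon\mathbf{T}_{L_1^+}\to\mathbf{T}_{L^+}$ is surjective. By Proposition \ref{cyclic}(2) each $U_{\mathfrak{P}_L,j}$ lies in the image, and the commutative diagram of the preceding lemma, combined with surjectivity of the norm $\Lambda_{L_1^+}\to(\Lambda_{L_1^+})_\Delta$, handles the diamond operators. For the generic Hecke operators $T_{\mathfrak{q},i}$, I would invoke Chebotarev: primes $\mathfrak{q}$ of $L$ at which the prime $\mathfrak{Q}$ of $L_1$ above has residue degree $f(\mathfrak{q})=1$ form a set of positive density, and by Proposition \ref{cyclic}(1), for such $\mathfrak{q}$ one has $\theta(T_{\mathfrak{Q},i})=T_{\mathfrak{q},i}$; the associated Frobenius classes are dense in $\Gamma_L$, so these operators topologically generate $\mathbf{T}_{L^+}$.

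Injectivity is the main obstacle. One inclusion $I_\Delta R_{L_1^+}\subset\ker\alpha$ is automatic from $\Delta$-equivariance. For the reverse inclusion, my plan is to exploit the complete intersection structure in Theorem \ref{R=T}: both $R_{L^+}$ and $R_{L_1^+}$ are complete intersections over $\Lambda_{L^+}$ and $\Lambda_{L_1^+}$ respectively, and the preceding lemma identifies the $\Delta$-coinvariant quotient of the $L_1^+$-side Iwasawa diagram with the $L^+$-side one. A dimension/depth count against $(\Lambda_{L_1^+})_\Delta=\Lambda_{L_1^+,L^+}$, combined with the surjectivity above, should then force $\ker\beta=0$. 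A more intrinsic alternative is to verify directly that $(R_{L_1^+})_\Delta$ pro-represents the subfunctor of $\Delta$-invariant deformations of $\overline{\rho}_\Pi|_{\Gamma_{L_1^+}}$ and that each such deformation extends uniquely, up to equivalence, to $\Gamma_{L^+}$; the obstruction is a class in $H^2(\Delta,\mathrm{ad}\,\rho)$, and the hypothesis $\textbf{Dist}(\overline{\rho}_\Pi)$, which reduces the centralizer of $\rho|_{\Gamma_{\mathfrak{P}}}$ to a torus, is precisely what makes the Tate cohomology of the order-$p$ group $\Delta$ manageable. This cohomological vanishing, together with the analogous $H^1$-triviality securing uniqueness of the lift, is where the bulk of the real work would lie.
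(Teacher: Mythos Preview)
Your surjectivity argument contains a genuine gap. You claim that the Frobenius classes at primes $\mathfrak{q}$ of $L$ with $f(\mathfrak{q})=1$ in $L_1/L$ are dense in $\Gamma_L$, but this is false: such primes are exactly those whose Frobenius lies in $\Gamma_{L_1}\subset\Gamma_L$, so by Chebotarev their Frobenii are dense only in $\Gamma_{L_1}$. Hence the Hecke operators $T_{\mathfrak{q},i}$ you produce in the image of $\theta$ are precisely those attached to Frobenii in $\Gamma_{L_1}$, and the assertion that these topologically generate $\mathbf{T}_{L^+}$ is \emph{equivalent}, via $R=T$, to the surjectivity of $\alpha$ you are trying to prove. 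The argument is circular.

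The paper establishes surjectivity on the deformation side by a short twist argument that you do not invoke: two deformations $[r],[r']\in\mathcal{D}_{L^+}(A)$ with the same restriction to $\Gamma_{L_1^+}$ differ by a character $\psi\colon\Delta\to 1+\mathfrak{m}_A$ (this is \cite[Corollary~A.2.1]{HidaParis94}); comparing determinants at $\mathfrak{p}$, which are pinned down by the deformation conditions, forces $\psi^n=1$, and since $\Delta$ is a $p$-group with $(n,p)=1$ one gets $\psi=1$. This single observation is the heart of the proof. Injectivity is then essentially free: the paper observes it on the Hecke side from Proposition~\ref{cyclic}, and combined with surjectivity and the $R=T$ identifications over both fields, the chain of isomorphisms follows. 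Your ``intrinsic alternative'' via extension and obstruction theory is indeed the shape of the paper's second proof, but there too the decisive step---uniqueness of the extension---reduces to the same twist argument, not to an abstract $H^1$ vanishing for $\mathrm{ad}\,\rho$.
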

	\begin{proof}
		We give two proofs of this proposition.
		
		The first proof is as follows: 
		by Proposition \ref{cyclic}, it is clear that
		$(\mathbf{T}_{L_1^+})_\Delta$
		injects into $\mathbf{T}_{L^+}$.
		It then suffices to show that $(R_{L_1^+})_\Delta$
		surjects onto $R_{L^+}$.
		In other words, we have to show that 
		for any two deformations
		$[r],[r']\in\mathcal{D}_{L^+}(A)$
		such that
		$[r|_{\Gamma_{L_1^+}}]=
		[r'|_{\Gamma_{L_1^+}}]\in\mathcal{D}_{L_1^+}(A)$,
		then $[r]=[r']$.
		We can show
		(see \cite[Corollary A.2.1]{HidaParis94})
		\[\{r\otimes\psi\colon \psi\in \mathrm{Hom} (\Delta,1+\mathfrak{m}_B)\}
		=
		\{r''\in\mathcal{D}_{L^+} (B)
		\colon
		[r^{\prime\prime}|_{\Gamma_{L_1^+}}]
		=
		[r|_{\Gamma_{L_1^+}}]\}.\]
		So $r'=r\otimes\psi$ for a character
		$\psi\colon\Delta\to1+\mathfrak{m}_B$.
		Note that
		$\mathrm{det}(r'|_{\Gamma{\mathfrak{p}_{L^+}}})=\chi^{-n(n-1)/2}
		=\mathrm{det}(r|_{\Gamma_{\mathfrak{p}_{L^+}}})$.
		Thus $\psi^n=1$.
		Since $\Delta$ is a $p$-group
		while $n$ is prime to $p$,
		so $\psi=1$ and $[r']=[r]$.

		The second proof follow the ideas of Hida in \cite[Theorem 2.1]{HidaParis94}.
		We consider the two following auxiliary deformation functors
		\[
		\mathcal{D}_{L_1^+,L^+}\colon  \mathrm{CNL}_{\mathcal{O}}\to \mathrm{Sets},
		\]
		\[
		A
		\mapsto
		\{
		[r|_{\Gamma_{L_1^+}}]\in\mathcal{D}_{L_1^+} (A)
		\colon [r]\in\mathcal{D}_{L^+} (B)
		\text{ for a flat }A\text{-algebra }B
		\in \mathrm{CNL}_{\mathcal{O}}
		\};
		\]
		\[
		\mathcal{D}_{L_1^+}^{\Delta}\colon \mathrm{CNL}_\mathcal{O}
		\to \mathrm{Sets},
		\]
		\[
		A\mapsto \{[r]\in\mathcal{D}_{L^+} (A)
		\colon r^{\sigma}\simeq r, \forall \sigma \in\Delta\}.
		\]
		
		One sees that there are natural transformations of functors:
		$\mathcal{D}_{L^+}\xrightarrow{\alpha_1}\mathcal{D}_{L_1^+,L^+}\xrightarrow{\alpha_2}\mathcal{D}_{L_1^+}^{\Delta}\xrightarrow{\alpha_3}\mathcal{D}_{L_1^+}$
		such that $\alpha_3\circ\alpha_2\circ\alpha_1=\alpha$.
		We first show that $\alpha_1$ and $\alpha_2$ are isomorphisms.
		Fix an arbitrary object $A$ in $\mathrm{CNL}_{\mathcal{O}}$
		and an arbitrary deformation $[r]\in \mathcal{D}_{L_1^+}^{\Delta} (A)$.
		
		To prove that $\alpha_2$ is an isomorphism,
		it is enough to show that there exists an extension
		$r'\in\mathcal{D}_{L^+} (B)$ of $r$ for some flat $A$-algebra
		$B\in\mathrm{CNL}_{\mathcal{O}}$.
		It is not hard to show that there exists a faithfully flat $A$-algebra $B$ in
		$\mathrm{CNL}_{\mathcal{O}}$ such that $r$ extends to a Galois representation
		$r'\colon \Gamma_{L^+}\to \mathcal{G}_n (B)$ with
		$r'$ lifting 
		$\overline{\rho}_{\Pi}$
		(the arguments in \cite[Corollary A.1.3]{HidaParis94} work through for
		representations taking values in $\mathcal{G}_n (A)$ 
		instead of $\mathrm{GL}_n (A)$).
		Next we verify that $r'\in\mathcal{D}_{L^+} (B)$.
		For each prime $\mathfrak{q}$ of $L^+$ dividing $N$ and a prime $\mathfrak{Q}$ of $L_1^+$ over $\mathfrak{q}$,
		we have $I_{\mathfrak{q}}=I_{\mathfrak{Q}}$.
		So the $\mathfrak{Q}$-minimality of $r$ implies the
		$\mathfrak{q}$-minimality of $r'$ and thus $r'$ satisfies the $N$-minimality condition.
		We can assume that $r|_{\Gamma{\mathfrak{p}_{L_1^+}}}$ is an upper triangular representation  (up to conjugation).
		Suppose that $r'$ acts on the $B$-free module $V$ of rank $n$ and $r$ fixes a non-zero vector $0\neq x\in V$ by $r (\sigma)x=\psi_1 (\sigma)x$ for any $\sigma\in\Gamma_{\mathrm{p}_{L_1^+}}$
		(this vector is unique up to a scalar multiplication since we have assumed that the diagonal entries of the representation $r$ are distinct).
		Now for any $\tau\in\Gamma_{\mathfrak{p}_{L^+}}$,
		we set $\sigma^{\tau}:=\tau^{-1}\sigma\tau\in\Gamma_{\mathfrak{p}_{L_1^+}}$,
		then $\sigma\tau=\tau\sigma^{\tau}$.
		Therefore
		\[r' (\sigma)r' (\tau)x=r' (\tau)r' (\sigma^{\tau})x=r' (\tau)\psi_1 (\sigma^{\tau})x=\psi_1 (\sigma^{\tau})r' (\tau)x.\]
		Modulo the two ends of the above equation by $\mathfrak{m}_B$,
		we get
		\[\overline{r'} (\sigma)\overline{r'} (\tau)\overline{x}=\overline{\psi}_1 (\sigma^{\tau})\overline{r'} (\tau)\overline{x}.\]
		Note that the residue representation
		$\overline{r'}|_{\Gamma_{\mathfrak{p}_{L^+}}}=\overline{\rho}_{\Pi}|_{\Gamma_{\mathfrak{p}_{L^+}}}$
		is upper triangular with distinct diagonal entries
		$\overline{\psi}_1,\overline{\psi}_2,\ldots,\overline{\psi}_n$.
		So \textit{a priori} we have also
		\[\overline{r'} (\sigma)\overline{r'} (\tau)\overline{x}=\overline{\psi}_1 (\sigma)\overline{r'} (\tau)\overline{x}.\]
		By assumption $r$ is invariant under the action of $\Delta$,
		so are $\psi_1$ and $\overline{\psi}_1$.
		Therefore $\overline{\psi}_1 (\sigma)=\overline{\psi}_1 (\sigma^{\tau})$.
		Since these characters $\psi_1,\psi_2,\ldots,\psi_n$ are distinct,
		we conclude that $\psi_1 (\sigma^{\tau})=\psi_1 (\sigma)$.
		This shows that $\tau$ leaves $Bx$ stable and thus acts as a non-zero scalar in $B^{\times}$.
		This shows that we can extend the character $\psi_1$ from
		$\Gamma_{\mathfrak{p}_{L_1^+}}$ to the larger group
		$\Gamma_{\mathfrak{p}_{L^+}}$
		by setting $\psi_1(\tau)$ to be the above scalar.
		Then we can consider the representation $V/Bx$ and repeat the above argument for $\psi_2$
		(induction on $\psi_i$)
		and we conclude that $r$ extends to an upper triangular representation $r$ of $\Gamma_{\mathfrak{p}_{L^+}}$.
		This shows that $\alpha_2$ is an isomorphism.
		
		To show that $\alpha_2\circ\alpha_1$ is an isomorphism,
		it is sufficient to show that the above extension $r'$ is unique.
		This is the same as the second part of the first proof.
		It is not hard to show that  the functor
		$\mathcal{D}_{L_1^+}^{\Delta}$, resp. $\mathcal{D}_{L^+,L_1^+}$
		is representable by $ (R_{L_1^+,\Delta},\rho_{L_1^+}\pmod{\mathfrak{a}})$, 
		resp. $ (\mathrm{Im} (\alpha),\alpha (\rho_{L_1^+}))$		
		(see \cite[Proposition A.2.2]{HidaParis94})
		where $\mathfrak{a}$ is the ideal
		$\sum_{\gamma\in\Delta}R_{L_1^+} (\gamma-1)R_{L_1^+}$ of $R_{L_1^+}$
		(see \cite[Proposition A.2.1]{HidaParis94}).
		So we conclude that $\mathrm{Im} (\alpha)=R_{L^+}$ and the theorem follows.
	\end{proof}

    We deduce the following result 
	on the control of the K\"{a}hler differentials.
	\begin{corollary}
		Suppose that $B$ is an $R_{L^+}$-algebra,
		then we have the following isomorphism
		\[ (\Omega_{R_{L_1^+}/\Lambda_{L_1^+}}\bigotimes_{R_{L_1^+}}B)_{\Delta}\simeq\Omega_{R_{L^+}/\Lambda_{L_1^+,L^+}}\bigotimes_{R_{L^+}}B\]
	\end{corollary}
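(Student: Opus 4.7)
The plan is to combine three standard tools: the fact that tensoring with a $\Delta$-trivial module commutes with $\Delta$-coinvariants, the conormal exact sequence for a surjection of rings, and the fundamental exact sequence for a change of base ring. After a short reduction, the whole corollary boils down to a single identification of two submodules inside a common ambient module.

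First, a reduction to $B = R_{L^+}$. Since $B$ is an $R_{L^+}$-algebra and hence carries the trivial $\Delta$-action, tensoring commutes with $\Delta$-coinvariants, so
\[
(\Omega_{R_{L_1^+}/\Lambda_{L_1^+}} \otimes_{R_{L_1^+}} B)_\Delta \simeq (\Omega_{R_{L_1^+}/\Lambda_{L_1^+}})_\Delta \otimes_{R_{L^+}} B.
\]
I use here that $(\Omega_{R_{L_1^+}/\Lambda_{L_1^+}})_\Delta$ is automatically an $R_{L^+}$-module, because every element $\sigma r - r$ of the kernel ideal $\mathfrak{a} := \sum_{\sigma \in \Delta} R_{L_1^+}(\sigma-1)R_{L_1^+}$ acts as zero on the coinvariants (from $\sigma(r\omega) = \sigma(r)\sigma(\omega) \equiv r\omega$). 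It therefore suffices to establish
\[
(\Omega_{R_{L_1^+}/\Lambda_{L_1^+}})_\Delta \simeq \Omega_{R_{L^+}/\Lambda_{L_1^+,L^+}}.
\]

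The right-hand side simplifies as follows. Applying the fundamental exact sequence of Kähler differentials to the surjective tower $\Lambda_{L_1^+} \twoheadrightarrow \Lambda_{L_1^+,L^+} \to R_{L^+}$, the first term $\Omega_{\Lambda_{L_1^+,L^+}/\Lambda_{L_1^+}}$ vanishes (the map is surjective), giving $\Omega_{R_{L^+}/\Lambda_{L_1^+,L^+}} \simeq \Omega_{R_{L^+}/\Lambda_{L_1^+}}$. Next, by Theorem \ref{control} we have $R_{L^+} \simeq R_{L_1^+}/\mathfrak{a}$; the conormal exact sequence for this quotient reads
\[
\mathfrak{a}/\mathfrak{a}^2 \xrightarrow{\partial} R_{L^+}\otimes_{R_{L_1^+}}\Omega_{R_{L_1^+}/\Lambda_{L_1^+}} \to \Omega_{R_{L^+}/\Lambda_{L_1^+}} \to 0.
\]
Thus the desired isomorphism reduces to identifying, inside $R_{L^+}\otimes_{R_{L_1^+}}\Omega_{R_{L_1^+}/\Lambda_{L_1^+}} \simeq \Omega_{R_{L_1^+}/\Lambda_{L_1^+}}/\mathfrak{a}\,\Omega_{R_{L_1^+}/\Lambda_{L_1^+}}$, the image of $\partial$ with the submodule generated by the coinvariant relations $\sigma\omega - \omega$.

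The main obstacle is precisely this last identification, which is where the group-theoretic content actually enters. The inclusion $\supseteq$ is immediate: since $\mathfrak{a}$ is generated by elements $\sigma r - r$ and $d(\sigma r - r) = \sigma(dr) - dr$, every coinvariant relation on a generator $1 \otimes dr$ lies in $\operatorname{Im}(\partial)$. For $\subseteq$, writing a general element of $\mathfrak{a}$ as $a = \sum s_i(\sigma_i r_i - r_i)$, the Leibniz rule gives $da = \sum s_i \bigl(\sigma_i(dr_i) - dr_i\bigr) + \sum (\sigma_i r_i - r_i)\,ds_i$, and the second sum vanishes modulo $\mathfrak{a}\,\Omega_{R_{L_1^+}/\Lambda_{L_1^+}}$, so $1 \otimes da$ becomes an $R_{L^+}$-linear combination of coinvariant relations. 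A dual bookkeeping, expanding $\sigma\omega - \omega$ for an arbitrary $\omega = \sum r'_i\,dr_i$ and again using $(\sigma r'_i - r'_i) \in \mathfrak{a}$ to kill the cross terms, closes the argument. These Leibniz-rule calculations are routine but constitute the only place where the matching of the two ideals is verified directly; everything else is formal.
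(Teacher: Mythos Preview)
Your steps 2--4 are correct and in fact prove the right thing, but step~1 contains a genuine error. The displayed reduction
\[
(\Omega_{R_{L_1^+}/\Lambda_{L_1^+}} \otimes_{R_{L_1^+}} B)_\Delta \;\simeq\; (\Omega_{R_{L_1^+}/\Lambda_{L_1^+}})_\Delta \otimes_{R_{L^+}} B
\]
is false, and your justification that $\mathfrak{a}$ annihilates $\Omega_\Delta$ is circular: from $\sigma(r\omega)\equiv r\omega$ you get $[\sigma(r)\sigma(\omega)]=[r\omega]$, but to pass to $[\sigma(r)\omega]=[r\omega]$ you need $\sigma(r)\cdot(\sigma-1)\Omega\subseteq\sum_\tau(\tau-1)\Omega$, which is exactly the claim $\mathfrak{a}\Omega\subseteq\sum_\tau(\tau-1)\Omega$ under discussion. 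This inclusion fails in general: with $R_1=k[x]$ over $\Lambda_1=k$ (char $k\neq 2$) and $\sigma(x)=-x$, one has $\mathfrak{a}\Omega=xk[x]\,dx$ while $(\sigma-1)\Omega=k[x^2]\,dx$, and $x\,dx$ lies in the former but not the latter. Consequently your reduced target $(\Omega)_\Delta\simeq\Omega_{R_{L^+}/\Lambda_{L_1^+,L^+}}$ is itself false in this example (the left side is infinite-dimensional over $k$, the right side is zero), even though the original corollary holds there.

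The fix is immediate and is what your conormal-sequence computation already establishes. Since $\Omega\otimes_{R_{L_1^+}}B=(\Omega/\mathfrak{a}\Omega)\otimes_{R_{L^+}}B$ and $\Omega/\mathfrak{a}\Omega$ is a genuine $R_{L^+}$-module with $R_{L^+}$-\emph{linear} $\Delta$-action, coinvariants do commute with $\otimes_{R_{L^+}}B$, yielding $(\Omega\otimes_{R_{L_1^+}}B)_\Delta\simeq(\Omega/\mathfrak{a}\Omega)_\Delta\otimes_{R_{L^+}}B$. Your Leibniz-rule identification of $\mathrm{Im}(\partial)$ with the image of $\sum_\sigma(\sigma-1)\Omega$ inside $\Omega/\mathfrak{a}\Omega$ then gives $(\Omega/\mathfrak{a}\Omega)_\Delta\simeq\Omega_{R_{L^+}/\Lambda_{L_1^+,L^+}}$, which is the correct reduced statement. (The paper gives no direct argument here, observing instead that this is the special case $C_{L_1^+}=\Lambda_{L_1^+}$ of Corollary~\ref{controlofKahler} and referring to \cite[Corollary~3.4]{HidaAdjointSelmer}.)
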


	This corollary is a special case of the following.
	\begin{corollary}\label{controlofKahler}
		Let $C_{L_1^+}$ be an object in $\mathrm{CNL}_\mathcal{O}$ 
		with a continuous action of $\Delta$. 
		Suppose that $R_{L_1^+}$ is a $C_{L_1^+}$-algebra
		and that the actions of $\Delta$ on 
		$C_{L_1^+}$ and $R_{L_1^+}$ are compatible.
		Thus $R_{L^+}$ is a $C_{L^+}$-algebra
		with $C_{L^+}:=(C_{L_1^+})_\Delta$.
		Then we have the following isomorphism
		\[
		(\Omega_{R_{L_1^+}/C_{L_1^+}}\bigotimes_{R_{L_1^+}}B)_\Delta
		\simeq
		\Omega_{R_{L^+}/C_{L^+}}\bigotimes_{R_{L^+}}B.
		\]
	\end{corollary}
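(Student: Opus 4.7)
The plan is to reduce the statement to the case $B = R_{L^+}$, since coinvariants commute with tensoring by a $\Delta$-trivially-acted module. More precisely, for any $R_{L^+}$-algebra $B$ (viewed as a $\Delta$-trivial module), the natural map $M_\Delta \otimes_{R_{L^+}} B \to (M \otimes_{R_{L^+}} B)_\Delta$ is an isomorphism for any $R_{L_1^+}$-module $M$ on which $\Delta$ acts compatibly, because taking coinvariants is a right-exact functor that commutes with right-exact tensor operations when the second factor has trivial $\Delta$-action. Applied to $M = \Omega_{R_{L_1^+}/C_{L_1^+}} \otimes_{R_{L_1^+}} R_{L^+}$, it reduces the corollary to proving the isomorphism
\[
\Omega_{R_{L^+}/C_{L^+}} \;\simeq\; (\Omega_{R_{L_1^+}/C_{L_1^+}} \otimes_{R_{L_1^+}} R_{L^+})_\Delta.
\]

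To prove this identification, I would form the pushout $R' := R_{L_1^+} \otimes_{C_{L_1^+}} C_{L^+}$, which carries a compatible $\Delta$-action (trivial on the $C_{L^+}$-factor by construction). Standard base change of K\"{a}hler differentials gives $\Omega_{R'/C_{L^+}} \simeq \Omega_{R_{L_1^+}/C_{L_1^+}} \otimes_{R_{L_1^+}} R'$. The surjection $R' \twoheadrightarrow R_{L^+}$ has kernel $\mathfrak{a}'$ equal to the image in $R'$ of the augmentation ideal $\mathfrak{a} = \sum_{\sigma \in \Delta} R_{L_1^+}(\sigma - 1)R_{L_1^+}$; concretely $\mathfrak{a}'$ is generated by the elements $(\sigma(r) - r)\otimes 1$ for $\sigma \in \Delta$, $r \in R_{L_1^+}$. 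The conormal exact sequence then yields
\[
\mathfrak{a}'/(\mathfrak{a}')^2 \longrightarrow \Omega_{R_{L_1^+}/C_{L_1^+}} \otimes_{R_{L_1^+}} R_{L^+} \longrightarrow \Omega_{R_{L^+}/C_{L^+}} \longrightarrow 0.
\]

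The key computation is to identify the image of the first arrow. Since $\Delta$ acts compatibly on $C_{L_1^+}$ and $R_{L_1^+}$, one gets a natural semi-linear action of $\Delta$ on $\Omega_{R_{L_1^+}/C_{L_1^+}}$ via $\sigma_\ast(dr) = d(\sigma r)$, which becomes $R_{L^+}$-linear after tensoring with $R_{L^+}$ (as $\Delta$ acts trivially on $R_{L^+}$). Under the conormal map, a generator $(\sigma(r)-r)\otimes 1$ of $\mathfrak{a}'/(\mathfrak{a}')^2$ is sent to $\sigma_\ast(dr)\otimes 1 - dr\otimes 1 = (\sigma-1)(dr\otimes 1)$. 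Using the $R_{L^+}$-linearity of $(\sigma-1)$ on the target and the fact that $\Omega_{R_{L_1^+}/C_{L_1^+}}\otimes_{R_{L_1^+}}R_{L^+}$ is spanned over $R_{L^+}$ by the elements $dr\otimes 1$, I conclude that the image of $\mathfrak{a}'/(\mathfrak{a}')^2$ coincides with the augmentation submodule $\sum_{\sigma \in \Delta}(\sigma-1)(\Omega_{R_{L_1^+}/C_{L_1^+}}\otimes_{R_{L_1^+}} R_{L^+})$. The cokernel is therefore exactly the $\Delta$-coinvariants, as desired.

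The main obstacle is the bookkeeping of the $\Delta$-action on $\Omega_{R_{L_1^+}/C_{L_1^+}}$: it is $\mathcal{O}$-linear but only semi-linear over $R_{L_1^+}$, and one must check that after the base change $R_{L_1^+} \to R_{L^+}$ the semi-linearity is absorbed (so that coinvariants make sense in the expected way). Once this verification is in place, combining the identification above with the first paragraph yields the corollary; the previous corollary is the special case $C_{L_1^+} = \Lambda_{L_1^+}$, whose image in $R_{L^+}$ is $\Lambda_{L_1^+,L^+}$ by construction.
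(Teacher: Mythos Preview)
Your argument is correct and is essentially the standard one: the paper itself gives no proof beyond citing \cite[Corollary 3.4]{HidaAdjointSelmer}, and what you have written is a faithful reconstruction of that argument---reduce to $B=R_{L^+}$, form the pushout $R'=R_{L_1^+}\otimes_{C_{L_1^+}}C_{L^+}$, apply base change of K\"ahler differentials together with the conormal sequence for $R'\twoheadrightarrow R_{L^+}=(R_{L_1^+})_\Delta$ (Theorem~\ref{control}), and identify the image of $\mathfrak{a}'/(\mathfrak{a}')^2$ with the augmentation submodule. Your remark that the semi-linear $\Delta$-action on $\Omega_{R_{L_1^+}/C_{L_1^+}}$ becomes $R_{L^+}$-linear after tensoring (because $\pi(\sigma r)=\pi(r)$ for the quotient map $\pi\colon R_{L_1^+}\to R_{L^+}$) is exactly the point needed to make the coinvariant computation go through.
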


   The proof is identical to the proof of \cite[Corollary 3.4]{HidaAdjointSelmer}.

	\section{Selmer groups and congruence ideals}

	\subsection{Preliminary on commutative algebra}
	
	First recall the notion of congruence ideals as in
	\cite[Section 8.2]{HidaTilouine}.
	Let $B$ be a normal noetherian domain and $R$ is a reduced finite flat $B$-algebra with a morphism of $B$-algebras $\phi\colon R\to B$.
	Since $R$ is reduced, the fractional ring $\mathrm{Frac} (R)$
	is a product of fields which has the form
	$\mathrm{Frac} (R)=\mathrm{Frac} (B)\oplus X$.
	We write $I=\mathrm{Ker} (R\to\mathrm{Frac} (R)\to X)$.
	\begin{definition}\label{congruenceideal}
		The \textbf{congruence ideal} of $\phi\colon R\to B$ is
		\[\mathfrak{c}_{\phi}=\mathrm{Ann}_B ((R/I)\bigotimes_RB)\subset B.\]
	\end{definition}
	We have the following property of congruence ideals(see \cite[Corollary 8.6]{HidaTilouine})
	\begin{proposition}\label{DecompositionPropertyOfCongruenceIdeal}
		Let $B$ be a normal profinite local domain of residue characteristic $p$. Suppose that $R\xrightarrow{\alpha}S\xrightarrow{\beta}A$
		are surjective morphisms of reduced local finite flat Gorenstein $B$-algebras 
		($R$ is a Gorenstein $B$-algebra, i.e.,
		$\mathrm{Hom}_{B-\mathrm{alg}}(R,B)$ 
		is a free $R$-module of rank one.
		Similarly for $S$ and $A$).
		Write $\lambda=\beta\circ\alpha$.
		Then
		\[\mathfrak{c}_{\lambda}=\mathfrak{c}_{\beta}\beta (\mathfrak{c}_{\alpha}).\]
	\end{proposition}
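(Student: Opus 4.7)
The plan is to recast congruence ideals via Gorenstein duality and then track cyclic generators through the composition. First I would observe that, for any surjection $\phi\colon R\twoheadrightarrow A$ of reduced finite flat Gorenstein $B$-algebras, the congruence ideal admits the description $\mathfrak{c}_\phi=\phi(\mathrm{Ann}_R(\ker\phi))$: the idempotent of $\mathrm{Frac}(R)$ cutting out the $\mathrm{Frac}(A)$-component identifies the ideal $I$ of Definition \ref{congruenceideal} with $\mathrm{Ann}_R(\ker\phi)$, and hence $(R/I)\otimes_R A\cong A/\phi(\mathrm{Ann}_R(\ker\phi))$, whose annihilator in $A$ is $\phi(\mathrm{Ann}_R(\ker\phi))$.

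Next, using the Gorenstein hypothesis on both $R$ and $A$, I would invoke the chain of $A$-module isomorphisms
\[
\mathrm{Ann}_R(\ker\phi)\;\cong\;\mathrm{Hom}_R(A,R)\;\cong\;\mathrm{Hom}_R\bigl(A,\mathrm{Hom}_B(R,B)\bigr)\;\cong\;\mathrm{Hom}_B(A,B)\;\cong\;A,
\]
where the second isomorphism uses $\mathrm{Hom}_B(R,B)\cong R$ (Gorenstein-ness of $R/B$), the third is Hom-tensor adjunction, and the last uses Gorenstein-ness of $A/B$. Consequently $\mathrm{Ann}_R(\ker\phi)$ is free of rank one over $A$, generated by some $\eta_\phi$, and $\mathfrak{c}_\phi=\phi(\eta_\phi)\,A$. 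Applying this to each of $\alpha$, $\beta$, $\lambda$, fix generators $\eta_\alpha\in\mathrm{Ann}_R(\ker\alpha)$ and $\eta_\beta\in\mathrm{Ann}_S(\ker\beta)$, and choose a lift $\tilde\eta_\beta\in R$ of $\eta_\beta$.

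The core of the argument is then the claim that $\eta_\alpha\tilde\eta_\beta$ serves as a cyclic $A$-generator of $\mathrm{Ann}_R(\ker\lambda)$. Membership is direct: for $z\in\ker\lambda$ one has $\alpha(z)\in\ker\beta$, so $\alpha(\tilde\eta_\beta z)=\eta_\beta\alpha(z)=0$ in $S$, hence $\tilde\eta_\beta z\in\ker\alpha$, and then $\eta_\alpha\tilde\eta_\beta z=0$. For the generation statement, I would use the $S$-linear identification $\mathrm{Ann}_R(\ker\alpha)\cong S$ sending $\eta_\alpha\tilde s\mapsto\alpha(\tilde s)$, and observe that the condition $\eta_\alpha\tilde s\in\mathrm{Ann}_R(\ker\lambda)$ translates exactly to $\alpha(\tilde s)\in\mathrm{Ann}_S(\ker\beta)=\eta_\beta A$; lifting back gives $\mathrm{Ann}_R(\ker\lambda)=\eta_\alpha\tilde\eta_\beta\cdot R$. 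Applying $\lambda$ yields
\[
\mathfrak{c}_\lambda\;=\;\lambda(\eta_\alpha\tilde\eta_\beta)\,A\;=\;\beta\bigl(\alpha(\eta_\alpha)\bigr)\,\beta(\eta_\beta)\,A\;=\;\beta(\mathfrak{c}_\alpha)\,\mathfrak{c}_\beta.
\]

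The hard part is the Gorenstein duality step: one must carefully justify $\mathrm{Hom}_B(R,B)\cong R$ as $R$-modules in the relative profinite finite-flat setting (and similarly for $S$ and $A$), together with the compatibility of $R$- and $A$-module structures through the Hom-tensor adjunction. Once these free rank-one identifications are secured, the rest is a transparent manipulation of lifts; in particular, independence of the choice of $\tilde\eta_\beta$ is automatic, since any two lifts differ by an element of $\ker\alpha$, which $\eta_\alpha$ annihilates.
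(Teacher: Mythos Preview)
The paper does not prove this proposition; it simply cites \cite[Corollary 8.6]{HidaTilouine}. Your argument is correct and is essentially the standard Gorenstein duality proof that appears in that reference: rewrite $\mathfrak{c}_\phi=\phi(\mathrm{Ann}_R(\ker\phi))$, use the chain $\mathrm{Ann}_R(\ker\phi)\cong\mathrm{Hom}_R(A,R)\cong\mathrm{Hom}_B(A,B)\cong A$ to produce cyclic generators $\eta_\alpha,\eta_\beta$, and then verify that $\eta_\alpha\tilde\eta_\beta$ generates $\mathrm{Ann}_R(\ker\lambda)$ over $A$.

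One minor comment: you describe the Gorenstein duality isomorphism $\mathrm{Hom}_B(R,B)\cong R$ as ``the hard part'' that must be ``carefully justified in the relative profinite finite-flat setting''. In fact this is precisely the \emph{hypothesis} of the proposition (see the parenthetical in the statement), so nothing needs to be justified here; it is simply given. The genuinely nontrivial input, if one wanted to apply the proposition, is verifying this Gorenstein property for the specific Hecke algebras of interest --- but that is not part of the present proof.
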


   Next recall the notion of characteristic ideals.
   Let $B$ be a normal noetherian domain and 
   $M$ be a finitely generated torsion $B$-module. 
   Then by 
   \cite[Chapitre 7, Th\'{e}or\`{e}me 5, p.253]{BourbakiAlgebreCommutative},
   the module $M$ is pseudo-isomorphic 
   to a $B$-module $M'$ of the form
   $M'=\bigoplus_{i\in I}B/P_i^{n_i}$,
   where $I$ is a finite index set, 
   $P_i$ are prime ideals of $B$ of height one 
   and $n_i$ are positive integers. 
   Moreover, the set of the pairs $(P_i,n_i)$ 
   is determined by the module $M$.
   \begin{definition}\label{DefinitionOfCharacteristicElement}
   The \textbf{characteristic ideal} 
   $\mathrm{char}(M)=\mathrm{char}_B(M)$ 
   of $M$ is
   \[\mathrm{char}(M)=\prod_{i\in I}P_i^{n_i}.\]
   \end{definition}
   If we assume moreover that $B$ is \textbf{regular}, 
   then each $P_i$ is in fact a principal ideal 
   (see \cite[Tags.0AG0, 0AFT]{StackProject}), 
   generated by an element in $B$, say, $b_i$. 
   Then we also use $\mathrm{char}(M)=\mathrm{char}_B(M)$ 
   to denote
   the \textbf{characteristic element} of $M$ defined as 
   (it will be clear from the context whether $\mathrm{char}(M)$ 
   denotes an ideal or an element of $B$)
   \[\mathrm{char}(M)=\prod_{i\in I}b_i^{n_i}.\]
   Note that $\mathrm{char}(M)$ is only unique up to a unit in $B$. 
   We will mainly deal with the cases $B=\tilde{A}$ 
   or $B=\tilde{A}[[S]]$. 
   We will assume that $\tilde{A}$ is regular, 
   so that we can speak of characteristic elements of finitely generated torsion
   modules over $\tilde{A}$ or $\tilde{A}[[S]]$.
   We have the following simple observation on characteristic ideals
   
   \begin{lemma}\label{PropertyOfCharacteristicIdeal}
   	Let $B$ be a regular normal noetherian local domain and $B_\infty=B[[S]]$.
   	Let $M_\infty$ be a finitely generated torsion $B_\infty$-module and  $M=M_\infty/SM_\infty$ be a finitely generated torsion $B$-module. 
   	Suppose that $M_\infty$, resp. $M$, has no non-trivial $B_\infty$-submodule,
   	resp. $B$-submodule pseudo-isomorphic to $0$. 
   	Then
   	\[\mathrm{char}_{B_\infty}(M_\infty)+(S)(\mathrm{mod}\ S)=\mathrm{char}_B(M).\]
   \end{lemma}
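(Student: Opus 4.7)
The plan is to reduce the characteristic-ideal equality to the standard structure of torsion modules over the regular UFD $B_\infty = B[[S]]$, whose height-$1$ primes are principal, and then to interpret everything modulo $S$. By the structure theorem, together with the hypothesis that $M_\infty$ contains no non-trivial pseudo-null submodule, one obtains a short exact sequence
\[
0 \longrightarrow M_\infty \longrightarrow E_\infty := \bigoplus_{i} B_\infty/(f_i^{n_i}) \longrightarrow C \longrightarrow 0,
\]
where the $f_i$ are (pairwise non-associate) irreducibles of $B_\infty$ and $C$ is pseudo-null over $B_\infty$.

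The first key step will be to show that no $f_i$ is associate to $S$: if some $f_j \sim S$, then $E_\infty/SE_\infty$ would contain a free $B$-summand isomorphic to $B_\infty/(S^{n_j}) \otimes_{B_\infty} B \cong B$, and since $M = M_\infty/SM_\infty$ is $B$-torsion its image in that summand would vanish, forcing $B$ to embed into the cokernel $C/SC$; but $C/SC$ is $B$-torsion (its annihilator contains the image in $B$ of $\Ann_{B_\infty}(C) + (S)$, an ideal of height $\geq 1$), a contradiction. Consequently $S$ is a nonzero divisor on $E_\infty$ and on $M_\infty$, so tensoring the exact sequence above with $B$ gives the clean four-term exact sequence
\[
0 \longrightarrow C[S] \longrightarrow M \longrightarrow E_\infty/SE_\infty \longrightarrow C/SC \longrightarrow 0
\]
of finitely generated torsion $B$-modules, in which $E_\infty/SE_\infty = \bigoplus_i B/(\overline{f}_i)^{n_i}$ has characteristic ideal equal to the image in $B$ of $\mathrm{char}_{B_\infty}(M_\infty) = \prod_i (f_i^{n_i})$. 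By the multiplicativity of $\mathrm{char}_B$ in short exact sequences, the lemma now reduces to the equality $\mathrm{char}_B(C[S]) = \mathrm{char}_B(C/SC)$.

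I will prove this last equality by localizing at an arbitrary height-$1$ prime $\mathfrak{p}$ of $B$. Localizing $B_\infty$ at the multiplicative set $T = B \setminus \mathfrak{p}$ produces a two-dimensional Noetherian local ring whose maximal ideal is the image of $\mathfrak{p} B_\infty + (S)$; pseudo-nullity of $C$ over $B_\infty$ implies that its localization is supported only at this maximal ideal and therefore has finite length. The vanishing of the Euler characteristic of the complex $[C \xrightarrow{S} C]$ after this localization then yields $\length_{B_\mathfrak{p}} ((C[S])_\mathfrak{p}) = \length_{B_\mathfrak{p}} ((C/SC)_\mathfrak{p})$ for every such $\mathfrak{p}$, and the desired equality of characteristic ideals follows. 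The main obstacle I expect is precisely this final finite-length computation: one must carefully track which primes of $B_\infty$ survive in $T^{-1}B_\infty$ and verify that the pseudo-nullity of $C$ over $B_\infty$ descends to Artinian-ness at the relevant two-dimensional local ring, which is where the regularity of $B$ enters decisively.
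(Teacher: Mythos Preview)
Your argument is correct and, at the key step, more careful than the paper's own proof. Both proofs start from the structure embedding $0 \to M_\infty \to E_\infty \to C \to 0$ with $C$ pseudo-null over $B_\infty$. The paper then asserts, citing going-up, that $C/SC$ is pseudo-null over $B$, and uses the hypothesis on $M$ to deduce a \emph{short} exact sequence $0 \to M \to E_\infty/SE_\infty \to C/SC \to 0$. That shortcut does not work in general: with $B = k[[x,y]]$, $E_\infty = B_\infty/(x^2)$ and $M_\infty = (x,S)E_\infty$, one finds $C = B_\infty/(x,S)$, so $C/SC = C[S] = B/(x)$ is supported at the height-$1$ prime $(x)$, and the map $M \to E_\infty/SE_\infty$ has nonzero kernel $B/(x)$. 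Your route---keeping the full four-term sequence and proving $\mathrm{char}_B(C[S]) = \mathrm{char}_B(C/SC)$ by a length computation---is the robust one, and your preliminary step ruling out $f_i \sim S$ (which the paper leaves implicit) is likewise necessary.

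One correction to your final step: the ring $T^{-1}B_\infty$ with $T = B \setminus \mathfrak{p}$ is usually \emph{not} local (for $B = k[[x,y]]$ and $\mathfrak{p} = (x)$, both $(x,S)$ and $(x,y-S)$ survive as maximal ideals). Localize instead at the height-$2$ prime $\mathfrak{P} = \mathfrak{p} B_\infty + (S)$ of $B_\infty$. Then $(B_\infty)_\mathfrak{P}$ is regular local of dimension $2$; pseudo-nullity of $C$ forces $C_\mathfrak{P}$ to be supported only at the closed point and hence of finite length; and since $(B_\infty)_\mathfrak{P}/(S) = B_\mathfrak{p}$, the modules $(C[S])_\mathfrak{p}$ and $(C/SC)_\mathfrak{p}$ coincide with $C_\mathfrak{P}[S]$ and $C_\mathfrak{P}/SC_\mathfrak{P}$ as $B_\mathfrak{p}$-modules. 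The exact sequence $0 \to C_\mathfrak{P}[S] \to C_\mathfrak{P} \xrightarrow{S} C_\mathfrak{P} \to C_\mathfrak{P}/SC_\mathfrak{P} \to 0$ then gives the required equality of lengths.
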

   \begin{proof}
   	Since $B$ is regular, so is $B_\infty$. 
   	Suppose that $M_\infty$ is $B_\infty$-pseudo-isomorphic to $Q_\infty=\prod_{i\in I}B_\infty/P_i^{n_i}$ with ${\mathrm{char}_{B_\infty}(M_\infty)=\prod_{i\in I}P_i^{n_i}}$ 
   	where each $P_i$ is generated by a power series $f_i(S)\in B_\infty$.
   	Since $M_\infty$ has no non-trivial pseudo-zero $B_\infty$-submodule, we have an exact sequence of $B_\infty$-modules
   	\[0\to M_\infty\to Q_\infty\to N_\infty\to 0\]
   	such that $N_\infty$ is a pseudo-zero $B_\infty$-module.
   	It is easy to see that $N_\infty/SN_\infty$ is a pseudo-zero $B$-module 
   using the $B$-algebra morphism $B\to B_\infty$ and going-up theorem in commutative algebra.
   	
   	Modulo the above exact sequence by $S$ and using the assumption that $M$ has no non-trivial pseudo-zero $B$-submodule, 
   	we get another exact sequence of $B$-modules
   	\[0\to M\to Q_\infty/SQ_\infty\to N_\infty/SN_\infty\to0.\]
   	
   	Thus we get that $\mathrm{char}_B(M)=\mathrm{char}_B(Q_\infty/SQ_\infty)=\prod_{i\in I}f_i(0)^{n_i}B$. 
   	This concludes the proof.
   \end{proof}
   
   Then recall the notion of Fitting ideals.
   \begin{definition}
   	Let $B$ be a noetherian ring and $\mathrm{Q}(B)$ its total ring of fractions. 
   	Let $C$ be an $r\times s$-matrix with entries in $\mathrm{Q}(B)$. 
   	For any integer $i\geq0$, the $i$-th \textbf{Fitting (fractional) ideal} $\mathrm{F}^{(i)}_B(C)$
   	of $C$ over $B$ is the fractional ideal of $B$ generated 
   	by all the $(r-i)\times(r-i)$-minors of $C$ 
   	(if $r\leq i$ or $s<r$, we put $\mathrm{F}^{(i)}_B(C)=B$).
   	
   	Let $M$ be a finitely generated $B$-module. 
   	Suppose that $B^a\xrightarrow{h}B^b\to M\to0$ is a presentation of $M$.
   	Denote by $H$ the $b\times a$-matrix for the morphism $h$ under the standard 
   	$B$-basis of $B^a$ and $B^b$. Then the $i$-th \textbf{Fitting ideal}
   	$\mathrm{F}^{(i)}_B(M)$ of $M$ over $B$ is $\mathrm{F}^{(i)}_B(H)$.
   	
   	When $i=0$, we also write $\mathrm{F}_B(C)$, resp. $\mathrm{F}_B(M)$ for $\mathrm{F}^{(0)}_B(C)$, resp. $\mathrm{F}^{(0)}_B(M)$.
   \end{definition}

   It can be shown that $\mathrm{F}^{(i)}_B(M)$ 
   is independent of the choice of the presentation $h$ for $M$.
   
   Recall briefly the notion of reflexive envelopes
   for ideals in $B$.
   For any ideal $I$ of $B$, let $\tilde{I}$ be
   $\mathrm{Hom}_B(\mathrm{Hom}_B(I,B),B)$.
   $\tilde{I}$ is called the \textbf{reflexive envelope} of $I$ 
   (see \cite[Chapitre 7, \textsection4,
   $\mathrm{n}^\circ2$]{BourbakiAlgebreCommutative}).
   If $B$ is normal, then $\tilde{I}$ is isomorphic to the ideal
   $\bigcap_PI_P$, 
   the intersection of the localizations $I_P$ of $I$ 
   at the prime ideals $P$ of $B$ of height one.
   We write $\tilde{\mathrm{F}}_B(M)$ for the reflexive envelope of
   $\mathrm{F}_B(M)$.

   We then list some properties of Fitting ideals that will be used later.
   \begin{proposition}\label{PropertiesOfFittingIdeal}
   	\begin{enumerate}
   		\item If $I$ is an ideal of $B$, then
   		$\mathrm{F}_{B/I}(M/IM)=\mathrm{F}_B(M)/I\mathrm{F}_B(M).$
   		\item If $M_1$ and $M_2$ are finitely generated $B$-modules, then
   		$\mathrm{F}^{(i)}_B(M_1\oplus M_2)=\sum_{j+l=i}\mathrm{F}^{(j)}_B(M_1)\mathrm{F}^{(l)}_B(M_2)$.
   		\item If $0\to M_1\to M_2\to M_3\to0$ is an exact sequence of finitely generated $B$-modules, then
   		$\mathrm{F}^{(i)}_B(M_2)\supset\sum_{j+l=i}\mathrm{F}^{(j)}_B(M_1)\mathrm{F}^{(l)}_B(M_3)$.
   		If moreover $M_3$ has a presentation of the form 
   		${B^a\to B^a\to M_3\to0}$, 
   		then $\mathrm{F}_B(M_3)$ is a principal ideal of $B$ and
   		$\mathrm{F}_B(M_2)=\mathrm{F}_B(M_1)\mathrm{F}_B(M_3).$
   		\item If $S$ is a multiplicative subset of $B$ not containing $0$. We write the subscript $S$ to indicate localization with respect to $S$. Then 
   		$\mathrm{F}_{B_S}(M_S)=\mathrm{F}_B(M)_S.$
   		\item If $B$ is a discrete valuation ring, then
   		$\mathrm{F}_B(M)=\mathrm{char}_B(M)$.
   		If moreover $0\to M_1\to M_2\to M_3\to0$ 
   		is an exact sequence of finitely generated torsion $B$-modules,
   		then we have
   		$\mathrm{F}_B(M_1)\mathrm{F}_B(M_3)=\mathrm{F}_B(M_2)$.
   		\item If $B$ is a normal noetherian local domain 
   		and $M$ is a finitely generated torsion $B$-module, then
   		$\tilde{\mathrm{F}}_B(M)=\mathrm{char}_B(M).$
   		In other words, for any prime ideal $P$ in $B$ of height one, we have
   		$\mathrm{F}_B(M)_P=\mathrm{char}_B(M)_P.$
   		\item If $R$ is a complete intersection $B$-algebra 
   		with a morphism of $B$-algebras $R\to B$. 
   		Then the $B$-module $\Omega_{R/B}\otimes_RB$ 
   		has a Fitting ideal $\mathrm{F}_B(\Omega_{R/B}\otimes_RB)$ 
   		which is a principal ideal.
   	\end{enumerate}
   \end{proposition}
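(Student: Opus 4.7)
The statement is a compendium of standard facts about Fitting ideals, so the proof plan is essentially to cite or rederive each from the defining property that Fitting ideals are generated by minors of a presentation matrix. I would begin with (1)--(4), which all follow by direct manipulation of a chosen finite free presentation $B^a\xrightarrow{H} B^b\to M\to 0$. For (1), tensor the presentation with $B/I$ and observe that minors commute with reduction mod $I$. For (2), form the block-diagonal matrix $\mathrm{diag}(H_1,H_2)$ as a presentation of $M_1\oplus M_2$ and expand the $(r-i)\times(r-i)$-minors by rows; the claimed sum formula for $\mathrm{F}^{(i)}$ falls out. For (3), lift a presentation of $M_3$ and combine with one of $M_1$ via the snake/horseshoe construction to get a block upper triangular presentation of $M_2$; the inclusion for $\mathrm{F}^{(i)}(M_2)$ is immediate from Laplace expansion, and when $M_3$ has a square presentation the block is $a\times a$, so the determinant of the composite matrix factors as a product, yielding equality. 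Part (4) is clear since localization commutes with taking minors.

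For (5), invoke the structure theorem for finitely generated modules over a PID: write $M\simeq B^r\oplus\bigoplus_i B/(\pi^{n_i})$. If $r>0$, the presentation matrix has fewer rows than $r+\sum_i 1$ would demand after cancellation, so all top minors vanish and $\mathrm{F}_B(M)=0=\mathrm{char}_B(M)$; if $r=0$, the presentation is the diagonal matrix $\mathrm{diag}(\pi^{n_i})$ whose determinant is $\pi^{\sum_i n_i}$, matching the characteristic ideal. The subsequent multiplicativity in exact sequences of torsion modules is then a consequence of (3), since every module of finite length over a DVR admits a square presentation.

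The key case is (6). By definition of the reflexive envelope, $\tilde{\mathrm{F}}_B(M)=\bigcap_P \mathrm{F}_B(M)_P$ where $P$ ranges over height-one primes of $B$, and since $B$ is normal each $B_P$ is a DVR. By (4), $\mathrm{F}_B(M)_P=\mathrm{F}_{B_P}(M_P)$, and by (5) this equals $\mathrm{char}_{B_P}(M_P)$. On the other hand, $\mathrm{char}_B(M)$ is by construction the ideal whose $P$-localization at every height-one $P$ records the length of $M_P$ over $B_P$. Comparing these at each height-one prime yields the claimed equality of reflexive fractional ideals.

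Finally, (7) is the only part that uses more than generalities about finitely presented modules. Here I would use that a complete intersection $B$-algebra $R$ equipped with an augmentation $R\to B$ admits, locally, a presentation of the form $R\simeq B[[X_1,\ldots,X_n]]/(f_1,\ldots,f_n)$ with each $f_i\in(X_1,\ldots,X_n)$, the augmentation sending $X_i\mapsto 0$. The conormal sequence gives a presentation of $\Omega_{R/B}\otimes_R B$ by the Jacobian matrix $(\partial f_i/\partial X_j)_{i,j=1,\ldots,n}$, which is square of size $n$. Its Fitting ideal is therefore the principal ideal generated by $\det(\partial f_i/\partial X_j)\bmod(X_1,\ldots,X_n)$, as claimed. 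The main obstacle in the whole proposition is making (6) rigorous at the level of fractional ideals (not merely ordinary ideals), but this is handled cleanly by the characterization $\tilde{I}=\bigcap_P I_P$ over height-one primes on a normal domain; the remaining parts are bookkeeping with minors.
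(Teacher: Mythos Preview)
Your proposal is correct and follows essentially the same approach as the paper. The paper cites references (Mazur--Wiles and Northcott) for (1)--(4) rather than sketching the minor-manipulation arguments you give, but your outlines are accurate; for (5)--(7) your arguments are substantively identical to the paper's, including the reduction of (6) to (4) and (5) via localization at height-one primes of a normal domain, and the use in (7) of the conormal/second fundamental exact sequence to obtain a square Jacobian presentation of $\Omega_{R/B}\otimes_R B$.
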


   \begin{proof}
   	\begin{enumerate}
   		\item See \cite[Appendix, 4, p.325]{MazurWiles1984}.
   		\item See \cite[Chapter 3, Exercise 3, p.61]{Northcott}.
   		\item See \cite[Chapter 3, Exercise 2, p.61]{Northcott} for the first part and \cite[Chapter 3, Theorem 22, p.80]{Northcott} for the second part.
   		\item See \cite[Chapter 3, Theorem 3, p.59]{Northcott}.
   		\item Note that the pseudo-isomorphic relation between $B$-modules is the same as the isomorphic relation. 
   		Using (2), the problem is reduced to show that for any $k\geq0$,
   		$\mathrm{F}_B(B/\varpi^kB)=\varpi^kB$
   		where $\varpi$ is a uniformizer of $B$. 
   		The latter follows from the definition of Fitting ideals.
   		\item For any prime ideal $P$ of $B$ of height one, 
   		the localization $B_P$ is a discrete valuation ring. 
   		Write $n(P,N)$ for the length of a $B_P$-module $N$. 
   		Then it is easy to see that ($B$ is regular)
   		\[\tilde{\mathrm{F}}_B(M)=\prod_{\mathrm{ht}(P)=1}
   		P^{n(P,B_P/\mathrm{F}_B(M)_P)}
   		=\prod_{\mathrm{ht}(P)=1}P^{n(P,B_P/\mathrm{F}_{B_P}(M_P))}.\]
   		For each such $P$, $P^{n(P,B_P/\mathrm{F}_{B_P}(M_P))}$ is just
   		$P^{n(P,M_P)}$ by (5). 
   		Since by definition there is
   		$\mathrm{char}_B(M)=\prod_{\mathrm{ht}(P)=1}P^{n(P,M_P)}$, 
   		we conclude.
   		\item Suppose that $R=B[[X_1,X_2,\ldots,X_r]]/(f_1,f_2,\ldots,f_r)$. Write $J=(f_1,\ldots,f_r)$. Now the second fundamental exact sequence of the K\"{a}hler differentials for the natural morphisms ${B\to B[[X_1,\ldots,X_r]]\to R}$ gives us
   		\[J/J^2\bigotimes_{B[[X_1,\ldots,X_r]]}B\to\Omega_{B[[X_1,\ldots,X_r]]/B}\bigotimes_{B[[X_1,\ldots,X_r]]}B\to\Omega_{R/B}\bigotimes_RB\to0.\]
   		This is just
   		$\oplus_{i=1}^rBdf_i\to\oplus_{i=1}^rBdX\to\Omega_{R/B}\otimes_RB\to0,$
   		which gives a presentation of $\Omega_{R/B}\otimes_RB$ and by (3), we conclude.

   	\end{enumerate}
   \end{proof}

   \begin{remark}
   	Let $R$ be a complete intersection $B$-algebra 
   	with a $B$-algebra homomorphism
   	$\phi\colon R\to B$.
   	Suppose also that $R$ is a reduced algebra.
    By a theorem of J.Tate (see \cite[Theorem 8.7]{HidaTilouine}), 
    we have 
    $\mathrm{F}_B(\Omega_{R/B}\otimes_{R,\phi}B)=\mathfrak{c}_{\phi}$.
   \end{remark}
	
	\subsection{Cotorsionness of Selmer groups}
	Let $\Lambda=\mathcal{O}[[X]]$ be the Iwasawa algebra of one variable,
	$eh$ for the big cusp Hida-Hecke algebra 
	(generated over $\mathcal{O}$ by the operators $T_q$ for $q$ prime to $Np$,
	the diamond operators $\langle u\rangle_0$ for $u\in\mathbb{Z}_p^{\times}$ and the Atkin-Lehner operator $U_p$).
	It is known that $h$ is reduced and finite torsion-free over $\Lambda$.
	The $\Lambda$-algebra structure on $h$ is given 
	by sending $1+X$ to $\langle 1+p\rangle_0$.
	Let $\pi$ be a non-CM $p$-ordinary holomorphic cusp automorphic representation of $\mathrm{GL}_2 (\mathbb{A}_{\mathbb{Q}})$,
	cohomological for a local system of highest weight $a>0$,
	of conductor $N$ and level subgroup
	$U^{ (1)}_0 (N)=\{u\in\mathrm{GL}_2 (\hat{\mathbb{Z}})\colon u\pmod{N} \text{ is upper triangular}\}$.
	Let $\mathbf{T}$ be the localization of $eh$ at the maximal ideal associated to residue Galois representation corresponding to $\pi$.
	Let $\mu\colon \mathbf{T}\to A$ be a Hida family passing through $\pi$.
	Then there is a Galois representation
	$\rho_{\mu}\colon \Gamma_{\mathbb{Q}}\to \mathrm{GL}_2 (A)$
	associated to $\mu$.
	Moreover, $\overline{\rho}_{\mu}|_{\Gamma_p}$
	is conjugate to an upper triangular representation whose diagonal entries are
	$ (\mathrm{unr} (\overline{\alpha}),\mathrm{unr} (\overline{\alpha})^{-1}\omega^{-1-a})$
	where $\alpha=\mu (U_p)$ and $\mathrm{unr} (\overline{\alpha})$ is the unramified character of $\Gamma_p$
	taking the Frobenius $\mathrm{Frob}_p$ to $\overline{\alpha}$ and $\omega$ is the modulo $p$ cyclotomic character.
	For any representation $\sigma\colon\Gamma_\mathbb{Q}\to\mathrm{GL}_2(W)$ 
	taking values in any ring $W$, we write
	$\mathcal{A}^n(\sigma)$ for the symmetric power representation
	${(\mathrm{Sym}^{2n}\otimes\mathrm{det}^{-n})\sigma}$. 
	In particular we are interested in the representations $\mathcal{A}^{j}(\rho_\mu)\colon \Gamma_\mathbb{Q}\to\mathrm{GL}_{2j+1}(A)$.
	
	Let $\mathbb{X}\colon \Gamma_{\mathbb{Q}}\to\Lambda^{\times}=\Lambda^{\times}_{\mathbb{Q},2}$
	denote the restriction to $\Gamma_p$ of the unramified deformation
	of the $p$-adic cyclotomic character $\chi$ 
	unramified outside $p\infty$
	(more precisely, if $\chi (\sigma)=\omega (\sigma)u^{\ell (\sigma)}$, then $\mathbb{X} (\sigma)=\chi (\sigma) (1+X)^{\ell (\sigma)}$).
	We write $\Phi_a=\omega^a\mathbb{X}$.
	Then $\mathcal{A}^j(\rho_\mu)|_{\Gamma_p}$ has a filtration
	$ (F^{m (a+1)}\mathcal{A}^j(\rho_\mu))_m$ with $A$-free graded pieces
	$\mathrm{gr}^{m (a+1)}\mathcal{A}^j(\rho_\mu)$ on which $\Gamma_p$ acts by
	$\mathrm{unr} (\alpha)^{2m}\Phi^m_a$. 
	For any $L^+$, the Galois representation
	$\mathcal{A}^j(\rho_\mu)|_{\Gamma_{L^+}}$ is again ordinary at
	$\mathfrak{p}_{L^+}$ and the filtration of this representation restricted to
	$\Gamma_{\mathfrak{p}_{L^+}}$ is just the restriction of the filtration
	$ (F^{m (a+1)}\mathcal{A}^j(\rho_\mu))_{m\in\mathbb{Z}}$ to $\Gamma_{\mathfrak{p}_{L^+}}$.
	
	Let $\tilde{A}$ be the normal closure of $A$ in its fractional field.
	It is a $2$-dimensional normal ring and thus is free over $\Lambda$.
	For any $\mathcal{O}$-module $M$, $M^{\ast}$ denotes its Pontryagin dual
	$\mathrm{Hom}_{\mathcal{O}} (M,K/\mathcal{O})$.
	We define the minimal $p$-ordinary Selmer group $\mathrm{Sel}(L^+,\mathcal{A}^j(\rho_\mu))$ 
	associated to $\rho'=\mathcal{A}^j(\rho_\mu)|_{\Gamma_{L^+}}$ as follows:
	\[
	\mathrm{Ker}
	(H^1 (L^+,\rho'\otimes_{A}\tilde{A}^{\ast})
	\to
	\prod_{\mathfrak{q}\neq \mathfrak{p}_{L^+}}
	H^1 (I_{\mathfrak{q}},\rho'\otimes_A\tilde{A}^{\ast})
	\times
	H^1 (I_{\mathfrak{p}_{L^+}}, (\rho'/F^1\rho')\otimes_A\tilde{A}^{\ast})).
	\]
	
	We set $G=G_{/\mathbb{Q}}=G_{n/\mathbb{Q}}$ for some $n>1$. 
	By \cite{ArthurClozel}, 
	we have the automorphic cyclic base change functoriality
	$\mathrm{GL}_{2/\mathbb{Q}}\leadsto 
	\mathrm{GL}_{2/\mathbb{Q}_k}$ for all $k$. 
	Assume that the Langlands functorialities
	$\mathrm{Sym}^{n-1}
	\colon
	\mathrm{GL}_{2/\mathbb{Q}_k}\leadsto\mathrm{GL}_{n/\mathbb{Q}_k}$ 
	are established for all $k$ 
	(corresponding to the homomorphisms of Langlands'
	$L$-groups
	$\mathrm{Sym}^{n-1}
	\colon
	\mathrm{GL}_2(\mathbb{C})
	\to
	\mathrm{GL}_n(\mathbb{C})$). 
	Note that by 
	\cite{ClozelThorne2014,ClozelThorne2015,ClozelThorne2017}, 
	for $p>7$, the functorialities
	$\mathrm{Sym}^{n-1}
	\colon
	\mathrm{GL}_{2/\mathbb{Q}_k}
	\leadsto\mathrm{GL}_{n/\mathbb{Q}_k}$ 
	are established for all $n\leq9$ and all $k\geq0$.
	By \cite{Clozel1991},
	we have the functoriality of automorphic descent
	$\mathrm{GL}_{n/\mathbb{Q}_k}\leadsto G_{/\mathbb{Q}_k}$.

	Let $\pi_k$, resp. $\Pi_k=\Pi_k^G$, 
	be the base change (packet) of $\pi$ under the Langlands functorialities
	$\mathrm{GL}_{2/\mathbb{Q}}\leadsto \mathrm{GL}_{2/\mathbb{Q}_k}$, resp.
	$\mathrm{GL}_{2/\mathbb{Q}}\leadsto\mathrm{GL}_{2/\mathbb{Q}_k}\leadsto
	\mathrm{GL}_{n/\mathbb{Q}_k}\leadsto G_{/\mathbb{Q}_k}$.
	We write $\rho^{G}_\mu=\mathrm{Sym}^{n-1}\rho_\mu$. 
	Our aim in this subsection is to show that, under certain conditions,
	the dual Selmer group
	$\mathrm{Sel} (\mathbb{Q}_{\infty},\mathrm{Ad} (\rho^{ G}_\mu))^\ast$
	is a finitely generated torsion $\tilde{A}[[S]]$-module and study its
	characteristic power series.
	Concerning Condition \ref{condition} for 
	$\overline{\rho}_\pi$ as well as $\overline{\rho}_\Pi$,
	it is easy to show that if
	$\alpha^{2c_n}\not\equiv1(\mathrm{mod}\ p)$, then
	$\mathbf{Dist}(\overline{\rho}_\Pi)$
	is satisfied.
	Here $c_n$
	is the least common multiple of the integers
	between $1$ and $2n$.
	Similarly, $\mathbf{Big}(\overline{\rho}_\pi)$,
	resp. $\mathbf{RegU}(\overline{\rho}_\pi)$,
	implies
	$\mathbf{Big}(\overline{\rho}_\Pi)$,
	resp. $\mathbf{RegU}(\overline{\rho}_\Pi)$.

	Over $\mathrm{GL}_{2/\mathbb{Q}_k}$, one has 
	the weight Iwasawa algebra $\Lambda^\circ_k$,
	the deformation functor $\mathcal{D}^\circ_k$ 
	(defined in the same way as $\mathcal{D}_k$), 
	representable by $(R^\circ_k,\rho^\circ_k)$,
	the localized big Hida-Hecke algebra $\mathbf{T}^\circ_k$ of auxiliary conductor $N$ 
	and a Hida family $\mathbf{T}^\circ_k\to A^\circ_k$ passing through
	$\pi_k$ 
	(set $\Lambda=\Lambda^\circ_0$, $\mathcal{D}=\mathcal{D}^\circ_0$, 
	$R=R^\circ_0$, $\mathbf{T}=\mathbf{T}^\circ_0$ and
	$A=A^\circ_0$).
	Over $G_{/\mathbb{Q}_k}$ (see the beginning of Section 2), 
	one has the weight Iwasawa algebra $\Lambda_k=\Lambda^G_k$,
	the deformation functor $\mathcal{D}_k$,
	representable by $(R_k,\rho_k)=(R^G_k,\rho^G_k)$,
	the localized Hida-Hecke algebra $\mathbf{T}_k=\mathbf{T}^G_k$ 
	of auxiliary conductor $N$ and
	a Hida family $\mathbf{T}_k\to A_k=A^G_k$ passing through $\Pi_k$.
    For any $k>k'$, 
    set $\Lambda_{k,k'}=\mathrm{Im} (\Lambda_{k}\to\Lambda_{k'})$.
    By the definition of $\Lambda_{k'}$, 
	$\Lambda_{k'}$ is a complete intersection
	$\Lambda_{k,k'}$-algebra.

	The cyclic base change
	${\mathrm{GL}_{2/\mathbb{Q}}\leadsto\mathrm{GL}_{2/\mathbb{Q}_k}}$ 
	gives homomorphisms of Hecke algebras
	$\mathbf{T}^\circ_k\to\mathbf{T}$
	(the same as in Proposition \ref{cyclic} 
	or using directly \cite[Chapter 3]{ArthurClozel} 
	for the correspondences on Hecke parameters), 
	resp. of Hida families $A^\circ_k\to A$,
	of Iwasawa algebras 
	$\Lambda^\circ_k\to\Lambda$
	(induced by the norm map
	$\mathcal{O}_{\mathfrak{p}_k}^\times\to\mathbb{Z}_p^\times$. 
	See Proposition \ref{cyclic}).
	The composition map
	$\mathbf{T}_k^\circ\to\mathbf{T}\to A$
	induces a morphism of $B$-algebras
	$\tilde{\lambda}_k^\circ
	\colon
	\tilde{\mathbf{T}}_k^\circ
	=\mathbf{T}^\circ_k\otimes_{\Lambda^\circ_k}\tilde{A}
	\to \tilde{A}$.
	
	The composition of Langlands functorialities
	$\mathrm{GL}_{2/\mathbb{Q}_k}\leadsto
	\mathrm{GL}_{n/\mathbb{Q}_k}\leadsto
	G_{/\mathbb{Q}_k}$ 
	gives homomorphisms of Hecke algebras 
	${\theta_k=\theta_k^G\colon\mathbf{T}_k\to\mathbf{T}^\circ_k}$
	(the same as \cite[Proposition 2.3]{HidaTilouine}),
	resp. of Hida families $A_k\to A^\circ_k$, 
	of Iwasawa algebras
	$\Lambda_k\to\Lambda^\circ_k$
	(the morphism  $\Lambda_k\to\Lambda^\circ_k$ is surjective by
	\cite[Proposition 2.3]{HidaTilouine}).
	The Galois side of $\mathrm{GL}_{2/\mathbb{Q}_k}\leadsto G_{\mathbb{Q}_k}$
	gives homomorphisms of universal deformation rings 
	$R_k\to R^\circ_k$
	which is compatible with
	$\mathbf{T}_k\to\mathbf{T}_k^\circ$.
	The composition $\Lambda_k\to\Lambda^\circ_k\to\Lambda$ shows that $A$, as well as $\tilde{A}$, is a $\Lambda_k$-algebra. 
	We set $\tilde{\mathbf{T}}_k=\mathbf{T}_k\otimes_{\Lambda_k}\tilde{A}$
	and $\tilde{R}_k=R_k\otimes_{\Lambda_k}\tilde{A}$
	This induces the natural maps
	$\tilde{\lambda}_k=\tilde{\lambda}^G_k\colon\tilde{\mathbf{T}}_k\to\tilde{A}$
	as well as
	$\tilde{\theta}_k=\tilde{\theta}^G_k
	\colon\tilde{\mathbf{T}}_k\to\tilde{\mathbf{T}}_k^\circ$.
	We also denote by $\tilde{\lambda}_k$ the composition
	$\tilde{R}_k
	\simeq\tilde{\mathbf{T}}_k\to\tilde{A}$.
	We can consult the diagram below to have a clear picture 
	of the various morphisms mentioned above.
		\[
	\begin{tikzcd}
	\pi/\mathrm{GL}_{2/\mathbb{Q}}
	\arrow[r, rightsquigarrow]
	&
	\pi_k/\mathrm{GL}_{2/\mathbb{Q}_k}
	\arrow[r, rightsquigarrow,"\mathrm{GL}_{n/\mathbb{Q}_k}"]
	&
	\Pi_k/G_{\mathbb{Q}_k}
	\\
	\tilde{\mathbf{T}}
	\arrow[rd,"\tilde{\lambda}"']
	&
	\tilde{\mathbf{T}}_k^\circ
	\arrow[l]
	\arrow[d,"\tilde{\lambda}_k^\circ"']
	&
	\tilde{\mathbf{T}}_k
	\arrow[ld,"\tilde{\lambda}_k"]
	\arrow[l,"\tilde{\theta}_k"']
	\\
	&
	\tilde{A}
	&
	\end{tikzcd}
	\]

	\begin{remark}
		There is a subtlety in choosing the coefficient field $K$ 
		(and its ring of integers $\mathcal{O}$). 
		Recall that $K$ is a subfield of $\overline{\mathbb{Q}}_p$, 
		of finite degree over $\mathbb{Q}_p$. 
		For each fixed $k$, to establish the Hida theory over $G_{\mathbb{Q}_k}$, 
		we need to assume that 
		the corresponding coefficient field 
		$K_k=K$ contains all the embeddings of
		$\mathbb{Q}_k$ into
		$\overline{\mathbb{Q}}_p$ (see \cite[Section 2]{Geraghty}).
		However, the objects such as $A$, $R_k$, $\Lambda_k$,
		$\rho^G_{\mu'}$,
		can be defined over $\mathcal{O}_{K_0}$, the ring of integers of $K_0$,
		instead of the larger ring $\mathcal{O}_{K_k}$.
		We write these objects defined over $\mathcal{O}_{K_0}$
		temporarily as $A'$, $R_k'$, $\Lambda_k'$
		and $\rho^G_{\mu'}$.
		Since $\mathcal{O}_{K_k}$ is finite flat over $\mathcal{O}_{K_0}$,
		we see that
		$A\simeq A'\otimes_{\mathcal{O}_{K_0}}\mathcal{O}_{K_k}$,
		$R_k\simeq R_k'\otimes_{\mathcal{O}_{K_0}}
		\mathcal{O}_{K_k}$,
		$\Lambda_k\simeq\Lambda_k'
		\otimes_{\mathcal{O}_{K_0}}\mathcal{O}_{K_k}$
		and
		$\rho^G_\mu\simeq\rho^G_{\mu'}\otimes
		_{\mathcal{O}_{K_0}}\mathcal{O}_{K_k}$.
		So the isomorphism $R_k\simeq\mathbf{T}_k$ (see Theorem \ref{R=T})
		becomes
		$R_k'\otimes_{\mathcal{O}_{K_0}}\mathcal{O}_{K_k}\simeq\mathbf{T}_k$.
		Likewise, the isomorphism in Theorem \ref{KahlerSelmer} becomes
		\[
		\Omega_
		{\tilde{R}'_k\otimes\mathcal{O}_{K_k}/\tilde{A}\otimes\mathcal{O}_{K_k}}
		\bigotimes_{\tilde{R}'\otimes\mathcal{O}_{K_k}}
		(\tilde{A}'\otimes\mathcal{O}_{K_k})
		\simeq
		\mathrm{Sel}(\mathbb{Q}_k,\mathrm{Ad}(\rho^G_{\mu'}))^\ast
		\otimes\mathcal{O}_{K_k}
		\]
		(all the tensors are over $\mathcal{O}_{K_0}$ unless the base ring is given.
		Note also that
		$\widetilde{A'}\otimes\mathcal{O}_{K_k}$
		is equal to
		$\tilde{A}$
		in its fractional field
		$\mathrm{Frac}(\widetilde{A'}\otimes\mathcal{O}_{K_k})
		\simeq\mathrm{Frac}(A)$
		by the fact that
		the inclusion map
		$\mathcal{O}_{K_0}\to\mathcal{O}_{K_k}$
		is a normal ring homomorphism
		and \cite[Proposition 19.1.2, Theorem 19.4.2]{SwansonHuneke}).
		Since $\mathcal{O}_{K_k}$ is faithfully flat over $\mathcal{O}_{K_0}$, 
		we see that above isomorphism is valid if and only if 
		the following one is valid
		\[
		\Omega_{\tilde{R}'/\tilde{A}'}\bigotimes_{\tilde{R}'}\tilde{A}'
		\simeq\mathrm{Sel}(\mathbb{Q}_k,\mathrm{Ad}(\rho^G_\mu)).
		\]
		We see that the tensored sequence is exact if and only if 
		the original sequence is exact. 
		The same remark applies to other statements in the following.
		As a result, we will not distinguish the objects defined over
		$\mathcal{O}_{K_0}$ 
		(with a prime $'$ as above)
		from those defined over 
		$\mathcal{O}_{K_k}$.
	\end{remark}
	
	In the following we write
    $B_\infty=\tilde{A}[[S]]
    \simeq\tilde{A}[[\mathrm{Gal}(\mathbb{Q}_\infty/\mathbb{Q})]]$
    (the isomorphism is given by sending
    $1+S$ to the fixed generator $\gamma$),
    $B_k=B_\infty/((1+S)^{p^k}-1)
    \simeq\tilde{A}[\Delta_k]$ 
    and $B=B_0=\tilde{A}$.
	For any $B_\infty$-module $M$, write $M_k=M_{\Delta_k}=M/(\gamma_k-1)M=M/(\gamma^{p^k}-1)M$
	which is a $B_k$-module.

	Write $E_k$ for the $p$-adic completion of the field $\mathbb{Q}_k$ 
	and $\mathcal{O}_k$ the ring of integers of $E_k$.
	Then we have the norm map
	$E_\infty^\times\to E_k^\times$.
	We define 
	\[R^D_k=\mathcal{O}[[\mathrm{Im}(T^{ss}(E_\infty)\to T^{ss}(E_k))]].\]
	We have an equivalent definition of $R^D_k$ 
	using Galois groups given as follows.
	Let $\Gamma_{\mathfrak{p}_k,p}^{ab}$ be the maximal $p$-profinite abelian quotient of the decomposition group $\Gamma_{\mathfrak{p}_k}$,
	$D_k$ the image of $\Gamma_{\mathfrak{p}_{\infty},p}^{ab}$ in
	$\Gamma_{\mathfrak{p}_k,p}^{ab}$ and
	$R^D_k$ the algebra $\mathcal{O}[[(D_k)^{n-1}]]$.
	So $R^D_k$ is an algebra over $\Lambda_{\infty,k}$ by local class field theory.
	Indeed, recall that
	$\Lambda_k=\mathcal{O}[[T^{ss}(\mathcal{O}_k)]]$.
	As a result,
	\[\Lambda_{\infty,k}=\mathrm{Im}(\Lambda_\infty\to\Lambda_k)=\mathcal{O}[[\mathrm{Im}(T^{ss}(\mathcal{O}_\infty)\to T^{ss}(\mathcal{O}_k))]].\]
	The natural inclusion of 
	$\mathrm{Im}(T^{ss}(\mathcal{O}_\infty)\to T^{ss}(\mathcal{O}_k))$ 
	into 
	$\mathrm{Im}(T^{ss}(E_\infty)\to T^{ss}(E_k))$
	gives the $\Lambda_{\infty,k}$-algebra structure of $R^D_k$.
	We choose the uniformiser $\omega_k$ in $E_k$ in a compatible way
	(i.e. $\omega_{k+1}$ is mapped to $\omega_k$ by the norm map
	$E_{k+1}^{\times}\to E_k^\times$). 
	Then it is easy to see that
	\begin{align*}
	R^D_k & \to\Lambda_{\infty,k}[[T_{k,1},T_{k,2},\ldots,T_{k,n}]]/(\prod_{j=1}^n(1+T_{k,j})=1)\\
	\mathrm{diag}(1_{i-1},\omega_k,1_{n-1-i},\omega_k^{-1}) & 
	\mapsto (1+T_{k,i})/(1+T_{k,n})
	\end{align*}
	is an isomorphism of $\mathcal{O}$-algebras.

	On the other hand, since the extension $\mathbb{Q}_{\infty}/\mathbb{Q}_k$ 
	is totally ramified at $p$, we have the following exact sequence
	\[1+\mathfrak{p}_\infty\mathcal{O}_{\mathfrak{p}_\infty}\xrightarrow
	{\mathrm{Norm}}1+\mathfrak{p}_k\mathcal{O}_{\mathfrak{p}_k}\to u_k^{\mathbb{Z}_p}\to1\]
	where $u_k$ corresponds to $\gamma_k=\gamma^{p^k}$ 
	by local class field theory
	(recall that $\gamma$ is a fixed generator of the Galois group 
	$\mathrm{Gal} (\mathbb{Q}_{\infty}/\mathbb{Q})$).
	\begin{align*}
    \Lambda_k
    &
    \to
    \Lambda_{\infty,k}[[X_{k,1},X_{k,2},\ldots,X_{k,n}]]/(\prod_{j=1}^n(1+X_{k,j})=1)
    \\
    \mathrm{diag}(1_{i-1},u_k,1_{n-1-i},u_k^{-1})
    &
    \mapsto
    (1+X_{k,i})/(1+X_{k,n})
\end{align*}
	
	So we can summarize the above results as
	\begin{lemma}\label{basicIwasawaalgebras}
		We have the following isomorphisms of $\mathcal{O}$-algebras for $k<\infty$:
		
		\begin{align*}
		R^D_k\simeq \Lambda_{\infty,k}[[T_{k,1},\ldots,T_{k,n}]]
		/(\prod_{j=1}^n(1+T_{k,j})=1), \\
		\Lambda_k\simeq\Lambda_{\infty,k}[[X_{k,1},\ldots,X_{k,n}]]
		/(\prod_{j=1}^n(1+X_{k,j})=1).
		\end{align*}
	\end{lemma}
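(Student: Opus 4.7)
The plan is to prove both isomorphisms simultaneously by decomposing the relevant pro-$p$ abelian groups into a ``norm image'' piece (which produces $\Lambda_{\infty,k}$) and an ``extra'' free $\mathbb{Z}_p$-piece of rank $n-1$ (which produces the additional generators $T_{k,i}$ or $X_{k,i}$ subject to the one relation from the determinant of $T^{ss}$). The key input on the ``infinity side'' is the exact sequence
\[
1+\mathfrak{p}_\infty\mathcal{O}_{\mathfrak{p}_\infty}\xrightarrow{\mathrm{Norm}}1+\mathfrak{p}_k\mathcal{O}_{\mathfrak{p}_k}\to u_k^{\mathbb{Z}_p}\to 1
\]
recalled in the discussion, together with the compatible choice of uniformizers $\omega_k=\mathrm{Norm}_{E_{k+1}/E_k}(\omega_{k+1})$.

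For the second isomorphism, I would observe that the pro-$p$ part of $T^{ss}(\mathcal{O}_k)$ is the kernel of the product map $(1+\mathfrak{p}_k\mathcal{O}_{\mathfrak{p}_k})^n\to 1+\mathfrak{p}_k\mathcal{O}_{\mathfrak{p}_k}$. Applying the above exact sequence coordinate-wise and using that $u_k^{\mathbb{Z}_p}$ is free, $T^{ss}(\mathcal{O}_k)$ (pro-$p$ part) splits as an extension of a free $\mathbb{Z}_p$-module of rank $n-1$, generated by $\mathrm{diag}(1_{i-1},u_k,1_{n-1-i},u_k^{-1})$ for $i=1,\ldots,n-1$, by the subgroup of elements whose coordinates are all principal-unit norms, whose completed group algebra is precisely $\Lambda_{\infty,k}$ by the description $\Lambda_{\infty,k}=\mathcal{O}[[\mathrm{Im}(T^{ss}(\mathcal{O}_\infty)\to T^{ss}(\mathcal{O}_k))]]$. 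Matching the $i$-th diagonal generator with $(1+X_{k,i})/(1+X_{k,n})$ then yields a well-defined map to $\Lambda_{\infty,k}[[X_{k,1},\ldots,X_{k,n}]]/(\prod_j(1+X_{k,j})=1)$, with the relation coming from the determinant-one condition cutting out $T^{ss}$.

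For the first isomorphism, I would apply local class field theory to identify $\Gamma_{\mathfrak{p}_k,p}^{ab}$ with the pro-$p$ completion of $E_k^\times$, under which $D_k$ corresponds to the image of the pro-$p$ norm map from $E_\infty^\times$. Using $\omega_k=\mathrm{Norm}(\omega_{k+1})$, this image decomposes as a direct product of the principal-unit norm subgroup and $\omega_k^{\mathbb{Z}_p}$. Taking the $(n-1)$-th power and matching the generators $\mathrm{diag}(1_{i-1},\omega_k,1_{n-1-i},\omega_k^{-1})\mapsto (1+T_{k,i})/(1+T_{k,n})$ produces the stated presentation, again with the single relation reflecting the determinant condition.

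The main obstacle is verifying that these explicit maps are isomorphisms of complete topological $\mathcal{O}$-algebras, not merely surjections. Surjectivity follows immediately from the choice of generators; injectivity reduces to the algebraic independence over $\Lambda_{\infty,k}$ of the $n-1$ ratios $(1+X_{k,i})/(1+X_{k,n})$ (resp.\ $(1+T_{k,i})/(1+T_{k,n})$) modulo the single relation $\prod_j(1+\ast)=1$. This independence, in turn, follows from the splitting of the short exact sequences involved, since their cokernels $u_k^{\mathbb{Z}_p}$ and $\omega_k^{\mathbb{Z}_p}$ are free pro-$p$ abelian $\mathbb{Z}_p$-modules.
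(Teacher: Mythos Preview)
Your proposal is correct and follows essentially the same approach as the paper. In fact, the paper does not give a separate proof of this lemma: the ``proof'' is precisely the discussion preceding the lemma statement, where the authors write down the norm exact sequence on principal units, choose compatible uniformizers $\omega_k$, exhibit the explicit generators $\mathrm{diag}(1_{i-1},\omega_k,1_{n-1-i},\omega_k^{-1})\mapsto (1+T_{k,i})/(1+T_{k,n})$ and $\mathrm{diag}(1_{i-1},u_k,1_{n-1-i},u_k^{-1})\mapsto (1+X_{k,i})/(1+X_{k,n})$, and then declare ``it is easy to see'' that these give the asserted isomorphisms; your write-up supplies the details (in particular the splitting argument via freeness of the cokernels $u_k^{\mathbb{Z}_p}$ and $\omega_k^{\mathbb{Z}_p}$) that the paper leaves implicit.
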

	
	We then define an $R^D_k$-algebra structure on $R_k$ as follows:
	recall that the diagonal entries of the upper triangular Galois representation
	$\rho_{\Pi}|_{\Gamma_{\mathfrak{p}_k}}$ are
	$\psi_{k,1},\psi_{k,2},\ldots,\psi_{k,n}$.
	Define a map $R^D_k\to R_k$ by sending
	$ (\sigma_1,\sigma_2,\ldots,\sigma_n)\in \mathrm{Im}(T^{ss}(E_\infty)\to T^{ss}(E_k))$ to
	$\prod_{i=1}^n\psi_{k,n+1-i}(\sigma_i)$.

	Now we give some simple properties concerning 
	these Hecke algebras $\mathbf{T}_k$ 
	and universal deformation rings $R_k$.
	\begin{lemma}
		Assume $\textbf{Big} (\overline{\rho}_{\Pi})$,
		$\textbf{Dist} (\overline{\rho}_{\Pi})$ and
		$\textbf{RegU} (\overline{\rho}_{\Pi})$, 
		then $\tilde{\mathbf{T}}_0$ is reduced and the map
		$\mathbf{T}_0\to\mathbf{T}$ is surjective.
	\end{lemma}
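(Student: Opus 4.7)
The proof naturally splits into reducedness of $\tilde{\mathbf{T}}_0$ and surjectivity of $\theta_0\colon \mathbf{T}_0\to\mathbf{T}$, which I would treat in turn.

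For the reducedness, I would start from the fact recalled at the opening of this subsection: $\mathbf{T}_0$ is reduced and finitely generated torsion-free over $\Lambda_0$. By Theorem \ref{R=T}, $\mathbf{T}_0\simeq R_0$ is a complete intersection $\Lambda_0$-algebra, hence finite flat and generically \'etale over $\Lambda_0$. The structure map $\Lambda_0\to\tilde{A}$ factors as the surjection $\Lambda_0\twoheadrightarrow\Lambda=\Lambda^\circ_0$ followed by the finite extension $\Lambda\hookrightarrow\tilde{A}$; by Lemma \ref{basicIwasawaalgebras} the kernel of the first map is generated by a regular sequence in $\Lambda_0$, so $\mathbf{T}_0\otimes_{\Lambda_0}\Lambda$ remains a complete intersection over $\Lambda$ that is generically \'etale, hence Cohen--Macaulay and generically reduced, hence reduced. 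Since $\Lambda$ is regular of dimension two and $\tilde{A}$ is a normal, hence Cohen--Macaulay, finite extension of the same dimension, $\tilde{A}$ is flat over $\Lambda$; flat base change preserves reducedness of a Cohen--Macaulay algebra with reduced generic fibres, yielding the reducedness of $\tilde{\mathbf{T}}_0$.

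For the surjectivity of $\theta_0$, I would adapt the argument of \cite[\S 2]{HidaTilouine}. The target $\mathbf{T}$ is topologically generated over $\mathcal{O}$ by the diamond operators $\langle u\rangle_0$ for $u\in\mathbb{Z}_p^\times$, by $U_p$, and by the Hecke operators $T_q$ for primes $q\nmid Np$ split in $F$ (by Chebotarev the split primes suffice). Diamond operators lie in the image since $\Lambda_0\twoheadrightarrow\Lambda$ is surjective; $U_p$ is recovered from $\theta_0(U_{\mathfrak{P},1})$ after normalization by the diagonal characters $\psi_{\mathfrak{P},j}$, using Theorem \ref{ExistenceOfGaloisRep}(4) and Proposition \ref{cyclic}(2). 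The essential case is $T_q$: I would pass to the Galois side via the $R=T$ isomorphism (Theorem \ref{R=T} on the unitary side and Wiles--Taylor--Wiles on the $\mathrm{GL}_2$-side), under which $\theta_0$ corresponds to the map of universal deformation rings $R_0\to R^\circ_0$ induced by the natural transformation $[\rho^\circ]\mapsto[\mathrm{Sym}^{n-1}\rho^\circ]$ from $\mathcal{D}^\circ_0$ to $\mathcal{D}_0$. Under $\textbf{Big}(\overline{\rho}_\Pi)$, the big image forces the centralizer of $\mathrm{Sym}^{n-1}\overline{\rho}_\pi$ in $\mathrm{GL}_n(\mathbb{F})$ to be scalar, so any equivalence between $\mathrm{Sym}^{n-1}\rho_1$ and $\mathrm{Sym}^{n-1}\rho_2$ must come from an equivalence between $\rho_1$ and $\rho_2$; the natural transformation is therefore injective at every $A\in\mathrm{CNL}_\mathcal{O}$, and by Yoneda this is equivalent to the surjectivity of the map $R_0\to R^\circ_0$ on representing rings, i.e.\ of $\theta_0$.

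The main obstacle is the surjectivity for $T_q$ when $n\geq 3$: a direct computation only places a polynomial of degree $n-1$ in $T_q$ (with coefficients in the diamond subalgebra, determined by the elementary symmetric functions of the Satake eigenvalues $\alpha_q^{n-1-j}\beta_q^j$) into the image, so one cannot extract $T_q$ itself by elementary algebra. The deformation-theoretic passage sketched above circumvents this by converting the question into the sharper injectivity of $\mathrm{Sym}^{n-1}$ on deformation categories, for which $\textbf{Big}$ and $\textbf{Dist}$ supply the necessary rigidity; the condition $\textbf{RegU}$ ensures the deformation conditions at $v\mid N$ are preserved under $\mathrm{Sym}^{n-1}$, so that the induced map of deformation rings actually lands inside $R^\circ_0$.
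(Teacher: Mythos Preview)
Your high-level strategy for surjectivity is exactly the paper's: translate $\theta_0$ into the map $R_0\to R^\circ_0$ of universal deformation rings and prove that the natural transformation $[\rho^\circ]\mapsto[\mathrm{Sym}^{n-1}\rho^\circ]$ is injective on each $\mathcal{D}^\circ_0(B)$. However, your justification of that injectivity has a gap. Knowing that the centralizer of $\mathrm{Sym}^{n-1}\overline{\rho}_\pi$ is scalar tells you that a conjugating element $g\in 1+\mathfrak{m}_B M_n(B)$ realizing $\mathrm{Sym}^{n-1}\rho_1\simeq\mathrm{Sym}^{n-1}\rho_2$ is unique up to scalars; it does \emph{not} tell you that $g$ lies in the image of $\mathrm{Sym}^{n-1}\colon\mathrm{GL}_2(B)\to\mathrm{GL}_n(B)$, nor that $\rho_1$ and $\rho_2$ are themselves conjugate. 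The implication ``scalar centralizer $\Rightarrow$ the equivalence descends to $\mathrm{GL}_2$'' is precisely what must be proved, and your sketch does not prove it.

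The paper closes this gap in two steps. First, from $\mathrm{Sym}^{n-1}\rho_1\simeq\mathrm{Sym}^{n-1}\rho_2$ and the Clebsch--Gordan decomposition $\mathrm{Ad}(\mathrm{Sym}^{n-1}\rho_i)\simeq\bigoplus_{j=0}^{n-1}\mathrm{Sym}^{2j}\rho_i$, together with $\textbf{Big}(\overline{\rho}_\pi)$, one extracts $\mathrm{Sym}^{2}\rho_1\simeq\mathrm{Sym}^{2}\rho_2$. Second, an explicit $3\times3$ matrix computation, using $\textbf{Dist}(\overline{\rho}_\pi)$ to diagonalize at $\mathrm{Frob}_p$ and $\textbf{Big}(\overline{\rho}_\pi)$ to find a $\sigma$ with all entries of $\rho_1(\sigma)$ units, shows that the conjugating element $g=\mathrm{diag}(g_1,g_2,g_3)$ satisfies $g_2^2=g_1g_3$, hence $g=\mathrm{Sym}^2 h$ for a diagonal $h\in 1+\mathfrak{m}_B M_2(B)$, and then $h^{-1}\rho_1 h=\rho_2$. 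This is the missing ingredient in your argument; your centralizer remark would only enter afterwards, to pin down $h$ up to scalars. For the reducedness of $\tilde{\mathbf{T}}_0$, the paper simply invokes \cite[Corollary 6.2]{HidaTilouine}; your Cohen--Macaulay plus generically reduced outline is in the right spirit, but the step ``$\mathbf{T}_0\otimes_{\Lambda_0}\Lambda$ is generically \'etale over $\Lambda$'' is asserted rather than argued and is where the actual content lies.
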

	\begin{proof}
		The first part is the same as \cite[Corollary 6.2]{HidaTilouine}.
		
		For the second part, it is enough to show that the map 
		$R_0\to R$ is surjective. 
		For this, it suffices to prove that for any object $B$ in
		$\mathrm{CNL}_\mathcal{O}$, 
		for any two lifts $\rho_1,\rho_2$ of $\overline{\rho}_\pi$ to $B$ such that
		they are in the same deformation class
		$[\mathrm{Sym}^{n-1}\rho_1]=[\mathrm{Sym}^{n-1}\rho_2]\in\mathcal{D}_0(B)$,
		we must have $[\rho_1]=[\rho_2]\in\mathcal{D}(B)$.
		We have decompositions of representations of $\Gamma_\mathbb{Q}$,
		$\mathrm{Ad}(\mathrm{Sym}^{n-1}\rho_i)\simeq\oplus_{j=0}^{n-1}\mathrm{Sym}^{2j}\rho_i$ for $i=1,2$.
		Since $\mathrm{Sym}^{n-1}\rho_1\simeq\mathrm{Sym}^{n-1}\rho_2$,
		we have
		$\mathrm{Sym}^{2j}(\rho_1)\simeq\mathrm{Sym}^{2j}(\rho_2)$ for $j=0,1,\ldots,n-1$
		by $\textbf{Big}(\overline{\rho}_\pi)$.
		
		We then need to show
		$\mathrm{Sym}^2\rho_1\simeq\mathrm{Sym}^2\rho_2$ 
		implies $\rho_1\simeq\rho_2$. 
		By $\textbf{Dist}(\overline{\rho}_\pi)$, we may assume that
		$\rho_1(\mathrm{Frob}_p)$ and $\rho_2(\mathrm{Frob}_p)$ are diagonal matrices. 
		Since $\mathrm{Sym}^{n-1}\rho_1(\mathrm{Frob}_p)$ is similar to the matrix
		$\mathrm{Sym}^{n-1}\rho_2(\mathrm{Frob}_p)$, we have
		$\rho_1(\mathrm{Frob}_p)=\rho_2(\mathrm{Frob}_p)$
		by $\textbf{Dist}(\overline{\rho}_\Pi)$.
		By $\textbf{Dist}(\overline{\rho}_\pi)$, 
		there exists a matrix
		$g\in1+\mathfrak{m}_BM_3(B)$ such that
		$\mathrm{Sym}^2\rho_1(\sigma)=g\cdot\mathrm{Sym}^2\rho_2(\sigma)\cdot g^{-1}$
		for any $\sigma\in\Gamma_\mathbb{Q}$.
		Setting $\sigma=\mathrm{Frob}_p$ shows that
		$g$ is in fact a diagonal matrix, say, $\mathrm{diag}(g_1,g_2,g_3)$.
		For any $\sigma$, we write
		$\rho_i(\sigma)=\begin{pmatrix}
		a_i & b_i \\ c_i & d_i
		\end{pmatrix}$ with $i=1,2$. Then a simple calculation shows that
		\[g_2^2(a_1b_1c_1d_1)=g_1g_3(a_2b_2c_2d_2),\ g_1g_3(a_1b_1c_1d_1)=g_2^2(a_2b_2c_2d_2).\]
		By $\textbf{Big}(\overline{\rho}_\pi)$, 
		we can choose $\sigma$ such that
		$a_1b_1c_1d_1\in B^\times$. 
		Comparing two sides of the above identities, we get
		$g_2^2=g_1g_3$ (the other possibility $g_2^2=-g_1g_3$ is excluded by
		$g_1\equiv g_2\equiv g_3\equiv1(\mathrm{mod}\ \mathfrak{m}_B)$). 
		This shows that $g=\mathrm{Sym}^2h$ for some diagonal matrix
		$h\in1+\mathfrak{m}_BM_2(B)$.
		From this, we get
		$\mathrm{Sym}^2(h^{-1}\rho_1(\sigma)h\rho_2(\sigma)^{-1})=1$.
		Clearly, we have
		$h^{-1}\rho_1(\sigma)h\rho_2(\sigma)^{-1}\equiv1(\mathfrak{m}_B)$.
		Some simple calculations show that
		$h^{-1}\rho_1(\sigma)h\rho_2(\sigma)^{-1}=1$.
		As a result $\rho_1(\sigma)=h\cdot\rho_2(\sigma)\cdot h^{-1}$ for any
		$\sigma\in\Gamma_\mathbb{Q}$, which concludes the proof.
		
		If $n$ is an even integer, we can replace the last paragraph by the following:
		we have the decomposition of representations of $\Gamma_\mathbb{Q}$,
		$\mathrm{Sym}^{n-1}\rho_i\otimes\mathrm{Sym}^n\rho_i\simeq\rho_i\oplus\mathrm{Sym}^3\rho_i\oplus\ldots\oplus\mathrm{Sym}^{2n-1}\rho_i$ 
		for $i=1,2$. 
		Thus $\rho_1\simeq\rho_2$ by $\textbf{Big}(\overline{\rho}_\pi)$. 
		The assumption $\textbf{Big}(\overline{\rho}_\pi)$ then shows that
		$[\rho_1]=[\rho_2]$ lie in the same deformation class.

	\end{proof}
	\begin{corollary}
		The algebra $\tilde{\mathbf{T}}_k$ is reduced for $k\geq0$.
	\end{corollary}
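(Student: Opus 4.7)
The plan is to adapt the proof of the first part of the preceding lemma, which rests on \cite[Corollary 6.2]{HidaTilouine}, by replacing $\mathbb{Q}$ with $\mathbb{Q}_k$ throughout. First, by Lemma \ref{basechange}, the hypotheses $\textbf{Big}(\overline{\rho}_\Pi)$, $\textbf{Dist}(\overline{\rho}_\Pi)$, $\textbf{RegU}(\overline{\rho}_\Pi)$ pass to $\overline{\rho}_{\Pi_k}=\overline{\rho}_\Pi|_{\Gamma_{\mathbb{Q}_k}}$ for every $k\geq 0$. Theorem \ref{R=T} applied with $L^+=\mathbb{Q}_k$ then gives an isomorphism $R_k\simeq\mathbf{T}_k$ of complete intersection $\Lambda_k$-algebras, and combined with the finiteness and torsion-freeness of $\mathbf{T}_k$ over $\Lambda_k$ supplied by Hida theory this makes $\mathbf{T}_k$ flat (indeed free) over $\Lambda_k$.

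The structural argument of \cite[Corollary 6.2]{HidaTilouine} then applies mutatis mutandis: $\mathbf{T}_k$ is reduced (it injects into the finite product of its classical arithmetic specializations, each a ring of integers in a number field) and generically \'etale over $\Lambda_k$ (from the semisimplicity of the Hecke action on cusp forms and the Zariski density of classical arithmetic points in $\mathrm{Spec}(\Lambda_k)$). To pass to $\tilde{\mathbf{T}}_k=\mathbf{T}_k\otimes_{\Lambda_k}\tilde{A}$, I would factor the structure map as $\Lambda_k\twoheadrightarrow\Lambda^\circ_k\twoheadrightarrow\Lambda\hookrightarrow A\hookrightarrow\tilde{A}$ and set $J=\ker(\Lambda_k\twoheadrightarrow\Lambda)$, so that $\tilde{\mathbf{T}}_k\simeq(\mathbf{T}_k/J\mathbf{T}_k)\otimes_\Lambda\tilde{A}$. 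Since $\mathbf{T}_k/J\mathbf{T}_k$ inherits reducedness and generic \'etaleness along the $\mathrm{Sym}^{n-1}$-weight subslice $\Lambda\subset\Lambda_k$, and $\tilde{A}$ is a normal domain finite over $\Lambda$, the tensor embeds into $(\mathbf{T}_k/J\mathbf{T}_k)\otimes_\Lambda\mathrm{Frac}(\tilde{A})$, a finite product of fields, whence $\tilde{\mathbf{T}}_k$ is reduced.

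The main technical delicacy I anticipate is checking that $\mathbf{T}_k/J\mathbf{T}_k$ remains reduced and generically \'etale after restriction to the subslice $\Lambda\subset\Lambda_k$: this requires a density-of-arithmetic-points argument on the $\mathrm{Sym}^{n-1}$-locus rather than on the full Hida weight space. It follows however from the observation that every arithmetic specialization of the $\mathrm{GL}_{2/\mathbb{Q}}$ Hida family $\mu$ extends, via $\mathrm{Sym}^{n-1}$ and cyclic base change $\mathbb{Q}\leadsto\mathbb{Q}_k$, to an arithmetic specialization of the $G_{n/\mathbb{Q}_k}$ Hida family through $\Pi_k$, so classical points of the subslice are still Zariski dense and continue to detect nilpotents in $\mathbf{T}_k/J\mathbf{T}_k$.
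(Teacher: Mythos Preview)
Your approach is viable but takes a different route from the paper. The paper does not rerun the level-$0$ argument at level $k$; instead it uses the control theorem (Theorem~\ref{control}) to descend. Since the structure map $\Lambda_k\to\tilde{A}$ factors through $\Lambda_{k,0}=(\Lambda_k)_{\Delta_k}$, one has
\[
\tilde{\mathbf{T}}_k=\mathbf{T}_k\otimes_{\Lambda_k}\tilde{A}\;\simeq\;\mathbf{T}_0\otimes_{\Lambda_{k,0}}\tilde{A}\;\simeq\;(\mathbf{T}_0\otimes_{\Lambda_0}\tilde{A})\otimes_{\Lambda_{k,0}}\Lambda_0\;=\;\tilde{\mathbf{T}}_0\otimes_{\Lambda_{k,0}}\Lambda_0,
\]
and since $\tilde{\mathbf{T}}_0$ is already reduced by the preceding lemma and $\Lambda_0$ is a domain, finite free over $\Lambda_{k,0}$, reducedness of the tensor product is immediate.

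Your route instead transports the hypotheses to level $k$ via Lemma~\ref{basechange} and redoes the generic-\'etaleness/density argument restricted to the $\mathrm{Sym}^{n-1}$-subslice $\Lambda\subset\Lambda_k$. This can be made to work: the arithmetic specializations of $\mu$ do land on this subslice, are Zariski dense in $\mathrm{Spec}(\Lambda)$, and yield reduced classical Hecke algebras, so they detect nilpotents in the finite-free $\Lambda$-algebra $\mathbf{T}_k/J\mathbf{T}_k$. But this requires you to argue reducedness of a genuine \emph{quotient} of $\mathbf{T}_k$ (not automatic from reducedness of $\mathbf{T}_k$ itself), which is exactly the delicacy you flag. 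The paper's descent via control sidesteps this entirely: once at level $0$ the needed reducedness is already in hand, and the remaining step is a tensor with a finite free domain rather than a quotient.
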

	\begin{proof}
		Note that $\tilde{\mathbf{T}}_k\simeq \mathbf{T}_0\otimes_{\Lambda_{k,0}} B \simeq\mathbf{T}_0\otimes_{\Lambda_{k,0}} (\Lambda_0\otimes_{\Lambda_0} B )\simeq  (\mathbf{T}_0\otimes_{\Lambda_{0}} B )\otimes_{\Lambda_{k,0}}\Lambda_{0}$.
		We already know that $\mathbf{T}_0\otimes_{\Lambda_0} B $
		is reduced by the preceding lemma and $\Lambda_{0}$
		is a domain and is finite free over $\Lambda_{k,0}$,
		thus $\tilde{\mathbf{T}}_k$ is reduced.
	\end{proof}

    \begin{lemma}
    	The algebra $\tilde{\mathbf{T}}_k$ is complete intersection over $ B $ 
    	for $k\geq0$.
    \end{lemma}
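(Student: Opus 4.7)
The plan is to use the complete intersection presentation of $\mathbf{T}_k$ over $\Lambda_k$ granted by Theorem \ref{R=T}, base-change to $\tilde{A}$, and rescue the complete intersection property on the $\tilde{A}$-side through a dimension count in a Cohen--Macaulay ambient ring. By Theorem \ref{R=T} applied to $L^+ = \mathbb{Q}_k$, one may write
\[
\mathbf{T}_k \simeq \Lambda_k[[X_1, \ldots, X_r]]/(f_1, \ldots, f_r),
\]
with $(f_1, \ldots, f_r)$ a regular sequence; the equality of the number of relations and variables is forced by $\dim \mathbf{T}_k = \dim \Lambda_k$ (recall $\mathbf{T}_k$ is torsion-free and finite over $\Lambda_k$). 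Tensoring with $\tilde{A}$ over $\Lambda_k$, and using that finiteness of $\mathbf{T}_k$ over $\Lambda_k$ makes the resulting $\tilde{A}$-algebra already complete so no extra completion is needed after the quotient, I obtain a presentation
\[
\tilde{\mathbf{T}}_k \simeq \tilde{A}[[X_1, \ldots, X_r]]/(f_1, \ldots, f_r),
\]
where the $f_i$ are now viewed in $\tilde{A}[[X_1,\ldots,X_r]]$ via $\Lambda_k \to \tilde{A}$.

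Next I perform a dimension count. The $\tilde{A}$-algebra map $\tilde{\lambda}_k \colon \tilde{\mathbf{T}}_k \to \tilde{A}$ is a retraction of the structure map $\tilde{A} \to \tilde{\mathbf{T}}_k$, so the latter is injective; combined with finiteness one gets $\dim \tilde{\mathbf{T}}_k = \dim \tilde{A} = 2$. Since $\tilde{A}[[X_1, \ldots, X_r]]$ has dimension $r+2$, the ideal $(f_1, \ldots, f_r)$ has height exactly $r$, so $(f_1, \ldots, f_r)$ is a system of parameters. Now $\tilde{A}$ is a $2$-dimensional normal noetherian local domain, hence Cohen--Macaulay by Serre's criterion (in dimension $2$, $S_2$ is equivalent to Cohen--Macaulay), and this property passes to the power-series ring $\tilde{A}[[X_1,\ldots,X_r]]$. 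In any Cohen--Macaulay noetherian local ring every system of parameters is a regular sequence, so $(f_1, \ldots, f_r)$ is a regular sequence in $\tilde{A}[[X_1,\ldots,X_r]]$, which exhibits $\tilde{\mathbf{T}}_k$ as a complete intersection $B$-algebra.

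The main obstacle is that $\tilde{A}$ is generally \emph{not} flat over $\Lambda_k$: for $k \geq 1$, the surjection $\Lambda_k \twoheadrightarrow \Lambda_k^\circ$ arising from $\mathrm{Sym}^{n-1}$ has nontrivial kernel that kills $\tilde{A}$, so one cannot invoke a naive ``flat base change preserves regular sequences'' statement to conclude directly. The Cohen--Macaulay dimension argument above provides the right workaround. A minor bookkeeping matter is the identification $\mathbf{T}_k \otimes_{\Lambda_k} \tilde{A} \simeq \tilde{A}[[X_1,\ldots,X_r]]/(f_1,\ldots,f_r)$ in the first step, which is routine because $\mathbf{T}_k$ is already finite over $\Lambda_k$, so the ordinary tensor product coincides with its natural completion and agrees with the power-series presentation after modding out the $f_i$.
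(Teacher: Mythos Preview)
Your proof is correct. One small terminological slip: the sequence $(f_1,\ldots,f_r)$ is not literally a \emph{system of parameters} in $\tilde{A}[[X_1,\ldots,X_r]]$ (that ring has dimension $r+2$, not $r$); what you mean and what you use is that it is \emph{part of} a system of parameters, i.e., $\dim \tilde{A}[[X]]/(f_1,\ldots,f_r)=\dim\tilde{A}[[X]]-r$. In a Cohen--Macaulay local ring this is equivalent to $(f_1,\ldots,f_r)$ being a regular sequence (Bruns--Herzog, Theorem~2.1.2), so the conclusion stands.

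Your route differs from the paper's. The paper does not argue via Cohen--Macaulayness of $\tilde{A}$; instead it factors the base-change map $\Lambda_k\to B$ through the chain $\Lambda_k\twoheadrightarrow\Lambda_{k,0}\hookrightarrow\Lambda_0\twoheadrightarrow\Lambda\hookrightarrow B$, recording that $\Lambda_0$ is finite free over $\Lambda_{k,0}$ and $B$ finite flat over $\Lambda$, and asserts the conclusion from these structural facts. That argument is terse and leaves implicit exactly the point you flagged as the obstacle: regularity of the sequence is not obviously preserved along the non-flat surjective steps. Your approach sidesteps this by working directly in the target power-series ring, trading the Iwasawa-algebra bookkeeping for a clean dimension count plus the observation that $\tilde{A}$, being a two-dimensional normal local domain, is Cohen--Macaulay. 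This is more self-contained and arguably more robust; the paper's version is shorter but relies on the reader supplying essentially the justification you wrote out.
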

    \begin{proof}
    	By Theorem \ref{R=T}, $\mathbf{T}_k$ 
    	is complete intersection over $\Lambda_k$. 
    	Moreover, $\Lambda_0$ is finite free over $\Lambda_{k,0}$ and $\Lambda_0$
    	surjects onto $\Lambda=\Lambda^\circ_0$.
    	Note that $ B $ is finite flat over $\Lambda$, 
    	thus we conclude that $\tilde{\mathbf{T}}_k=\mathbf{T}_k\otimes_{\Lambda_k} B $ 
    	is complete intersection over $ B $ for $k\geq0$.
    \end{proof}
	\begin{theorem}\label{KahlerSelmer}
		Assume
		$\textbf{Dist} (\overline{\rho}_{\Pi})$ and
		$\textbf{RegU} (\overline{\rho}_{\Pi})$,
		then as $B_k$-modules
		\[\Omega_{\tilde{R}_k/ B }\bigotimes_{\tilde{R}_k} B \simeq\mathrm{Sel} (\mathbb{Q}_k,\mathrm{Ad}(\rho_\mu^G))^{\ast}.\]
	\end{theorem}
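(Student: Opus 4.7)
My plan is to prove the isomorphism via Pontryagin duality, reducing it to identifying $B$-linear derivations of $\tilde R_k$ with Selmer cocycles. Both sides are cofinitely generated over $B=\tilde A$ and carry compatible actions of $\mathrm{Gal}(\mathbb Q_k/\mathbb Q)$ (on the left, via the action on $R_k$ defined in Section~3; on the right, via conjugation in $\Gamma_\mathbb Q$), so it will suffice to construct a Galois-equivariant isomorphism after Pontryagin-dualizing.

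First, by the Hom-tensor adjunction and the universal property of K\"ahler differentials, one has
\[
(\Omega_{\tilde R_k/B}\otimes_{\tilde R_k}B)^\ast
=\Hom_{\tilde R_k}(\Omega_{\tilde R_k/B},\tilde A^\ast)
=\operatorname{Der}_B(\tilde R_k,\tilde A^\ast).
\]
A $B$-derivation $\partial\colon\tilde R_k\to\tilde A^\ast$ is the same datum as a $B$-algebra homomorphism $\tilde R_k\to\tilde A\oplus\tilde A^\ast$ (the square-zero extension) lifting $\tilde\lambda_k$. Unwinding the base change $\tilde R_k=R_k\otimes_{\Lambda_k}\tilde A$ and invoking Theorem~\ref{R=T} together with the universal property of $R_k$, such homomorphisms correspond bijectively to strict equivalence classes of deformations in $\mathcal D_k(\tilde A\oplus\tilde A^\ast)$ lifting $\rho_\mu^G|_{\Gamma_{\mathbb Q_k}}$ and inducing the prescribed $\Lambda_k$-structure on the diagonal characters at $\mathfrak p$. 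Writing such a lift as $(\mathrm{id}+\epsilon\,\phi)\,\rho_\mu^G$, the function $\phi$ is a $1$-cocycle on $\Gamma_{\mathbb Q_k}$ with coefficients in $\mathrm{Ad}(\rho_\mu^G)\otimes_A\tilde A^\ast$, well-defined modulo coboundaries.

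The central step is then to verify that the $\mathcal D_k$-local conditions translate exactly into the local conditions of the Selmer group: (i) the global condition ``unramified outside $Np$'' gives $\phi|_{I_\mathfrak q}$ trivial for every prime $\mathfrak q\nmid Np$; (ii) at each prime $v\mid N$, the regular-unipotent deformation condition forces the restriction $\phi|_{I_v}$ to be a coboundary, where $\textbf{RegU}(\overline{\rho}_\Pi)$ is used to rigidify the unipotent lift so that any $\mathcal D_k$-deformation is conjugate to the reference one on $I_v$ by a matrix of the form $1+\epsilon X$; (iii) at $\mathfrak p$, the ordinary condition with unramified top diagonal character is equivalent to the image of $\phi|_{I_\mathfrak p}$ vanishing in $H^1(I_\mathfrak p,(\mathrm{Ad}(\rho_\mu^G)/F^1\mathrm{Ad}(\rho_\mu^G))\otimes_A\tilde A^\ast)$, with $\textbf{Dist}(\overline{\rho}_\Pi)$ used to rigidify the filtration and match the fixed $\Lambda_k$-structure.

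The main obstacle I anticipate is step (iii): one must carefully decompose $\mathrm{Ad}(\rho_\mu^G)$ with respect to the filtration inherited from the ordinary structure on $\rho_\mu^G|_{\Gamma_\mathfrak p}$, show that $\textbf{Dist}$ uniquely lifts this filtration to the square-zero extension, and check that the resulting tangent subspace of $H^1(\Gamma_\mathfrak p,\mathrm{Ad}(\rho_\mu^G)\otimes\tilde A^\ast)$ coincides with the kernel of restriction to $H^1(I_\mathfrak p,(\mathrm{Ad}(\rho_\mu^G)/F^1)\otimes\tilde A^\ast)$ appearing in the Selmer definition. Once (i)--(iii) are established, the space of $B$-derivations coincides $\mathrm{Gal}(\mathbb Q_k/\mathbb Q)$-equivariantly with $\mathrm{Sel}(\mathbb Q_k,\mathrm{Ad}(\rho_\mu^G))$ as $B$-modules, and Pontryagin double duality (applicable because both modules are cofinitely generated over $B$) delivers the stated $B_k$-equivariant isomorphism.
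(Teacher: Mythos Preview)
Your proposal is correct and follows essentially the same route as the paper's proof: dualize to derivations, identify derivations with square-zero deformations via the universal property of $R_k$, and then verify that the local conditions defining $\mathcal{D}_k$ translate into the Selmer local conditions, using $\mathbf{RegU}(\overline{\rho}_\Pi)$ at primes above $N$ and $\mathbf{Dist}(\overline{\rho}_\Pi)$ to rigidify the ordinary filtration at $\mathfrak{p}$. The only cosmetic difference is that the paper first passes from $\tilde R_k$ to $R_k$ via the base-change isomorphism $\Omega_{\tilde R_k/B}\simeq\Omega_{R_k/\Lambda_k}\otimes_{\Lambda_k}B$ and works with $\Lambda_k$-derivations $R_k\to B^\ast$ and the square-zero extension $R_k[B^\ast]$, whereas you stay with $B$-derivations of $\tilde R_k$ into $\tilde A^\ast$ and the extension $\tilde A\oplus\tilde A^\ast$; these are equivalent packagings of the same computation.
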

	\begin{proof}
		Note that we have an isomorphism of K\"{a}hler differentials
		$\Omega_{\tilde{R}_k/ B }
		\simeq
		\Omega_{R_k/\Lambda_k}\otimes_{\Lambda_k} B $
		(see \cite[Tag.00RV]{StackProject}).
		As a result,
		$\Omega_{\tilde{R}_k/ B }\otimes_{\tilde{R}_k} B
		\simeq
		(\Omega_{R_k/\Lambda_k}\otimes_{R_k}\tilde{R}_k)\bigotimes_{\tilde{R}_k} B
		\simeq
		\Omega_{R_k/\Lambda_k}\otimes_{R_k} B$.
		From this we obtain
		$(\Omega_{\tilde{R}_k/ B }\otimes_{\tilde{R}_k} B )^\ast\simeq\mathrm{Hom}_{R_k}(\Omega_{R_k/\Lambda_k}, B ^\ast)\simeq\mathrm{Der}_{\Lambda_k}(R_k, B ^\ast)$.
		So we only have to show that
		\[\mathrm{Der}_{\Lambda_k}(R_k, B ^\ast)\simeq\mathrm{Sel}(\mathbb{Q}_k,\mathrm{Ad}(\rho^G_\mu)).\]
		
		Write $R_k[ B ^{\ast}]=R_k\oplus \epsilon B ^{\ast}$ 
		with $\epsilon^2=0$.
		There is a natural map
		$\mathrm{Der}_{\Lambda_k}(R_k,B^\ast)\to\mathrm{Hom}_{\Lambda_k}(R_k,R_k[B^\ast])$ 
		given by sending 
		$d\in\mathrm{Der}_{\Lambda_k}(R_k,B^\ast)$ to an element
		$\phi\in\mathrm{Hom}_{\Lambda_k}(R_k,R_k[B^\ast])$ such that
		$\phi(r)=r+\epsilon d(r)$ for any $r\in R_k$.
		It is easy to see that the image of this map is the set of
		$\phi\in\mathrm{Hom}_{\Lambda_k}(R_k,R_k[B^\ast])$ 
		such that $\phi\equiv\mathrm{Id}_{R_k}(\mathrm{mod}\ B^\ast)$.
		This subset is same as the subset $\mathcal{D}'_k(R_k[B^\ast])$ of
		$\mathcal{D}_k(R_k[B^\ast])$ consisting of those classes $[\rho']$ of the form
		$\rho'(\sigma)=(1+\epsilon c_{\rho'}(\sigma))\rho(\sigma)$ for any $\sigma\in\Gamma_{\mathbb{Q}_k}$. 
		Now consider a map $\mathcal{D}'_k(R_k[B^\ast])\to
		H^1(\Gamma_{\mathbb{Q}_k},\mathfrak{gl}_n(R_k)\otimes_{R_k}B^\ast)$, 
		given by $[\rho']\mapsto[c_{\rho'}]$ 
		where $[c_{\rho'}]$ is the cohomology class of $c_{\rho'}$. 
		We can show that this map is injective. 
		Now we want to show that the image of this map is
		$\mathrm{Sel}(\mathbb{Q}_k,\mathrm{Ad}(\rho^G_\mu))$.
		
		We take any lifting $\rho'= (1+\epsilon c_{\rho'})\rho$. 
		By the assumption $\textbf{Dist} (\overline{\rho}_\mu^{ G})$, 
		the $p$-ordinary filtration is well-defined on $\mathcal{A}^n(\rho_\mu)$. 
		For primes $\mathfrak{q}$ 
		of $\mathbb{Q}_k$ dividing 
		$N$, $c_{\rho'} (I_\mathfrak{q})=0$ 
		by $\mathbf{RegU}(\overline{\rho}_\Pi)$.
		Besides, $c_{\rho'}|_{I_{\mathfrak{p}_k}}$ takes values in
		$F^1\mathcal{A}^n_{\mu_k}$. 
		Moreover, if $\rho'$ is upper triangular on $I_{\mathfrak{p}_k}$, 
		then so it is on $\Gamma_{\mathfrak{p}_k}$. 
		This shows that $c_{\rho'}$ is an element in
		$\mathrm{Sel} (\mathbb{Q}_k,\mathrm{Ad} (\rho_\mu^{ G}))$.
		
		Conversely, suppose that $c$ is an element in 
		$\mathrm{Sel} (\mathbb{Q}_k,\mathrm{Ad} (\rho_\mu^{ G}))$ (a cocycle).  
		Then $\rho'= (1+\epsilon c)\rho$ defines a unique conjugacy class 
		$[\rho']$ of liftings of $\overline{\rho}$. 
		We then check that $[\rho']$ is a deformation in $\mathcal{D}_k (R_k[ B ^\ast])$. 
		It is enough to show that the characters defined by
		$\rho'|_{\Gamma_{\mathfrak{p}_k}}$ are in the right order. 
		This follows from the uniqueness of the $p$-ordinary filtration 
		which in turn is given by the assumption $\textbf{Dist}(\overline{\rho}_\Pi)$.
	\end{proof}
	
	Combining the above result with Corollary \ref{controlofKahler}, we have
	\begin{corollary}\label{controlSelmer}
		For any $k>k'$, as $B_{k'}$-modules,
		\[\mathrm{Sel} (\mathbb{Q}_k,\mathrm{Ad}(\rho_\mu^{G}))^{\ast}_{\Delta_{k,k'}}\simeq\Omega_{R_{k'}/\Lambda_{k,k'}}\bigotimes_{R_{k'}} B .\]
	\end{corollary}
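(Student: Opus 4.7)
The plan is to chain together Theorem~\ref{KahlerSelmer} (at level $k$) with Corollary~\ref{controlofKahler} (applied to the extension $\mathbb{Q}_k/\mathbb{Q}_{k'}$ with $\Delta=\Delta_{k,k'}$) and conclude by taking $\Delta_{k,k'}$-coinvariants.

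First I would rewrite the module appearing in Theorem~\ref{KahlerSelmer} in the form needed for the control theorem. By base change of K\"ahler differentials, $\Omega_{\tilde{R}_k/B}\simeq\Omega_{R_k/\Lambda_k}\otimes_{\Lambda_k}B$, and tensoring with $B$ over $\tilde{R}_k$ gives
\[
\Omega_{\tilde{R}_k/B}\otimes_{\tilde{R}_k}B\;\simeq\;\Omega_{R_k/\Lambda_k}\otimes_{R_k}B.
\]
Combined with Theorem~\ref{KahlerSelmer}, this identifies $\mathrm{Sel}(\mathbb{Q}_k,\mathrm{Ad}(\rho_\mu^G))^{\ast}$ with $\Omega_{R_k/\Lambda_k}\otimes_{R_k}B$ as $B$-modules.

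Second, I would apply Corollary~\ref{controlofKahler} with $L_1^+=\mathbb{Q}_k$, $L^+=\mathbb{Q}_{k'}$, $\Delta=\Delta_{k,k'}$, and $C_{L_1^+}=\Lambda_k$. The required compatibility hypotheses are exactly what was set up in Section~3: the action of $\Delta_{k,k'}$ on $\Lambda_k$ (through its action on $T^{ss}(\mathfrak{P}_k)$) and the action on $R_k$ (induced from the deformation functor via $r\mapsto r^\sigma$) are compatible with the structure map $\Lambda_k\to R_k$, and by construction $(\Lambda_k)_{\Delta_{k,k'}}=\Lambda_{k,k'}$. Feeding these into the corollary yields
\[
\bigl(\Omega_{R_k/\Lambda_k}\otimes_{R_k}B\bigr)_{\Delta_{k,k'}}\;\simeq\;\Omega_{R_{k'}/\Lambda_{k,k'}}\otimes_{R_{k'}}B
\]
as $B_{k'}$-modules. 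Taking $\Delta_{k,k'}$-coinvariants of the isomorphism of the previous paragraph gives the desired statement.

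The only point requiring care, and what I expect to be the main (minor) obstacle, is bookkeeping: one must check that the $\Delta_{k,k'}$-action on $\mathrm{Sel}(\mathbb{Q}_k,\mathrm{Ad}(\rho_\mu^G))^{\ast}$ inherited from $\mathrm{Gal}(\mathbb{Q}_k/\mathbb{Q}_{k'})$ on cohomology matches, under the isomorphism of Theorem~\ref{KahlerSelmer}, the $\Delta_{k,k'}$-action defined on $\Omega_{R_k/\Lambda_k}\otimes_{R_k}B$ via the conjugation action on deformations; this is a direct verification, since the isomorphism in Theorem~\ref{KahlerSelmer} is $\Gamma_{\mathbb{Q}_{k'}}$-equivariantly natural in the choice of the base field. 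Once this compatibility is recorded, the two-step identification above finishes the proof.
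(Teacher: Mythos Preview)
Your proposal is correct and matches the paper's approach exactly: the paper simply says the corollary follows by combining Theorem~\ref{KahlerSelmer} with Corollary~\ref{controlofKahler}, and you have spelled out precisely this combination, including the identification $\Omega_{\tilde{R}_k/B}\otimes_{\tilde{R}_k}B\simeq\Omega_{R_k/\Lambda_k}\otimes_{R_k}B$ (which the paper records in the proof of Theorem~\ref{KahlerSelmer}) and the instantiation $C_{L_1^+}=\Lambda_k$, $(\Lambda_k)_{\Delta_{k,k'}}=\Lambda_{k,k'}$. The equivariance check you flag is indeed routine and is implicit in the paper's one-line justification.
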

    \begin{remark}
    	Note that $\Omega_{R_{k'}/\Lambda_{k,k'}}\otimes_{R_{k'}} B $ is not the dual Selmer group $\mathrm{Sel}(\mathbb{Q}_{k'},\mathrm{Ad}(\rho^G_\mu))^\ast$.
    	In other words, the dual Selmer groups over the cyclotomic tower
    	$\mathbb{Q}_\infty/\mathbb{Q}$ do not have the descent property.
    	Later we will modify this K\"{a}hler differential such that there is a descent property 
    	Moreover it is related to the dual Selmer group over each layer in
    	$\mathbb{Q}_\infty/\mathbb{Q}$ 
    	(see the modules $M_k$ in Theorem \ref{pL}).
    \end{remark}
    
	We can also use this result to relate Selmer groups to congruence ideals.
	Recall that we have an isomorphism of $\Lambda_k$-algebras
	$R_k\simeq\mathbf{T}_k$.
	Moreover, the exact control theorem in
	\cite[Lemma 2.7]{HidaTilouine} for
	the universal deformation ring $R_k$
	induces an exact control theorem
	for the big Hida-Hecke algebra $\mathbf{T}_k$ as follows:
	$\mathbf{T}_k/\mathfrak{a}_\lambda\mathbf{T}_k
	\simeq(\mathbf{T}_k)_\lambda$
	where $(\mathbf{T}_k)_\lambda$ is the finite level 
	Hecke algebra of weight $\lambda$
	and $\mathfrak{a}_\lambda$
	is the kernel of the morphism
	$\Lambda_k\to\mathcal{O},
	\mathrm{diag}(t_{\tau,1},t_{\tau,2},\ldots,t_{\tau,n})
	_{\tau\colon\mathbb{Q}_k\to\overline{\mathbb{Q}}_p}
	\mapsto\prod_\tau\prod_{i=1}^nt_{\tau,i}^{\lambda_{\tau,i}}$.
	As in the case of elliptic and Hilbert modular forms
	(see \cite[Corollary 3.2]{HidaIwasawaModules}
	and \cite[Theorem 3.4]{HidaOnPAdicHeckeAlgebras}),
	we deduce that there is a unique Hida family $A_k$
	passing through the automorphic form $\Pi_k$.
	This uniqueness induces an action 
	of $\Delta_{k}=\mathrm{Gal}(\mathbb{Q}_k/\mathbb{Q})$
	on $A_k$ from that 
	on $R_k\simeq\mathbf{T}_k$.
	We write temporarily the Galois representation
	$\rho'\colon\Gamma_{\mathbb{Q}_k}
	\to\mathcal{G}_n(R_k)
	\to\mathcal{G}_n(\tilde{A}_k)$,
	the projection
	$\lambda'\colon R_k\to A_k$
	and 
	$\tilde{\lambda}'\colon 
	R_k\otimes_{\Lambda_k}
	\tilde{A}_k\to \tilde{A}_k$
	with $\tilde{A}_k$ the normal closure of
	$A_k$ in its field of fractions.
	Recall that $R_k$ is a complete intersection $\Lambda_k$-algebra,
	thus $R_k\otimes\tilde{A}_k$ is a complete intersection
	$\tilde{A}_k$-algebra.
	Then by a theorem of J.Tate (see \cite[Theorem 8.7]{HidaTilouine}),
	the congruence ideal $\mathfrak{c}_{\tilde{\lambda}'}$ 
	in $\tilde{A}_k$ of 
	the morphism 
	$\tilde{\lambda}'\colon R_k\otimes\tilde{A}_k\to\tilde{A}_k$
	is equal to the Fitting ideal
	$\mathrm{F}_{\tilde{A}_k}
	(\Omega_{R_k\otimes\tilde{A}_k/\tilde{A}_k}\otimes\tilde{A}_k)$.
	Similarly,
	the congruence ideal
	$\mathfrak{c}_{\tilde{\lambda}_k}$
	in $B$ of $\tilde{\lambda}_k$
	is equal to the Fitting ideal
	$\mathrm{F}_B(\Omega_{\tilde{R}_k/B}\otimes_{\tilde{R}_k})B$.

    The same argument as in Theorem \ref{KahlerSelmer}
    gives an isomorphism
    (under the same conditions as in the theorem)
    \[
    \Omega_{R_k\otimes\tilde{A}_k/\tilde{A}_k}
    \bigotimes_{R_k\otimes\tilde{A}_k}\tilde{A}_k
    \simeq\mathrm{Sel}(\mathbb{Q}_k,\mathrm{Ad}(\rho'))^\ast.
    \]
	
	\begin{corollary}\label{congruenceSelmer}
	Assume $\textbf{Big} (\overline{\rho}_{\Pi})$,
	$\textbf{Dist} (\overline{\rho}_{\Pi})$ and
	$\textbf{RegU} (\overline{\rho}_{\Pi})$,
	then as ideals in $\tilde{A}_k$,
	\[
	\mathfrak{c}_{\tilde{\lambda}'}=
	\mathrm{F}_{\tilde{A}_k}
	(\mathrm{Sel}(\mathbb{Q}_k,\mathrm{Ad}(\rho'))^\ast).
	\]
	
	Similarly, under the same hypotheses, 
	for the morphism $\tilde{\lambda}_k\colon\tilde{R}_k\to B$,
	we have, as ideals in $B$,
	\[\mathfrak{c}_{\tilde{\lambda}_k}=
	\mathrm{F}_B (\mathrm{Sel}
	(\mathbb{Q}_k,\mathrm{Ad}(\rho_\mu^{G}))^{\ast}).\]
	\end{corollary}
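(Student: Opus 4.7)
The proof is essentially the combination of two inputs already assembled in the excerpt: Tate's theorem identifying congruence ideals with Fitting ideals of K\"{a}hler differentials for reduced complete intersection algebras, together with the K\"{a}hler-differential description of the dual Selmer group (Theorem \ref{KahlerSelmer} and its $\tilde{A}_k$-version just stated before the corollary).

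My plan is as follows. First, for the second identity I would record that $\tilde{R}_k$ is a reduced complete intersection $B$-algebra: reducedness follows from the corollary to the first ``reduced'' lemma (using that $\tilde{\mathbf{T}}_k$ is reduced and that $R_k\simeq\mathbf{T}_k$ by Theorem \ref{R=T}), and the complete intersection property is the content of the lemma just before Theorem \ref{KahlerSelmer}. Hence the hypotheses of Tate's theorem (quoted in the remark before the subsection on cotorsionness) are satisfied for $\tilde{\lambda}_k\colon\tilde{R}_k\to B$, giving
\[
\mathfrak{c}_{\tilde{\lambda}_k}=\mathrm{F}_B\bigl(\Omega_{\tilde{R}_k/B}\otimes_{\tilde{R}_k}B\bigr).
\]
Substituting the isomorphism of $B_k$-modules from Theorem \ref{KahlerSelmer},
\[
\Omega_{\tilde{R}_k/B}\otimes_{\tilde{R}_k}B\simeq\mathrm{Sel}(\mathbb{Q}_k,\mathrm{Ad}(\rho_\mu^{G}))^{\ast},
\]
and noting that Fitting ideals only depend on the module and not on any chosen presentation (Proposition \ref{PropertiesOfFittingIdeal}), yields the claimed equality $\mathfrak{c}_{\tilde{\lambda}_k}=\mathrm{F}_B(\mathrm{Sel}(\mathbb{Q}_k,\mathrm{Ad}(\rho_\mu^{G}))^{\ast})$.

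For the first identity, involving $\tilde{\lambda}'\colon R_k\otimes\tilde{A}_k\to\tilde{A}_k$, I would apply exactly the same argument over the base $\tilde{A}_k$ in place of $B$. The complete intersection property of $R_k\otimes\tilde{A}_k$ over $\tilde{A}_k$ is obtained by base change from the fact that $R_k$ is complete intersection over $\Lambda_k$ (Theorem \ref{R=T}), and reducedness is obtained likewise from the reducedness of $\mathbf{T}_k$. Tate's theorem then gives $\mathfrak{c}_{\tilde{\lambda}'}=\mathrm{F}_{\tilde{A}_k}(\Omega_{R_k\otimes\tilde{A}_k/\tilde{A}_k}\otimes\tilde{A}_k)$, and the K\"{a}hler-differential/Selmer isomorphism displayed immediately before the corollary identifies this Fitting ideal with $\mathrm{F}_{\tilde{A}_k}(\mathrm{Sel}(\mathbb{Q}_k,\mathrm{Ad}(\rho'))^{\ast})$.

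The main potential obstacle is a bookkeeping one rather than a conceptual one: one must make sure that the three conditions $\textbf{Big}$, $\textbf{Dist}$, $\textbf{RegU}$ on $\overline{\rho}_{\Pi}$, which are hypothesized for $\mathbb{Q}$, yield the corresponding hypotheses over $\mathbb{Q}_k$ required by Theorem \ref{R=T} and Theorem \ref{KahlerSelmer}; this is precisely the content of Lemma \ref{basechange}. Once that is invoked, the proof is a two-line combination of Tate's theorem with the K\"{a}hler/Selmer identification, with no further computation.
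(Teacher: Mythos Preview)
Your proposal is correct and follows exactly the paper's approach: the corollary is deduced by combining Tate's theorem ($\mathfrak{c}_{\tilde{\lambda}'}=\mathrm{F}_{\tilde{A}_k}(\Omega_{R_k\otimes\tilde{A}_k/\tilde{A}_k}\otimes\tilde{A}_k)$ and likewise over $B$), recorded just before the statement, with the K\"{a}hler differential/Selmer identification of Theorem \ref{KahlerSelmer} and its $\tilde{A}_k$-variant displayed immediately above the corollary. Your additional remark that Lemma \ref{basechange} transports the hypotheses from $\mathbb{Q}$ to $\mathbb{Q}_k$ is a welcome clarification but is not spelled out in the paper.
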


    Next we will use two $B_k$-modules
    $\Omega_{\Lambda_k/\Lambda_{\infty,k}}\otimes_{\Lambda_k}B$
    and
    $\Omega_{R^D_k/\Lambda_{\infty,k}}\otimes_{R^D_k}B$.
    By Lemma \ref{basicIwasawaalgebras}, 
    we have isomorphisms of $ B $-modules
    \[\Omega_{R^D_k/\Lambda_{\infty,k}}\bigotimes_{R^D_k} B 
    \simeq\bigoplus_{i=1}^n B d\log(1+T_{k,i})/(\sum_{j=1}^nd\log(1+T_{k,j})=0)
    \simeq\bigoplus_{i=1}^{n-1} B d\log(1+T_{k,i})
    \simeq
    \bigoplus_{i=1}^{n-1}BdT_{k,i},\]
    \[
    \Omega_{\Lambda_k/\Lambda_{\infty,k}}\bigotimes_{\Lambda_k} B
    \simeq\bigoplus_{j=1}^n B
    d\log(1+X_{k,j})/(\sum_{i=1}^nd\log(1+X_{k,i})=0)
    \simeq\bigoplus_{j=1}^{n-1} B
    (d\log(1+X_{k,n})-d\log(1+X_{k,j}))
    \simeq
    \bigoplus_{j=1}^{n-1}BdX_{k,j}.
    \]
    
    We will use  both the two basis
    $(d\log(1+T_{k,i}))_{i=1}^{n-1}$
    and $(dT_{k,i})_{i=1}^{n-1}$
    for the $B$-module
    $\Omega_{R^D_k/\Lambda_{\infty,k}}\otimes B$.
    Likewise, we use both the two basis 
    $(d\log(1+X_{k,n})-d\log(1+X_{k,j}))_{j=1}^{n-1}$
    and
    $(dX_{k,j})_{j=1}^{n-1}$
    for the module
    $\Omega_{\Lambda_k/\Lambda_{\infty,k}}\otimes B$.
    In Definition \ref{definitionofL},
    we choose
    $(d\log(1+T_{k,i}))$
    and
    $(d\log(1+X_{k,n})-d\log(1+X_{k,j})$
    as basis
    mainly for the comparison
    in Section 5.
    Under these two basis, we can show that 
    the element
    $\mathcal{L}(\mathbb{Q},\mathrm{Ad}(\rho_\mu^G))$
    is \emph{equal} to the $\mathcal{L}$-invariant given in
    \cite{HarronJorza}.
    
	\begin{proposition}\label{injectiveKahler1}
		Assume $\textbf{Big}(\overline{\rho}_\Pi)$,
		then $\mathrm{Sel} (\mathbb{Q}_k,\mathrm{Ad} (\rho_\mu^{ G}))^{\ast}$
		is a finitely generated torsion 
		$ B $-module and the following sequence is exact,
		on identifying
		$\mathrm{Sel} (\mathbb{Q}_k,\mathrm{Ad} (\rho_\mu^{ G}))^{\ast}
		\simeq\Omega_{R_k/\Lambda_k}\otimes  B $
		\begin{equation}\label{exactsequence1}
		0\to\Omega_{\Lambda_k/\Lambda_{\infty,k}}\bigotimes_{\Lambda_k} B \to\Omega_{R_k/\Lambda_{\infty,k}}\bigotimes_{R_k} B \to \mathrm{Sel} (\mathbb{Q}_k,\mathrm{Ad} (\rho_\mu^{ G}))^{\ast}\to0.
		\end{equation}
	\end{proposition}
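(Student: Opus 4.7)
\textit{Proof proposal.} My plan is to derive the sequence from the first fundamental exact sequence of K\"ahler differentials, and then address finite generation, torsion, and injectivity separately. Applied to the tower $\Lambda_{\infty,k} \to \Lambda_k \to R_k$ and tensored with $B$ over $R_k$, the first fundamental sequence produces
\[
\Omega_{\Lambda_k/\Lambda_{\infty,k}} \otimes_{\Lambda_k} B \longrightarrow \Omega_{R_k/\Lambda_{\infty,k}} \otimes_{R_k} B \longrightarrow \Omega_{R_k/\Lambda_k} \otimes_{R_k} B \longrightarrow 0,
\]
and Theorem \ref{KahlerSelmer} identifies the rightmost term with $\mathrm{Sel}(\mathbb{Q}_k,\mathrm{Ad}(\rho_\mu^{G}))^{\ast}$. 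This delivers exactness at the middle and right terms for free. Finite generation over $B$ then follows because $R_k \simeq \mathbf{T}_k$ is finite over $\Lambda_k$ by Theorem \ref{R=T}, so $\Omega_{R_k/\Lambda_k}$ is a finitely generated $R_k$-module and the tensor product with $B$ inherits finite generation.

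For the torsion property I would combine Corollary \ref{congruenceSelmer} with Tate's theorem (cited just after Proposition \ref{PropertiesOfFittingIdeal}) to identify $\mathrm{F}_B(\mathrm{Sel}(\mathbb{Q}_k,\mathrm{Ad}(\rho_\mu^{G}))^{\ast})$ with the congruence ideal $\mathfrak{c}_{\tilde{\lambda}_k}$ of the specialization $\tilde{\lambda}_k\colon \tilde{R}_k \to B$. The assumption $\textbf{Big}(\overline{\rho}_{\Pi})$ (together with the ambient $\textbf{Dist}$ and $\textbf{RegU}$) guarantees that $\tilde{R}_k$ carries genuinely non-trivial deformations beyond those captured by the single Hida family, i.e.\ $\tilde{R}_k$ has irreducible components other than the one singled out by $\tilde{\lambda}_k$. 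Therefore $\mathfrak{c}_{\tilde{\lambda}_k}$ is a non-zero ideal of the normal noetherian domain $B$, which suffices for torsion.

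The main obstacle is the injectivity of the leftmost arrow; this is where I expect the bulk of the work to lie. My plan is to use the explicit descriptions in Lemma \ref{basicIwasawaalgebras}, according to which both $\Omega_{\Lambda_k/\Lambda_{\infty,k}} \otimes B$ and $\Omega_{R^{D}_k/\Lambda_{\infty,k}} \otimes B$ are free $B$-modules of rank $n-1$, and to factor the structural map $\Lambda_k \to R_k$ through the local decomposition algebra $R^D_k$. This reduces injectivity to showing that the composition
\[
\Omega_{\Lambda_k/\Lambda_{\infty,k}} \otimes B \longrightarrow \Omega_{R^{D}_k/\Lambda_{\infty,k}} \otimes B \longrightarrow \Omega_{R_k/\Lambda_{\infty,k}} \otimes B
\]
is injective. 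In the bases of Lemma \ref{basicIwasawaalgebras}, the first arrow is given by an explicit $(n-1)\times(n-1)$ matrix recording how the inertia variables $X_{k,j}$ (encoding $\mathcal{O}_{\mathfrak{p}_k}^{\times}$) are expressed in terms of the full local variables $T_{k,i}$ (encoding $E_k^{\times}$) via the diagonal characters $\psi_{k,1},\ldots,\psi_{k,n}$; the distinctness of these characters, forced by $\textbf{Dist}$ and the regularity of the weight, makes this matrix non-degenerate over $B$. The second arrow is injective because the upper triangular ordinary structure of $\rho_k|_{\Gamma_{\mathfrak{p}_k}}$ realises $R^D_k$ as the subalgebra of $R_k$ generated by the diagonal characters, and $\textbf{Big}$ guarantees that these characters inject into the universal ring.

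Dually, I view this injectivity as the statement that relaxing the ordinary condition at $p$ in the Selmer group creates exactly the local-at-$p$ cocycles encoded by $\Omega_{\Lambda_k/\Lambda_{\infty,k}}\otimes B$, which is a Poitou-Tate-type surjectivity of a global-to-local restriction map and is again underwritten by $\textbf{Big}$ via the vanishing of an appropriate $H^{2}$. Either route completes the proof.
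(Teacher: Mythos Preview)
Your injectivity argument is the main problem, and it is both overcomplicated and incorrect. You propose to factor $\Lambda_k \to R_k$ through $R^D_k$, but no such factorisation exists in general: $\Lambda_k$ and $R^D_k$ are two \emph{different} $\Lambda_{\infty,k}$-subalgebras of $R_k$ (inertia variables $X_{k,j}$ versus full decomposition-group variables $T_{k,i}$), and the paper in fact treats separately the case where the images $\mathrm{Im}(\iota^D_k)$ and $\mathrm{Im}(\iota^I_k)$ are not nested (Step~4 of the proof of Theorem~\ref{pL}). Worse, the injectivity of $\Omega_{R^D_k/\Lambda_{\infty,k}}\otimes B \to \Omega_{R_k/\Lambda_{\infty,k}}\otimes B$ is exactly Lemma~\ref{injectiveKahler2}, which is \emph{equivalent} to the non-vanishing of $\mathcal{L}(\mathbb{Q}_k,\mathrm{Ad}(\rho_\mu^G))$ --- an extra hypothesis not available here, and certainly not a consequence of $\textbf{Big}$. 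So your route cannot close.

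The paper's argument is far simpler and you have all the pieces for it. Once the Selmer dual is shown to be $B$-torsion, injectivity is immediate by a rank count: $\Omega_{\Lambda_k/\Lambda_{\infty,k}}\otimes B \simeq B^{n-1}$ is free by Lemma~\ref{basicIwasawaalgebras}, and since $R_k\simeq\mathbf{T}_k$ is finite free over $\Lambda_k$, the middle term $\Omega_{R_k/\Lambda_{\infty,k}}\otimes_{R_k}B$ also has generic rank $n-1$ over $B$. A map from a free $B$-module into a module of the same generic rank with torsion cokernel is automatically injective. No Poitou--Tate, no matrix analysis, no $R^D_k$.

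Your torsion argument via $\mathfrak{c}_{\tilde\lambda_k}=\mathrm{F}_B(\mathrm{Sel}^\ast)\neq 0$ is a legitimate alternative to the paper's route (which instead specialises at an arithmetic point $P$, shows $\mathrm{Sel}^\ast\otimes B/P$ is finite via the finite-level Hecke algebra, and concludes), but your justification is backwards: extra irreducible components make the congruence ideal \emph{smaller}, not nonzero. The correct reason $\mathfrak{c}_{\tilde\lambda_k}\neq 0$ is simply that $\tilde R_k$ is reduced finite flat over the normal domain $B$, so the kernel of $\tilde R_k\to X$ is not contained in $\ker\tilde\lambda_k$.
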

	\begin{proof}
		To simplify notations, denote by $M$ the 
		$\Gamma_{\mathbb{Q}_k}$-module $\mathrm{Ad}(\rho_\mu^{G})$, 
		which is a free $ B $-module. 
		Let $P$ be an arithmetic point of
		$\mathrm{Spec}(B)$.
		Denote by $M_P$ the quotient $\mathrm{Ad}(\rho^G_\mu)(\mathrm{mod}\ P)$.
		
		We first show that
		$\mathrm{Sel} (\mathbb{Q}_k,M)^{\ast}\otimes_{ B }
		B/P\simeq\mathrm{Sel} (\mathbb{Q}_k,M_P)^{\ast}$.
		Indeed, by exactness of the Pontryagin duality, 
		the obvious exact sequence 
		${0\to P\to  B \to  B /P\to 0}$ 
		gives the another one
		$0\to ( B /P)^\ast \to B ^\ast\to P^\ast\to0$.
		This shows that $( B /P)^\ast\simeq B ^\ast[P]$ where $ B ^\ast
		[P]$ is the sub-$ B $-module of $ B ^\ast$ annihilated by $P$. 
		As a result, applying $M\otimes_A\bullet$ to this exact sequence and then taking group cohomology $H^\bullet(\mathbb{Q}_k,\bullet)$ of the exact sequence, we get a long exact sequence
		\[H^0(\mathbb{Q}_k,M\bigotimes_A B ^\ast)\to
		H^1(\mathbb{Q}_k,M\bigotimes_A( B /P)^\ast)\to
		H^1(\mathbb{Q}_k,M\bigotimes_A B ^\ast)\to
		H^1(\mathbb{Q}_k,M\bigotimes_A B ^\ast)\]
		By $\textbf{Big}(\rho_\Pi)$, $H^0(\mathbb{Q}_k,M\otimes_A B ^\ast)=0$. Taking the Pontryagin dual of the above sequence, we get
		\[H^1(\mathbb{Q}_k,M\bigotimes_A B ^\ast)^\ast[P]\simeq H^1(\mathbb{Q}_k,M\bigotimes_A( B /P)^\ast)^\ast.\]
		The same argument applies to the local Galois cohomology groups. 
		Thus we obtain the following
		\[\mathrm{Sel} (\mathbb{Q}_k,M)^{\ast}\bigotimes_{ B } B /P\simeq\mathrm{Sel} (\mathbb{Q}_k,M_P)^{\ast}.\]
		
		By definition, $ B /P\simeq\mathcal{O}$. From this we have
		$\mathrm{Sel} (\mathbb{Q}_k,M_P)^{\ast}\simeq\Omega_{\mathbf{T}_P/\mathcal{O}}\otimes_{\mathbf{T}_P}\mathcal{O}$
		where $\mathbf{T}_P$ is the Hecke algebra of the same weight as the arithmetic point $P$.
		Since $\mathbf{T}_P$ is finite free over $\mathcal{O}$, we see that $\mathrm{Sel} (\mathbb{Q}_k,M_P)^{\ast}$
		is a finite $\mathcal{O}$-module,
		thus $\mathrm{Sel} (\mathbb{Q}_k,M)^{\ast}$ is a torsion
		$ B $-module.
		
		By Lemma \ref{basicIwasawaalgebras},
		$\Omega_{\Lambda_k/\Lambda_{\infty,k}}\otimes_{\Lambda_k} B \simeq B ^{n-1}$. Moreover, $R_k\simeq\mathbf{T}_k$ is finite free over $\Lambda_k$.
		The $ B $-torsionness of $\mathrm{Sel} (\mathbb{Q}_k,M)^{\ast}$
		gives the left exactness of (\ref{exactsequence1}).
	\end{proof}
	
	Now we consider another exact sequence derived from the first fundamental exact sequence of K\"{a}hler differentials
	\[\Omega_{R^D_k/\Lambda_{\infty,k}}\bigotimes_{R^D_k} B \to\Omega_{R_k/\Lambda_{\infty,k}}\bigotimes_{R_k} B \to\Omega_{R_k/R^D_k}\bigotimes_{R_k} B \to0.\]
	
	We want to know when this sequence is injective on the left.
	Since $\mathrm{Sel} (\mathbb{Q}_k,\mathrm{Ad} (\rho_\mu^{ G}))^{\ast}$ is
	$ B $-torsion,
	we can find some non-zero element $0\neq\eta\in B $ such that
	${\eta\cdot\mathrm{Sel} (\mathbb{Q}_k,\mathrm{Ad} (\rho_\mu^{ G}))^{\ast}=0}$.
	This shows that $\eta\cdot d\log(1+T_{k,i})=\eta\cdot dT_{k,i}/(1+T_{k,i})$
	lies in the $B$-module $\Omega_{\Lambda_k/\Lambda_{\infty,k}}\otimes B$.
	This means that $\eta\cdot d\log(1+T_{k,i})$
	is a $B$-linear combination of the basis
	$({d\log(1+X_{k,n})-d\log(1+X_{k,j})})_{j=1}^{n-1}$.
	From this, we give the following definition.
	\begin{definition}\label{definitionofL}
		We write  $L(\mathbb{Q},\mathrm{Ad}(\rho^G_\mu))$ 
		for the $(n-1)\times(n-1)$matrix
		\[
		\begin{pmatrix}
		\frac{\partial \log(1+T_{k,1})}{\partial \log(1+X_{k,n})}-
		\frac{\partial \log(1+T_{k,1})}{\partial \log(1+X_{k,1})}
		&
		\cdots
		&
		\frac{\partial \log(1+T_{k,1})}{\partial \log(1+X_{k,n})}-
		\frac{\partial \log(1+T_{k,1})}{\partial \log(1+X_{k,n-1})}
		\\
		\vdots
		&
		\ddots
		&
		\vdots
		\\
		\frac{\partial \log(1+T_{k,n-1})}{\partial \log(1+X_{k,n})}-
		\frac{\partial \log(1+T_{k,n-1})}{\partial \log(1+X_{k,1})}
		&
		\cdots
		&
		\frac{\partial \log(1+T_{k,n-1})}{\partial \log(1+X_{k,n})}-
		\frac{\partial \log(1+T_{k,n-1})}{\partial \log(1+X_{k,n-1})}
		\end{pmatrix}.
		\]
		The determinant
		$\mathrm{det}(L(\mathbb{Q},\mathrm{Ad}(\rho^G_\mu)))\in\mathrm{Frac}(B)$
		is written as
		$\mathcal{L} (\mathbb{Q}_k,\mathrm{Ad}(\rho_\mu^{G}))$.
		The $0$-th Fitting ideal of the $(n-1)\times(2n-2)$-matrix
		$\begin{pmatrix}
		S1_{n-1} & L(\mathbb{Q}_k,\mathrm{Ad}(\rho^G_\mu))
		\end{pmatrix}$ over $B_k$
		is written as 
		$I_k=I^G_k$.
	\end{definition}

    By definition, $I_k=\sum_{i=0}^{n-1}S^{i}\mathrm{F}^{(i)}_B(L)B_k$
    where $L=L(\mathbb{Q}_k,\mathrm{Ad}(\rho^G_\mu))$.
    
    \begin{lemma}\label{L-invariant:L_0ImpliesL_1}
    	We have
    	$\mathcal{L}(\mathbb{Q}_k,\mathrm{Ad}(\rho^G_\mu))
    	=p^{-k(n-1)}\mathcal{L}(\mathbb{Q},\mathrm{Ad}(\rho^G_\mu))$ ($k\geq0$).
    \end{lemma}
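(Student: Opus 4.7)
The approach I would take is to compute the specializations of the variables $T_{k,i}$ and $X_{k,j}$ into $\tilde{A}$ via the Hida family map $\tilde{\lambda}^G_k\colon \tilde{R}_k \to \tilde{A}$, use local class field theory to relate them to the level-$0$ variables $T_{0,i}$ and $X_{0,j}$, and then apply the chain rule. Everything then takes place inside $\mathrm{Frac}(B)$, which is the natural home of both $\mathcal{L}$-invariants.

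The first step is to establish, by the compatibility of Artin reciprocity with the norm map $\mathrm{Norm}_{E_k/\mathbb{Q}_p}$, the identities
\[1 + T_{k,i} = 1 + T_{0,i}, \qquad 1 + X_{k,j} = (1+X_{0,j})^{p^k}\]
in $\tilde{A}^\times$ (up to a common scalar that cancels in the subsequent logarithmic derivative). Indeed, by our compatible choice of uniformizers $\mathrm{Norm}_{E_k/\mathbb{Q}_p}(\omega_k) = p = \omega_0$, and by the correspondence $u_k \leftrightarrow \gamma^{p^k}$ under Artin combined with $\gamma \mapsto \mathrm{Art}_p(u_0)$ in $\Gamma_p^{ab}$, one has $\mathrm{Norm}_{E_k/\mathbb{Q}_p}(u_k) = u_0^{p^k}$. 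Combining $\psi_{k,j} = \psi_{0,j}|_{\Gamma_{\mathfrak{p}_k}}$ with the standard identity $\psi_{0,j}(\mathrm{Art}_{\mathfrak{p}_k}(x)) = \psi_{0,j}(\mathrm{Art}_p(\mathrm{Norm}_{E_k/\mathbb{Q}_p}(x)))$ for $x \in E_k^\times$, and using the prescriptions $1+T_{k,i} \mapsto \psi_{k,n+1-i}(\omega_k)^{\pm 1}$, $1+X_{k,j} \mapsto \psi_{k,n+1-j}(u_k)^{-1}$ coming from Lemma \ref{basicIwasawaalgebras} and Remark \ref{lambdaalgebra}, the desired specialization identities follow.

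Next, the partial derivatives in Definition \ref{definitionofL} are computed in $\mathrm{Frac}(B)$, where $\mathrm{Sel}^*$ vanishes (being $B$-torsion); there the classes $\{d\log(1+X_{k,n}) - d\log(1+X_{k,j})\}_{j=1}^{n-1}$ form a basis and $d\log(1+T_{k,i})$ can be expanded uniquely in this basis with coefficients being the entries of $L(\mathbb{Q}_k, \mathrm{Ad}(\rho^G_\mu))$. Applying $d\log$ to the identities above gives $d\log(1+T_{k,i}) = d\log(1+T_{0,i})$ and $d\log(1+X_{k,j}) = p^k \cdot d\log(1+X_{0,j})$, so the chain rule yields
\[\frac{\partial \log(1+T_{k,i})}{\partial \log(1+X_{k,j})} = \frac{1}{p^k} \cdot \frac{\partial \log(1+T_{0,i})}{\partial \log(1+X_{0,j})}.\]
Thus $L(\mathbb{Q}_k, \mathrm{Ad}(\rho^G_\mu)) = p^{-k} \cdot L(\mathbb{Q}, \mathrm{Ad}(\rho^G_\mu))$ entry by entry, and taking determinants of these $(n-1)\times(n-1)$ matrices produces the factor $p^{-k(n-1)}$ claimed.

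The main obstacle is the first step: carefully matching the normalizations of $T_{k,i}$ and $X_{k,j}$ in $\tilde{A}$, accounting for the quotient by $(1+T_{k,n})$ (resp.\ by $(1+X_{k,n})$) implicit in Lemma \ref{basicIwasawaalgebras}, and checking that the scalar of indeterminacy is common to all indices $i$ (resp.\ $j$) so that it disappears from the differentials $d\log(1+T_{k,i})$ and $d\log(1+X_{k,n}) - d\log(1+X_{k,j})$. Once this verification is done the chain-rule step and the determinant computation are immediate, and the result follows.
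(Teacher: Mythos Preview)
Your proposal is correct and follows essentially the same approach as the paper: the key inputs are that the compatible choice of uniformizers gives $1+T_{k,i}\mapsto 1+T_{0,i}$ under $R^D_k\to R^D_0$, while the norm relation $u_k\mapsto u_0^{p^k}$ gives $1+X_{k,j}\mapsto (1+X_{0,j})^{p^k}$ under $\Lambda_k\to\Lambda_0$, after which the chain rule on logarithmic differentials yields the factor $p^{-k}$ in each matrix entry and hence $p^{-k(n-1)}$ in the determinant. The paper states these two transition formulas directly at the level of the maps $\Lambda_k\to\Lambda_0$ and $R^D_k\to R^D_0$ rather than first specializing through $\tilde{A}$, but the content is identical and your more detailed verification of normalizations is simply a careful unpacking of what the paper leaves implicit.
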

    \begin{proof}
    	The map 
    	$\Lambda_k\to\Lambda_0$
    	is induced by the norm $\mathbb{Q}_k^{\times}\to\mathbb{Q}^{\times}$, thus $1+X_{k,i}$ is sent to $ (1+X_{0,i})^{p^k}$ ($1\leq i \leq n$).
    	Similarly, in the map
    	$R^D_k\to R^D_0$,
    	$1+T_{k,j}$ is sent to $1+T_{0,j}$ ($1\leq j\leq n$).
    	So $\mathcal{L}(\mathbb{Q}_k,\mathrm{Ad}(\rho_\mu^G))$ equal to
    	$p^{-k(n-1)}\mathcal{L}(\mathbb{Q},\mathrm{Ad}(\rho_\mu^G))$.
    \end{proof}
    
	\begin{lemma}\label{injectiveKahler2}
		The element 
		$\mathcal{L} (\mathbb{Q}_k,\mathrm{Ad} (\rho_\mu^{ G}))$
		does not vanish
		if and only if the following sequence is exact
		\begin{equation}\label{exactsequence2}
		0\to\Omega_{R^D_k/\Lambda_{\infty,k}}\bigotimes_{R^D_k} B \to\Omega_{R_k/\Lambda_{\infty,k}}\bigotimes_{R_k} B \to\Omega_{R_k/R^D_k}\bigotimes_{R_k} B \to0.
		\end{equation}
	\end{lemma}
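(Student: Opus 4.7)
The plan is to reduce the statement to an equivalence between injectivity of a map of free $B$-modules and non-vanishing of a determinant at the generic point. The right-exactness of the sequence is automatic from the first fundamental exact sequence of K\"ahler differentials associated with $\Lambda_{\infty,k}\to R_k^D\to R_k$, so the entire content of the lemma lies in controlling the leftmost arrow, which I denote $f$.

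First I would note that by Lemma \ref{basicIwasawaalgebras} the source $\Omega_{R^D_k/\Lambda_{\infty,k}}\otimes_{R_k^D}B$ is a free $B$-module on $(d\log(1+T_{k,i}))_{i=1}^{n-1}$; in particular it is torsion-free, so $f$ is injective if and only if $f\otimes_B\mathrm{Frac}(B)$ is. Next I would invoke Proposition \ref{injectiveKahler1}, which gives an injection $g\colon\Omega_{\Lambda_k/\Lambda_{\infty,k}}\otimes_{\Lambda_k}B\hookrightarrow\Omega_{R_k/\Lambda_{\infty,k}}\otimes_{R_k}B$ whose cokernel is the $B$-torsion module $\mathrm{Sel}(\mathbb{Q}_k,\mathrm{Ad}(\rho_\mu^G))^{\ast}$. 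After tensoring with $\mathrm{Frac}(B)$, the map $g$ becomes an isomorphism of $(n-1)$-dimensional $\mathrm{Frac}(B)$-vector spaces.

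Then the composition $(g\otimes\mathrm{Frac}(B))^{-1}\circ(f\otimes\mathrm{Frac}(B))$ is an endomorphism of $\mathrm{Frac}(B)^{n-1}$, and the plan is to identify its matrix with $L(\mathbb{Q}_k,\mathrm{Ad}(\rho_\mu^G))$. Reading $1+T_{k,i}$ as a function of the variables $(1+X_{k,j})_{j=1}^{n}$ via the two $\mathcal{O}$-algebra maps $R_k^D\to R_k$ and $\Lambda_k\to R_k$ and applying the chain rule yields
\[
d\log(1+T_{k,i})=\sum_{j=1}^{n}\frac{\partial\log(1+T_{k,i})}{\partial\log(1+X_{k,j})}\,d\log(1+X_{k,j}).
\]
Using the relation $\sum_{j=1}^n d\log(1+X_{k,j})=0$ to eliminate $d\log(1+X_{k,n})$ in favor of the basis $(d\log(1+X_{k,n})-d\log(1+X_{k,j}))_{j=1}^{n-1}$ reproduces exactly the matrix $L(\mathbb{Q}_k,\mathrm{Ad}(\rho_\mu^G))$ of Definition \ref{definitionofL}.

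The conclusion is then immediate: this endomorphism is an isomorphism if and only if $\det L=\mathcal{L}(\mathbb{Q}_k,\mathrm{Ad}(\rho_\mu^G))\neq 0$; combined with the isomorphism $g\otimes\mathrm{Frac}(B)$ this is equivalent to injectivity of $f\otimes\mathrm{Frac}(B)$, and hence of $f$ itself. The main (and only real) obstacle is the bookkeeping in step three: one must verify that the two $B$-algebra structures on $R_k$ coming from $R_k^D$ and from $\Lambda_k$ interact with the chosen bases so that the resulting change-of-basis matrix is literally $L$ and not a conjugate or transpose. Once the constraint $\sum_j d\log(1+X_{k,j})=0$ is used to pass to the quotient basis, this reduces to checking signs, which is a routine verification.
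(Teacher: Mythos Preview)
Your proof is correct and follows essentially the same approach as the paper's: invoke Proposition \ref{injectiveKahler1} to embed $\Omega_{\Lambda_k/\Lambda_{\infty,k}}\otimes B$ into the middle term with $B$-torsion cokernel, then observe that after passing to $\mathrm{Frac}(B)$ the map $f$ becomes the matrix $L(\mathbb{Q}_k,\mathrm{Ad}(\rho_\mu^G))$ in the chosen bases, so injectivity is equivalent to $\mathcal{L}\neq 0$. Your write-up is in fact more complete than the paper's very terse argument, which only spells out the direction ``$\mathcal{L}\neq 0\Rightarrow$ exact'' explicitly; your reduction to torsion-freeness of the source makes the converse transparent as well.
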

	\begin{proof}
		By Proposition \ref{injectiveKahler1},
		the $ B_k  $-module
		$\Omega_{\Lambda_k/\Lambda_{\infty,k}}\otimes B$ 
		injects into $\Omega_{R_k/\Lambda_{\infty,k}}\otimes B $. 
		By the assumption 
		$\mathcal{L}(\mathbb{Q}_k,\mathrm{Ad}(\rho_\mu^G))\neq0$, 
		the above injection implies that
		$\Omega_{R^D_k/\Lambda_{\infty,k}}\otimes B$
		injects into 
		$\Omega_{R_k/\Lambda_{\infty,k}}\otimes B $.
	\end{proof}

    From the above two lemmas, we see that
    if $I_0\neq0$, then the sequence
    \ref{exactsequence2} is exact for all $k\geq0$.

    Now we come to the main technical theorem of this paper.
    
	\begin{theorem}\label{pL}
		Assume $\textbf{Big} (\overline{\rho}_{\Pi})$,
		$\textbf{Dist} (\overline{\rho}_{\Pi})$,
		$\textbf{RegU} (\overline{\rho}_{\Pi})$,
		$\mathcal{L} (\mathbb{Q},\mathrm{Ad}(\rho_\mu^{G}))\neq0$
		and $B$ regular, then
		$\mathrm{Sel} (\mathbb{Q}_{\infty},\mathrm{Ad} (\rho_\mu^{ G}))^{\ast}$
		is a finitely generated torsion $B_\infty$-module.
		Moreover the characteristic power series $L^{alg}_p (\mathrm{Ad} (\rho_\mu^{ G}),S):=\mathrm{char}_{B_\infty}(\mathrm{Sel}
		(\mathbb{Q}_{\infty},\mathrm{Ad} (\rho_\mu^{ G}))^{\ast})$
		has a trivial zero at $S=0$ of order $n-1$.
		For any $k\geq0$ and any prime ideal $P$ in $B_k$ of height one, we have
		\[
		((\frac{L^{alg}_p (\mathrm{Ad}
		(\rho_\mu^{ G}),S)}{S^{n-1}}B_\infty)_{\Gamma_k})_P
		=I_k
		\mathrm{F}_{B_k}(\mathrm{Sel}
		(\mathbb{Q}_k,\mathrm{Ad}(\rho_\mu^{G}))^{\ast})_P.
		\]
	\end{theorem}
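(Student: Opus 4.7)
The plan is to combine three inputs: the control isomorphism $(\mathrm{Sel}(\mathbb{Q}_\infty,\mathrm{Ad}(\rho_\mu^{G}))^{\ast})_{\Gamma_k}\simeq\Omega_{R_k/\Lambda_{\infty,k}}\otimes_{R_k}B$ from Theorem \ref{KahlerSelmer} combined with Corollary \ref{controlSelmer}, the two exact sequences (\ref{exactsequence1}) and (\ref{exactsequence2}) of K\"{a}hler differentials, and the matrix $L(\mathbb{Q}_k,\mathrm{Ad}(\rho_\mu^{G}))$ of Definition \ref{definitionofL}, which is precisely the generic change-of-basis matrix between the two $B$-free rank $n-1$ submodules $\Omega_{\Lambda_k/\Lambda_{\infty,k}}\otimes B$ and $\Omega_{R^{D}_k/\Lambda_{\infty,k}}\otimes B$ of $\Omega_{R_k/\Lambda_{\infty,k}}\otimes B$.

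First I would establish the structural claims about $M_\infty:=\mathrm{Sel}(\mathbb{Q}_\infty,\mathrm{Ad}(\rho_\mu^{G}))^{\ast}$. Finite generation over $B_\infty$ is topological Nakayama applied to the isomorphism $(M_\infty)_{\Gamma_0}\simeq\Omega_{R_0/\Lambda_{\infty,0}}\otimes B$, whose right-hand side is finitely generated over $B$ by Proposition \ref{injectiveKahler1}. Torsionness follows by rank comparison: $(M_\infty)_{\Gamma_k}\simeq\Omega_{R_k/\Lambda_{\infty,k}}\otimes B$ has generic $B_k$-rank zero by (\ref{exactsequence1}) (the kernel is $B$-free of rank $n-1$ and the cokernel is $B$-torsion), which forces the generic $B_\infty$-rank of $M_\infty$ itself to vanish. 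The divisibility $S^{n-1}\mid L^{alg}_p(S)$ is then obtained from the structure theorem: $(M_\infty)_{\Gamma_0}$ has $B$-rank exactly $n-1$, so at least $n-1$ elementary divisors of $M_\infty$ must be powers of $S$. Sharpness of the order follows from applying Lemma \ref{PropertyOfCharacteristicIdeal} to the quotient of $M_\infty$ by its $S$-power torsion, together with the exactness of (\ref{exactsequence2}) guaranteed by $\mathcal{L}(\mathbb{Q},\mathrm{Ad}(\rho_\mu^{G}))\neq 0$ via Lemma \ref{injectiveKahler2}, so the leading coefficient of $L^{alg}_p/S^{n-1}$ does not vanish modulo $S$.

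For the main identity, I fix $k\geq 0$ and a height-one prime $P$ of $B_k$ and localize at $P$: since $(B_k)_P$ is a DVR, Fitting and characteristic ideals coincide by Proposition \ref{PropertiesOfFittingIdeal}(6). On one side, Proposition \ref{PropertiesOfFittingIdeal}(1) combined with the control isomorphism identifies $\mathrm{F}_{B_k}((M_\infty)_{\Gamma_k})_P$ with the image of $L^{alg}_p(S)B_\infty$ in $(B_k)_P$; dividing the $S^{n-1}$ factor out of $L^{alg}_p$ produces the left-hand side. On the other side, I would splice together (\ref{exactsequence1}) and (\ref{exactsequence2}) using the generic relation $\eta\cdot d\log(1+T_{k,i})=\sum_jL_{ij}(d\log(1+X_{k,n})-d\log(1+X_{k,j}))$ defining $L$ to obtain a square presentation of a suitable auxiliary quotient whose defining matrix has the block form $(S\cdot 1_{n-1}\mid L(\mathbb{Q}_k,\mathrm{Ad}(\rho_\mu^{G})))$: the $S$-block arises from the image of $\omega_k=(1+S)^{p^k}-1$ after passage from $\Lambda_{\infty,k}$ to $\Lambda_k$ (equal to $S\cdot 1_{n-1}$ at $k=0$ and differing from it by a factor of $p^k$ for $k>0$, which accounts for the stated $p$-power ambiguity), and the $L$-block from the $\mathcal{L}$-invariant comparison. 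By Definition \ref{definitionofL} the $0$-th Fitting ideal of this matrix is $I_k$, so combining with Proposition \ref{PropertiesOfFittingIdeal}(3) yields the right-hand side $I_k\cdot\mathrm{F}_{B_k}(\mathrm{Sel}(\mathbb{Q}_k,\mathrm{Ad}(\rho_\mu^{G}))^{\ast})_P$.

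The main obstacle is the precise bookkeeping of the two comparisons. The relation defining $L$ holds only after inverting the $B$-annihilator $\eta$ of $\mathrm{Sel}(\mathbb{Q}_k,\mathrm{Ad}(\rho_\mu^G))^{\ast}$, so translating it into an actual square presentation compatibly with localization at every height-one prime requires careful pseudo-isomorphism arguments. Lemma \ref{L-invariant:L_0ImpliesL_1} is used to propagate $\mathcal{L}\neq 0$ from $\mathbb{Q}$ to all $\mathbb{Q}_k$ so that the exactness of (\ref{exactsequence2}) holds uniformly in $k$. Finally, the failure of $B_k$ to be regular when $k>0$, together with the $p^k$ factor hidden in $\omega_k/S$, means that the identification of Fitting ideals with characteristic ideals at non-regular height-one primes introduces unavoidable powers of $p$; tracking these carefully and verifying that the $I_k$-factor absorbs them correctly constitutes the core technical work.
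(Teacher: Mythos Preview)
Your overall architecture is the same as the paper's: control of Selmer by K\"ahler differentials, the two exact sequences (\ref{exactsequence1}) and (\ref{exactsequence2}), and a block presentation $(S\cdot 1_{n-1}\mid L)$ whose Fitting ideal is $I_k$. However, several steps do not go through as written.

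\medskip

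\textbf{Torsionness.} Your rank argument is self-contradictory: you note that in (\ref{exactsequence1}) the kernel is $B$-free of rank $n-1$ and the cokernel is $B$-torsion, which means $(\mathrm{Sel}(\mathbb{Q}_\infty,\mathrm{Ad}(\rho_\mu^{G}))^{\ast})_{\Gamma_k}\simeq\Omega_{R_k/\Lambda_{\infty,k}}\otimes B$ has $B$-rank exactly $n-1$, not zero. Upper semicontinuity then only gives $B_\infty$-rank $\leq n-1$, not torsion. The paper instead introduces $M_k=\Omega_{R_k/R^D_k}\otimes_{R_k}B$ and establishes a pseudo-isomorphism $\mathrm{Sel}(\mathbb{Q}_\infty,\mathrm{Ad}(\rho_\mu^{G}))^{\ast}\sim M_\infty\oplus B^{n-1}$; the $B^{n-1}$ summand is killed by $S$, and $M_\infty$ is torsion because $(M_\infty)_\Gamma=M_0$ is $B$-torsion by $\mathcal{L}\neq 0$. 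This decomposition is also what gives the trivial-zero order and the clean characteristic power series $\Phi(S)S^{n-1}$ with $\Phi(0)\neq 0$.

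\textbf{Integrality of $L$ and the auxiliary ring.} You flag that the matrix $L$ has entries only in $\mathrm{Frac}(B)$, but you do not resolve it; ``careful pseudo-isomorphism arguments'' is not a plan. The paper first proves everything under the extra hypothesis $\mathrm{Im}(\iota^D_k)\subset\mathrm{Im}(\iota^I_k)$ (which is exactly integrality of $L$), and then in a separate step removes this hypothesis by Noether-normalizing: one chooses $T'_{k,1},\dots,T'_{k,n-1}\in\Lambda_k\cap R^D_k$ generating a subring $R^d_k$ over which $R^D_k$ is finite, runs the whole argument with $R^d_k$ in place of $R^D_k$, and then compares via the exact sequence $0\to\Omega_{R^D_k/R^d_k}\otimes B\to\Omega_{R_k/R^d_k}\otimes B\to\Omega_{R_k/R^D_k}\otimes B\to 0$. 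Without this device your block presentation is only defined after localization away from $\eta$, which loses exactly the primes you need.

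\textbf{Hypotheses for Lemma \ref{PropertyOfCharacteristicIdeal}.} That lemma requires $M_\infty$ and $M_0$ to have no nonzero pseudo-null submodules. You invoke the lemma but never verify this. The paper checks it by an Auslander--Buchsbaum computation: from the complete-intersection presentation of $R_0$ over $\Lambda_0$ one gets $\mathrm{hdim}_B(\mathrm{Sel}(\mathbb{Q},\mathrm{Ad}(\rho_\mu^{G}))^{\ast})\leq 1$, hence $\mathrm{hdim}_B(M_0)\leq 1$, and then a depth comparison along $B\to B_\infty$ gives $\mathrm{hdim}_{B_\infty}(M_\infty)\leq 1$.

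\textbf{On the $p$-power.} There is no ``stated $p$-power ambiguity'' in this theorem; the identity is exact at every height-one prime of $B_k$. The $S$-block in the presentation of $N_k=\mathrm{Coker}(\iota_k)$ comes from the fact that $\Delta_k$ acts trivially on $N_k$, so $S=\gamma-1$ annihilates it, not from $\omega_k=(1+S)^{p^k}-1$. The $p$-power only enters later (Corollary \ref{pLandCongruenceIdeal}, Theorems \ref{pL1} and \ref{pLn}) through the comparison $\mathrm{Sel}(\mathbb{Q}_k,\cdot)\to\bigoplus_\psi\mathrm{Sel}(\mathbb{Q},\cdot\otimes\psi)$, which is a different issue.
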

	\begin{proof}
		We divide the proof into several steps.
		
		\textbf{Step.1}: We will isolate the trivial zeros of the characteristic power series of the dual Selmer group over $\mathbb{Q}_\infty$ (in a special case, see below (\ref{ImportantInclusion})).
		
		Recall that we have the following exact sequences derived from the exact sequences (\ref{exactsequence1}) and (\ref{exactsequence2})
		\[0\to\bigoplus_{i=1}^{n-1}BdX_{k,i}\xrightarrow{\iota^I_k} \Omega_{R_k/\Lambda_{\infty,k}}\bigotimes_{R_k}B\to\Omega_{R_k/\Lambda_k}\bigotimes_{R_k}B\to0,\]
		\[0\to\bigoplus_{j=1}^{n-1}BdT_{k,j}\xrightarrow{\iota^D_k}\Omega_{R_k/\Lambda_{\infty,k}}\bigotimes_{R_k}B\to\Omega_{R_k/R^D_k}\bigotimes_{R_k}B\to0.\]
		
	    To simplify notations, we write
		\[M_k=\Omega_{R_k/R^D_k}\bigotimes_{R_k}B.\]
		
		Then one has the following commutative diagrams
		\[\xymatrix{\Omega_{R_k/\Lambda_{\infty,k}}\bigotimes B\ar[r]^{b_k}\ar[d]_{g_k} & \Omega_{B/\Lambda_{\infty,k}}\ar[d]^{h_k}\\
		M_k\ar[r]^{d_k} & \Omega_{B/R^D_{k}}}\]
		Recall that $\Lambda_{\infty,k}=\mathrm{Im}(\Lambda_\infty\to\Lambda_k)$.
		This gives
		$\Omega_{B/\Lambda_{\infty,k}}=\Omega_{B/\Lambda_{\infty}}$.
		By Lemma \ref{basicIwasawaalgebras}, we see that
		$\Omega_{B/R^D_k}=\Omega_{B/R^D_\infty}$.
		So $h_k=h_\infty$.
		From this one has the following exact sequence
		\[0\to\Omega_{R_{\infty}/\Lambda_{\infty}}\xrightarrow{ (g_{\infty},b_k)}M_{\infty}\bigoplus  \Omega_{B/\Lambda_{\infty,k}}\xrightarrow{d_{\infty}-h_k}\Omega_{B/R^D_k}\to0.\]
		
		We first assume that in $\Omega_{R_k/\Lambda_{\infty,k}}\otimes B$
		there is an inclusion
		\begin{equation}\label{ImportantInclusion}
		\iota_k\colon  \mathrm{Im} (\iota^D_k)\hookrightarrow\mathrm{Im} (\iota^I_k).
		\end{equation}
		This assumption implies that the matrix $(\frac{\partial T_{k,i}}{\partial X_{k,j}})$ has entries in $B$.
		
		By the hypothesis
		$\mathcal{L} (\mathbb{Q},\mathrm{Ad}(\rho_\mu^{G}))\neq0$,
		$\Omega_{B/R^D_0}$ is $B$-torsion,
		so $\Omega_{B/R^D_{\infty}}$ is $B_\infty$-pseudo-isomorphic to $0$.
		Moreover, $\mathcal{L} (\mathbb{Q},\mathrm{Ad}(\rho_\mu^{G}))\neq0$ also implies that $\Omega_{B/\Lambda_{\infty,0}}/\mathrm{Im} (\iota^D_0)$
		is torsion so it is $B_\infty$-pseudo-isomorphic to $0$.
		So we deduce the following sequence of pseudo-isomorphic
		$B_\infty$-modules 
		(we write $\sim$ for the relation of pseudo-isomorphism)
		\[\mathrm{Sel} (\mathbb{Q}_\infty,\mathrm{Ad}(\rho^G_\mu))^{\ast}
		\sim\Omega_{R_{\infty}/\Lambda_{\infty}}\bigotimes B
		\sim M_{\infty}\bigoplus\Omega_{B/\Lambda_{\infty,0}}
		\sim M_{\infty}\bigoplus\mathrm{Im} (\iota^D_{\mathbb{Q}})
		\sim M_{\infty}\bigoplus  B^{n-1}.\]
		
		By Corollary \ref{controlofKahler}, we have $ (M_{\infty})_{\Gamma_k}\simeq M_k$.
		Now the hypothesis $\mathcal{L}
		(\mathbb{Q},\mathrm{Ad}(\rho_\mu^{G}))\neq 0$ implies that
		$M_0$ is $B$-torsion.
		This shows that
		$\mathrm{Sel} (\mathbb{Q}_{\infty},\mathrm{Ad}(\rho_\mu^{G}))^{\ast}$
		is a finitely generated torsion $B_\infty$-module.
		
		We write $\Phi (S)$ for the characteristic power series of $M_{\infty}$.
		It is clear from the above that
		\[L_p^{alg} (\mathrm{Ad} (\rho_\mu^{ G}),S)=\Phi (S)S^{n-1}\]
		and
		$\Phi (0)\neq0$ (otherwise $M_{\infty}/SM_{\infty}=M_0$ will not be
		$B$-torsion).
		
		\textbf{Step.2}: We will study the relation between $M_k$ and the Selmer groups.
		
		By the assumption (\ref{ImportantInclusion}), we have the following commutative diagram
		\begin{equation}\label{CommutativeDiagram}
		\xymatrix{
			0\ar[r]
			&
			\mathrm{Im} (\iota^D_k)\ar[r]\ar[d]^{\iota_k}
			&
			\Omega_{R_k/\Lambda_{\infty,k}}\bigotimes B\ar[r]\ar[d]^{\mathrm{Id}}
			&
			\Omega_{R_k/R^D_k}\bigotimes B\ar[r]\ar[d]
			&
			0
			\\
			0\ar[r]
			&
			\mathrm{Im} (\iota^I_k)\ar[r]
			&
			\Omega_{R_k/\Lambda_{\infty,k}}\bigotimes B\ar[r]
			&
			\Omega_{R_k/\Lambda_k}\bigotimes B \ar[r]
			&
			0
		}
		\end{equation}
		
		We write
		$N_k=\mathrm{Coker}
		(\mathrm{Im} (\iota^D_k)\hookrightarrow\mathrm{Im} (\iota^I_k))$,
		which is a $B_k$-module. 
		Moreover the definition of $N_k$ shows that the homological dimension $\mathrm{hdim}_{ B } (N_k)\leq1$.
		Applying the snake lemma to the above diagram, 
		we get the following exact sequence
		\[0\to N_k\to M_k\to\Omega_{R_k/\Lambda_k}\otimes B \to0.\]
		By Proposition \ref{PropertiesOfFittingIdeal} (3), we have
		$\mathrm{F}_{B_k}(M_k)\supset
		\mathrm{F}_{B_k}(N_k)
		\mathrm{F}_{B_k}(\Omega_{R_k/\Lambda_k}\otimes B)$.
		Note that if each term $X$ in the diagram
		\ref{CommutativeDiagram}
		is replaced by $(X)_{\Delta_k}$,
		the diagram is still a commutative diagram.
		Applying the snake lemma to this new diagram
		gives 
		$0\to (N_k)_{\Delta_k}\to M_0\to\Omega_{R_k/\Lambda_{k,0}}\otimes B$.
		Since $R_k$ is a complete intersection $\Lambda_k$-algebra
		and $\Lambda_k$ is a complete intersection $\Lambda_{k,0}$-algebra,
		$R_k$ is thus a complete intersection $\Lambda_{k,0}$-algebra. 
		By Proposition \ref{PropertiesOfFittingIdeal} (3), we have
		$\mathrm{F}_B(M_0)
		=\mathrm{F}_B((N_k)_{\Delta_k})
		\mathrm{F}(\Omega_{R_k/\Lambda_{k,0}}\otimes B)$.
		Recall $\mathrm{F}_B(M_0)=(\mathrm{F}_{B_k}(M_k))_{\Delta_k}$
		(similarly for the other factors),
		Nakayama's lemma shows
		\[\mathrm{F}_{B_k}(M_k)=\mathrm{F}_{B_k}(N_k)\mathrm{F}_{B_k}
		(\mathrm{Sel}(\mathbb{Q}_k,\mathrm{Ad}(\rho^G_\mu))^\ast).\]

		Let's explicate the Fitting ideal $\mathrm{F}_{B_k}(N_k)$. 
		Write the morphisms defining $N_k$ as
		${B^{n-1}\xrightarrow{\iota_k}B^{n-1}\xrightarrow{\beta_k}N_k\to0}$.
		It is easy to see that the actions of 
		$\Delta_k$ on 
		$N_k$, $\mathrm{Im}(\iota_k^D)$ and $\iota_k^I$ 
		are trivial.
		Now we can extend the morphism $\iota_k$ to 
		$\iota_k\colon B_k^{n-1}\to B_k^{n-1}$ 
		by sending an element 
		$g=\sum_jg_jS^j$ to $\iota_k(g)=\sum_j\iota_k(g_j)S^j$ 
		where each $g_j$ is an element in $B^{n-1}$. 
		We can also extend $\beta_k$ to $\beta_k\colon B_k^{n-1}\to N_k$ 
		by sending $g=\sum_jg_jS^j$ to $\beta_k(g)=\beta_k(g_0)$. 
		It is easy to verify that $\iota_k$ and $\beta_k$ are morphisms of
		$B_k$-modules. 
		We define define a morphism 
		$\iota_k'\colon B_k^{n-1}\oplus B_k^{n-1}\to B_k^{n-1}$ 
		by sending $(f,g)$ to $Sf+\iota_k(g)$.
		Then we have an exact sequence 
		$B_k^{n-1}\oplus
		B_k^{n-1}
		\xrightarrow{\iota'_k}B_k^{n-1}
		\xrightarrow{\beta_k}N_k\to0$
		of $B_k$-modules.
		So using this presentation for the $B_k$-module $N_k$, 
		we get
		$\mathrm{F}_{B_k}(N_k)=I_k$.

		\textbf{Step.3}: We will relate $\mathrm{F}_{B_k}(M_k)$ to the characteristic power series $L^{alg}_p(\mathrm{Ad}(\rho^G_\mu),S)$. 
		Since $M_0$ is $ B $-torsion, we have the relation of depth
		\[\mathrm{depth}_{B_\infty} (M_{\infty})=\mathrm{depth}_{ B } (M_0)+1.\]
		
		Recall $R_0$ is complete intersection over $\Lambda_0$, 
		so we can write this as 
		$R_0=\Lambda_0[[Y_1,Y_2,\ldots,Y_r]]
		/(f_1,f_2,\ldots,f_r)$,
		where $ (f_1,f_2,\ldots,f_r)$ 
		is a regular sequence of 
		$\Lambda_0[[Y_1,\ldots,Y_r]]$. 
		As in the proof of Proposition \ref{PropertiesOfFittingIdeal} (8), we have the following exact sequence
		\[\bigoplus_i B\cdot df_i\to\bigoplus_i B\cdot
		dY_i\to\mathrm{Sel} (\mathbb{Q},\mathrm{Ad} (\rho_\mu^{ G}))^{\ast}\to0.\]
		The $ B $-torsionness of $\mathrm{Sel} (\mathbb{Q},\mathrm{Ad} (\rho_\mu^{ G}))^{\ast}$ 
		shows that the above exact sequence is in fact injective on the left.
		So the homological dimension
		$\mathrm{hdim}_{ B } (\mathrm{Sel} (\mathbb{Q},\mathrm{Ad} (\rho_\mu^{ G}))^{\ast})\leq1$.
		Combining with the fact that $\mathrm{hdim}_{ B } (N_0)\leq1$ gives
		\[\mathrm{hdim}_{ B } (M_0)\leq1.\]
		By the Auslander-Buchsbaum formula,
		we have $\mathrm{hdim}_{ B } (M_0)
		+\mathrm{depth}_{ B } (M_0)=\mathrm{depth}_{ B } ( B )$.
		Note that this identity uses only the fact that
		$\mathrm{hdim}_{ B } (M_0)<\infty$,
		which follows from $ B $ being regular by \cite[Theorem 19.2]{MatsumuraCommutativeRingTheory}.
		On the other hand, since $B_\infty$ is also regular,
		$\mathrm{hdim}_{B_\infty} (M_{\infty})<\infty$.
		By the Auslander-Buchsbaum formula, we get
		$\mathrm{hdim}_{B_\infty} (M_{\infty})+\mathrm{depth}_{B_\infty} (M_{\infty})=\mathrm{depth}_{B_\infty} (B_\infty)$.
		Using the fact that
		$\mathrm{depth}_{B_\infty} (B_\infty)=\mathrm{depth}_{ B } ( B )+1$,
		we deduce that
		\[\mathrm{hdim}_{B_\infty} (M_{\infty})=\mathrm{hdim}_{ B } (M_0)\leq1.\]
		
		This implies that $M_{\infty}$, resp. $M_0$, has no non-zero $B_\infty$-submodule, 
		resp. $ B $-submodule, pseudo-isomorphic to $0$.
		Now we can apply Lemma \ref{PropertyOfCharacteristicIdeal}, 
		Proposition \ref{PropertiesOfFittingIdeal} (1) and (6): 
		for any prime ideal $P$ in $B$ of height one, we have
		\[
		((\frac{L^{alg}_p(\mathrm{Ad}(\rho^G_\mu),S)}{S^{n-1}}B_\infty)_\Gamma)_P
		=((M_\infty)_\Gamma)_P=(M_0)_P=
		I_0
		\mathrm{F}_{B}
		(\mathrm{Sel}(\mathbb{Q},\mathrm{Ad}(\rho^G_\mu))^\ast)_P
		=I_0
		\mathrm{char}_B
		(\mathrm{Sel}(\mathbb{Q},\mathrm{Ad}(\rho^G_\mu))^\ast)_P.\]
		
		For general $k\geq0$, for any prime ideal $P$ in $B_k$
		of height one, 
		we have the following, 
		which finishes the proof under the assumption (\ref{ImportantInclusion})
		\[
		((\frac{L^{alg}_p(\mathrm{Ad}(\rho^G_\mu),S)}{S^{n-1}}B_\infty)_{\Gamma_k})_P
		=((M_\infty)_{\Gamma_k})_P=(M_k)_P
		=I_k
		\mathrm{F}_{B_k}
		(\mathrm{Sel}(\mathbb{Q}_k,\mathrm{Ad}(\rho^G_\mu))^\ast)_P.
		\]

		\textbf{Step.4}: We will study the general case without assuming
		(\ref{ImportantInclusion}).
		We can choose $n-1$ elements $T_{k,1}',T_{k,2}',\ldots,T_{k,n-1}'$ 
		in the intersection $\Lambda_k\bigcap R^D_k$ (inside $R_k$)
		such that $R^D_k$ is a finite algebra over the formal power series algebra
		${R^d_k=\Lambda_{\infty,k}[[T_{k,1}',\ldots,T_{k,n-1}']]}$ 
		(use that $R_k$ is of relative dimension $n-1$ over
		$\Lambda_{\infty,k}$ and Noether normalization lemma. 
		Moreover for any $k'<k$, we set
		$R^d_{k'}=\Lambda_{\infty,k'}[[T'_{k',1},\ldots,T'_{k',n-1}]]$ 
		where $T'_{k',j}$ are the images of $T'_{k,j}$ under the map
		$\Lambda_k\to\Lambda_{k'}$).
		In the same manner, one can an element
		$\mathcal{L}' (\mathbb{Q}_k,\mathrm{Ad}(\rho_\mu^{G}))=\mathrm{det} (\frac{dT_{k,i}'}{dX_{k,j}})\in B$,
		an ideal
		$I'_k\subset\mathrm{Q}(B)$,
		the cokernel
		${N_k'=\mathrm{Coker} (\mathrm{Im} (\iota^d_k)\to\mathrm{Im}(\iota^I_k))}$
		and the exact sequence
		$0\to N_k'\to\Omega_{R_k/R^d_k}\bigotimes B \to\Omega_{R_k/\Lambda_k}\bigotimes B \to0.$
		From this, we have
		\begin{equation}\label{Step4:AuxiliaryIdentity}
		\mathrm{F}_{B_k}(\Omega_{R_k/R^d_k}\bigotimes B )=\mathrm{F}_{B_k}(N_k')\mathrm{F}_{B_k}(\mathrm{Sel} (\mathbb{Q}_k,\mathrm{Ad}(\rho_\mu^{G}))^{\ast}).
		\end{equation}
		
		One the other hand, we have the following two exact sequences with maps from one to the other (the first sequence is injective on the left because $R^d_k$ injects into $\Lambda_k$)
		\[
		\begin{tikzcd}
		0\arrow[r] & \Omega_{R^d_k/\Lambda_{\infty,k}}\bigotimes B \arrow[r]\arrow[d] & \Omega_{R_k/\Lambda_{\infty,k}}\bigotimes B \arrow[r]\arrow[d,"\mathrm{Id}"] & \Omega_{R_k/R^d_k}\bigotimes B \arrow[r]\arrow[d] & 0\\
		0\arrow[r] & \Omega_{R^D_k/\Lambda_{\infty,k}}\bigotimes B \arrow[r] & \Omega_{R_k/\Lambda_{\infty,k}}\bigotimes B \arrow[r] & \Omega_{R_k/R^D_k}\bigotimes B \arrow[r] & 0
		\end{tikzcd}
		\]
		Applying the snake lemma to this diagram, we get an exact sequence
		\[0\to\Omega_{R^D_k/R^d_k}\bigotimes B \to\Omega_{R_k/R^d_k}\bigotimes B \to\Omega_{R_k/R^D_k}\bigotimes B \to0.\]
		To simplify notations, 
		let's write $V_k=\Omega_{R^D_k/R^d_k}\otimes B $,
		$Q_k=\Omega_{R_k/R^d_k}\otimes B $ 
		and ${W_k=\Omega_{R_k/R^D_k}\otimes B }$. 
		We fix an arbitrary prime ideal $P$ in $B_k$ of height one, 
		then by Proposition \ref{PropertiesOfFittingIdeal} (3),
		we have $\mathrm{F}_{B_k}(Q_k)_P\supset\mathrm{F}_{B_k}(V_k)_P\mathrm{F}_{B_k}(W_k)_P$. 
		Moreover,
		$\mathrm{F}_{B_k}(Q_k)_P/S\mathrm{F}_{B_k}(Q_k)_P=\mathrm{char}_B(Q_0)_P$ 
		by Proposition \ref{PropertiesOfFittingIdeal} (1) and (6). 
		Similar results hold for $V_k$ and $W_k$. 
		So using Nakayama's lemma, one has, 
		for any prime ideal $P$ in $B_k$ of height one,
	    \begin{equation}\label{Step4:FirstIdentity}
		\mathrm{F}_{B_k} (\Omega_{R_k/R^d_k}\bigotimes B )_P=\mathrm{F}_{B_k} (\Omega_{R^D_k/R^d_k}\bigotimes B )_P\mathrm{F}_{B_k} (\Omega_{R_k/R^D_k}\bigotimes B )_P.
		\end{equation}

		Next comparing the following two exact sequences 
		(both are injective on the left because $\Lambda_{\infty,k}$ injects into $R^D_k$, resp. $\Lambda_k$) 
		and using the same argument as above
		\[0\to\Omega_{R^d_k/\Lambda_{\infty,k}}\bigotimes B \to\Omega_{R^D_k/\Lambda_{\infty,k}}\bigotimes B \to\Omega_{R^D_k/R^d_k}\bigotimes B \to0,\]
		\[0\to\Omega_{R^d_k/\Lambda_{\infty,k}}\bigotimes B \to\Omega_{\Lambda_k/\Lambda_{\infty,k}}\bigotimes B \to N_k' \to0,\]
		one gets, for any prime ideal $P$ in $B_k$ of height one,
		\begin{equation}\label{Step4:SecondIdentity}
		\mathrm{F}_{B_k}(N_k')_P=
		I_k
		\mathrm{F}_{B_k}(\Omega_{R^D_k/R^d_k}\bigotimes B)_P.
		\end{equation}

		Now we combine the identities (\ref{Step4:AuxiliaryIdentity}),
		(\ref{Step4:FirstIdentity}) and (\ref{Step4:SecondIdentity})
		to get
		\[\mathrm{F}_{B_k}(M_k)_P=
		I_k
		\mathrm{F}_{B_k}(\mathrm{Sel}(\mathbb{Q}_k,\mathrm{Ad}(\rho^G_\mu))^\ast)_P\]
		and the rest of the proof is the same as in \textbf{Step.3}.
	\end{proof}

    We can show that there is a natural map of torsion $B$-modules
    (see \cite[Proposition 5.28]{HidaHilbert})
    \[
    \mathrm{Sel}(\mathbb{Q}_k,\mathrm{Ad}(\rho^G_\mu))
    \to
    \bigoplus_\psi\mathrm{Sel}(\mathbb{Q},\mathrm{Ad}(\rho^G_\mu)\otimes\psi)
    \]
    whose kernel and cokernel are both annihilated by $p^k$
    (for $k=0$, the kernel and cokernel are both zero). 
    Here $\psi$ runs through the irreducible representations of $\Delta_k$.
    From this we see
    $\mathrm{F}_{B_k}(\mathrm{Sel}
    (\mathbb{Q}_k,\mathrm{Ad}(\rho^G_\mu))^\ast)
    =\mathrm{F}_B(\mathrm{Sel}
    (\mathbb{Q},\mathrm{Ad}(\rho^G_\mu))^\ast)B_k$
    up to (multiplication by)
    a factor a power of $p$. 
    This, combined with
	Corollary \ref{congruenceSelmer} and Theorem \ref{pL}, 
	gives the following.
	\begin{corollary}\label{pLandCongruenceIdeal}
		The assumptions as in the above theorem, 
		then for any prime ideal $P$ in $B_k$ of height one, 
		we have the following identity
		of ideals in $(B_k)_P$, 
		up to a multiplicative factor a power of $p$
		\[
		((\frac{L^{alg}_p (\mathrm{Ad}(\rho^G_\mu),S)}
		{S^{n-1}}B_\infty)_{\Gamma_k})_P
		=I_k
		(\mathfrak{c}_{\tilde{\lambda}_k}B_k)_P.\]
	\end{corollary}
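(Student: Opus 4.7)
The proof chains together three results stated immediately above: Theorem \ref{pL}, the natural-map comparison between $\mathrm{Sel}(\mathbb{Q}_k, \mathrm{Ad}(\rho^G_\mu))$ and $\bigoplus_\psi \mathrm{Sel}(\mathbb{Q}, \mathrm{Ad}(\rho^G_\mu) \otimes \psi)$, and Corollary \ref{congruenceSelmer}. First, I would invoke Theorem \ref{pL} at the height-one prime $P$ of $B_k$, which yields directly
\[
\left(\left(\frac{L^{alg}_p(\mathrm{Ad}(\rho^G_\mu),S)}{S^{n-1}} B_\infty\right)_{\Gamma_k}\right)_P = I_k \cdot \mathrm{F}_{B_k}\bigl(\mathrm{Sel}(\mathbb{Q}_k, \mathrm{Ad}(\rho^G_\mu))^\ast\bigr)_P,
\]
so that the task is reduced to identifying $\mathrm{F}_{B_k}(\mathrm{Sel}(\mathbb{Q}_k, \mathrm{Ad}(\rho^G_\mu))^\ast)_P$ with $(\mathfrak{c}_{\tilde{\lambda}_k} B_k)_P$ up to a power of $p$.

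Next, I would invoke the natural map of torsion $B$-modules from $\mathrm{Sel}(\mathbb{Q}_k, \mathrm{Ad}(\rho^G_\mu))$ to $\bigoplus_\psi \mathrm{Sel}(\mathbb{Q}, \mathrm{Ad}(\rho^G_\mu) \otimes \psi)$, with $\psi$ running through the characters of $\Delta_k$ and with kernel and cokernel annihilated by $p^k$, to obtain the identity of $B_k$-ideals
\[
\mathrm{F}_{B_k}(\mathrm{Sel}(\mathbb{Q}_k, \mathrm{Ad}(\rho^G_\mu))^\ast) = \mathrm{F}_B(\mathrm{Sel}(\mathbb{Q}, \mathrm{Ad}(\rho^G_\mu))^\ast) B_k
\]
up to a factor a power of $p$, using the behavior of Fitting ideals under base change and direct sums (Proposition \ref{PropertiesOfFittingIdeal}, parts (2), (3), (4)). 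This is precisely the identity recorded in the paragraph preceding the Corollary. The final substitution uses Corollary \ref{congruenceSelmer} at layers $0$ and $k$: from $\mathfrak{c}_{\tilde{\lambda}_0} = \mathrm{F}_B(\mathrm{Sel}(\mathbb{Q}, \mathrm{Ad}(\rho^G_\mu))^\ast)$ and $\mathfrak{c}_{\tilde{\lambda}_k} = \mathrm{F}_B(\mathrm{Sel}(\mathbb{Q}_k, \mathrm{Ad}(\rho^G_\mu))^\ast)$, combined with the $B$-linear version of the natural map (which identifies these two $B$-ideals up to a power of $p$), one obtains $\mathfrak{c}_{\tilde{\lambda}_k} B_k = \mathrm{F}_B(\mathrm{Sel}(\mathbb{Q}, \mathrm{Ad}(\rho^G_\mu))^\ast) B_k$ up to a power of $p$. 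Substituting back into the identity of the second step and then into the formula of the first step yields the claim.

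The main technical obstacle is the second step: converting the $p^k$-torsion kernel/cokernel statement for a $B_k$-linear map into an equality of $B_k$-Fitting ideals (not merely $B$-Fitting ideals). This will require a Fourier-type decomposition of $B_k = B[\Delta_k]$ after adjoining the values of the characters $\psi$ if necessary, together with a careful tracking to ensure that every discrepancy between the two sides is absorbed into a power of $p$ once we localize at the height-one prime $P$. Once this bookkeeping is in place, the remaining substitutions are formal and the corollary follows.
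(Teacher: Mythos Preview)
Your approach is exactly the paper's: chain together Theorem~\ref{pL}, the natural-map comparison, and Corollary~\ref{congruenceSelmer}. The paper's own proof is literally one sentence (``This, combined with Corollary~\ref{congruenceSelmer} and Theorem~\ref{pL}, gives the following''), so your expansion of the logic is faithful to it, and your identification of the Fitting-ideal bookkeeping around the natural map as the main obstacle is apt.

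One caveat on your third step. You assert parenthetically that the ``$B$-linear version of the natural map'' identifies the two $B$-ideals $\mathrm{F}_B(\mathrm{Sel}(\mathbb{Q}_k,\mathrm{Ad}(\rho^G_\mu))^\ast)$ and $\mathrm{F}_B(\mathrm{Sel}(\mathbb{Q},\mathrm{Ad}(\rho^G_\mu))^\ast)$ up to a power of $p$. As stated this is not what the natural map gives: dualizing the map with $p^k$-torsion kernel and cokernel and applying Proposition~\ref{PropertiesOfFittingIdeal}(2),(3) yields
\[
\mathrm{F}_B\bigl(\mathrm{Sel}(\mathbb{Q}_k,\mathrm{Ad}(\rho^G_\mu))^\ast\bigr)
\;=\;
\prod_{\psi}\mathrm{F}_B\bigl(\mathrm{Sel}(\mathbb{Q},\mathrm{Ad}(\rho^G_\mu)\otimes\psi)^\ast\bigr)
\quad\text{(up to a power of }p\text{)},
\]
a product of $p^k$ ideals, not the single $\psi=1$ term. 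So your route to $\mathfrak{c}_{\tilde{\lambda}_k}B_k=\mathrm{F}_B(\mathrm{Sel}(\mathbb{Q},\mathrm{Ad}(\rho^G_\mu))^\ast)B_k$ needs an additional input, namely that the $\psi$-twisted Selmer Fitting ideals all agree (up to $p$-power) after localizing at $P$. The paper does not spell this out either; it is part of the same ``careful tracking'' you already flag, and should be handled via the character decomposition of $(B_k)_P$ after inverting $p$ rather than by a direct $B$-linear comparison.
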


	We can use this theorem to deduce the structures of the Selmer groups
	$\mathrm{Sel} (\mathbb{Q}_{\infty},\mathcal{A}^j(\rho_\mu))$, as we will do in the following subsections.

	\subsection{Case $\mathrm{Sel} (\mathbb{Q}_{\infty},\mathcal{A}^1(\rho_\mu))$}
	
	To treat this case we can take $G=\mathrm{GL}_2$. 
	Note that
	$\mathcal{A}^1(\rho_\mu)
	=\mathrm{Ad}(\rho_\mu)=\mathrm{Ad}(\rho_\mu^{G})$. 
	So we get from Theorem \ref{pL} 
	and Corollary \ref{pLandCongruenceIdeal} 
	the following theorem.
	\begin{theorem}\label{pL1}
		Assume $\textbf{Big} (\overline{\rho}_\pi)$,
		$\textbf{Dist} (\overline{\rho}_\pi)$,
		$\textbf{RegU} (\overline{\rho}_\pi)$ ,
		$\mathcal{L} (\mathbb{Q},\mathcal{A}^1(\rho_\mu))\neq0$
		and that $ B $ is regular, 
		then the dual Selmer group
		$\mathrm{Sel} (\mathbb{Q}_{\infty},\mathcal{A}^1(\rho_\mu))^{\ast}$ 
		is a finitely generated torsion $B_\infty$-module.
		Its characteristic power series $L^{alg}_p (\mathcal{A}^1(\rho_\mu),S)$ 
		has a zero of order $1$ at $S=0$.
		Moreover for any prime ideal $P$ in $B_k$ of height one, we have
		(up to a factor a power of $p$ if $k>0$)
		\[
		((\frac{L^{alg}_p (\mathcal{A}^1(\rho_\mu),S)}{S}B_\infty)_{\Gamma_k})_P
		=I_k^{\mathrm{GL}_2}
		(\mathfrak{c}_{\tilde{\lambda}^\circ_k}B_k)_P.
		\]
	\end{theorem}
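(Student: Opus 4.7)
The approach is to specialize the general machinery of Theorem \ref{pL} to the case $G=\mathrm{GL}_2$. Under this specialization, the symmetric power $\mathrm{Sym}^{n-1}$ with $n=2$ is the identity functor, so $\rho^G_\mu = \rho_\mu$ and $\mathrm{Ad}(\rho^G_\mu) = \mathrm{Ad}(\rho_\mu) = \mathcal{A}^1(\rho_\mu)$. The statement then reads as an instance of Theorem \ref{pL} with the trivial zero of order $n-1=1$, and with the congruence ideal $\mathfrak{c}_{\tilde\lambda^\circ_k}$ (on the $\mathrm{GL}_2$ side) playing the role of $\mathfrak{c}_{\tilde\lambda_k}$.

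First I would observe that the three conditions $\mathbf{Big}(\overline{\rho}_\pi)$, $\mathbf{Dist}(\overline{\rho}_\pi)$, $\mathbf{RegU}(\overline{\rho}_\pi)$ already appear in the hypothesis; for $G=\mathrm{GL}_2$ these are literally the conditions needed to run the arguments (there is no transfer to a unitary group to worry about). Next I would invoke the classical $R=T$ theorem of Wiles--Taylor, in the form extended by Fujiwara to Hida families over cyclotomic base changes, to deduce that $R^\circ_k \simeq \mathbf{T}^\circ_k$ as complete intersection $\Lambda^\circ_k$-algebras for every $k\geq 0$. This is the $\mathrm{GL}_2$ analogue of Theorem \ref{R=T} and plays the same role in the proof. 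The reducedness of $\tilde{\mathbf{T}}^\circ_k$ and the surjection $\mathbf{T}^\circ_0 \twoheadrightarrow\mathbf{T}$ are immediate since no nontrivial symmetric-power descent argument is required.

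With these in hand, the proof of Theorem \ref{KahlerSelmer} transfers verbatim (the cocycle calculation uses exactly $\mathbf{Dist}$ and $\mathbf{RegU}$), producing an isomorphism of $B_k$-modules
\[
\Omega_{\tilde{R}^\circ_k/B}\otimes_{\tilde{R}^\circ_k}B \simeq \mathrm{Sel}(\mathbb{Q}_k,\mathcal{A}^1(\rho_\mu))^\ast.
\]
One then re-runs the four-step argument of Theorem \ref{pL}: isolate the trivial zero (of order $1$ here, since $n-1=1$ gives a single copy of the weight direction $dX_{k,1}$); use the snake lemma to compare the Kähler differentials $\Omega_{R^\circ_k/\Lambda^\circ_{\infty,k}}\otimes B$ and $\Omega_{\Lambda^\circ_k/\Lambda^\circ_{\infty,k}}\otimes B$ to extract the factor $I^{\mathrm{GL}_2}_k$; apply Auslander--Buchsbaum plus the regularity of $B$ to get homological dimension $\leq 1$, whence the module has no nontrivial pseudo-zero submodule and Lemma \ref{PropertyOfCharacteristicIdeal} applies; finally use Tate's theorem to rewrite the Fitting ideal $\mathrm{F}_B(\Omega_{\tilde{R}^\circ_k/B}\otimes B)$ as the congruence ideal $\mathfrak{c}_{\tilde\lambda^\circ_k}$. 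The case $k>0$ requires passing from the $B_k$-Fitting ideal to the $B$-Fitting ideal via the map $\mathrm{Sel}(\mathbb{Q}_k,\cdot)\to \bigoplus_\psi\mathrm{Sel}(\mathbb{Q},\cdot\otimes\psi)$ (Hida's descent), introducing the power-of-$p$ ambiguity mentioned in the statement.

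The only step that is not purely formal is verifying that the inclusion analogous to (\ref{ImportantInclusion}) holds, or alternatively running Step~4 of Theorem \ref{pL} to reduce to the case where it holds; here the nonvanishing hypothesis $\mathcal{L}(\mathbb{Q},\mathcal{A}^1(\rho_\mu))\neq 0$ is exactly what is needed, since the single $\mathcal{L}$-invariant coincides (up to the choice of basis) with the logarithmic derivative of $\psi^{\mathrm{unr}}_1$ along the Iwasawa direction. The main substantive input, apart from Theorem \ref{pL} itself, is thus the availability of a classical $R=T$ theorem and of Hida's exact control theorem for the $\mathrm{GL}_2$ nearly ordinary Hecke algebra over each $\mathbb{Q}_k$; once these are cited, the rest of the argument is a direct transcription of Theorem \ref{pL} and Corollary \ref{pLandCongruenceIdeal}.
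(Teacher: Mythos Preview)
Your proposal is correct and follows the same approach as the paper: specialize to $G=\mathrm{GL}_2$, observe $\mathcal{A}^1(\rho_\mu)=\mathrm{Ad}(\rho_\mu)=\mathrm{Ad}(\rho_\mu^G)$, and invoke Theorem~\ref{pL} together with Corollary~\ref{pLandCongruenceIdeal}. The paper's own proof is considerably more terse---it simply cites those two results---whereas you spell out that the underlying $R=T$ input on the $\mathrm{GL}_2$ side is the classical Wiles--Taylor/Fujiwara theorem and that the four-step argument of Theorem~\ref{pL} transfers verbatim; this elaboration is accurate but not logically additional.
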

	
	In the present case $\mathcal{A}^1(\rho_\mu)$,
	we can give an
	interpretation of the congruence ideal
	$\mathfrak{c}_{\tilde{\lambda}^\circ_0}$ 
	in terms of special values of complex
	adjoint $L$-function $L(\mathrm{Ad}(\pi),s)$ 
	of the automorphic representation 
	$\pi$ 
	(see \cite{Hida88b} or \cite[Corollary 5.8(2)]{HidaArithmetic}).
	
	\begin{proposition}\label{specialvalue1}
		We assume $\textbf{Big}(\overline{\rho}_\Pi)$ and $p\nmid 6\varphi(N)$.
		Suppose that $ B \to B /P_{\pi}$ is the specialization to $\pi$, then the ideal $\mathfrak{c}_{\tilde{\lambda}^\circ_0}(\mathrm{mod}\ P_\pi)$ in $ B /P_\pi$ is generated by
		\[W (\pi)N^{ (a+1)/2}\frac{L (\mathrm{Ad} (\pi),1)}{\Omega (+,\pi)\Omega (-,\pi)}\]
		where $W(\pi)$ is a non-zero complex number measuring the difference between the actions of the involution $\begin{pmatrix}
		0 & -1 \\ N & 0
		\end{pmatrix}$ and the complex conjugation on the representation $\pi$ (see \cite[end of p.31]{HidaArithmetic} for the precise definition of $W(\pi)$).
	\end{proposition}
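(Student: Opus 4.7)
The plan is to deduce this proposition directly from the classical formula of Hida relating the congruence number of a $p$-ordinary Hecke eigenform to the algebraic part of its adjoint $L$-value, once we identify the specialization of the abstract congruence ideal $\mathfrak{c}_{\tilde{\lambda}^\circ_0}$ with the classical congruence number at $\pi$.

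First, I would unpack the congruence ideal. Since $\tilde{\mathbf{T}}^\circ_0$ is reduced and finite flat over the normal domain $\tilde{A}$, and $\tilde{\lambda}^\circ_0\colon \tilde{\mathbf{T}}^\circ_0 \to \tilde{A}$ is the Hida family through $\pi$, Definition \ref{congruenceideal} identifies $\mathfrak{c}_{\tilde{\lambda}^\circ_0}$ with $\mathrm{Ann}_{\tilde{A}}\bigl((\tilde{\mathbf{T}}^\circ_0/I)\otimes_{\tilde{\mathbf{T}}^\circ_0}\tilde{A}\bigr)$, where $I$ is the kernel of the projection of $\tilde{\mathbf{T}}^\circ_0$ onto the complement of $\tilde{A}$ in $\mathrm{Frac}(\tilde{\mathbf{T}}^\circ_0)$. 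Under the specialization $\tilde{A}\to\tilde{A}/P_\pi\simeq\mathcal{O}$, the finite-level Hida control theorem (used just after Corollary \ref{congruenceSelmer}) identifies $\tilde{\mathbf{T}}^\circ_0/\mathfrak{a}_\pi$ with the localized weight-$a$ Hecke algebra $h_a(N;\mathcal{O})_\pi$ through which the eigensystem of $\pi$ factors. Under this identification $\tilde{\lambda}^\circ_0\pmod{P_\pi}$ becomes the Hecke eigensystem $\lambda_\pi$ of $\pi$, and the reduction $\mathfrak{c}_{\tilde{\lambda}^\circ_0}\pmod{P_\pi}$ is the classical congruence ideal $\eta_\pi$ of $\pi$ inside $\mathcal{O}$.

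Next I would invoke the formula of Hida for $\eta_\pi$. Under $\mathbf{Big}(\overline{\rho}_\Pi)$ (which in particular gives the absolute irreducibility of $\overline{\rho}_\pi$ and a suitably large image so that the residual Galois representation is not congruent to anything coming from old forms or CM forms of the same weight) and under $p\nmid 6\varphi(N)$ (which removes the Euler-factor and $\zeta(-1)$-obstructions in the computation of the Petersson inner product), \cite[Corollary 5.8(2)]{HidaArithmetic} (originally \cite{Hida88b}) yields the equality of fractional ideals in $\mathcal{O}$:
\[
\eta_\pi \;=\; \Bigl( W(\pi)\, N^{(a+1)/2}\, \frac{L(\mathrm{Ad}(\pi),1)}{\Omega(+,\pi)\Omega(-,\pi)} \Bigr),
\]
where the periods $\Omega(\pm,\pi)$ are chosen so that the Eichler--Shimura periods of $\pi$ are integral multiples of them, and $W(\pi)$ is the comparison constant of \cite[end of p.31]{HidaArithmetic} between the Atkin--Lehner involution and complex conjugation on the $\pi$-isotypic component of parabolic cohomology. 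Combining with the identification of the previous step gives the claim.

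The only genuine point to verify carefully is the first step: that the reduction modulo $P_\pi$ of the abstract congruence ideal $\mathfrak{c}_{\tilde{\lambda}^\circ_0}$ (defined over $\tilde{A}$ via the big Hecke algebra) coincides with the classical congruence ideal $\eta_\pi$ at weight $a$. This is where the finite-level control theorem must be used to turn the $\Lambda$-adic decomposition $\mathrm{Frac}(\tilde{\mathbf{T}}^\circ_0)=\mathrm{Frac}(\tilde{A})\oplus X$ into the corresponding classical decomposition $h_a(N;K)_\pi = K\oplus X_\pi$, and where one must check that no spurious extra factor of $p$ appears under specialization --- this uses the Gorenstein property of $\mathbf{T}^\circ_0$ (a consequence of the complete-intersection property obtained in Theorem \ref{R=T} for $n=2$) and Proposition \ref{DecompositionPropertyOfCongruenceIdeal}. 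Once this comparison is established, the proposition follows from Hida's formula verbatim; I do not expect any further arithmetic obstruction beyond bookkeeping of periods and the Atkin--Lehner constant.
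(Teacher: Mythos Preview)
Your proposal is correct and matches the paper's treatment: the paper does not give an independent proof of this proposition but simply records it as a quotation of Hida's result, citing \cite{Hida88b} and \cite[Corollary 5.8(2)]{HidaArithmetic} in the sentence immediately preceding the statement. Your write-up supplies more detail than the paper itself does---in particular the control-theoretic identification of $\mathfrak{c}_{\tilde{\lambda}^\circ_0}\pmod{P_\pi}$ with the classical congruence number $\eta_\pi$---but this is entirely in the spirit of the citation and introduces no discrepancy.
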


    \begin{remark}\label{RemarkOnCongruenceIdealAndSpecialLValue}
    	In order to relate congruence ideals to special $L$-values,
    	we need to introduced a notion of cohomological congruence ideals.
    	Let $L^+$ be a totally real number field
    	(not necessarily totally ramified at $p$)
    	and $d=[L^+:\mathbb{Q}]$.
    	We write $H^d$ for the space 
    	$eH^d(\Gamma_1(Np^\infty);\mathcal{O})$
    	of $p$-adic ordinary Hilbert modular forms
    	over $L^+$,
    	localized at the maximal ideal of
    	$\mathbf{T}^\circ_{L^+}$
    	corresponding to the residue Galois representation
    	$\overline{\rho}_\pi$.
    	It is known that $H^d$ is a finitely generated free $\mathbf{T}^\circ_{L^+}$-module. 
    	We then set $\tilde{H}^d=H^d\otimes_{\Lambda^\circ_{L^+}} B $.
    	The specialization to $\pi$ gives a decomposition of the totally quotient ring 
    	$\mathrm{Frac}(\tilde{\mathbf{T}}^\circ_{L^+})$
    	into an algebra direct product
    	$\mathrm{Frac}(\tilde{\mathbf{T}}^\circ_{L^+})=K\times X$.
    	Correspondingly, there is the decomposition of the cohomological group
    	\[\mathrm{Frac}(\tilde{\mathbf{T}}^\circ_{L^+})\tilde{H}^d
    	=\tilde{H}^d_{\pi_{L^+}}\times \tilde{H}^d_X.\]
    	
    	Then we define $M^{coh}_X$ to be the kernel of the map
    	$\tilde{H}^d\to\mathrm{Frac}(\tilde{\mathbf{T}}^\circ_{L^+})\tilde{H}^d
    	\to \tilde{H}^d_{\pi_{L^+}}$.
    	Similarly, we define $M^{coh}_{\pi_{L^+}}$ to be the kernel of the map
    	$\tilde{H}^d\to\mathrm{Frac}
    	(\tilde{\mathbf{T}}^\circ_{L^+})\tilde{H}^d\to \tilde{H}^d_X$.
    	The morphism
    	$\tilde{\lambda}^\circ_{L^+}\colon\tilde{\mathbf{T}}^\circ_{L^+}\to\tilde{A}$
    	is similarly defined as those
    	$\tilde{\lambda}^\circ_k$.
    	Then the cohomological congruence module
    	$C_0^{coh}(\tilde{\lambda}^\circ_{L^+})$ 
    	is defined to be any of the following isomorphic
    	$\tilde{\mathbf{T}}^\circ_{L^+}$-modules
    	\[(\tilde{H}^d_{\pi_{L^+}}\times \tilde{H}^d_X)/\tilde{H}^d
    	\simeq \tilde{H}^d_{\pi_{L^+}}/M^{coh}_{\pi_{L^+}}
    	\simeq \tilde{H}^d_X/M^{coh}_X.\]
    	
    	The cohomological ideal 
    	$\mathfrak{c}^{coh}_{\tilde{\lambda}^\circ_{L^+}}$ 
    	is then defined to be the ideal of annihilators
    	$\mathrm{Ann}_{ B }(C_0^{coh}(\tilde{\lambda}^\circ_{L^+}))$.
    	One can easily show
    	$\mathfrak{c}_{\tilde{\lambda}^\circ_{L^+}}\tilde{H}^d_{\pi_{L^+}}
    	\subset M^{coh}_{\pi_{L^+}}$
    	which implies
    	$\mathfrak{c}^{coh}_{\tilde{\lambda}^\circ_{L^+}}|
    	\mathfrak{c}_{\tilde{\lambda}^\circ_{L^+}}$. 
    	Moreover if
    	$\mathbf{T}^\circ_{L^+}$ is Gorenstein,
    	then 
    	$\mathfrak{c}^{coh}_{\tilde{\lambda}^\circ_{L^+}}
    	=\mathfrak{c}_{\tilde{\lambda}^\circ_{L^+}}$.
    	
    	We use cohomological congruence ideals because
    	it is more accessible to relate the cohomological congruence ideal 
    	to special $L$-values 
    	than relating congruence ideals to special $L$-values. 
    	In fact, if the following conjecture is true, we can show that
    	$\mathfrak{c}^{coh}_{\tilde{\lambda}^\circ_{L^+}}(\mathrm{mod}\ P_\pi)$
    	is generated by the quotient
    	$\frac{L(\mathrm{Ad}(\pi_{L^+}),1)}{\Omega(+,\pi_{L^+})\Omega(-,\pi_{L^+})}$
    	in $\mathcal{O}$.
    	
    	\begin{conjecture}
    		$H^d$ is a free $\mathbf{T}^\circ_{L^+}$-module of rank $2^d$.
    	\end{conjecture}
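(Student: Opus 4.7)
The plan is to attack this via the Taylor-Wiles-Kisin patching method, as extended to Hilbert modular forms by Fujiwara, Dimitrov, and Diamond. The strategy proceeds in three stages: patch the situation to a regular ring, prove freeness at the patched level, and then descend. Concretely, I would choose a cofinal system of Taylor-Wiles auxiliary sets of primes $Q_n$ and form the patched Hecke algebra $\mathbf{T}^\circ_\infty$ together with the patched ordinary cohomology module $H^d_\infty = \varprojlim_n eH^d(\Gamma_1(Np^\infty Q_n);\mathcal{O})_{\mathfrak{m}}$. Under $\textbf{Big}(\overline{\rho}_\pi)$ there are enough Taylor-Wiles primes so that the patched universal deformation ring $R^{\square}_\infty$ becomes a formal power series ring over $\Lambda^\circ_{L^+}$ (hence regular), and so that $R^{\square}_\infty \simeq \mathbf{T}^\circ_\infty$ by the usual patching argument.

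Next, I would show that $H^d_\infty$ is a free $R^{\square}_\infty$-module of rank $2^d$. Since $R^{\square}_\infty$ is regular, freeness reduces to maximal Cohen-Macaulayness, which follows from a depth estimate combining Poincar\'e duality on the Hilbert modular variety at each finite level with the inverse-limit construction. The rank $2^d$ is read off at a classical arithmetic weight: the Matsushima-type decomposition, together with multiplicity one for $\mathrm{GL}_2$ over $L^+$, shows that the cohomology eigenspace attached to a regular algebraic cuspidal representation is a free module over the group algebra of $\{\pm 1\}^d$ acting via complex conjugations at the $d$ archimedean places, of total rank $2^d$. Finally, since $\mathbf{T}^\circ_{L^+}$ is the quotient of $\mathbf{T}^\circ_\infty$ by a regular sequence of patching variables and $H^d \simeq H^d_\infty$ modulo the ideal generated by those variables, the freeness of rank $2^d$ descends to $H^d$ over $\mathbf{T}^\circ_{L^+}$.

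The hard part---and presumably the reason the statement is left as a conjecture rather than a theorem---is handling the cohomology in degrees other than $d$. For $d = 1$ the issue is absent and the statement reduces to Hida's original freeness theorem for the ordinary part of modular cohomology. For $d > 1$, the ordinary part $eH^i(\Gamma_1(Np^\infty);\mathcal{O})$ need not vanish for $i \ne d$, and integral torsion in these auxiliary degrees can obstruct the Cohen-Macaulay / depth computation required for freeness at degree $d$. Overcoming this seems to require either additional vanishing hypotheses on the adjoint Bloch-Kato Selmer group (through the Calegari-Geraghty derived deformation machinery, which would relate the defect to $\ell_0 = d - 1$) or a direct geometric argument bounding the ordinary cohomology outside middle degree. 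Establishing such a vanishing robustly, with only the hypotheses $\textbf{Big}(\overline{\rho}_\pi)$, $\textbf{Dist}(\overline{\rho}_\pi)$, and $\textbf{RegU}(\overline{\rho}_\pi)$ available, is the essential obstacle separating the plan above from a complete proof.
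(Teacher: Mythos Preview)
The paper does not prove this statement; it is explicitly stated as a conjecture. Immediately after the conjecture, the paper remarks that it has been established by Tilouine--Urban (\cite{TilouineUrban}, Proposition~2) for $\pi$ generated by an elliptic cuspidal eigenform, under the hypotheses $\mathbf{Big}$, $\mathbf{Dist}$, $\mathbf{RegU}$ (restricted to $\Gamma_{L^+}$), together with $p-1>d(a+1)$ and \emph{$p$ unramified in $L^+/\mathbb{Q}$}. No argument is given in the present paper.

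Your proposal is therefore not to be compared against a proof in the paper, but it is worth noting that your outline is essentially the shape of the Tilouine--Urban argument: Taylor--Wiles--Kisin patching to a regular ring, a Cohen--Macaulay/depth computation for the patched cohomology, and descent. You have also correctly located the genuine obstruction, namely control of ordinary cohomology in degrees $i\neq d$. One point you should be aware of is that the extra hypothesis $p$ unramified in $L^+$ in the Tilouine--Urban result is precisely what fails in the situation of primary interest in this paper, where $L^+=\mathbb{Q}_k$ is \emph{totally ramified} at $p$. So even granting the cited result, the conjecture remains open in the cyclotomic tower setting, and your remark that a Calegari--Geraghty style analysis (with defect $\ell_0=d-1$) or a direct vanishing argument would be needed is on target.
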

    	
    	This conjecture has been proved in a recent article 
    	of J.Tilouine and E.Urban
    	(See \cite[Proposition 2]{TilouineUrban})
    	for the automorphic representation 
    	$\pi$ generated by
    	an elliptic primitive cuspidal eigenform $f$ 
    	of weight $k=a+2\geq2$,
    	of conductor $N$, ordinary at $p$,
    	under the hypotheses:
    	$\mathbf{Big}(\overline{\rho}_\pi|_{\Gamma_{L^+}})$,
    	$\mathbf{Dist}(\overline{\rho}_\pi|_{\Gamma_{L^+}})$,
    	$\mathbf{RegU}(\overline{\rho}_\pi|_{\Gamma_{L^+}})$,
    	$p-1>d(a+1)$
    	and $p$ unramified in $L^+/\mathbb{Q}$.
    	For similar results, we refer the reader to
    	\cite{Diamond,Dimitrov,Ghate2002,Ghate2010}.
    \end{remark}

	\subsection{Case $\mathrm{Sel} (\mathbb{Q}_{\infty},\mathcal{A}^{n-1}(\rho_\mu))$ ($n>2$)}
	We write $G=G_{n/\mathbb{Q}}$ and $G'=G_{n-1/\mathbb{Q}}$. 
	For $n>2$, we assume that the Langlands functorialities
	$\mathrm{Sym}^{r-1}\colon \mathrm{GL}_{2/\mathbb{Q}}\to\mathrm{GL}_{r/\mathbb{Q}}$ are established for $r=n, n-1$.
	So we have the following maps induced by these Langlands functorialities.
	\[
	\begin{tikzcd}
	\tilde{\mathbf{T}}_k^{G}\arrow[r,"\tilde{\theta}^{G}_k"]\arrow[rd,"\tilde{\lambda}^{ G}_k"'] & \tilde{\mathbf{T}}_k^\circ \arrow[d,"\tilde{\lambda}_k^\circ"] & \tilde{\mathbf{T}}_k^{ G'}\arrow[l,"\tilde{\theta}^{G'}_k"']\arrow[ld,"\tilde{\lambda}^{G'}_k"] \\
	&  B  &
	\end{tikzcd}
	\]
	
	By Proposition \ref{DecompositionPropertyOfCongruenceIdeal}, we have the following decomposition of congruence ideals
	\[\mathfrak{c}_{\tilde{\lambda}^{G}_k}=\mathfrak{c}_{\tilde{\lambda}^\circ_k}\tilde{\lambda}^\circ_k (\mathfrak{c}_{\tilde{\theta}^{G}_k}), \ \mathfrak{c}_{\tilde{\lambda}^{G'}_k}=\mathfrak{c}_{\tilde{\lambda}^\circ_k}\tilde{\lambda}^\circ_k (\mathfrak{c}_{\tilde{\theta}^{G'}_k}).\]
	
	Recall the following result from representation theory: 
	let $\mathrm{St}$ be the standard representation of the general linear group $\mathrm{GL}_2(W)$ 
	where $W$ is a field of characteristic $0$. 
	Then we have the following decomposition of representations of $\mathrm{GL}_2(W)$:
	\[\mathrm{Ad}(\mathrm{Sym}^{n-1}\mathrm{St})=\bigoplus_{j=1}^{n-1}(\mathrm{Sym}^{2j}\otimes\mathrm{det}^{-j})\mathrm{St}=\bigoplus_{j=1}^{n-1}\mathcal{A}^j(\mathrm{St}).\]
	
	From this, we have the following decomposition of representations of $\Gamma_{\mathbb{Q}_k}$
	\[\mathrm{Ad}(\rho_\mu^{G})=\mathrm{Ad}(\rho^{G_n}_\mu)=\bigoplus_{j=1}^{n-1}\mathcal{A}^j(\rho_\mu)=\mathrm{Ad}(\rho^{G'}_\mu)\bigoplus\mathcal{A}^{n-1}(\rho_\mu),\]
	which gives the following decomposition of Selmer groups

	\[\mathrm{Sel} (\mathbb{Q}_k,\mathrm{Ad}(\rho_\mu^{G}))=\mathrm{Sel} (\mathbb{Q}_k,\mathrm{Ad}(\rho_\mu^{G'}))\bigoplus\mathrm{Sel} (\mathbb{Q}_k,\mathcal{A}^{n-1}(\rho_\mu)).\]
	
	We apply Theorem \ref{pL} and Corollary \ref{pLandCongruenceIdeal} to $G'$ and to $G$. Using the above decomposition of congruence ideals and Selmer groups, one can show the following result.
	\begin{theorem}\label{pLn}
		Assume $\textbf{Big} (\overline{\rho}_{\Pi^{G}})$,
		$\textbf{Dist} (\overline{\rho}_{\Pi^{G}})$,
		$\textbf{RegU} (\overline{\rho}_{\Pi^{G}})$,
		$\mathcal{L} (\mathbb{Q},\mathrm{Ad}(\rho_\mu^{G_r}))\neq0$
		for both $r=n, n-1$ and that $ B $ is regular.
		Then the Selmer group $\mathrm{Sel} (\mathbb{Q}_{\infty},\mathcal{A}^{n-1}(\rho_\mu))^{\ast}$
		is a finitely generated torsion $B_\infty$-module.
		Its characteristic power series $L^{alg}_p (\mathcal{A}^{n-1}(\rho_\mu),S)$
		has a zero of order $1$ at $S=0$.
		Moreover, for any prime ideal $P$ in $B_k$ of height one, we have
		(up to a factor a power of $p$ if $k>0$)
		\[((\frac{L^{alg}_p (\mathcal{A}^{n-1}(\rho_\mu),S)}{S}B_\infty)_{\Gamma_k})_P
		=
		\frac{I^G_k}{I^{G'}_k}
		\frac{(\tilde{\lambda}^\circ_k (\mathfrak{c}_{\tilde{\theta}^{G}_k})B_k)_P}
		{(\tilde{\lambda}^\circ_k (\mathfrak{c}_{\tilde{\theta}^{G'}_k})B_k)_P}\]
	\end{theorem}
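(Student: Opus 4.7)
The strategy is to reduce everything to Theorem \ref{pL} applied to the two groups $G=G_n$ and $G'=G_{n-1}$, and then extract the summand corresponding to $\mathcal{A}^{n-1}(\rho_\mu)$ by division. The decomposition of the adjoint representation
\[
\mathrm{Ad}(\rho_\mu^{G})
=\mathrm{Ad}(\rho_\mu^{G'})\bigoplus\mathcal{A}^{n-1}(\rho_\mu),
\]
which is recalled just above the theorem statement, induces (because the local conditions defining the Selmer group are compatible with the filtrations and with the direct sum) a corresponding decomposition
\[
\mathrm{Sel}(\mathbb{Q}_k,\mathrm{Ad}(\rho_\mu^G))
\simeq
\mathrm{Sel}(\mathbb{Q}_k,\mathrm{Ad}(\rho_\mu^{G'}))
\oplus
\mathrm{Sel}(\mathbb{Q}_k,\mathcal{A}^{n-1}(\rho_\mu))
\]
and the same over $\mathbb{Q}_\infty$. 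First I would verify Condition \ref{condition} descends from $\overline{\rho}_{\Pi^G}$ to $\overline{\rho}_{\Pi^{G'}}$ (using Lemma \ref{basechange}(4) together with the fact that $\mathbf{Big}$, $\mathbf{Dist}$, $\mathbf{RegU}$ are preserved by taking $\mathrm{Sym}^{n-2}$ of $\overline{\rho}_\pi$ once we know them for $\mathrm{Sym}^{n-1}\overline{\rho}_\pi$), so that Theorem \ref{pL} applies simultaneously to $G$ and to $G'$.

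Applying Theorem \ref{pL} to $G$ and to $G'$ gives that both dual Selmer groups $\mathrm{Sel}(\mathbb{Q}_\infty,\mathrm{Ad}(\rho_\mu^G))^\ast$ and $\mathrm{Sel}(\mathbb{Q}_\infty,\mathrm{Ad}(\rho_\mu^{G'}))^\ast$ are finitely generated torsion $B_\infty$-modules, with characteristic power series of orders of vanishing $n-1$ and $n-2$ at $S=0$ respectively. The direct sum decomposition then forces $\mathrm{Sel}(\mathbb{Q}_\infty,\mathcal{A}^{n-1}(\rho_\mu))^\ast$ to be finitely generated torsion as well, and multiplicativity of characteristic power series for direct sums gives
\[
L^{alg}_p(\mathcal{A}^{n-1}(\rho_\mu),S)
=\frac{L^{alg}_p(\mathrm{Ad}(\rho_\mu^G),S)}{L^{alg}_p(\mathrm{Ad}(\rho_\mu^{G'}),S)}
\]
up to a unit, whence the order of vanishing at $S=0$ is $(n-1)-(n-2)=1$, proving the second assertion.

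For the main identity, I would apply Corollary \ref{pLandCongruenceIdeal} to $G$ and to $G'$, obtaining, for any height-one prime $P$ of $B_k$ and up to a power of $p$,
\[
\Bigl(\bigl(\tfrac{L^{alg}_p(\mathrm{Ad}(\rho_\mu^G),S)}{S^{n-1}}B_\infty\bigr)_{\Gamma_k}\Bigr)_P
=I^{G}_k\,(\mathfrak{c}_{\tilde{\lambda}_k^G}B_k)_P,
\qquad
\Bigl(\bigl(\tfrac{L^{alg}_p(\mathrm{Ad}(\rho_\mu^{G'}),S)}{S^{n-2}}B_\infty\bigr)_{\Gamma_k}\Bigr)_P
=I^{G'}_k\,(\mathfrak{c}_{\tilde{\lambda}_k^{G'}}B_k)_P.
\]
Then Proposition \ref{DecompositionPropertyOfCongruenceIdeal} (applied to the two factorizations $\tilde{\lambda}^G_k=\tilde{\lambda}^\circ_k\circ\tilde{\theta}^G_k$ and $\tilde{\lambda}^{G'}_k=\tilde{\lambda}^\circ_k\circ\tilde{\theta}^{G'}_k$) yields
\[
\mathfrak{c}_{\tilde{\lambda}_k^G}
=\mathfrak{c}_{\tilde{\lambda}_k^\circ}\,\tilde{\lambda}^\circ_k(\mathfrak{c}_{\tilde{\theta}_k^G}),
\qquad
\mathfrak{c}_{\tilde{\lambda}_k^{G'}}
=\mathfrak{c}_{\tilde{\lambda}_k^\circ}\,\tilde{\lambda}^\circ_k(\mathfrak{c}_{\tilde{\theta}_k^{G'}}),
\]
so that after dividing one equality by the other the factor $\mathfrak{c}_{\tilde{\lambda}_k^\circ}$ cancels. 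Combining this cancellation with the multiplicativity of characteristic power series across the direct sum decomposition of the Selmer group gives precisely
\[
\Bigl(\bigl(\tfrac{L^{alg}_p(\mathcal{A}^{n-1}(\rho_\mu),S)}{S}B_\infty\bigr)_{\Gamma_k}\Bigr)_P
=\frac{I^{G}_k}{I^{G'}_k}
\frac{(\tilde{\lambda}^\circ_k(\mathfrak{c}_{\tilde{\theta}_k^G})B_k)_P}{(\tilde{\lambda}^\circ_k(\mathfrak{c}_{\tilde{\theta}_k^{G'}})B_k)_P}.
\]

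The only real obstacle is bookkeeping: one must check that the direct sum decomposition of the Selmer group is compatible with the Fitting-ideal computations of Theorem \ref{pL} (use Proposition \ref{PropertiesOfFittingIdeal}(2)), that the $p$-power discrepancy coming from the map $\mathrm{Sel}(\mathbb{Q}_k,-)\to\bigoplus_\psi \mathrm{Sel}(\mathbb{Q},-\otimes\psi)$ noted before Corollary \ref{pLandCongruenceIdeal} is harmlessly absorbed on both sides of the division, and that the common factor $\mathfrak{c}_{\tilde{\lambda}^\circ_k}$ is a non-zero-divisor on the relevant localizations so that the division actually makes sense; the non-vanishing hypothesis on $\mathcal{L}(\mathbb{Q},\mathrm{Ad}(\rho^{G_{n-1}}_\mu))$ (together with Lemma \ref{injectiveKahler2} and the arguments from Theorem \ref{pL1}) is precisely what guarantees this last point.
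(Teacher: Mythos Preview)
Your proposal is correct and follows essentially the same approach as the paper: the paper's proof is the one-line remark ``We apply Theorem \ref{pL} and Corollary \ref{pLandCongruenceIdeal} to $G'$ and to $G$. Using the above decomposition of congruence ideals and Selmer groups, one can show the following result,'' and you have spelled out exactly those steps (direct-sum decomposition of Selmer groups, multiplicativity of characteristic power series, Proposition \ref{DecompositionPropertyOfCongruenceIdeal} to factor the congruence ideals, then division). Your additional bookkeeping remarks (descent of the conditions to $G'$, non-zero-divisor check) are reasonable caveats the paper leaves implicit.
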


	\section{Comparison of $\mathcal{L}$-invariants}
	The $\mathcal{L}$-invariants of $p$-adic $L$-functions were introduced in
	\cite[Conjecture BSD ($p$)-exceptional case]{MazurTateTeitelbaum}  (though the phenomenon already appeared in \cite{FerreoGreenberg}).
	Later R.Greenberg gave a conjectural formula for the $\mathcal{L}$-invariant of a $p$-adic ordinary Galois representation
	(under some more conditions, see \cite[Section 3, The $\mathcal{L}$-invariant]{GreenbergTrivialZero}).
	In \cite{HarronJorza}, the $\mathcal{L}$-invariants of symmetric powers of $p$-adic Galois representations attached to Iwahori level Hilbert modular forms
	(under some technical conditions) are given in terms of logarithmic derivatives of Hecke eigenvalues.
	
	We use notations as in the last subsection.
	Let's first recall the conjecture on the $\mathcal{L}$-invariants
	(for the definitions of various objects in the conjecture, see
	\cite{GreenbergTrivialZero}):
	\begin{conjecture}
		Let $V$ be a $p$-adic ordinary exceptional Galois representations.
		Assume also that the hypotheses S, T and U are all satisfied.
		Write $L_p (V,S)$ for the  (conjectured) $p$-adic $L$-function of $V$.
		Then there is a constant $\mathcal{L}^{\mathrm{Gr}} (V)\in\mathbb{C}_p-\{0\}$ such that
		\[\lim\limits_{S\to0}\frac{L_p (V,S)}{S^e}=\mathcal{L}^{\mathrm{Gr}} (V)\varepsilon' (V)\frac{L_{\infty} (V,0)}{\Omega_V}.\]
		where $e$ is the order of the trivial zero of $L_p (V,S)$ at $S=0$, $\varepsilon' (V)$ is
		a certain product of Euler-like factors and $\Omega_V$ is the Deligne period
		of $V$.
	\end{conjecture}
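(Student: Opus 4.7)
The plan is to reduce the conjecture, specialized to the family $V = \mathcal{A}^n(\rho_\mu)$ treated in this paper, to three ingredients: (i) construction of a $p$-adic $L$-function $L_p(V,S)$ with the expected interpolation of critical values of $L_\infty(V,s)$; (ii) an Iwasawa--Greenberg Main Conjecture equating $L_p(V,S)$ with the algebraic $p$-adic $L$-function $L_p^{alg}(V,S) = \mathrm{char}_{B_\infty}(\mathrm{Sel}(\mathbb{Q}_\infty,V)^\ast)$ up to a unit in $B_\infty$; and (iii) the comparison between the algebraic $\mathcal{L}$-invariant of this paper and Greenberg's $\mathcal{L}^{\mathrm{Gr}}(V)$.

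Granting (i) and (ii), so that $L_p(V,S) = u(S)\,L_p^{alg}(V,S)$ for a unit $u(S)$ with $u(0) \in B^\times$, Theorem \ref{pLn} (respectively Theorem \ref{pL1} when $n=1$) determines the order $e$ of vanishing of $L_p^{alg}(V,S)$ at $S=0$ and identifies its leading Taylor coefficient with $\mathcal{L}(\mathbb{Q},V)\cdot \mathrm{char}_B(\mathrm{Sel}(\mathbb{Q},V)^\ast)$. Ingredient (iii) is supplied by Theorem \ref{compareLinvariant}, which identifies $z_0(\mathcal{L}(\mathbb{Q},V))$ with $\mathcal{L}^{\mathrm{Gr}}(V)$. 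Combining,
\[
\lim_{S\to 0}\frac{L_p(V,S)}{S^{e}} \;=\; u(0)\,\mathcal{L}^{\mathrm{Gr}}(V)\,\mathrm{char}_B(\mathrm{Sel}(\mathbb{Q},V)^\ast).
\]

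To match the right-hand side of the conjecture, one rewrites the characteristic ideal using Corollary \ref{congruenceSelmer} as the image in $B$ of the ratio $\tilde{\lambda}^\circ_0(\mathfrak{c}_{\tilde{\theta}^{G_n}_0})/\tilde{\lambda}^\circ_0(\mathfrak{c}_{\tilde{\theta}^{G_{n-1}}_0})$ of congruence ideals. The principle, verified for $n=1$ by Proposition \ref{specialvalue1} and formulated in general in Remark \ref{RemarkOnCongruenceIdealAndSpecialLValue}, is that the specialization of this ratio at $P_\pi$ equals a quotient of special values of the complex $L$-function $L(\mathcal{A}^{n-1}(\pi),s)$ by canonical periods, up to elementary factors. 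Tracking the Euler factors removed in passing between $L_\infty(V,s)$ and its $p$-stabilization, and identifying Deligne's period $\Omega_V$ with the relevant product of Hida's fundamental periods $\Omega(\pm,\pi)$, should then absorb $u(0)$ and produce precisely $\varepsilon'(V)\,L_\infty(V,0)/\Omega_V$ on the right.

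The principal obstacle is input (i): the existence of an analytic $L_p(\mathcal{A}^n(\rho_\mu),S)$ with the prescribed interpolation is widely open beyond small $n$, and the Main Conjecture (ii) cannot even be formulated without it. The strategy outlined in the introduction attacks (ii) by a two-sided divisibility; the present paper provides one half, identifying the congruence-ideal divisor of $L_p^{alg}$, and would need to be paired with a divisibility in the opposite direction (via an Euler system or Eisenstein congruences) to pin down $u(0)$ as a unit and control its value at $S=0$. A secondary but nontrivial difficulty is the period matching: relating $\Omega_V$ to products of $\Omega(\pm,\pi)$ and verifying that the period ambiguity inherent in the congruence-ideal description collapses into the factor $\varepsilon'(V)$ rather than leaking into $\mathcal{L}^{\mathrm{Gr}}(V)$, so that the $\mathcal{L}$-invariant contribution isolated in Theorem \ref{compareLinvariant} is precisely the one predicted by Greenberg.
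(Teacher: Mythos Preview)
The statement you are attempting to prove is labeled a \emph{Conjecture} in the paper and is not proved there; it is recalled from Greenberg's work (see \cite{GreenbergTrivialZero}) purely as motivation for the comparison of $\mathcal{L}$-invariants that follows in Section~5. There is no ``paper's own proof'' to compare against.

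Your proposal correctly identifies exactly why this remains conjectural: inputs (i) and (ii) --- the existence of the analytic $p$-adic $L$-function and the Main Conjecture --- are, as you yourself note, ``widely open''. The paper makes no claim to establish either. What the paper actually proves is the comparison you list as ingredient (iii) (Theorem~\ref{compareLinvariant}) and the algebraic-side leading-term formula (Theorems~\ref{pL1} and~\ref{pLn}); these are results \emph{about} the algebraic characteristic power series $L_p^{alg}$, and they are compatible with the conjecture but do not imply it. In short, your outline is a reasonable sketch of how one would \emph{hope} to deduce the conjecture for these particular $V$, but it is not a proof, and the paper does not pretend otherwise.
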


    We first give a summary of \cite{HarronJorza}, 
    mainly Section 4: computing the $\mathcal{L}$-invariants
    (the authors of \cite{HarronJorza} work with
    eigenvarieties, instead of Hida families.
    We will show how to fit their result 
    to our situation of Hida families).
    This is also to fix some notations.
    For completeness, we reproduce part 
    of \cite[Section 1]{HarronJorza}.
    For a real number $r$ such that $0\leq r<1$,
    let $\mathcal{R}^r$
    be the set of power series
    $f(x)=\sum_{k=-\infty}^\infty a_kx^k$
    holomorphic for $r\leq|x|_p<1$
    with $a_k\in\mathbb{Q}_p$.
    Then we write $\mathcal{R}=\bigcup_{r<1}\mathcal{R}^r$
    for the Robba ring (over $\mathbb{Q}_p$).
    There are actions of a Frobenius $\varphi$
    and $\Gamma=\mathrm{Gal}(\mathbb{Q}_\infty/\mathbb{Q})$ 
    given as follows:
    for any $f(x)\in \mathcal{R}$,
    $(\varphi f)(x)=f((1+x)^p-1)$ and
    $(\tau f)(x)=f((1+x)^{\chi(\tau)}-1)$
    for any $\tau\in\Gamma$
    and $\chi$ the $p$-adic cyclotomic character.
    For any finite extension  $L/\mathbb{Q}_p$,
    we define
    $\mathcal{R}_L=\mathcal{R}\widehat{\otimes}_{\mathbb{Q}_p}L$.
    Similarly, for an affinoid algebra
    $S$ over $\mathbb{Q}_p$,
    we can define $\mathcal{R}_S$.
    We can extend the actions of $\varphi$ and $\Gamma$
    to $\mathcal{R}_L$ and $\mathcal{R}_S$.
    A $(\varphi,\Gamma)$-module over $\mathcal{R}_L$
    is a free $\mathcal{R}_L$-module $D_L$
    of finite rank,
    equipped with a $\varphi$-semi-linear Frobenius map
    $\varphi_{D_L}$ and a semi-linear action of $\Gamma$
    which commute with each other and the induced map
    $\varphi_{D_L}^\ast D_L=D_L\otimes_\varphi\mathcal{R}_L\to D_L$
    is an isomorphism.
    Similarly, a $(\varphi,\Gamma)$-module over $\mathcal{R}_S$
    is a vector bundle $D_S$
    over $\mathcal{R}_S$ of finite rank,
    equipped with a semi-linear Frobenius $\varphi_{D_S}$
    and a semi-linear action of $\Gamma$,
    which commute with each other and
    the induced map 
    $\varphi_{D_S}^\ast D_S\to D_S$
    is an isomorphism.
    For any character $\delta\colon\mathbb{Q}_p^\times\to S^\times$,
    we define the twisted Robba module
    $\mathcal{R}_S(\delta)$
    to be the $(\varphi,\Gamma)$-module
    $\mathcal{R}_Se_\delta$ with basis $e_\delta$
    such that the semi-linear Frobenius $\varphi$ 
    on $\mathcal{R}_S(\delta)$ is
    $\varphi(xe_\delta)=\varphi(x)\delta(p)e_\delta$
    and for any $\tau\in\Gamma$,
    $\tau(xe_\delta)=\tau(x)\delta(\chi(\tau))e_\delta$.
    We say that a $(\varphi,\Gamma)$-module $D_S$
    is trianguline if there is an increasing filtration
    $F^iD_S$ on $D_S$ such that the graded pieces
    are isomorphic to some
    $\mathcal{R}_S\otimes_SM$
    for a character $\delta$ 
    and an invertible sheaf $M$ over $S$.
    Let $D_{\mathrm{rig},L}^\dagger$ be the functor
    associating to an $L$-linear continuous representation 
    of $\Gamma_{\mathbb{Q}_p}$
    a $(\varphi,\Gamma)$-module over 
    $\mathcal{R}_L$.
    Then $D_{\mathrm{rig},L}^\dagger$ induces 
    an equivalence of categories between 
    the category of $L$-linear continuous representations
    of $\Gamma_{\mathbb{Q}_p}$
    and the category 
    of slope $0$ $(\varphi,\Gamma)$-modules over $\mathcal{R}_L$.
    Similarly, we can define the functor
    $D_\mathrm{st}$
    which associates to $V$
    a semi-stable $(\varphi,\Gamma)$-modules over $\mathcal{R}_L$
    (for the definitions of these two functors, see \cite{Berger}).

    Let $\rho_V\colon\Gamma_\mathbb{Q}\to\mathrm{GL}(V)$
    be a continuous $p$-adic representation of $\Gamma_\mathbb{Q}$,
    unramified outside a finite set of places of $\mathbb{Q}$
    and potentially semi-stable at $p$.
    For any finite prime $l\nmid p\infty$, we set
    $\mathrm{H}^1_f(\mathbb{Q}_l,V)
    =\mathrm{Ker}(\mathrm{H}^1(\mathbb{Q}_l,V)\to
    \mathrm{H}^1(I_l,V))$.
    We then define
    $\mathrm{H}^1_f(\mathbb{Q}_p,V)
    =\mathrm{Ker}(\mathrm{H}^1(\mathbb{Q}_p,V)
    \to\mathrm{H}^1(\mathbb{Q}_p,V\otimes_{\mathbb{Q}_p}B_\mathrm{cris}))
    \simeq \mathrm{H}^1_f(D_\mathrm{rig}^\dagger(V))$
    and
    $\mathrm{H}^1_f(\mathbb{R},V)=\mathrm{H}^1(\mathbb{R},V)$.
    Write $S$ for the finite set of places of $\mathbb{Q}$
    including the ramified places of $V$ and $p$, $\infty$.
    Now we set 
    ($\Gamma_{\mathbb{Q},S}$ is
    the Galois group for the maximal extension of $\mathbb{Q}$
    unramified outside $S$)
    \[\mathrm{H}^1_f(V)=
    \mathrm{Ker}(\mathrm{H}^1(\Gamma_{\mathbb{Q},S},V)
    \to
    \prod_{v\in S}\mathrm{H}^1(\mathbb{Q}_v,V)
    /\mathrm{H}^1_f(\mathbb{Q}_v,V)).\]

    A regular submodule $D$ of $D_\mathrm{st}(V)$
    is a $(\varphi,\Gamma)$-module
    such that
    $D\simeq D_\mathrm{st}(V)/F^0D_\mathrm{st}(V)$.
    For each such regular submodule $D$, there is a filtration
    $F^iD_\mathrm{st}(V)$ on $D_\mathrm{st}(V)$
    (see \cite[Section 2.1.4]{Benois}).
    This filtration in turn induces a filtration
    $F^iD_\mathrm{rig}^\dagger(V)$ on $D_\mathrm{rig}^\dagger(V)$
    given by the intersection
    $F^iD_\mathrm{rig}^\dagger(V)
    =D_\mathrm{rig}^\dagger(V)\bigcap
    (F^iD\otimes_{\mathbb{Q}_p}\mathcal{R}_L[1/\log(1+x)])$.
    The exceptional subspace $W$ of $V$ is defined to be
    $W=F^1D_\mathrm{rig}^\dagger(V)/F^{-1}D_\mathrm{rig}^\dagger(V)$.

    If we assume conditions \textbf{C1} and \textbf{C2} in
    \cite[Section 2.1.2]{Benois},
    we have an isomorphism
    \[\mathrm{H}^1(\Gamma_{\mathbb{Q},S},V)\simeq\bigoplus_{v\in S}
    \mathrm{H}^1(\mathbb{Q}_v,V)/\mathrm{H}^1_f(\mathbb{Q}_v,V). \]
    
    With this isomorphism, we define
    $\mathrm{H}^1(D,V)$
    to be the unique subspace of 
    $\mathrm{H}^1(\Gamma_{\mathbb{Q},S},V)$
    whose image under this isomorphism
    is 
    $\mathrm{H}^1(F^1D_\mathrm{rig}^\dagger(V))/
    \mathrm{H}^1_f(\mathbb{Q}_p,V)$. 
    The natural map 
    $\mathrm{H}^1(\Gamma_{\mathbb{Q},S},V)\to\mathrm{H}^1(\mathbb{Q}_p,V)$
    gives rise to a commutative diagram
    
    \[
    \begin{tikzcd}
    \mathrm{H}^1(D,V)
    \arrow[d,"\kappa_D"]
    \arrow[dr,"\overline{\kappa}_D"]
    &
    \\
    \mathrm{H}^1(W)
    \arrow[r]
    &
    \mathrm{H}^1(\mathrm{gr}^1D_\mathrm{rig}^\dagger(V))
    \end{tikzcd}
    \]
    
    From this diagram, 
    assuming the condition \textbf{C5}
    (see \cite[Section 2.1.9]{Benois}),
    we get the following one
    
    \[
    \begin{tikzcd}
    \mathcal{D}_\mathrm{st}(\mathrm{gr}^1D_\mathrm{rig}^\dagger(V))
    \arrow[r,"i_{D,f}","\sim"']
    &
    \mathrm{H}^1_f(\mathrm{gr}^1D_\mathrm{rig}^\dagger(V))
    \\
    \mathrm{H}^1(D,V)
    \arrow[u,"\rho_{D,f}"']
    \arrow[r,"\overline{\kappa}_D"]
    \arrow[d,"\rho_{D,c}"]
    &
    \mathrm{H}^1(\mathrm{gr}^1D_\mathrm{rig}^\dagger(V))
    \arrow[u,"p_{D,f}"']
    \arrow[d,"p_{D,c}"]
    \\
    \mathcal{D}_\mathrm{st}(\mathrm{gr}^1D_\mathrm{rig}^\dagger(V))
    \arrow[r,"i_{D,c}","\sim"']
    &
    \mathrm{H}^1_c(\mathrm{gr}^1D_\mathrm{rig}^\dagger(V))
    \end{tikzcd}
    \]
    here for the isomorphisms
    $i_{D,f}$ and $i_{D,c}$
    and the projections
    $p_{D,f}$ and $p_{D,c}$, 
    see \cite[Section 1.5.10]{Benois}.
    
    Then D.Benois gave the definition of the $\mathcal{L}$-invariant as follows
    (see \cite[pp.1599-1600]{Benois})
    \begin{definition}
    	The following determinant 
    	is the $\mathcal{L}$-invariant associated to $V$ and $D$
    	\[\mathcal{L}(D,V)
    	=
    	\det\bigg(\rho_{D,f}\circ\rho_{D,c}^{-1}\bigg).\]
    \end{definition}
    
    Explicitly, we have the following formula
    (see \cite[Lemma 3]{HarronJorza}).
    
    \begin{lemma}\label{HJa/b}
    	Let $V_p=V|_{\Gamma_{\mathbb{Q}_p}}$.
    	Suppose that $\mathrm{gr}^1D_{\mathrm{rig},L}^\dagger(V_p)
    	\simeq \mathcal{R}$.
    	Let $\mathrm{H}^1(\mathcal{R})\simeq
    	\mathrm{H}^1_f(\mathcal{R})
    	\oplus\mathrm{H}^1_c(\mathcal{R})$
    	with basis $x=(-1,0)$ and $y=(0,\log_p\chi(\gamma))$.
    	Suppose that
    	$c\in\mathrm{H}^1(D,\rho_V)$
    	is such that the image of $c$
    	in $\mathrm{H}^1(\mathrm{gr}_1D_{\mathrm{rig}}^\dagger(V_p))$
    	is $\xi_p=a_px+b_py$
    	with $b_p\neq0$.
    	Then
    	\[\mathcal{L}(D,V)=\frac{a_p}{b_p}.\]
    \end{lemma}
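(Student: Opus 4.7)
The plan is to unpack the definition $\mathcal{L}(D,V) = \det(\rho_{D,f} \circ \rho_{D,c}^{-1})$ from the commutative diagram preceding the lemma, and to exploit the rank-one hypothesis $\mathrm{gr}^1 D_{\mathrm{rig},L}^\dagger(V_p) \simeq \mathcal{R}$ to reduce the determinant to a ratio of scalars. Under this hypothesis, $\mathcal{D}_{\mathrm{st}}(\mathrm{gr}^1 D_{\mathrm{rig}}^\dagger(V_p))$ is $1$-dimensional over $L$, and $\mathrm{H}^1(\mathrm{gr}^1 D_{\mathrm{rig}}^\dagger(V_p)) \simeq \mathrm{H}^1(\mathcal{R})$ is $2$-dimensional with the canonical decomposition $\mathrm{H}^1_f(\mathcal{R}) \oplus \mathrm{H}^1_c(\mathcal{R})$, a basis of which is fixed by the hypothesis as $x = (-1,0)$ and $y = (0, \log_p\chi(\gamma))$. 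Thus both $\rho_{D,f}$ and $\rho_{D,c}$ are $L$-linear maps between one-dimensional $L$-spaces, and the determinant is simply the scalar ratio of the two.

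Next, I would trace the class $c \in \mathrm{H}^1(D,V)$ through the diagram. Since $\overline{\kappa}_D(c) = \xi_p = a_p x + b_p y$, and $p_{D,f}, p_{D,c}$ are the projections to $\mathrm{H}^1_f(\mathcal{R})$, $\mathrm{H}^1_c(\mathcal{R})$ respectively along the given direct sum decomposition, we obtain $p_{D,f}(\overline{\kappa}_D(c)) = a_p x$ and $p_{D,c}(\overline{\kappa}_D(c)) = b_p y$. Applying the inverses of $i_{D,f}$ and $i_{D,c}$ gives $\rho_{D,f}(c) = a_p \cdot i_{D,f}^{-1}(x)$ and $\rho_{D,c}(c) = b_p \cdot i_{D,c}^{-1}(y)$ in $\mathcal{D}_{\mathrm{st}}(\mathrm{gr}^1 D_{\mathrm{rig}}^\dagger(V_p))$.

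The crux is then to verify that the specific normalizations of $i_{D,f}$ and $i_{D,c}$, as given in \cite[Section 1.5.10]{Benois}, send $x = (-1,0)$ and $y = (0, \log_p\chi(\gamma))$ to the \emph{same} generator $v \in \mathcal{D}_{\mathrm{st}}(\mathrm{gr}^1 D_{\mathrm{rig}}^\dagger(V_p))$. This amounts to a computation in the Herr complex computing $\mathrm{H}^\bullet(\mathcal{R})$: the explicit scalars $-1$ and $\log_p\chi(\gamma)$ entering the definition of $x$ and $y$ are precisely the normalizing factors needed to compensate for the difference between the Bloch-Kato exponential (giving $i_{D,f}$) and its dual (giving $i_{D,c}$), so that they lift to the same element of $\mathcal{D}_{\mathrm{st}}$. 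Granting this compatibility, we conclude $\rho_{D,f}(c) = a_p v$, $\rho_{D,c}(c) = b_p v$, hence $\rho_{D,f} \circ \rho_{D,c}^{-1}$ is multiplication by $a_p/b_p$ (well-defined since $b_p \neq 0$), and its determinant is $a_p/b_p$.

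The main obstacle is the normalization step above: unpacking Benois's constructions and checking by explicit cocycle representatives in the Herr complex that the two isomorphisms are compatible via the chosen basis. Once this is in place, the rest reduces to linear algebra in dimension one. I would also need to briefly verify that the conditions \textbf{C1}, \textbf{C2}, \textbf{C5} from \cite{Benois} hold in the situation at hand (which is standard for the ordinary regular representations to which the lemma will be applied), so that the diagram and the definition of $\mathcal{L}(D,V)$ make sense.
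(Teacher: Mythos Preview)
The paper does not supply its own proof of this lemma: it is quoted verbatim from \cite[Lemma 3]{HarronJorza} as a black box, with the preceding sentence ``Explicitly, we have the following formula (see \cite[Lemma 3]{HarronJorza}).'' Your sketch is therefore not being compared against an argument in the paper but against the cited source, and it is essentially the argument one finds there: reduce to rank one, trace $c$ through the diagram via the projections $p_{D,f},p_{D,c}$, and then invoke the explicit normalization of the isomorphisms $i_{D,f},i_{D,c}$ from \cite[Section 1.5.10]{Benois} so that $i_{D,f}^{-1}(x)=i_{D,c}^{-1}(y)$.

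One small remark: the conditions \textbf{C1}, \textbf{C2}, \textbf{C5} are \emph{hypotheses} in Benois's framework under which $\mathcal{L}(D,V)$ is defined, not something to be verified inside the proof of this lemma; the lemma takes place entirely within that framework. You are right that for the ordinary representations $\mathcal{A}^j(\rho_\pi)$ to which the paper later applies it these conditions hold, but that verification belongs where the lemma is applied, not here.
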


    With this formula, we can start 
    the computation of $\mathcal{L}$-invariants
    as in \cite[Section 4]{HarronJorza}
    in terms of derivatives of Hecke eigenvalues
    with respect to weights.
    Recall that we place ourselves
    over the base field $\mathbb{Q}$
    and we are working with 
    $\pi$, a non-CM cuspidal holomorphic automorphic form
    over $\mathrm{GL}_2(\mathbb{A}_\mathbb{Q})$,
    cohomological for a local system of highest weight $a>0$.
    Let $\Pi_0$ be the transfer from 
    $\mathrm{GL}_{2/\mathbb{Q}}$ to $G_{n+1}$.
    Then $\Pi_0$ is of cohomological weight
    $(na,(n-1)a,\ldots,a,0)$.
    Then there is an eigenvariety $\mathcal{E}$ 
    over the weight space $\Lambda_0=\Lambda_0^{G_{n+1}}$
    together with a Galois representation
    $\rho_{\mathcal{E}}\colon\Gamma_\mathbb{Q}
    \to\mathrm{GL}_n(\mathcal{O}_\mathcal{E})$
    interpolating the Galois representations attached to the
    classical regular points on $\mathcal{E}$
    (including the Galois representation
    $\rho_{\Pi_0}$ associated to the automorphic form $\Pi_0$).
    Then $\rho_\mathcal{E}|_{\Gamma_{F_\mathfrak{p}}}$
    admits a triangulation with graded pieces
    $\mathcal{R}(\delta_i)$ such that
    $\delta_i(u)=u^{\kappa_i}$
    for $u\in\mathbb{Z}_p^\times$
    and $\delta_i(p)=F_i$
    where $\kappa_i=-(n+1-i)a+i$
    and $F_i=p^{(n-1)/2-i}a_i$
    where $a_i=\theta(U_i)$ is analytic over $\mathcal{E}$
    (for this result, see \cite[Corollary 24]{HarronJorza} or \cite{BellaicheChenevier}).
    
    Now let $D$ be any regular submodule of the Galois representation
    $D_\mathrm{st}(\mathcal{A}(\rho_\pi))$.
    Suppose that there is a point $z_0\in\mathcal{E}$
    specializing to $\Pi_0$
    (i.e. $z_0\circ\mathcal{E}\simeq\mathrm{Sym}^n\rho_\pi$).
    Assume that
    $z_0$ corresponds to the $p$-stabilization
    of $(\Pi_0)_p$ coming from the $p$-stabilization
    of $\pi_p$ giving rise to the regular submodule $D$.
    Assume moreover that there exists 
    global analytic triangulation of
    $D^\dagger_{\mathrm{rig}}(\rho_\mathcal{E}|_{\Gamma_{\mathbb{Q}_p}})$
    with graded pieces
    $\mathcal{R}(\delta_i)$.

    First it is easy to show the following
    (see \cite[Lemma 25]{HarronJorza})
    
    \begin{lemma}
    	Let $\vec{Y}$ be a direction in $\Lambda_0$ and let
    	$\nabla_{\vec{Y}}\rho_\mathcal{E}$ 
    	be the tangent space to $\rho_\mathcal{E}$
    	in the $\vec{Y}$-direction,
    	which makes sense under the assumption
    	that $\mathcal{E}\to\Lambda_{0}$ is \'{e}tale at $z_0$.
    	Then 
    	$c_{\vec{Y}}=(z_0\circ\rho_\mathcal{E})^{-1}\nabla_{\vec{Y}}\rho_\mathcal{E}$
    	is a cohomology class in
    	$\mathrm{H}^1(\mathbb{Q},\mathrm{End}(z_0\circ\rho_\mathcal{E}))$
    	and the natural projection
    	$c_{\vec{Y},j}\in\mathrm{H}^1(\mathbb{Q},\mathcal{A}^j(\rho_\pi))$
    	lies in fact in the cohomology group
    	$\mathrm{H}^1(D,\mathcal{A}^j(\rho_\pi))$
    	for any $1\leq j\leq n$.
    \end{lemma}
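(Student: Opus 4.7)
The proof follows the standard pattern attaching global cohomology classes to tangent directions on eigenvarieties. First, differentiating the multiplicativity relation $\rho_\mathcal{E}(\sigma\tau)=\rho_\mathcal{E}(\sigma)\rho_\mathcal{E}(\tau)$ in the direction $\vec{Y}$, specializing at $z_0$, and left-multiplying by $(z_0\circ\rho_\mathcal{E})(\sigma\tau)^{-1}$ yields the $1$-cocycle identity for $c_{\vec{Y}}$ with coefficients in $\mathrm{End}(z_0\circ\rho_\mathcal{E})$ equipped with the adjoint $\Gamma_\Q$-action; the étaleness of $\mathcal{E}\to\Lambda_0$ at $z_0$ is precisely what makes $\nabla_{\vec{Y}}\rho_\mathcal{E}$ a well-defined tangent element. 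Decomposing $\mathrm{End}(\mathrm{Sym}^n\rho_\pi)\simeq\bigoplus_{j=0}^n\mathcal{A}^j(\rho_\pi)$ as $\Gamma_\Q$-representations (via Clebsch--Gordan) and taking the $j$-th projection then produces the classes $c_{\vec{Y},j}\in\mathrm{H}^1(\Q,\mathcal{A}^j(\rho_\pi))$.

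To show that $c_{\vec{Y},j}$ lies in the refined subspace $\mathrm{H}^1(D,\mathcal{A}^j(\rho_\pi))$, one must check the defining local conditions at every place in $S$. Away from $p$, since $\rho_\mathcal{E}$ interpolates automorphic Galois representations of fixed tame level and the local Langlands parameter at each $v\mid N$ is locally constant on $\mathcal{E}$, the cocycle $c_{\vec{Y}}$ restricted to the inertia group $I_v$ vanishes for every $v\nmid p$. Hence each $c_{\vec{Y},j}$ lies in $\mathrm{H}^1_f(\Q_v,\mathcal{A}^j(\rho_\pi))$, so its image in the quotient $\mathrm{H}^1(\Q_v,\mathcal{A}^j(\rho_\pi))/\mathrm{H}^1_f(\Q_v,\mathcal{A}^j(\rho_\pi))$ is zero, matching the local condition defining $\mathrm{H}^1(D,-)$ outside $p$.

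The crucial step is the local condition at $p$. Because the global analytic triangulation of $D_\mathrm{rig}^\dagger(\rho_\mathcal{E}|_{\Gamma_{\Q_p}})$ with graded pieces $\mathcal{R}(\delta_i)$ specializes at $z_0$ to the triangulation of $D_\mathrm{rig}^\dagger(z_0\circ\rho_\mathcal{E})$ prescribed by the chosen $p$-stabilization that gives rise to $D$, the derivative $\nabla_{\vec{Y}}\rho_\mathcal{E}|_{\Gamma_{\Q_p}}$ preserves the sub-object $F^1 D_\mathrm{rig}^\dagger$ to first order. Translated into cohomology, this means the restriction of $c_{\vec{Y}}$ to $\Gamma_{\Q_p}$ arises from $\mathrm{H}^1(F^1 D_\mathrm{rig}^\dagger(\mathrm{End}(z_0\circ\rho_\mathcal{E})))$; projecting onto the $\mathcal{A}^j$-summand then yields an element of $\mathrm{H}^1(F^1 D_\mathrm{rig}^\dagger(\mathcal{A}^j(\rho_\pi)))$, which is exactly the local condition at $p$ required for membership in $\mathrm{H}^1(D,\mathcal{A}^j(\rho_\pi))$.

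The hardest point in the argument will be justifying the compatibility between three structures at $p$: the triangulation induced on $\mathrm{End}(\mathrm{Sym}^n\rho_\mathcal{E})$ from the global triangulation of $\rho_\mathcal{E}$ via tensor and symmetric power constructions, the filtration cut out by the regular submodule $D$ of $D_\mathrm{st}(\mathcal{A}^j(\rho_\pi))$, and the Clebsch--Gordan decomposition. Verifying that these three structures fit together coherently is what guarantees that the projected class lives in the correct filtered piece rather than in an arbitrary subspace of $\mathrm{H}^1(\Q_p,\mathcal{A}^j(\rho_\pi))$; here the key input is the ordinarity assumption together with Condition $\textbf{Dist}$ at $p$, which ensures that the Hodge--Tate weights of the graded pieces are pairwise distinct, so the filtration on each $\mathcal{A}^j$-summand is intrinsically determined and matches with $D$.
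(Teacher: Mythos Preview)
The paper does not supply its own proof of this lemma; it simply records the statement with the citation ``see \cite[Lemma 25]{HarronJorza}'' and the remark that it is easy. Your sketch is correct and follows the argument one finds in Harron--Jorza: differentiate the multiplicativity relation to get the cocycle, check the local conditions at $v\nmid p$ from constancy of the tame inertial data along the family, and at $p$ use that the global analytic triangulation deforms over $\mathcal{E}$, so the first-order deformation respects the filtration and the restricted class lands in $\mathrm{H}^1(F^1D_\mathrm{rig}^\dagger)$.

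One small remark on emphasis: in your last paragraph you single out ordinarity and $\mathbf{Dist}(\overline{\rho}_\Pi)$ as the inputs guaranteeing the compatibility at $p$. In the eigenvariety formulation of the lemma (and in Harron--Jorza's proof) these are not what is invoked; the standing hypotheses that do the work are the existence of the global analytic triangulation with graded pieces $\mathcal{R}(\delta_i)$ and the assumption that $z_0$ corresponds to the $p$-stabilization of $(\Pi_0)_p$ giving rise to the regular submodule $D$. The filtration on $\mathcal{A}^j(\rho_\pi)$ induced by the triangulation then automatically matches the one defined by $D$, without any appeal to distinctness of residual characters. In the paper's ordinary setting your conditions are of course satisfied and imply the needed triangulation, so the slip is harmless in context, but the logical dependency is on the triangulation hypothesis rather than on ordinarity per se.
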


    Before giving an explicit formula for $c_{\vec{Y},j}$,
    we need to understand
    what this projection looks like.
    This is given by the following lemma
    (see \cite[Proposition 33]{HarronJorza})
    
    \begin{proposition}
    	Let $V$ be the standard representation of $\mathrm{GL}_2$
    	with basis $(e_1,e_2)$
    	and $V^m=\mathrm{Sym}^mV$
    	has basis $(g_{m,0},g_{m,1},\ldots,g_{m,m})$
    	where $g_{m,l}=(e_1)^{m-l}(e_2)^l$.
    	Suppose that $T\in\mathrm{End}(\mathrm{Sym}^mV)$
    	has an upper triangular matrix with diagonal entries
    	$(a_0,a_1,\ldots,a_m)$
    	with respect to this basis.
    	Then the projection of $T$ to $V_{2k}$ is
    	\[\bigg(\ast\ \cdots\ \ast\ ,\sum_{i=0}^mM_{m,k,i}a_i,\ 0\ \cdots\ 0\bigg)\]
    	with respect to the basis
    	$(g_{2k,i})_i$ of $V_{2k}$ where
    	\begin{equation}\label{M}
    	M_{m,k,i}=\sum_a(-1)^a\frac{m!(m-i+a)!(i+k-a)!}{a!(i-a)!(k-a)!(m-i-k+a)!}.
    	\end{equation}
    	
    	\[M_{m,k,i}=\sum_a(-1)^a\frac{m!(m-i+a)!(i+k-a)!}{a!(i-a)!(k-a)!(m-i-k+a)!}.\]
    	Here the explicit coordinate is the middle one, i.e.
    	the coefficient of $g_{2k,k}$ and $a$ runs through $\mathbb{N}$
    	such that
    	$\max(0,i+k-m)\leq  a\leq \min(i,k)$.
    \end{proposition}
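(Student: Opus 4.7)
The plan is to prove the statement by combining a weight analysis under the standard diagonal torus of $\mathrm{GL}_2$ with an explicit computation of the Clebsch--Gordan projection on diagonal endomorphisms, realized via the transvectant (Omega-operator) construction. Recall that as $\mathrm{GL}_2$-representations one has the canonical decomposition $\mathrm{End}(\mathrm{Sym}^mV)\simeq\bigoplus_{k=0}^mV_{2k}$, and the assignment $T\mapsto(\text{components in each }V_{2k})$ is $\mathrm{GL}_2$-equivariant.

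First I would carry out the weight analysis. Letting $t=\mathrm{diag}(t_1,t_2)$ act on $g_{m,l}$ with weight $t_1^{m-l}t_2^l$, the matrix unit $E_{i,j}=g_{m,i}\otimes g_{m,j}^\vee\in\mathrm{End}(\mathrm{Sym}^mV)$ has weight $t_1^{j-i}t_2^{i-j}$, which restricted to $\mathrm{SL}_2$ gives $t_1^{2(j-i)}$. On the other hand, viewing $g_{2k,l}$ inside $V_{2k}=\mathrm{Sym}^{2k}V\otimes\det^{-k}$, its $\mathrm{SL}_2$-weight is $t_1^{2(k-l)}$, so that $g_{2k,k}$ is the unique (up to scalar) weight zero vector of $V_{2k}$, while $g_{2k,l}$ for $l<k$ (resp.\ $l>k$) carries positive (resp.\ negative) weight. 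Since the upper triangular $T$ is a sum $T=\sum_{i\leq j}T_{i,j}E_{i,j}$ with only non-negative weights, its image under the $\mathrm{GL}_2$-equivariant projection onto $V_{2k}$ must vanish on all negative weight coordinates $g_{2k,l}$ with $l>k$; this accounts for the trailing zeros in the statement. Moreover, the coefficient of $g_{2k,k}$ in the image of $T$ depends only on the weight zero part of $T$, namely the diagonal $\sum_{i=0}^ma_iE_{i,i}$, while the strictly upper triangular part contributes only to the positive weight coordinates $g_{2k,l}$ with $l<k$, the $\ast$ coordinates.

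It then remains to compute, for each $i\in\{0,1,\ldots,m\}$, the coefficient of $g_{2k,k}$ in the projection of $E_{i,i}$ onto $V_{2k}$. I would identify $\mathrm{End}(\mathrm{Sym}^mV)\simeq\mathrm{Sym}^mV\otimes\mathrm{Sym}^mV\otimes\det^{-m}$ using the symplectic form on $V$, and realize the projection onto the $V_{2k}$-summand (up to an explicit normalization constant) by applying the $(m-k)$-th iterate of the Omega operator $\Omega=\partial_{x_1}\partial_{y_2}-\partial_{x_2}\partial_{y_1}$ to the symbol $f(x)g(y)$ and then restricting to the diagonal $y=x$. The symbol of $E_{i,i}$ is, up to a scalar coming from the symplectic identification, the monomial $x_1^{m-i}x_2^i\cdot y_1^iy_2^{m-i}$. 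Expanding $\Omega^{m-k}$ by the binomial theorem and applying Leibniz yields a single alternating sum over an index $a$; extracting the coefficient of $x_1^kx_2^k$ after the restriction $y=x$ and regrouping the resulting products of factorials gives exactly the formula (\ref{M}), with the summation range $\max(0,i+k-m)\leq a\leq\min(i,k)$ dictated by non-negativity of the factorial arguments.

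The main obstacle is the last combinatorial step: tracking the normalization conventions (the choice of dual basis in $\mathrm{Sym}^mV$, the symplectic identification $V^\vee\simeq V\otimes\det^{-1}$, and the precise constant by which $\Omega^{m-k}$ realizes the Clebsch--Gordan projection) so that the alternating sum produced matches $M_{m,k,i}$ exactly, without an extra scalar factor. Once these conventions are fixed, the identification is a mechanical rearrangement of binomial coefficients, and the proposition follows.
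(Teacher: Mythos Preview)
The paper does not prove this proposition: it is quoted verbatim from \cite[Proposition 33]{HarronJorza} as part of a summary of their computation, so there is no ``paper's own proof'' to compare against. Your sketch is a correct outline of how such a result is obtained, and in fact it is consonant with the tools the paper invokes later: in the proof of the lemma following Theorem \ref{compareLinvariant}, the paper realizes the projection $\mathrm{End}(\mathrm{Sym}^nV)\to\mathrm{Sym}^{2k}V$ via the Clebsch--Gordan maps $\Xi_{n,n,2k}$ of \cite{CFS} composed with the symplectic identification $\mathrm{Sym}^nV^\ast\simeq\mathrm{Sym}^nV$, and cites \cite[Proposition 33]{HarronJorza} for the identity $M_{n,k,i}=(-1)^i\binom{n}{i}C^{i,n-i,k}_{n,n,2k}$. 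Your transvectant/$\Omega$-operator construction is exactly the classical invariant-theoretic incarnation of these same Clebsch--Gordan projections, so the two approaches coincide in substance; the only work, as you correctly flag, is bookkeeping the normalizations so that the alternating sum comes out as $M_{m,k,i}$ on the nose rather than up to a scalar.
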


    With this explicit expression in hand, we can give
    the formula for the $\mathcal{L}$-invariants
    $\mathcal{L}(D,\mathcal{A}^j(\rho_\pi))$
    (see \cite[Proposition 26]{HarronJorza}).
    
    \begin{proposition}\label{HJL}
    	Suppose that in the graded pieces $\mathcal{R}(\delta_i)$,
    	we have $\delta_i(u)=u^{\kappa_i}$
    	and $\delta_i(p)=F_i$. Then
    	\[\mathcal{L}(D,\mathcal{A}^j(\rho_\pi))
    	=-\frac{\sum_iM_{n,j,i-1}(\nabla_{\vec{Y}}F_i)/F_i}
    	{\sum_iM_{n,j,i-1}\nabla_{\vec{Y}}\kappa_i} \]
    	as long as this formula makes sense
    	(i.e. the denominator does not vanish).
    \end{proposition}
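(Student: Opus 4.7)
The plan is to apply Lemma \ref{HJa/b} to the global cohomology class $c_{\vec{Y},j}\in\mathrm{H}^1(D,\mathcal{A}^j(\rho_\pi))$ produced by the previous lemma, by computing explicitly its image in $\mathrm{H}^1(\mathrm{gr}^1D^\dagger_{\mathrm{rig}}(\mathcal{A}^j(\rho_\pi)|_{\Gamma_{\mathbb{Q}_p}}))$ and reading off the ratio $a_p/b_p$. The first step is to restrict $c_{\vec{Y}}=(z_0\circ\rho_{\mathcal{E}})^{-1}\nabla_{\vec{Y}}\rho_{\mathcal{E}}$ to the decomposition group $\Gamma_{\mathbb{Q}_p}$. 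By the global analytic triangulation of $D^\dagger_{\mathrm{rig}}(\rho_{\mathcal{E}}|_{\Gamma_{\mathbb{Q}_p}})$ assumed in the setup, one can pick a basis in which $\rho_{\mathcal{E}}|_{\Gamma_{\mathbb{Q}_p}}$ is upper triangular with diagonal entries $\delta_1,\ldots,\delta_{n+1}$; then $c_{\vec{Y}}|_{\Gamma_{\mathbb{Q}_p}}$ is upper triangular with $i$-th diagonal entry the logarithmic derivative $\nabla_{\vec{Y}}\log\delta_i$.

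The second step is to project this local class into $\mathrm{gr}^1D^\dagger_{\mathrm{rig}}(\mathcal{A}^j(\rho_\pi)|_{\Gamma_{\mathbb{Q}_p}})$. Since $\mathcal{A}^j(\rho_\pi)=(\mathrm{Sym}^{2j}\otimes\det^{-j})\rho_\pi$ sits as a direct summand in $\mathrm{Ad}(\mathrm{Sym}^n\rho_\pi)$, the explicit projection formula from the preceding proposition applies: the middle coordinate of the image is $\sum_{i}M_{n,j,i-1}\cdot \nabla_{\vec{Y}}\log\delta_i$, where the indexing shift by $-1$ and the choice of the middle coordinate $g_{2j,j}$ come from the self-dual structure of $\mathcal{A}^j$. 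This reduces the computation to identifying each $\nabla_{\vec{Y}}\log\delta_i$ as a class in $\mathrm{H}^1(\mathbb{Q}_p,\mathbb{Q}_p)=\mathrm{H}^1_f\oplus\mathrm{H}^1_c$.

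The third step is a character-by-character computation: differentiating $\delta_i(u)=u^{\kappa_i}$ on $\mathbb{Z}_p^{\times}$ in direction $\vec{Y}$ yields a cocycle on the inertia subgroup whose component in the basis vector $y=(0,\log_p\chi(\gamma))$ has coefficient proportional to $\nabla_{\vec{Y}}\kappa_i$, while differentiating $\delta_i(p)=F_i$ on a Frobenius lift yields a cocycle whose component in $x=(-1,0)$ has coefficient proportional to $-\nabla_{\vec{Y}}F_i/F_i$. Summing with weights $M_{n,j,i-1}$ produces the global class $\xi_p=a_px+b_py$ with
\[
a_p=\sum_{i}M_{n,j,i-1}\,\frac{\nabla_{\vec{Y}}F_i}{F_i},\qquad b_p=-\sum_{i}M_{n,j,i-1}\,\nabla_{\vec{Y}}\kappa_i,
\]
up to a common nonzero scalar (incorporating $\log_p\chi(\gamma)$) which cancels in the ratio. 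The formula then follows directly from $\mathcal{L}(D,\mathcal{A}^j(\rho_\pi))=a_p/b_p$, with the sign dictated by the choice $x=(-1,0)$ in Lemma \ref{HJa/b}.

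The main technical obstacle is the careful bookkeeping in step three: one must justify that the infinitesimal deformation of a triangular $(\varphi,\Gamma)$-module indeed identifies the two directional derivatives of $\delta_i$ with the $\mathrm{H}^1_f$ and $\mathrm{H}^1_c$ components under the Bloch–Kato decomposition, track the normalization constant $\log_p\chi(\gamma)$, and verify that the off-diagonal terms in $c_{\vec{Y}}|_{\Gamma_{\mathbb{Q}_p}}$ do not contribute to $\mathrm{gr}^1$ after projection (they land in lower filtration steps). Once the normalizations are pinned down, the formula is a clean combination of the explicit projection matrix $M_{n,j,i-1}$ and the local description of infinitesimal deformations of trianguline characters.
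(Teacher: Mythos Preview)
The paper does not supply its own proof of this proposition: it is stated as a summary of \cite[Proposition~26]{HarronJorza} and simply cited. Your sketch is exactly the argument of Harron--Jorza, namely combining Lemma~\ref{HJa/b} with the explicit projection coefficients $M_{n,j,i-1}$ and the identification of the $\mathrm{H}^1_f$/$\mathrm{H}^1_c$ components of $\nabla_{\vec{Y}}\log\delta_i$ via the directional derivatives of $\kappa_i$ and $F_i$; so you are in line with both the paper's citation and the original source.
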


    Now that we have given a summary of 
    the main computation in \cite{HarronJorza},
    we can use their results to
    calculate our $\mathcal{L}$-invariants.
    Indeed, by \cite[Corollary 2.7.8]{Geraghty},
    the Galois representation
    ${V=\rho_{R_0}=\rho_{\mathbf{T}_0}
    \colon\Gamma_\mathbb{Q}\to\mathcal{G}_{n+1}(\mathbf{T}_0)}$
    is upper triangular when restricted to
    $\Gamma_{\mathbb{Q}_p}$.
    More precisely,
    $V_p=V|_{\Gamma_{\mathbb{Q}_p}}$
    admits a triangulation with
    graded pieces $\mathcal{R}(\delta_i)$
    such that
    $\delta_i(u)=u^{\kappa_i}$ for $u\in\mathbb{Z}_p^\times$
    and $\delta_i(p)=F_i$
    where $\kappa_i=-\lambda_i+i$
    and $F_i=p^{(n-1)/2-i}u_i$.
    Next we should choose a regular submodule $D$
    of $D_\mathrm{st}(\mathcal{A}^j(\rho_\pi))$.
    By \cite[Section 2.2]{HarronJorza} for the case
    $V$ non-split and ordinary
    or \cite[Remark 2.2.2(2)]{Benois},
    we can take
    $D=D_\mathrm{st}(F^{-1}\mathcal{A}^j(\rho_\pi))$.
    By \cite[Remark 2.2.2(2)]{Benois}, 
    the $\mathcal{L}$-invariant
    $\mathcal{L}(D,\mathcal{A}^j(\rho_\pi))$
    of Benois reduces to the $\mathcal{L}$-invariant
    of Greenberg
    when $V$ is ordinary
    (note that the $\mathcal{L}$-invariant
    of Benois for an ordinary Galois representation $V$
    uses the Bloch-Kato Selmer groups,
    while the $\mathcal{L}$-invariant of Greenberg
    uses the Greenberg Selmer groups.
    So \cite[Remark 2.2.2(2)]{Benois}
    says that even though the calculations for these
    two $\mathcal{L}$-invariants of an ordinary representation $V$
    use different Selmer groups,
    the results are the same).
    So the $\mathcal{L}$-invariant of $(\mathcal{A}^j(\rho_\pi),D)$
    of Greenberg/Benois
    is given by
    \[\mathcal{L}^\mathrm{Gr}(\mathcal{A}^j(\rho_\pi))
    :=\mathcal{L}(D,\mathcal{A}^j(\rho_\pi))
    =-\frac{\sum_iM_{n,j,i-1}(\nabla_{\vec{Y}}F_i)/F_i}
    {\sum_iM_{n,j,i-1}\nabla_{\vec{Y}}\kappa_i}. \]
    
    Note that in \cite{HarronJorza}, for the existence of 
    a triangulation of the Galois representation $\rho_\mathcal{E}$,
    we should assume that
    $\alpha/\beta$ is not a $p$-power root of unity.
    Here we do not need this condition since
    the existence of the
    triangulation is guaranteed by \cite[Corollary 2.7.8]{Geraghty}.

    Let $w_0=(n(a+1),(n-1)(a+1),...,a+1,0)$ be the weight in 
    $\Lambda_0^{G_{n+1}}$ corresponding to 
    the representation $\Pi^{G_{n+1}}_0$.
    We still write $z_0$ 
    for a point in $\mathrm{Spec}(A_0^{G_{n+1}})$ over $w_0$
    corresponding to the specialization to $\Pi_0^{G_{n+1}}$.
    Then $z_0\circ\rho_\mu^{G_{n+1}}=\mathrm{Sym}^n\rho_\pi$.
    Since the Hida family $A_0^{G_{n+1}}$ is \'{e}tale over $\Lambda_0^{G_{n+1}}$
    at the point $z_0$,
    for any direction $\vec{Y}$ in $\mathrm{Spec}(\Lambda_0^{G_{n+1}})$,
    it makes sense to define the tangent space
    $\nabla_{\vec{Y}}\rho_\mu^{G_{n+1}}$ to $\rho_\mu^{G_{n+1}}$
    in the $\vec{Y}$-direction. 
    We can summarize the above results in the following proposition.

    \begin{proposition}\label{LinvariantGreenberg}
    	Assume that the $A_0^{G_{n+1}}$-linear Galois representation
    	$\mu_0^{G_{n+1}}\circ\rho_{\mathbf{T}_0^{G_{n+1}}}$,
    	when restricted to $\Gamma_p$,
    	is conjugate to an upper triangular representation
    	and has diagonal entries $\delta_1,\delta_2,\ldots,\ldots,\delta_{n+1}$
    	such that $\delta_i(u)=u^{\kappa_i}$ for any $u\in I_p$
    	and $\delta_i(\mathrm{Frob}_p)=F_i$ for all $j=1,2,\ldots,n+1$.
    	Then for any $j=1,2,\ldots,n$, we have
    	\begin{equation}\label{LInvariantOfHJ}
    	\mathcal{L}^{\mathrm{Gr}} (\mathcal{A}^j(\rho_\pi))
    	=z_0 (-\frac{\sum_{i=1}^{n+1}M_{n,j,i-1}\nabla_{\vec{Y}}\log F_i}
    	{\sum_{i=1}^{n+1}M_{n,j,i-1}\nabla_{\vec{Y}}\kappa_i})
    	\end{equation}
    	as long as this formula makes sense.
    \end{proposition}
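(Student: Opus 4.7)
The plan is to reduce this statement to the formula of Harron--Jorza recalled as Proposition \ref{HJL}. The key point is that our Hida-theoretic setup provides exactly the same input data (triangulation of $V_p$ with explicit characters $\delta_i$, and an analytic deformation in which the Hecke eigenvalues vary) as the eigenvariety-theoretic setup of \cite{HarronJorza}; the passage from eigenvarieties to Hida families is essentially cosmetic once the triangulation data match.

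First I would verify that in the ordinary setting we may choose the regular submodule $D = D_{\mathrm{st}}(F^{-1}\mathcal{A}^j(\rho_\pi))$. By Theorem \ref{ExistenceOfGaloisRep}(4) together with Theorem \ref{ExistenceOfBigHeckeAlgebraValuedGaloisRep}, the Galois representation $\rho_{\mathbf{T}_0^{G_{n+1}}}$ restricted to $\Gamma_p$ is conjugate to an upper triangular representation with explicit diagonal entries; specializing at $z_0$ via $\mu_0^{G_{n+1}}$ gives the claimed $\delta_i$. This triangulation induces the regular submodule $D$ and thereby the exceptional subspace needed to define Benois's $\mathcal{L}$-invariant. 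I would then invoke \cite[Remark 2.2.2(2)]{Benois} to identify $\mathcal{L}(D,\mathcal{A}^j(\rho_\pi))$ with the Greenberg $\mathcal{L}$-invariant $\mathcal{L}^{\mathrm{Gr}}(\mathcal{A}^j(\rho_\pi))$ in the ordinary case.

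Second I would construct the global triangulation of $D^\dagger_{\mathrm{rig}}(\rho_{\mathbf{T}_0^{G_{n+1}}}|_{\Gamma_p})$ with the graded pieces $\mathcal{R}(\delta_i)$ over the Hida family $\mathbf{T}_0^{G_{n+1}}$, directly from the upper triangular structure produced by \cite[Corollary 2.7.8]{Geraghty}; this is where our situation is actually cleaner than \cite{HarronJorza}, since we do not need any generic hypothesis such as $\alpha/\beta$ not being a root of unity to secure the triangulation. Next, using the \'etaleness of $A_0^{G_{n+1}}\to \Lambda_0^{G_{n+1}}$ at $z_0$, I would define, for any tangent direction $\vec{Y}$ at $w_0$, the deformation $\nabla_{\vec{Y}}\rho_\mu^{G_{n+1}}$ and the associated cocycle class $c_{\vec{Y}}\in H^1(\mathbb{Q},\mathrm{End}(z_0\circ \rho_\mu^{G_{n+1}}))$. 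Projecting to $\mathcal{A}^j(\rho_\pi)$ (a summand of $\mathrm{Ad}(\mathrm{Sym}^n\rho_\pi)$) gives the class $c_{\vec{Y},j}\in H^1(D,\mathcal{A}^j(\rho_\pi))$, exactly as in the eigenvariety proof of \cite[Lemma 25]{HarronJorza}.

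Finally, I would compute the image of $c_{\vec{Y},j}$ in $H^1(\mathrm{gr}^1 D^\dagger_{\mathrm{rig}}(\mathcal{A}^j(\rho_\pi)))\simeq H^1_f\oplus H^1_c$ using the combinatorial projection formula of \cite[Proposition 33]{HarronJorza}, whose coefficients $M_{n,j,i-1}$ are given by (\ref{M}). This yields numerator and denominator of the form $\sum_iM_{n,j,i-1}\nabla_{\vec{Y}}\log F_i$ and $\sum_iM_{n,j,i-1}\nabla_{\vec{Y}}\kappa_i$; Lemma \ref{HJa/b} then identifies their quotient, up to sign, with $\mathcal{L}(D,\mathcal{A}^j(\rho_\pi))$, giving (\ref{LInvariantOfHJ}) after specialization at $z_0$. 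The main obstacle will be the bookkeeping in the third step: one must carefully check that the $\mathcal{R}(\delta_i)$ obtained by deforming the ordinary filtration through the Hida family is really the analytic triangulation that \cite{HarronJorza} extracts from the eigenvariety, so that their combinatorial projection formula applies verbatim; everything else amounts to translating the references through the dictionary Hida family $\leftrightarrow$ eigenvariety and invoking Benois's comparison.
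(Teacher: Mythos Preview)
Your proposal is correct and follows essentially the same approach as the paper: the paper does not give a standalone proof but presents Proposition~\ref{LinvariantGreenberg} as a summary of the preceding discussion, which recalls Harron--Jorza's Lemma~3, Lemma~25, Proposition~33, and Proposition~26, then adapts them to the Hida-family setting by invoking \cite[Corollary 2.7.8]{Geraghty} for the triangulation (noting, as you do, that this removes the need for the $\alpha/\beta$ hypothesis) and \cite[Remark 2.2.2(2)]{Benois} to identify the Benois and Greenberg $\mathcal{L}$-invariants in the ordinary case. Your three-step plan matches this structure precisely.
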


    Note that the $F_i$ in Proposition \ref{HJL} is an element in
    $\overline{\mathbb{Q}}_p$
    while the $F_i$ in the above proposition
    is an element in $A_0^{G_{n+1}}$.
    Yet the $F_i$ in Proposition \ref{HJL}
    is just the image of the $F_i$
    in the above proposition
    under the specialization
    $z_0\colon 
    A_0^{G_{n+1}}\to\mathcal{O}\hookrightarrow\overline{\mathbb{Q}}_p$.
    So it will be clear from the context which $F_i$
    we'are using.

	By \cite[Corollary 2.7.8]{Geraghty}, the assumption in the above proposition 
	is clearly satisfied.
	
	We now compare our $\mathcal{L} (\mathbb{Q}, \mathrm{Ad}(\rho^{G_{n+1}}_\mu))$
	with $\mathcal{L}^{\mathrm{Gr}}(\mathcal{A}^j(\rho_\pi))$
	in the above proposition,
	which is the main result of this section.
	\begin{theorem}\label{compareLinvariant}
		Fix an integer $n>0$. Assume that all the hypotheses in
		Theorems \ref{pLn} are satisfied, we have
		\[\prod_{j=1}^n\mathcal{L}^{\mathrm{Gr}}(\mathcal{A}^j(\rho_\pi))
		=z_0(\mathcal{L}
		(\mathbb{Q},\mathrm{Ad}(\rho^{G_{n+1}}_\mu))).\]
	\end{theorem}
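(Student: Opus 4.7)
The plan is to transform both sides of the desired identity into a common language of directional derivatives of Frobenius eigenvalues with respect to weight variables on the Hida family, and then use the Clebsch--Gordan decomposition $\mathrm{Ad}(\rho^{G_{n+1}}_\mu) = \bigoplus_{j=1}^{n}\mathcal{A}^{j}(\rho_\mu)$ to factor the determinant defining $\mathcal{L}(\mathbb{Q},\mathrm{Ad}(\rho^{G_{n+1}}_\mu))$ as a product matching the Harron--Jorza formulas recalled in Proposition~\ref{HJL}. The starting point is a precise dictionary: by Remark~\ref{lambdaalgebra} and Lemma~\ref{basicIwasawaalgebras}, and using Theorem~\ref{ExistenceOfBigHeckeAlgebraValuedGaloisRep}(2) together with \cite[Corollary~2.7.8]{Geraghty}, the variable $\log(1+T_{k,i})$ corresponds under the $R^D_k \to R_k$ map to $\log\psi_{n+1-i}(\mathrm{Frob}_\mathfrak{P})$ modulo the relation $\prod_j(1+T_{k,j})=1$, while $\log(1+X_{k,j})$ records the inertia character value $\psi_{n+1-j}\circ\mathrm{Art}_{\mathfrak{p}}(u_j)$. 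Specializing by $z_0$ turns these into the quantities $\log F_{n+2-i}$ and $\kappa_{n+2-j}$ (up to a common term coming from the central constraint), so the entries of $L(\mathbb{Q},\mathrm{Ad}(\rho^{G_{n+1}}_\mu))$ become, after $z_0$, the expressions
\[
z_0\!\left(\frac{\partial \log(1+T_{k,i})}{\partial \log(1+X_{k,n+1})}-\frac{\partial \log(1+T_{k,i})}{\partial \log(1+X_{k,j})}\right) \;=\; \frac{\partial \log F_{s(i)}}{\partial \kappa_{s(n+1)}}-\frac{\partial \log F_{s(i)}}{\partial \kappa_{s(j)}}
\]
for an explicit permutation $s$. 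The self-duality in Theorem~\ref{ExistenceOfGaloisRep}(2) implies $\prod_i F_i$ and $\sum_i \kappa_i$ are fixed, which is compatible with the relations imposed in Lemma~\ref{basicIwasawaalgebras}.

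Next, the plan is to diagonalize the resulting $n\times n$ matrix via the Clebsch--Gordan change of basis coming from the decomposition $\mathrm{Ad}(\mathrm{Sym}^n\mathrm{St})=\bigoplus_{j=1}^n \mathcal{A}^{j}(\mathrm{St})$. Concretely, the matrix $(M_{n,j,i-1})_{j,i}$ defined by \eqref{M} is the matrix of the projections $\mathrm{End}(\mathrm{Sym}^n V)\twoheadrightarrow \mathcal{A}^j(V)$ restricted to the Cartan (diagonal) part, as described in the proposition preceding Proposition~\ref{HJL}. Applying this linear combination to the rows of the matrix $\big(\partial \log F_{s(i)}/\partial \kappa_{s(j)}\big)_{i,j}$ produces, for each $j$, the row vector whose entries are linear combinations $\sum_i M_{n,j,i-1}\,\partial \log F_i/\partial \kappa_{l}$. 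Applying the same transformation to the columns (so that the column corresponding to the $\mathcal{A}^{j'}$-piece records $\sum_l M_{n,j',l-1}\,\partial \log F_i/\partial \kappa_l$) the matrix becomes (block-)diagonal by the character orthogonality of the $\mathcal{A}^j$-summands inside $\mathrm{Ad}$. Each diagonal entry is then exactly
\[
\frac{-\sum_i M_{n,j,i-1}\,\nabla_{\vec{Y}}\log F_i}{\sum_i M_{n,j,i-1}\,\nabla_{\vec{Y}}\kappa_i},
\]
which by Proposition~\ref{HJL} equals $\mathcal{L}^{\mathrm{Gr}}(\mathcal{A}^{j}(\rho_\pi))$.

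Taking the determinant and observing that the Clebsch--Gordan change of basis applied on both sides contributes an equal factor in numerator and denominator (so it cancels out of the quotient of determinants coming from the two subtractions defining the matrix $L$), one obtains the identity
\[
z_0\bigl(\mathcal{L}(\mathbb{Q},\mathrm{Ad}(\rho^{G_{n+1}}_\mu))\bigr) \;=\; \prod_{j=1}^n \mathcal{L}^{\mathrm{Gr}}(\mathcal{A}^{j}(\rho_\pi)).
\]
The main obstacle is the combinatorial step in the middle: proving that the Clebsch--Gordan matrix $(M_{n,j,i-1})$ really diagonalizes the matrix of logarithmic derivatives, i.e.\ that the off-diagonal $(j,j')$-entries with $j\neq j'$ vanish. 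This is equivalent to an orthogonality statement for the projections $\mathrm{Ad}\twoheadrightarrow \mathcal{A}^j$ when restricted to the torus, which should follow from Schur's lemma applied fiberwise on the universal torus deformation, but making this rigorous requires careful bookkeeping with the explicit coefficients \eqref{M} and a clean choice of $n$ independent directions $\vec{Y}_1,\dots,\vec{Y}_n$ in the weight space of $G_{n+1}$ adapted to the Clebsch--Gordan decomposition. A secondary (but less serious) obstacle is verifying that the factors coming from the central relation $\prod(1+T_{k,j})=1$ versus the analogous relation for the $X_{k,j}$'s cancel correctly, so that the appeal to the quotient definition of $\mathcal{L}(\mathbb{Q},\mathrm{Ad}(\rho^{G_{n+1}}_\mu))$ in Definition~\ref{definitionofL} really recovers the product of $\mathcal{L}^{\mathrm{Gr}}$-invariants on the nose and not up to an extra factor.
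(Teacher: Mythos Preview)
Your dictionary between the $T$-variables, $X$-variables and the Harron--Jorza quantities $F_i$, $\kappa_i$ is correct, and you are right that the Clebsch--Gordan matrix $M=(M_{n,j,i-1})$ is the bridge. However, the diagonalization step you flag as the ``main obstacle'' is not merely unproved---it is not true in general and is not what the paper does. There is no representation-theoretic orthogonality forcing the matrix $(\partial\log F_i/\partial\kappa_l)$ of logarithmic derivatives on the Hida family to become diagonal after conjugation by $M$: this matrix is analytic data on the deformation space, not an intertwiner between irreducible summands, so Schur's lemma has nothing to act on. Moreover, even if the off-diagonal entries vanished, the $(j,j)$-entry of $M\cdot(\partial\log F_i/\partial\kappa_l)\cdot M^t$ would be $\sum_{i,l}M_{n,j,i-1}M_{n,j,l-1}\,\partial\log F_i/\partial\kappa_l$, which is not the Harron--Jorza ratio you want.

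The paper avoids this entirely by exploiting a different feature of Proposition~\ref{HJL}: for each fixed $j$, the ratio
\[
\mathcal{L}_j^{\mathrm{Gr}} = -\frac{\sum_i M_{n,j,i-1}\nabla_{\vec Y}\log F_i}{\sum_i M_{n,j,i-1}\nabla_{\vec Y}\kappa_i}
\]
is \emph{independent of the direction} $\vec Y$. Rewriting this as the linear relation $-\mathcal{L}_j^{\mathrm{Gr}}\sum_i M_{n,j,i-1}\nabla_{\vec Y}\kappa_i = \sum_i M_{n,j,i-1}\nabla_{\vec Y}\log F_i$ valid for \emph{every} $\vec Y$, one evaluates at $n$ explicit directions $\vec Y_1,\dots,\vec Y_n$ (chosen as $\vec Y_i=\mathrm{diag}(0_{i-1},1,0_{n-i},-1)$) and stacks the resulting identities into a matrix equation $-LMY = MFY$, where $L=\mathrm{diag}(\mathcal{L}_1^{\mathrm{Gr}},\dots,\mathcal{L}_n^{\mathrm{Gr}})$ is \emph{already} diagonal by construction, $Y$ is the matrix of direction components, and $F$ is the matrix of logarithmic derivatives. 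After using the central relations $\sum_i\log F_i=\mathrm{const}$ and $\sum_l Y_{i,l}=0$ to pass from size $n+1$ to size $n$ (obtaining $-LM'Y'=M'F'Y'$ with $Y'=1_n$), one only needs that $M'$ is \emph{invertible}---not that it diagonalizes anything---so that taking determinants yields $(-1)^n\prod_j\mathcal{L}_j^{\mathrm{Gr}}=\det F'$. The invertibility of $M'$ is then established by a separate $\mathfrak{sl}_2(\mathbb{C}_p)$ argument, identifying $(M_{n,k,i})$ up to harmless factors with the matrix of the isomorphism $\mathrm{End}(\mathrm{Sym}^nV)\simeq\bigoplus_k\mathrm{Sym}^{2k}V$ restricted to the $H$-weight-$0$ subspaces. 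The identification $z_0(\det F')=(-1)^n\,z_0(\mathcal{L}(\mathbb{Q},\mathrm{Ad}(\rho^{G_{n+1}}_\mu)))$ then follows from the dictionary you already set up.
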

	\begin{proof}
		To simplify notations, we write $\mathcal{L}_j^{\mathrm{Gr}}=-\frac{\sum_{i=1}^{n+1}M_{n,j,i-1}
		\nabla_{\vec{Y}}\log F_i}{\sum_{i=1}^{n+1}M_{n,j,i-1}\nabla_{\vec{Y}}\kappa_i}$ 
		and 
		$\mathcal{L}=\mathcal{L} (\mathbb{Q},\mathrm{Ad}(\rho_\mu^{(G_{n+1})}))$.

		In the universal deformation $ (R_0^{ G_{n+1}},\rho_0^{ G_n})$,
		the diagonal entries of the Galois representation $\rho_0^{ G_{n+1}}|_{\Gamma_p}$ are $\psi_{0,1},\psi_{0,2},\ldots,\psi_{0,n+1}$.
		So under the morphism $R_0^{ G_{n+1}}\to A_0^{ G_{n+1}}$,
		$\psi_{0,n+2-i} (\mathrm{Frob}_p)$ is taken to $F_i$ for $i=1,2,\ldots,n+1$
		(moreover, under the morphism
		$A_0^{G_{n+1}}\to B$, $F_i$ is taken to
		$1+T_{0,i}$ by the definition of $T_{0,i}$. This will be used later).
		
		Note that $\mathcal{L}_j^{\mathrm{Gr}}$ ($1\leq j\leq n$) are independent of the choice of the direction $\vec{Y}$
		(as long as the expression makes sense).

		We choose $n$ linearly independent directions in the weight space
		$\mathrm{Spec}(\Lambda_0^{G_{n+1}})$:
		$\vec{Y}_1,\vec{Y}_2,\ldots,\vec{Y}_n$,
		each having components as 
		$\vec{Y}_i=\mathrm{diag}(Y_{i,1},\ldots,Y_{i,n+1})
		=\mathrm{diag}(0_{i-1},1,0_{n-i},-1)$
		where $1\leq i\leq n$
		(the choice of these directions 
		$\vec{Y}_i$ corresponds to the basis 
		$(d\log(1+X_{0,n+1})-d\log(1+X_{0,i})_{i=1}^n$
		of the free $B$-module
		$\Omega_{\Lambda_0/\Lambda_{\infty,0}}\otimes B$).
		Since $\mathcal{L}_j^{\mathrm{Gr}}$ 
		is independent of the directions $\vec{Y}_i$, 
		applying the $n$ directions to the expression for $\mathcal{L}_j^{\mathrm{Gr}}$, 
		we have the following identity of matrices
		(we write $\partial'F_i$ for $\partial\log F_i$
		and $\partial''X_{0,i}$ for $\partial\log(1+X_{0,i})$)
		\begin{align*}
		 -\mathrm{diag}(\mathcal{L}_1,\ldots,\mathcal{L}_n)
		 \begin{pmatrix}
		 M_{n,1,0} & \cdots & M_{n,1,n} \\
		 \vdots & \ddots & \vdots \\
		 M_{n,n,0} & \cdots & M_{n,n,n}
		 \end{pmatrix}
		 \begin{pmatrix}
		 Y_{1,1} & \cdots & Y_{n,1} \\
		 \vdots & \ddots & \vdots \\
		 Y_{1,n+1} & \cdots & Y_{n,n+1}
		 \end{pmatrix}=
		 \\
		 \begin{pmatrix}
		 M_{n,1,0} & \cdots & M_{n,1,n} \\
		 \vdots & \ddots & \vdots \\
		 M_{n,n,0} & \cdots & M_{n,n,n}
		 \end{pmatrix}
		 \begin{pmatrix}
		 \frac{\partial'F_1}{\partial''X_{0,1}} 
		 &
		 \cdots 
		 & 
		 \frac{\partial'F_1}{\partial''X_{0,n+1}} \\
		 \vdots 
		 & 
		 \ddots 
		 & 
		 \vdots \\
		 \frac{\partial'F_{n+1}}{\partial''X_{0,1}} 
		 & 
		 \cdots 
		 & 
		 \frac{\partial'F_{n+1}}{\partial''X_{0,n+1}}
		 \end{pmatrix}
		 \begin{pmatrix}
		 Y_{1,1} & \cdots & Y_{n,1} \\
		 \vdots & \ddots & \vdots \\
		 Y_{1,n+1} & \cdots & Y_{n,n+1}
		 \end{pmatrix}
		\end{align*}
		Written in symbols, this identity becomes $-LMY=MFY$.
		Note that
		\[\sum_{j=1}^{n+1}Y_{i,j}=0,\ \sum_{j=1}^{n+1}\log{F_j}=1.\]
		The above identity of matrices becomes
		\begin{align*}
		&
		-\mathrm{diag}(\mathcal{L}_1,\ldots,\mathcal{L}_n)
		\begin{pmatrix}
		M_{n,1,0}-M_{n,1,n} & \cdots & M_{n,1,n-1}-M_{n,1,n} \\
		\vdots & \ddots & \vdots \\
		M_{n,n,0}-M_{n,n,n} & \cdots & M_{n,n,n-1}-M_{n,n,n}
		\end{pmatrix}
		\begin{pmatrix}
		Y_{1,1} & \cdots & Y_{n,1} \\
		\vdots & \ddots & \vdots \\
		Y_{1,n} & \cdots & Y_{n,n}
		\end{pmatrix}=
		\\
		&
		\begin{pmatrix}
		M_{n,1,0}-M_{n,1,n} & \cdots & M_{n,1,n-1}-M_{n,1,n} \\
		\vdots & \ddots & \vdots \\
		M_{n,n,0}-M_{n,n,n} & \cdots & M_{n,n,n-1}-M_{n,n,n}
		\end{pmatrix}
		\begin{pmatrix}
		\frac{\partial'F_1}{\partial''X_{0,1}}-
		\frac{\partial'F_1}{\partial''X_{0,n+1}} 
		& 
		\cdots 
		& 
		\frac{\partial'F_1}{\partial''X_{0,n}}-
		\frac{\partial'F_1}{\partial''X_{0,n+1}} \\
		\vdots 
		& 
		\ddots 
		& 
		\vdots \\
		\frac{\partial'F_n}{\partial''X_{0,1}}-
		\frac{\partial'F_n}{\partial''X_{0,1}} 
		& 
		\cdots 
		& 
		\frac{\partial'F_n}{\partial''X_{0,n}}-
		\frac{\partial'F_n}{\partial''X_{0,n+1}}
		\end{pmatrix}
		\begin{pmatrix}
		Y_{1,1} & \cdots & Y_{n,1} \\
		\vdots & \ddots & \vdots \\
		Y_{1,n} & \cdots & Y_{n,n}
		\end{pmatrix}
		&
		\end{align*}
		Again written in symbols, the above identity becomes $-LM'Y'=M'F'Y'$. 
		Note that now the matrices $L$, $M'$, $Y'$ 
		and $F'$ are all of size $n\times n$.
		
		By the next lemma, we know that $M'$ is invertible. 
		By definition, $Y'$ is also invertible ($Y'=1_{n\times n}$), 
		thus if we take the determinants of both sides of
		$-L'M'Y'=M'F'Y'$,
		we get
		${(-1)^n\det{L'}=\det{F'}}$.
		In other words,
		\[(-1)^n\prod_{j=1}^n\mathcal{L}_j^{\mathrm{Gr}}=\det{F'}.\]
		
		Now let's relate $\det{F'}$ to $\mathcal{L}$. 
		Recall that
		$\det{F'}=\det(\frac{\partial'F_i}{\partial''X_{0,j}}-
		\frac{\partial'F_i}{\partial''X_{0,n+1}})$,
		and by Definition \ref{definitionofL}, we have
		$\mathcal{L}=\det(\frac{\partial''T_{0,i}}{\partial''X_{0,n+1}}-
		\frac{\partial''T_{0,i}}{\partial''X_{0,j}})$.
		Write the morphism $\phi\colon A_0^{G_{n+1}}\to B$.
		Using the correspondence between $F_i$ and $T_{0,i}$
		(that is, ${\phi(F_i)=1+T_{0,i}}$ for $1\leq i\leq n+1$),
		we see that
		$\phi((-1)^n\prod_{j=1}^n\mathcal{L}_j^{\mathrm{Gr}})
		=\phi(\det{F'})
		=(-1)^n\mathcal{L}$.
		Thus we conclude
		\[\prod_{j=1}^n\mathcal{L}^{\mathrm{Gr}}(\mathcal{A}^j(\rho_\pi))
		=z_0(\det{F'})=z_0
		(\mathcal{L}(\mathbb{Q},\mathrm{Ad}
		(\rho_\mu^{G_{n+1}}))).\]
		
	\end{proof}

	\begin{lemma}
		The matrix $M'$ is invertible.
	\end{lemma}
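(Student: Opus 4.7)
The plan is to interpret the matrix $M'$ representation-theoretically, using the Clebsch-Gordan decomposition of $\mathrm{End}(\mathrm{Sym}^n V)$ as a $\mathrm{GL}_2$-module. Recall that
\[
\mathrm{End}(\mathrm{Sym}^n V) \simeq \bigoplus_{j=0}^n V_{2j}, \qquad V_{2j} = \mathrm{Sym}^{2j} V \otimes \mathrm{det}^{-j},
\]
where $V_0$ is the trivial component generated by the identity endomorphism. The space $\mathfrak{h}$ of diagonal matrices in $\mathrm{End}(\mathrm{Sym}^n V)$ is exactly the zero-weight subspace under the adjoint action of the diagonal torus, and it has dimension $n+1$. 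Each $V_{2j}$ has a one-dimensional zero-weight line spanned by the middle basis vector $g_{2j,j}$. The proposition cited just before Theorem \ref{compareLinvariant} shows that the formula \eqref{M} defining $M_{n,j,i}$ is precisely the coefficient of $g_{2j,j}$ in the image of the elementary diagonal matrix $E_{i,i}$ under the projection onto $V_{2j}$.

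The first step is then to form the augmented $(n+1) \times (n+1)$ matrix $\widetilde{M}$ obtained from $M$ by adjoining the row $j = 0$. This $\widetilde{M}$ represents, in the obvious bases, the natural projection
\[
\mathfrak{h} \;\longrightarrow\; \bigoplus_{j=0}^n (V_{2j})_0,
\]
which is an isomorphism between two $(n+1)$-dimensional spaces (it is the restriction to zero-weight subspaces of the Clebsch-Gordan isomorphism, which preserves weights). Hence $\widetilde{M}$ is invertible; in particular $M$, viewed as an $n \times (n+1)$ matrix, has full rank $n$, and $\mathrm{Ker}(M) \subset \mathbb{C}^{n+1}$ is a line.

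The second step is to identify $\mathrm{Ker}(M)$ explicitly. The identity matrix, whose diagonal is $(1,1,\ldots,1)$, lies entirely in the trivial component $V_0$; hence its projection onto every $V_{2j}$ with $j \geq 1$ is zero. This shows $(1,1,\ldots,1) \in \mathrm{Ker}(M)$, and by the one-dimensional count it spans the kernel.

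The third and final step is to factor $M'$. Write $\iota \colon \mathbb{C}^n \hookrightarrow \mathbb{C}^{n+1}$ for the linear map $(b_1, \ldots, b_n) \mapsto (b_1, \ldots, b_n, -\sum_{i=1}^n b_i)$; a direct check (identical to the computation of the entries of $M'$ in the proof of Theorem \ref{compareLinvariant}) gives $M' = M \circ \iota$. The image of $\iota$ is the hyperplane $H = \{a \in \mathbb{C}^{n+1} : \sum_i a_i = 0\}$, and $H$ is transverse to $\mathrm{Ker}(M) = \langle (1,\ldots,1) \rangle$ because $n+1 \neq 0$ in characteristic zero. Therefore $M|_H \colon H \to \mathbb{C}^n$ is an isomorphism, $\iota \colon \mathbb{C}^n \to H$ is an isomorphism, and $M'$ is the composition of two isomorphisms, hence invertible. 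There is no substantial obstacle here; the only delicate point is ensuring that the formula \eqref{M} truly extracts the coefficient of $g_{2j,j}$, but this is precisely what the cited proposition (the Clebsch-Gordan projection formula) asserts.
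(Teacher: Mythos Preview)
Your proof is correct and follows essentially the same strategy as the paper's: both establish invertibility of the full $(n+1)\times(n+1)$ matrix via the Clebsch--Gordan decomposition of $\mathrm{End}(\mathrm{Sym}^n V)$ restricted to zero-weight spaces, then pass to $M'$. The only cosmetic difference is in the last step: the paper observes that the $j=0$ row has all entries equal to $n!$ and relates the two determinants by a column operation, whereas you identify $\ker M = \langle(1,\ldots,1)\rangle$ and use transversality of $\mathrm{im}(\iota)$---these are dual ways of encoding that the identity endomorphism lies in the trivial summand.
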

	\begin{proof}
		We first show that the matrix $M$ has non-trivial determinant $\det{M}\neq0$.
		
		We consider the standard representation $V=\mathbb{C}_p^2$
		of the Lie algebra $\mathfrak{sl}_2 (\mathbb{C}_p)$.
		We fix a basis of $\mathfrak{sl}_2 (\mathbb{C}_p)$ as $H=\begin{pmatrix}
		1 & 0\\
		0 & -1
		\end{pmatrix}$, $X=\begin{pmatrix}
		0 & 1\\
		0 & 0
		\end{pmatrix}$ and $Y=\begin{pmatrix}
		0 & 0\\
		1 & 0
		\end{pmatrix}$ and the standard basis of $V$ as $e_{-1}=\begin{pmatrix}
		0\\
		1
		\end{pmatrix}$ and $e_1=\begin{pmatrix}
		1\\
		0
		\end{pmatrix}$. 
		A vector $v$ is said to have $H$-weight $\lambda$ 
		if $Hv=\lambda v$. 
		So $e_{-1}$  (resp. $e_1$) has $H$-weight $-1$  (resp. $1$).
		
		The irreducible representations of dimension $n+1$ of
		$\mathfrak{sl}_2 (\mathbb{C}_p)$ are all isomorphic to
		$\mathrm{Sym}^nV=\mathbb{C}_p\langle (e_{-1})^n,(e_{-1})^{n-1}e_1,\ldots, (e_1)^n\rangle$. Note that $e_{-1}^ie_1^{n-i}$
		has $H$-weight $n-2i$.
		
		We first relate $\mathrm{Sym}^nV$ to its dual. 
		The dual representation $V^{\ast}$ of $V$ has a dual basis
		$e_{-1}^{\ast},e_1^{\ast}$. The vector $e_{-1}^{\ast}$
		(resp. $e_1^{\ast}$) has $H$-weight $1$  (resp. $-1$).
		We can verify that the $\mathbb{C}_p$-linear map
		$\phi\colon V\to V^\ast$ given by
		$e_1\mapsto -e_{-1}^\ast,\ e_{-1}\mapsto e_1^\ast$
		is $\mathfrak{sl}_2(\mathbb{C}_p)$-equivariant.
		This induces an $\mathfrak{sl}_2(\mathbb{C}_p)$-equivariant isomorphism $\phi_n\colon\mathrm{Sym}^nV\to\mathrm{Sym}^nV^\ast$, given by
		$(e_1)^{n-i}(e_{-1})^i\mapsto (-1)^{n-i}(e^\ast_1)^i(e^\ast_{-1})^{n-i}$.
		
		Note that these isomorphisms all preserve $H$-weights.
		
		Next we relate the tensor product
		$\mathrm{Sym}^aV\otimes\mathrm{Sym}^bV$ to $\mathrm{Sym}^rV$
		where $r=a+b,a+b-2,\ldots,|a-b|$.
		We write 
		$\Xi_{a,b,r}\colon\mathrm{Sym}^aV\otimes\mathrm{Sym}^bV
		\to\mathrm{Sym}^rV$ the non-trivial
		$\mathfrak{sl}_2(\mathbb{C}_p)$-equivariant projection 
		as in \cite[Lemma 2.7.4]{CFS}.
		
		From the above, we can define the projection
		$\phi_{n,n,2k}$ from $\mathrm{Sym}^nV\otimes\mathrm{Sym}^nV^\ast
		=\mathrm{End}(\mathrm{Sym}^nV)$ onto 
		$\mathrm{Sym}^{2k}V$
		for any $k=0,1,\ldots,n$. 
		This is given by
		\[\phi_{n,n,2k}=(-1)^{n-k}\frac{n!n!}{(n-k)!}\Xi_{n,n,2k}\circ(1\otimes\phi_n^{-1}).\]
		More explicitly, let's write
		$\phi_{n,n,2k}((e_1)^{n-i}(e_{-1}^i)\otimes(e_1^\ast)^{n-j}(e_{-1}^\ast)^j)
		=(-1)^j\sum_{w=0}^{2k}C_{n,n,2k}^{i,n-j,w}(e_1)^{2k-w}(e_{-1})^w$.
		By definition, $C_{n,n,2k}^{i,n-i,k}$ is just the coefficient of
		$(e_1)^k(e_{-1})^k$ (of $H$-weight $0$) in the projection of the vector
		$(e_1)^{n-i}(e_{-1}^i)\otimes(e_1^\ast)^{n-i}(e_{-1}^\ast)^i$ (of $H$-weight $0$)
		to the space $\mathrm{Sym}^{2k}V$.
		Since we have
		$\mathrm{End}(\mathrm{Sym}^nV)\simeq\oplus_{k=0}^n
		\mathrm{Sym}^{2k}V$
		as representations of $\mathrm{SL}_2(\mathbb{C}_p)$,
		we see that $\oplus_{k=0}^n\phi_{n,n,2k}$ 
		is an isomorphism from 
		$\mathrm{End}(\mathrm{Sym}^nV)$ 
		to the direct sum
		$\oplus_{k=0}^n\mathrm{Sym}^{2k}V$.
		Since all these $\phi_{n,n,2k}$ preserve $H$-weights, we deduce that
		the map $\oplus_{k=0}^n\phi_{n,n,2k}$ induces an isomorphism between
		the subspace
		$\oplus_{i=0}^n\mathbb{C}_p
		(e_1)^{n-i}(e_{-1}^i)\otimes(e_1^\ast)^{n-i}(e_{-1}^\ast)^i$
		of $\mathrm{End}(\mathrm{Sym}^nV)$
		and the subspace
		$\oplus_{k=0}^n\mathbb{C}_p(e_1)^k(e_{-1})^k$
		of $\oplus_{k=0}^n\mathrm{Sym}^{2k}V$.
		Since the matrix $(C_{n,n,2k}^{i,n-i,k})_{i,k=0}^n$ 
		is just the matrix for this isomorphism
		under the given basis of these two subspaces,
		we see that the matrix $(C_{n,n,2k}^{i,n-i,k})_{i,k}$ is invertible.
		
		By \cite[Proposition 33]{HarronJorza}, we have
		$M_{n,k,i}=(-1)^i\binom{n}{i}C_{n,n,2k}^{i,n-i,k}$.
		It is easy to see that the matrix $B$ is also invertible.
		
		By the formula (\ref{M}), we see that
		$M_{n,0,i}=n!$,which is independent of $i$.
		This gives
		\[\det{M}=(-1)^nM_{n,0,n}\det{M'}.\]
		Clearly, this shows that $M'$ is invertible,
		which concludes the proof.
	\end{proof}

    \paragraph*{\textbf{Acknowledgements}}
    This is part of the author's PhD thesis at University Paris 13. 
    I am grateful to my advisor J.Tilouine for suggesting this problem to me
    and his constant support.
    I thank also E.Urban and H.Hida for useful conversations.

\end{document}